\title{Morse homology for the heat flow
%       \\
%       \vspace{.5cm}
%       Preliminary version of Habilitation Thesis
       \\
%       *to be* submitted to HU Berlin
%       \\
%       -- WORK IN PROGRESS --
       }
\author{Joa Weber
        \\
        UC Berkeley
       }
\date{19 March 2010%\footnote{Last revision:
                     %27 September 2009}
     }
\newtheorem{theorem}{Theorem}[section]
\newtheorem{corollary}[theorem]{Corollary}
\newtheorem{prop}[theorem]{Proposition}
\newtheorem{lemma}[theorem]{Lemma}
\newtheorem{proposition}[theorem]{Proposition}
\theoremstyle{definition}
\newtheorem{hypothesis}[theorem]{Hypothesis}
\newtheorem{remark}[theorem]{Remark}
\theoremstyle{remark}
\newtheorem*{notation}{Notation}
\renewcommand{\1}{{{\mathchoice {\rm 1\mskip-4mu l} {\rm 1\mskip-4mu l}
{\rm 1\mskip-4.5mu l} {\rm 1\mskip-5mu l}}}}
\newcommand{\HH}{{\mathbb{H}}}
\newcommand{\N}{{\mathbb{N}}}
\newcommand{\R}{{\mathbb{R}}}
\newcommand{\Z}{{\mathbb{Z}}}
\newcommand{\Bb}{{\mathcal{B}}}
\newcommand{\Cc}{{\mathcal{C}}}   % configuration space
\newcommand{\Dd}{{\mathcal{D}}}
\newcommand{\Ee}{{\mathcal{E}}}
\newcommand{\Ff}{{\mathcal{F}}}
\newcommand{\Hh}{{\mathcal{H}}}
\newcommand{\Ll}{{\mathcal{L}}}   % Lagrangian planes
\newcommand{\Mm}{{\mathcal{M}}}   % moduli space
\newcommand{\Oo}{{\mathcal{O}}}
\newcommand{\Pp}{{\mathcal{P}}}
\newcommand{\Ss}{{\mathcal{S}}}
\newcommand{\Vv}{{\mathcal{V}}}
\newcommand{\Ww}{{\mathcal{W}}}
\newcommand{\Zz}{{\mathcal{Z}}}
\newcommand{\coker}{{\rm coker\, }}  % cokernel
\newcommand{\im}{{\rm im\, }}      % image
\newcommand{\dom}{{\rm dom\, }}      % domain
\newcommand{\diag}{{\rm diag\,}}     % diagonal matrix
\newcommand{\cl}{{\rm cl}}         % closure
\newcommand{\diam}{{\rm diam\,}}   % diameter
\newcommand{\INT}{{\rm int}}       % interior
\newcommand{\supp}{{\rm supp}}     % support
\newcommand{\INDEX}{{\rm index}}   % (Fredholm)index
\newcommand{\IND}{{\rm ind}}       % (Morse)index
\newcommand{\grad}{{\rm grad }}    % gradient
\newcommand{\Vol}{{\rm Vol}}          % Volume
\newcommand{\CM}{{\rm CM}}            % Morse chain complex
\newcommand{\HM}{{\rm HM}}            % Morse homology
\newcommand{\norm}{{\rm norm}}
\newcommand{\eps}{{\varepsilon}}
\newcommand{\Om}{{\Omega}}
\renewcommand{\O}{{\rm O}}
\newcommand{\inner}[2]{\langle #1, #2\rangle}
\def\NABLA#1{{\mathop{\nabla\kern-.5ex\lower1ex\hbox{$#1$}}}}
\def\Nabla#1{\nabla\kern-.5ex{}_{#1}}
\def\Tabla#1{\Tilde\nabla\kern-.5ex{}_{#1}}
\def\abs#1{\mathopen|#1\mathclose|}   
\def\Abs#1{\left|#1\right|}            
\def\norm#1{\mathopen\|#1\mathclose\|}
\def\Norm#1{\left\|#1\right\|}
\renewcommand{\Tilde}{\widetilde}
\newcommand{\p}{{\partial}}
\begin{document}

\maketitle

%%%%%%%%%%%%%%%%%%%%%%%%%%%%%%%%%%%%%%%%%%%%%%
%%%%%%%%%%%%%%%%%%%%%%%%%%%%%%%%%%%%%%%%%%%%%%
%%%%%%%%%%%%%%%% Abstract %%%%%%%%%%%%%%%%%%%%
%%%%%%%%%%%%%%%%%%%%%%%%%%%%%%%%%%%%%%%%%%%%%%
%%%%%%%%%%%%%%%%%%%%%%%%%%%%%%%%%%%%%%%%%%%%%%

\begin{abstract}
We use the heat flow on the loop space
of a closed Riemannian manifold to construct
an algebraic chain complex.
The chain groups are generated
by perturbed closed geodesics.
The boundary operator is defined
in the spirit of Floer theory
by counting, modulo time shift, 
heat flow trajectories that
converge asymptotically to
nondegenerate
closed geodesics 
of Morse index difference one.
\end{abstract}

\tableofcontents

%%%%%%%%%%%%%%%%%%%%%%%%%%%%%%%%%%%%%%%%%%%%%%
%%%%%%%%%%%%%%%%%%%%%%%%%%%%%%%%%%%%%%%%%%%%%%
%%%%%%%%%%%%%%%% Section %%%%%%%%%%%%%%%%%%%%%
%%%%%%%%%%%%%%%%%%%%%%%%%%%%%%%%%%%%%%%%%%%%%%
%%%%%%%%%%%%%%%%%%%%%%%%%%%%%%%%%%%%%%%%%%%%%%
\section{Introduction}
\label{sec:plan}

Let $M$ be a closed Riemannian manifold
and denote by $\nabla$
the Levi-Civita connection
and by $\Ll M$ the {\bf loop space},
that is the space of free loops
$C^\infty(S^1,M)$.
For $x:S^1\to M$
consider the action functional
$$
     \Ss_V(x) = \int_0^1 \left(
     \frac12\Abs{\dot x(t)}^2
     -V(t,x(t)) \right)
     dt.
$$
Here and throughout
we identify $S^1=\R/\Z$
and think of $x\in\Ll M$
as a smooth map $x:\R\to M$
which satisfies $x(t+1)=x(t)$.
\emph{Smooth} means $C^\infty$ smooth.
The potential is a smooth function
$V:S^1\times M\to \R$
and we set $V_t(q):=V(t,q)$.
The critical points of $\Ss_V$
are the 1-periodic solutions
of the ODE
\begin{equation}\label{eq:1-periodic}
     \Nabla{t}\dot x=-\nabla V_t(x),
\end{equation}
where $\nabla V_t$ denotes the gradient and 
$\Nabla{t}\dot x$ denotes the covariant derivative, 
with respect to the Levi-Civita connection,
of the vector field $\dot x:=\frac{d}{dt} x$ 
along the loop $x$ in direction $\dot x$.
By $\Pp=\Pp(V)$ we denote the set of 1-periodic
solutions of~(\ref{eq:1-periodic}).
These solutions are called 
{\bf perturbed closed geodesics},
since in the case $V=0$ they are closed geodesics.

From now on we assume that
$\Ss_V$ is a Morse function on the loop space,
i.e. the 1-periodic solutions
of~(\ref{eq:1-periodic}) are all nondegenerate.
We proved in~\cite{Joa-INDEX}
that $\Ss_V$ is Morse for a generic potential $V_t$
and that in this case the set
$$
     \Pp^a(V):=\{ x\in\Pp(V)\mid \Ss_V(x)\le a\}
$$
is finite for every real number $a$.
By $E_x^u$ we denote
the eigenspace corresponding
to negative eigenvalues of the
Hessian of $\Ss_V$ at $x\in\Pp^a(V)$.
The dimension of $E_x^u$ is finite
and called the {\bf Morse index of \boldmath$x$}. 
Choose an orientation $\langle x\rangle$
of the vector space $E_x^u$. By $\nu=\nu(V,a)$ we denote a 
choice of orientations
for all $x\in\Pp^a(V)$.
Now consider the $\Z$-module
$$
     \CM^a_*=\CM^a_*(V,\nu):=\bigoplus_{x\in\Pp^a(V)} \Z 
     \langle x\rangle.
$$
It is graded by the Morse index.

If in addition $\Ss_V$ is Morse--Smale, then
the module $\CM^a_*$ carries a boundary operator 
$\p=\p(V,a,\nu)$
defined as follows.
Consider the (negative) $L^2$ gradient flow lines
of $\Ss_V$ on the loop space. These are
solutions $u:\R\times S^1\to M$ of
the {\bf heat equation}
\begin{equation}\label{eq:heat-V}
     \p_su-\Nabla{t}\p_tu-\nabla V_t(u)=0
\end{equation}
satisfying
\begin{equation}\label{eq:limit-V}
     \lim_{s\to\pm\infty} u(s,t)
     =x^\pm(t),\qquad
     \lim_{s\to\pm\infty} \p_su(s,t)
     =0,
\end{equation}
where $x^\pm\in\Pp(V)$.
The limits are uniform in $t$
together with the first partial $t$-derivative,
i.e. in $C^1(S^1)$; see remark~\ref{rem:limit}.
By definition the {\bf moduli space}
$\Mm(x^-,x^+;V)$
is the space of solutions of~(\ref{eq:heat-V})
and~(\ref{eq:limit-V}).
The action functional $\Ss_V$ 
is called {\bf Morse--Smale below level \boldmath$a$}
if the operator $\Dd_u$
obtained by linearizing~(\ref{eq:heat-V})
is onto as a linear operator
between appropriate Banach spaces,
see~(\ref{eq:norms}) below,
and this is true
for all $u\in\Mm(x^-,x^+;V)$ and $x^\pm\in\Pp^a(V)$.
Note that Morse--Smale implies Morse
(use that $u:=x\in\Mm(x,x;V)$).
Under the Morse--Smale hypothesis
the space $\Mm(x^-,x^+;V)$
is a smooth manifold whose dimension
is equal to the difference
of the Morse indices of the
perturbed closed geodesics $x^\pm$.
In the case of index difference one
it follows that the quotient
$\Mm(x^-,x^+;V)/\R$
by the (free) time shift action is a finite set.
Counting these elements with
appropriate signs
defines the boundary operator $\p$ on $\CM^a_*$.
The Morse complex $\left(\CM^a_*,\p\right)$ 
is called the {\bf heat flow complex}
and the corresponding homology groups
$\HM_*^a(\Ll M,\Ss_V)$ are called
{\bf heat flow homology}.

In chapter~\ref{sec:transversality}
we explain how to perturb
the Morse function $\Ss_V$
by a regular perturbation $v\in\Oo^a_{reg}$
to achieve the Morse--Smale condition 
without changing the set of critical points.
By definition heat flow homology of $\Ss_V$
is then equal to heat flow homology
of the perturbed functional.
It is an open question
if $\Ss_V$ is Morse--Smale
for a generic potential $V_t$.
In section~\ref{sec:perturbations}
we introduce a class of 
{\bf abstract perturbations} 
$\Vv:\Ll M\to\R$
for which we can establish transversality.
In contrast we call the potentials $V_t$
{\bf geometric perturbations}.

\begin{theorem}
\label{thm:d^2=0}
Let $V\in C^\infty(S^1\times M)$ be a potential
such that $\Ss_V$ is Morse
and let $a$ be a regular value of $\Ss_V$.
Take a choice of orientations $\nu=\nu(V,a)$
and fix a regular perturbation $v\in\Oo^a_{reg}$.
Then $\p=\p(V,a,\nu,v)$ satisfies
$\p\circ\p=0$.
Furthermore, {\bf heat flow homology} defined by
$$
     \HM_*^a(\Ll M,\Ss_V)
     :=\frac{\ker \p(V,a,\nu,v)}{\im \p(V,a,\nu,v)}
$$
is independent of the choice of orientations $\nu$ 
and the regular perturbation $v$.
\end{theorem}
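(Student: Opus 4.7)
The plan is to follow the standard Floer template: establish $\p\circ\p=0$ by compactifying the index-$2$ moduli spaces modulo $\R$, and establish invariance by constructing continuation chain maps between the complexes for different choices of $\nu$ and $v$.

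For $\p\circ\p=0$: fix $x^\pm\in\Pp^a(V)$ of Morse index difference two. The regularity of $v$ makes $\Mm(x^-,x^+;V,v)$ a smooth $2$-manifold, so that $\Mm(x^-,x^+;V,v)/\R$ is a smooth oriented $1$-manifold. The key step is to show that this quotient admits a natural compactification whose boundary is canonically identified with
\[
\bigsqcup_{y}\bigl(\Mm(x^-,y;V,v)/\R\bigr)\times\bigl(\Mm(y,x^+;V,v)/\R\bigr),
\]
the disjoint union taken over all $y\in\Pp^a(V)$ of intermediate Morse index. This requires two ingredients. Compactness up to breaking follows from uniform $L^\infty$ and $C^1$ bounds for finite-energy heat flow trajectories (whose energy is controlled by the action drop $\Ss_V(x^-)-\Ss_V(x^+)$) together with convergence on compact sets after $s$-reparametrization; I would package the parabolic a~priori estimates developed earlier in the paper to run an Arzela--Ascoli/diagonal argument producing the broken limit. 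Gluing produces, for each broken pair $(u_1,u_2)$, a smooth half-open family $[\rho_0,\infty)\hookrightarrow\Mm(x^-,x^+;V,v)/\R$ exhausting a neighborhood of that boundary end, constructed by Newton iteration from a pre-glued approximate solution using uniform right inverses for $\Dd_{u_1},\Dd_{u_2}$ in the weighted parabolic Sobolev spaces from~(\ref{eq:norms}). Combining these shows the compactified quotient is a compact oriented $1$-manifold with boundary; a coherent orientation scheme derived from $\nu$ turns the boundary sum into the matrix coefficient of $\p\circ\p$, forcing it to vanish.

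For independence of $\nu$ and $v$: changing $\nu$ at a single $x\in\Pp^a(V)$ just flips the sign of the generator $\langle x\rangle$, which is a tautological chain isomorphism. For the perturbation $v$, given $v_0,v_1\in\Oo^a_{reg}$ I would choose a regular homotopy $\{v_s\}_{s\in\R}$ with $v_s=v_i$ for $\pm s$ large and consider the associated $s$-dependent heat equation with asymptotic conditions in $\Pp^a(V)$. For generic $\{v_s\}$ the resulting moduli spaces are smooth and cut out transversely, and counting the zero-dimensional ones with signs defines a chain map $\Phi_{01}\colon\CM^a_*(V,\nu,v_0)\to\CM^a_*(V,\nu,v_1)$; the chain map identity follows from compactifying the one-dimensional homotopy moduli spaces exactly as above. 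A second application to a homotopy-of-homotopies comparing the concatenation $\{v_s^{10}\}\,\#\,\{v_s^{01}\}$ to the constant homotopy at $v_0$ produces a chain homotopy $K$ with $\Id-\Phi_{10}\circ\Phi_{01}=\p K+K\p$, which shows $\Phi_{01}$ is a chain homotopy equivalence and induces the desired isomorphism on $\HM^a_*$.

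The principal obstacle is gluing in this parabolic setting. Although the formal scheme parallels the elliptic Floer case, the linearized heat operator is parabolic rather than elliptic, so the pre-glued solution is only approximate in a weighted norm whose weight must be chosen compatibly with the exponential convergence rates of $u_1,u_2$ to the intermediate critical point $y$; constructing a right inverse for $\Dd$ whose operator norm is bounded \emph{uniformly} in the gluing parameter, and controlling the quadratic remainder well enough for Newton iteration to converge, is the analytically delicate point. Compactness by comparison is relatively routine, since the $C^1$-asymptotic control required by~(\ref{eq:limit-V}) is a consequence of parabolic smoothing and the energy identity, while coherent orientations are a purely linear-algebraic bookkeeping exercise in the spirit of Floer--Hofer.
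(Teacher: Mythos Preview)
Your outline is the natural direct approach and is sound in principle, but the paper takes a genuinely different route that sidesteps the parabolic gluing you flag as the principal obstacle. Rather than compactifying and gluing index-two heat moduli spaces, the paper invokes the main result of~\cite{SaJoa-LOOP}: the adiabatic limit establishes a sign-preserving bijection between index-one heat trajectories in $\Ll M$ and index-one Floer trajectories in $T^*M$ for the Hamiltonian given by kinetic plus potential energy. This makes the heat flow chain complex \emph{equal} (up to natural identification) to the Floer chain complex on the cotangent bundle, and $\p\circ\p=0$ is then imported wholesale from the elliptic Floer theory, where gluing is already established. Independence of $\nu$ and $v$ is likewise referred to the standard Floer continuation argument (or alternatively to the forthcoming isomorphism with singular homology, theorem~\ref{thm:isomorphism}).

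The trade-off is clear. Your approach would be self-contained within the heat flow setting and would not require the cotangent bundle or the adiabatic limit machinery, but it demands a full parabolic gluing theorem with uniform right inverses in the weighted $\Ww^{1,p}$ spaces, which the paper never proves and which is, as you say, the analytically delicate point. The paper's approach buys $\p^2=0$ cheaply once the adiabatic limit is in hand, but that result is itself a substantial piece of analysis. Note also that the paper's proof of compactness up to broken trajectories (lemma~\ref{le:broken}) is available and would serve your purposes, so the missing ingredient on your side really is just the gluing.
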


The construction of the Morse complex
in finite dimensions goes back
to Thom~\cite{THOM-49}, Smale~\cite{SMALE-60,SMALE-61}, 
and Milnor~\cite{MILNOR-h}.
It was rediscovered by Witten~\cite{Wi82}
and extended to infinite dimensions
by Floer~\cite{FLOER4,FLOER5}. We refer 
to~\cite{AM-LecsMCInfDimMfs}
for an extensive historical account.

%%%%%%%%%%%%%%%%%%%%%%%%%%%%%%%%%%%%%%%%%%%%%%
%%%%%%%%%%%%%%%% Subsection %%%%%%%%%%%%%%%%%%
%%%%%%%%%%%%%%%%%%%%%%%%%%%%%%%%%%%%%%%%%%%%%%
\subsection{Perturbations}
\label{sec:perturbations}

We introduce
a class of abstract perturbations of 
equation~(\ref{eq:heat}) for which
the analysis works.
Later in 
section~\ref{subsec:perturbations-banach}
we extract a countable subset and
construct a separable Banach space
of perturbations for which transversality works.
The abstract perturbations take the 
form of smooth maps 
$
\Vv:\Ll M\to\R.
$
For $x\in\Ll M$ let $\grad\Vv(x)\in\Om^0(S^1,x^*TM)$
denote the $L^2$-gradient of $\Vv$; it is defined by 
$$
\int_0^1\inner{\grad\Vv(u)}{\p_su}\,dt = \frac{d}{ds}\Vv(u)
$$
for every smooth path $\R\to\Ll M:s\mapsto u(s,\cdot)$. 
The \emph{covariant Hessian of $\Vv$} at a loop $x:S^1\to M$
is the operator
$$
\Hh_\Vv(x):\Om^0(S^1,x^*TM)\to\Om^0(S^1,x^*TM)
$$
defined by 
\begin{equation}\label{eq:Hess-Vv}
\Hh_\Vv(u)\p_su := \Nabla{s}\grad\Vv(u)
\end{equation}
for every smooth map 
$\R\to\Ll M:s\mapsto u(s,\cdot)$.
The axiom~(V1) below asserts that this Hessian is a zeroth 
order operator. We impose the following conditions on $\Vv$;
here $\abs{\cdot}$ denotes the pointwise absolute value 
at $(s,t)\in\R\times S^1$ and $\norm{\cdot}_{L^p}$ denotes 
the $L^p$-norm over $S^1$ at time $s$. 
Although condition~(V1) and the first part of~(V2)
are special cases of~(V3) we state the axioms in 
the form below, because some of our results don't require
all the conditions to hold.

\begin{description}
\item[(V0)]
$\Vv$ is continuous
with respect to the $C^0$ topology on $\Ll M$.
Moreover, there is a constant $C=C(\Vv)$ such that
$$
\sup_{x\in\Ll M}\left|\Vv(x)\right|
+\sup_{x\in\Ll M}\left\|\grad\Vv(x)\right\|_{L^\infty(S^1)}
\le C.
$$
\item[(V1)]
There is a constant $C=C(\Vv)$ such that
\begin{align*}
\Abs{\Nabla{s}\grad\Vv(u)}
&\le C\bigl(\left|\p_su\right|+\Norm{\p_su}_{L^1}\bigr), \\
\left|\Nabla{t}\grad\Vv(u)\right|
&\le C\Bigl(1+\left|\p_tu\right|\Bigr)
\end{align*}
for every smooth map $\R\to\Ll M:s\mapsto u(s,\cdot)$ 
and every $(s,t)\in\R\times S^1$. 
\item[(V2)]
There is a constant $C=C(\Vv)$ such that
\begin{align*}
\Abs{\Nabla{s}\Nabla{s}\grad\Vv(u)} 
&\le C\Bigl(\Abs{\Nabla{s}\p_su} 
+ \Norm{\Nabla{s}\p_su}_{L^1} 
+ \bigl(\Abs{\p_su} + \Norm{\p_su}_{L^2}\bigr)^2
\Bigr), \\
\Abs{\Nabla{t}\Nabla{s}\grad\Vv(u)} 
&\le C\Bigl(
\Abs{\Nabla{t}\p_su} 
+ \bigl(1+\Abs{\p_tu}\bigr)
\bigl(\Abs{\p_su} + \Norm{\p_su}_{L^1}\bigr)
\Bigr), 
\end{align*}
and
$$
\Abs{\Nabla{s}\Nabla{s}\grad\Vv(u)
- \Hh_\Vv(u)\Nabla{s}\p_su}
\le C\bigl(\Abs{\p_su} + \Norm{\p_su}_{L^2}\bigr)^2
$$
for every smooth map $\R\to\Ll M:s\mapsto u(s,\cdot)$ 
and every $(s,t)\in\R\times S^1$.
\item[(V3)]
For any two integers $k>0$ and $\ell\ge 0$
there is a constant $C=C(k,\ell,\Vv)$ such that
$$
\Abs{\nabla_t^\ell\nabla_s^k\grad\Vv(u)}
\le C\sum_{k_j,\ell_j}
\left(
\prod_{\overset{j}{\scriptscriptstyle\ell_j>0}}
\Abs{\nabla_t^{\ell_j}\nabla_s^{k_j}u}
\right)
\prod_{\overset{j}{\scriptscriptstyle\ell_j=0}}
\Biggl(\Abs{\nabla_s^{k_j}u}
+\left\|\nabla_s^{k_j}u\right\|_{L^{p_j}}
\Biggr)
$$
for every smooth map $\R\to\Ll M:s\mapsto u(s,\cdot)$ 
and every $(s,t)\in\R\times S^1$;
here $p_j\ge 1$ and $\sum_{\ell_j=0}1/p_j=1$; 
the sum runs over all partitions $k_1+\cdots+k_m=k$
and $\ell_1+\cdots+\ell_m\le\ell$ such that $k_j+\ell_j\ge1$ 
for all $j$. For $k=0$ the same inequality holds
with an additional summand $C$ on the right.
\end{description}

\begin{remark}
In~(V0)
the $L^\infty$ bound for $\grad\,\Vv$ 
is imposed, since occasionally
we need $L^p$ bounds
for fixed but arbitrary $p$.
Continuity of $\Vv$ with respect to the 
$C^0$ topology
is used to prove~\cite[lem.~10.2]{SaJoa-LOOP} and
proposition~\ref{prop:unifconvergence-compactsets}.
\end{remark}

\begin{remark}\label{rmk:HV}
Each geometric potential $V$ 
provides an abstract perturbation $\Vv$
such that for smooth loops $x$ and smooth vector
fields $\xi$ along $x$ we have
$$
     \Vv(x):=\int_0^1V_t(x(t))\,dt
     ,\qquad
     \grad\Vv(x) = \nabla V_t(x),\qquad
     \Hh_\Vv(x)\xi = \Nabla{\xi}\nabla V_t(x).
$$
\end{remark}

\begin{remark}\label{rmk:pert}
To prove transversality
in section~\ref{sec:transversality} we use
%% FOOTNOTE
perturbations\footnote{Here and throughout
the difference $x-x_0$ of two loops
denotes the difference in some 
ambient Euclidean space into which 
$M$ is (isometrically) embedded.
Note that cutting off with respect
to the $L^2$ norm -- as opposed
to the $L^\infty$ norm --
prevents us from expressing
the difference in terms of
the exponential map.
}
%% FOOTNOTE
of the form
$$
\Vv(x):= \rho\left(\left\|x-x_0\right\|_{L^2}^2\right)
\int_0^1V_t(x(t))\,dt,
$$
where $\rho:\R\to[0,1]$ is a smooth cutoff function
and $x_0:S^1\to M$ is a smooth loop.
Any such perturbation
satisfies~{\rm (V0)--(V3)}.
Here compactness of $M$ is crucial,
in particular, finiteness
of the diameter of $M$.
\end{remark}

%%%%%%%%%%%%%%%%%%%%%%%%%%%%%%%%%%%%%%%%%%%%%%
%%%%%%%%%%%%%%%% Subsection* %%%%%%%%%%%%%%%%%
%%%%%%%%%%%%%%%%%%%%%%%%%%%%%%%%%%%%%%%%%%%%%%
\subsection{Main results}
\label{subsec:results}

There are two main purposes
of this text.
One is to construct
the Morse chain complex
for the action functional on the loop space.
The other one is to provide proofs of the
results announced and used
in~\cite{SaJoa-LOOP}
to calculate the adiabatic limit of the Floer complex
of the cotangent bundle.
More precisely, in~\cite{SaJoa-LOOP} we proved
in joint work with
D.~Salamon that the connecting
orbits of the heat flow are
the adiabatic limit of Floer connecting orbits
in the cotangent bundle $T^*M$
with respect to the Hamiltonian given by
kinetic plus potential energy.
The key idea is to appropriately rescale the 
Riemannian metric on $M$.
Both purposes are achieved simultaneously by
theorems~\ref{thm:regularity}--\ref{thm:transversality}.

From now on
we replace the potential $V$
by an abstract perturbation
$\Vv$ satisfying~{\rm (V0)--(V3)}.
In this case the action is given by
\begin{equation}\label{eq:action}
     \Ss_\Vv(x) 
     = \frac12\int_0^1\Abs{\dot x(t)}^2\,dt - \Vv(x)
\end{equation}
for smooth loops $x:S^1\to  M$ and
the heat equation has the form
\begin{equation}\label{eq:heat}
   \p_su - \Nabla{t}\p_tu - \grad\Vv(u) = 0
\end{equation}
for smooth maps
$u:\R\times S^1\to M$, $(s,t)\mapsto u(s,t)$.
Here $\grad\Vv(u)$ denotes the value of $\grad\Vv$ on the
loop $u_s:t\mapsto u(s,t)$.  
The relevant set $\Pp(\Vv)$ of critical points of $\Ss_\Vv$
consists of the (smooth) loops $x:S^1\to M$ that satisfy the 
ODE
\begin{equation}\label{eq:crit}
     \Nabla{t}\dot x=-\grad\Vv(x).
\end{equation}
The subset $\Pp^a(\Vv)$ 
consists of all critical points $x$ with $\Ss_\Vv(x)\le a$. 
For two nondegenerate critical points 
$x^\pm\in\Pp(\Vv)$ we 
denote by $\Mm(x^-,x^+;\Vv)$ the set of 
all solutions $u$ of~(\ref{eq:heat}) 
such that
\begin{equation}\label{eq:limit}
     \lim_{s\to\pm\infty} u(s,t)
     =x^\pm(t),\qquad
     \lim_{s\to\pm\infty} \p_su(s,t)
     =0.
\end{equation}
The limits are uniform in $t$
together with the first partial $t$-derivative.
These solutions are
called \emph{connecting orbits}.
The energy of such a solution is given by 
\begin{equation}\label{eq:par-energy}
     E(u) 
     = \int_{-\infty}^\infty\int_0^1\Abs{\p_su}^2\,dtds
     = \Ss_\Vv(x^-)-\Ss_\Vv(x^+).
\end{equation}

\begin{remark}[Asymptotic limits]\label{rem:limit}
In~(\ref{eq:limit-V}) and~(\ref{eq:limit})
we require convergence in $C^1(S^1)$
as opposed to $C^0(S^1)$ which is standard
in elliptic Floer theory.
We need the stronger assumption
in theorem~\ref{thm:kerD-exp-decay}
to establish exponential decay.
Actually $W^{1,2}(S^1)$ convergence already works.
Compare~\cite{SaJoa-LOOP}
where the asymptotic $C^0$ limits of $(u,v)$ and
$(\p_su,\Nabla{s}v)$ are required to be
$(x^\pm,\p_tx^\pm)$ and zero, respectively.
Now $v$ corresponds to $\p_tu$ 
in the adiabatic limit
studied in~\cite{SaJoa-LOOP}.
\end{remark}

\begin{theorem}[Regularity]
\label{thm:regularity}
Fix a constant $p>2$ and a perturbation 
$\Vv:\Ll M\to\R$ that satisfies~{\rm (V0)--(V3)}.
Let $u:\R\times S^1\to M$ 
be a continuous function
of class $\Ww^{1,p}_{loc}$,
that is $u,\p_tu,\Nabla{t}\p_tu,\p_su$
are locally $L^p$ integrable.
Assume further
that $u$ solves the heat 
equation~(\ref{eq:heat})
almost everywhere.
Then $u$ is smooth.
\end{theorem}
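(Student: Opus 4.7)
The plan is a standard parabolic bootstrap; the only wrinkle is that $\grad\Vv(u)$ is a nonlocal functional of the loop $u_s$, controlled via the axioms~(V0)--(V3). I would isometrically embed $M\hookrightarrow\R^N$ and view $u$ as an $\R^N$-valued map. The identity $\p_t^2u=\Nabla{t}\p_tu+\mathrm{II}(u)(\p_tu,\p_tu)$ rewrites the heat equation~(\ref{eq:heat}) in the ambient space as
$$
\p_su-\p_t^2u=-\mathrm{II}(u)(\p_tu,\p_tu)+\grad\Vv(u),
$$
where $\mathrm{II}$ is the smooth, bounded second fundamental form of the compact submanifold $M\subset\R^N$. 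By hypothesis $u$ is continuous and $\p_tu\in L^p_\loc$ with $p>2$, so the first term on the right lies in $L^{p/2}_\loc$ while (V0) places the second in $L^\infty_\loc$.

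Next I would execute the bootstrap. The Calder\'on--Zygmund $L^q$-estimate for the linear parabolic operator $\p_s-\p_t^2$ yields $\p_su,\p_t^2u\in L^{p/2}_\loc$. Since $\R\times S^1$ has parabolic dimension three, finitely many iterations of the parabolic Sobolev embedding with strictly increasing exponents improve $\p_tu$ to $L^\infty_\loc$, at which point the entire right-hand side belongs to $L^q_\loc$ for every finite $q$; linear theory then gives $\p_su,\p_t^2u\in L^q_\loc$ for all $q<\infty$, so in particular $u\in C^{1,\al}_\loc$.

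For higher regularity I would proceed by induction on the number of derivatives. Differentiating the equation in $t$ and $s$ yields a linear parabolic PDE for $\nabla_t^\ell\nabla_s^ku$ whose inhomogeneity is a polynomial in lower derivatives of $u$ (from the smooth tensor $\mathrm{II}$) plus $\nabla_t^\ell\nabla_s^k\grad\Vv(u)$. Axioms (V1)--(V3) are tailored precisely for this: they bound the latter by a polynomial in pointwise and $L^{p_j}$-norms of strictly lower-order derivatives of $u$ with $\sum_{\ell_j=0}1/p_j=1$, so H\"older's inequality together with the inductive hypothesis places the inhomogeneity in $L^q_\loc$ for every $q<\infty$. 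The linear parabolic theory then upgrades the regularity by one order, and $u\in C^\infty$ follows from Sobolev embedding. The main obstacle is verifying that the scaling built into~(V3)---the partition conditions $k_1+\cdots+k_m=k$, $\ell_1+\cdots+\ell_m\le\ell$ with $k_j+\ell_j\ge1$, together with the matching exponents $p_j$---exactly closes the bootstrap at each order without loss of integrability.
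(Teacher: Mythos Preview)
Your overall strategy---embed $M\hookrightarrow\R^N$ isometrically, rewrite~(\ref{eq:heat}) in ambient form, then bootstrap using the axioms---is exactly what the paper does in section~\ref{subsec:regularity-compactness} via proposition~\ref{prop:reg-estimate} and the bootstrap proposition~\ref{prop:bootstrap}.

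There is, however, a genuine gap in your first iteration. You put the quadratic term $\Gamma(u)(\p_tu,\p_tu)$ in $L^{p/2}_{loc}$ and claim that finitely many parabolic Sobolev embeddings in homogeneous dimension three yield strictly increasing exponents until $\p_tu\in L^\infty_{loc}$. The arithmetic fails for $2<p\le 3$: from a right-hand side in $L^{p/2}$ parabolic regularity gives $u\in\Ww^{1,p/2}$, and the embedding then places $\p_tu$ in $L^r$ with $1/r=2/p-1/3$; one has $r>p$ only if $p>3$. So for $p\in(2,3]$ your loop does not improve and you never reach $\p_tu\in L^\infty$. The paper flags exactly this point in the remark immediately following theorem~\ref{thm:regularity}: the hypothesis $u\in W^{1,p}_{loc}$---which is all your first step actually uses---is likely insufficient unless $p>3$.

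The paper's fix is to exploit the full hypothesis $u\in\Ww^{1,p}_{loc}$ (including the second-order $t$-regularity) from the outset. Rather than iterating on the integrability exponent of $\p_tu$, proposition~\ref{prop:bootstrap}(i) takes as input $\Norm{u}_{\Ww^{1,p}}\le\mu_0$ together with $F,\p_tF\in L^p$ and returns $\p_tu\in\Ww^{1,p}$ on a slightly smaller cylinder; the crude inclusion $\Ww^{1,p}\subset W^{1,p}\hookrightarrow C^0$ (valid for $p>2$ in two dimensions) then gives $\p_tu\in L^\infty_{loc}$ in a single step. After that your higher-order induction using~(V3) matches the paper's argument.
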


\begin{remark}
It seems unlikely that
the assumption $u\in\Ww^{1,p}_{loc}$
can be weakened
to $u\in W^{1,p}_{loc}$, as announced in~\cite{SaJoa-LOOP},
unless we also weaken $p>2$ to $p>3$;
see~\cite[rmk.~5.2]{Joa-PARABOLI}.
However, the stronger assumption $u\in\Ww^{1,p}_{loc}$
is satisfied in our applications of 
theorem~\ref{thm:regularity}.
These are~\cite[proof of lemma~10.2]{SaJoa-LOOP},
the Banach bundle setup introduced in chapter~\ref{sec:IFT},
step~1 of the proof of theorem~\ref{thm:modified-IFT},
and the proof of proposition~\ref{prop:onto-universal}
on surjectivity of the universal section.
\end{remark}

\begin{theorem}[Apriori estimates]
\label{thm:apriori}
Fix a perturbation $\Vv:\Ll M\to\R$ that 
satisfies~{\rm (V0)--(V1)} and a constant $c_0$. 
Then there is a positive constant
$C=C(c_0,\Vv)$ such that the following holds.
If $u:\R\times S^1\to M$ is a smooth solution
of~(\ref{eq:heat}) such that
$\Ss_\Vv(u(s,\cdot))\le c_0$
for every $s\in\R$ then
$$
     \Norm{\p_tu}_\infty
     +\Norm{\Nabla{t}\p_tu}_\infty
     +\Norm{\p_su}_\infty
     +\Norm{\Nabla{t}\p_su}_\infty
     +\Norm{\Nabla{s}\p_su}_\infty
     \le C.
$$
\end{theorem}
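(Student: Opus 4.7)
The plan is to establish the five $L^\infty$ bounds in sequence via a parabolic mean value (Moser-type) argument applied to $|X|^2$ for each of the vector fields $X$ along $u$ in turn, exploiting the fact that $|X|^2$ becomes a subsolution of $\partial_s - \partial_t^2$ once one commutes covariant derivatives and invokes the heat equation. Two baseline estimates come first. The action bound together with~(V0) yields
$$\|\p_tu(s,\cdot)\|_{L^2(S^1)}^2 \le 2\bigl(c_0 + C(\Vv)\bigr)$$
uniformly in $s$; and the gradient identity $\frac{d}{ds}\Ss_\Vv(u(s,\cdot)) = -\|\p_su(s,\cdot)\|_{L^2}^2$ combined with the lower bound on $\Ss_\Vv$ from~(V0) delivers the global energy bound $\int_\R\|\p_su(s,\cdot)\|_{L^2}^2\,ds \le c_0 + C$.

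For $f := |\p_tu|^2$, the torsion-free identity $\Nabla{s}\p_tu = \Nabla{t}\p_su$ together with the heat equation gives
$$(\partial_s - \partial_t^2)f \;=\; 2\inner{\Nabla{t}\grad\Vv(u)}{\p_tu} - 2|\Nabla{t}\p_tu|^2 \;\le\; C(1+f)$$
by the second estimate of~(V1). A standard parabolic mean value inequality on cylinders $[s_0,s_0+1]\times S^1$, applied to this linear subparabolic inequality using the uniform $L^1(S^1)$-bound from the baseline, yields $\|\p_tu\|_\infty \le C$ with constant independent of $s_0$. The same recipe applies to $g := |\p_su|^2$: differentiating the heat equation in $s$ and commuting $\Nabla{s}\Nabla{t} - \Nabla{t}\Nabla{s} = R(\p_su,\p_tu)$ yields
$$\Nabla{s}\p_su - \Nabla{t}\Nabla{t}\p_su \;=\; R(\p_su,\p_tu)\p_tu + \Nabla{s}\grad\Vv(u),$$
and the first estimate of~(V1) bounds $|\Nabla{s}\grad\Vv(u)| \le C(|\p_su| + \|\p_su\|_{L^1})$. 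Combined with the just-obtained bound on $\p_tu$ and the global $L^2$ energy bound, the mean value inequality produces $\|\p_su\|_\infty \le C$. The heat equation then gives $\|\Nabla{t}\p_tu\|_\infty \le \|\p_su\|_\infty + \|\grad\Vv\|_\infty \le C$ via~(V0).

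For the remaining bounds on $\Nabla{t}\p_su = \Nabla{s}\p_tu$ and $\Nabla{s}\p_su$ I iterate the procedure one level higher, applying it to $h := |\Nabla{s}\p_su|^2$. Twice-differentiating the heat equation and re-commuting produces an analogous subparabolic inequality whose error terms involve precisely $\Nabla{s}\Nabla{s}\grad\Vv(u)$ and $\Nabla{t}\Nabla{s}\grad\Vv(u)$, which are exactly the quantities controlled by~(V2); the $L^2(\R\times S^1)$ control of $\Nabla{s}\p_su$ needed to feed into the mean value inequality is extracted by differentiating the baseline energy identity once more and invoking the bounds already obtained. The main obstacle throughout is handling the nonlocal $\|\cdot\|_{L^1}$ and $\|\cdot\|_{L^2}$ contributions in~(V1)--(V2): to absorb them into the \emph{linear} parabolic subsolution structure with $s_0$-independent constants one must combine pointwise with slicewise $L^p$ estimates carefully, and it is precisely this nonlocal structure that forces reliance on the global energy bound rather than merely a local one.
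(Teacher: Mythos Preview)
Your strategy matches the paper's exactly: a parabolic mean value inequality (the paper cites \cite[lemma~B.1]{SaJoa-LOOP}) applied to $|X|^2$ for successively higher derivatives $X$, seeded by slicewise $L^2$ estimates. The bounds for $\p_tu$, $\p_su$, and $\Nabla{t}\p_tu$ are handled just as the paper does in theorems~\ref{thm:apriori-t} and~\ref{thm:gradient}.

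The higher-order step, however, has a genuine gap. You propose to run the mean value argument on $h=|\Nabla{s}\p_su|^2$ and say the required $L^2$ seed ``is extracted by differentiating the baseline energy identity once more.'' Differentiating the energy identity gives $\tfrac{d}{ds}\|\p_su_s\|_{L^2}^2=2\langle\Nabla{s}\p_su_s,\p_su_s\rangle$, which does not produce $L^2$ control of $\Nabla{s}\p_su$. The mechanism the paper uses is different: the identity $(\p_s-\p_t^2)|X|^2 = -2|\Nabla{t}X|^2 + \text{(lower order)}$ contains a \emph{good} sign term, and integrating it over a cylinder (a Caccioppoli-type estimate, lemma~\ref{le:kerD-apriori-L2} and its corollary) yields $\int|\Nabla{t}X|^2\lesssim\int|X|^2$. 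Iterating with $X=\p_su$, then $X=\Nabla{t}\p_su$, gives local $L^2$ bounds on $\Nabla{t}\p_su$ and $\Nabla{t}\Nabla{t}\p_su$ in terms of $\|\p_su\|_{L^2}$ on a slightly larger cylinder (step~3 in the proof of theorem~\ref{thm:kerD-apriori}). The mean value inequality is then applied to $|\Nabla{t}\p_su|^2$ and $|\Nabla{t}\Nabla{t}\p_su|^2$---not to $|\Nabla{s}\p_su|^2$---and only \emph{afterwards} is $|\Nabla{s}\p_su|$ read off from the linearized equation $\Nabla{s}\p_su=\Nabla{t}\Nabla{t}\p_su+R(\p_su,\p_tu)\p_tu+\Hh_\Vv(u)\p_su$. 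Without this Caccioppoli step you have no seed to feed into the mean value argument at the second level.

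A minor point: you invoke~(V2), but the theorem only assumes~(V0)--(V1). The paper asserts the same, though its own proof chain (via theorems~\ref{thm:kerD-apriori} and~\ref{thm:kerD-apriori-II}) appears to use~(V2) as well, so this tension is already present in the source.
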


\begin{theorem}[Exponential decay]
\label{thm:par-exp-decay}
Fix a perturbation $\Vv:\Ll M\to\R$ that 
satisfies~{\rm (V0)--(V3)} and assume $\Ss_\Vv$ is Morse. 
\begin{enumerate}
\item[\rm\bfseries(F)]
Let $u:[0,\infty)\times S^1\to M$ 
be a smooth solution 
of~(\ref{eq:heat}).  Then there are positive 
constants $\rho$ and $c_0,c_1,c_2,\dots$
such that 
$$
     \Norm{\p_su}_{C^k([T,\infty)\times S^1)}
     \le c_ke^{-\rho T}
$$
for every $T\ge1$. Moreover, there is 
a periodic orbit $x\in\Pp(\Vv)$ such that
$u(s,\cdot)$ converges to $x$
in $C^2(S^1)$ as $s\to\infty$. 
\item[\rm\bfseries(B)]
Let $u:(-\infty,0]\times S^1\to M$ 
be a smooth solution 
of~(\ref{eq:heat}) with finite energy. 
Then there are positive 
constants $\rho$ and $c_0,c_1,c_2,\dots$
such that 
$$
     \Norm{\p_su}_{C^k((-\infty,-T]\times S^1)}
     \le c_ke^{-\rho T}
$$
for every $T\ge1$. Moreover, there is 
a periodic orbit $x\in\Pp(\Vv)$ such that
$u(s,\cdot)$ converges to $x$
in $C^2(S^1)$ as $s\to-\infty$. 
\end{enumerate}
\end{theorem}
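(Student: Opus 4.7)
The plan is to prove part~(F) in detail and obtain part~(B) by the symmetric argument running $s\to-\infty$. The main obstacle will be deriving a second-order ODE inequality for $f(s):=\Norm{\p_su(s,\cdot)}_{L^2}^2$ that extracts exponential decay from the spectral gap at the limit critical point.

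First I would exploit monotonicity of the action: the heat equation is the negative $L^2$-gradient flow of $\Ss_\Vv$, so $\tfrac{d}{ds}\Ss_\Vv(u(s,\cdot))=-\Norm{\p_su}_{L^2}^2$. Axiom~(V0) gives $\Ss_\Vv\ge-C$ on $\Ll M$, so the decreasing function $s\mapsto\Ss_\Vv(u(s,\cdot))$ converges and the forward energy $E=\int_0^\infty\Norm{\p_su}_{L^2}^2\,ds$ is finite. With $c_0:=\Ss_\Vv(u(0,\cdot))$ bounding the action on $[0,\infty)$, Theorem~\ref{thm:apriori} (applied after interior localization) yields uniform bounds on $\p_tu,\Nabla{t}\p_tu,\p_su,\Nabla{t}\p_su,\Nabla{s}\p_su$, and bootstrapping the linear parabolic equation for $\p_su$ via Theorem~\ref{thm:regularity} together with axiom~(V3) upgrades this to uniform $C^k$ bounds for every $k$. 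Since $f\in L^1([0,\infty))$ and $f'$ is uniformly bounded, a standard uniform-continuity argument forces $f(s)\to 0$. By Arzel\`a--Ascoli every sequence $s_n\to\infty$ has a subsequence with $u(s_n,\cdot)\to y$ in $C^2(S^1)$, and passing to the limit in $\p_su=\Nabla{t}\p_tu+\grad\Vv(u)$ places $y\in\Pp^{c_0}(\Vv)$. The Morse hypothesis makes this set discrete, and a standard isolation argument (using continuity of $u$ in $s$ together with $\Norm{\p_su(s,\cdot)}_{L^2}\to 0$) pins the limit to a unique $x\in\Pp(\Vv)$.

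The heart of the argument is the second-order inequality. Set $\xi:=\p_su$ and introduce the loop-dependent Jacobi-type operator
$$
A(u(s,\cdot))\eta := -\Nabla{t}\Nabla{t}\eta - R(\eta,\p_tu)\p_tu - \Hh_\Vv(u(s,\cdot))\eta,
$$
which is $L^2$-self-adjoint on sections along $u(s,\cdot)$ and tends to the Hessian of $\Ss_\Vv$ at $x$. Using $\Nabla{s}\p_tu=\Nabla{t}\p_su$, the curvature identity, and axiom~(V2) one obtains $\Nabla{s}\xi=-A(u)\xi$ together with
$$
f''(s) = 4\Norm{A(u)\xi}_{L^2}^2 - 2\inner{(\Nabla{s}A(u))\xi}{\xi}_{L^2}.
$$
Morse nondegeneracy supplies a spectral gap $\rho>0$ for $A(x)$, and the $C^2$ convergence $u(s,\cdot)\to x$ makes $A(u(s,\cdot))\to A(x)$ in $L^2$-operator norm, so $\Norm{A(u)\xi}_{L^2}^2\ge(\rho/2)^2 f(s)$ for $s\ge s_0$. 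Axioms~(V1)--(V2) bound the error by $C\Norm{\xi}_{L^\infty}f(s)$, and $\Norm{\xi(s)}_{L^\infty}\to 0$ follows from the uniform $C^k$ bound combined with $\Norm{\xi(s)}_{L^2}\to 0$. Hence $f''\ge\delta^2 f$ on $[s_0,\infty)$ for some $\delta\in(0,\rho)$, and a maximum-principle comparison of $f$ with the solution $h(s)=f(s_0)e^{-\delta(s-s_0)}$ of the equality (using $f\to 0$) yields $f(s)\le f(s_0)e^{-\delta(s-s_0)}$. This is the step I expect to be the hardest, because it relies simultaneously on the spectral gap, on the axioms~(V1)--(V2) to control $\Nabla{s}A(u)$, and on the interplay between $L^2$ and $L^\infty$ norms of $\xi$.

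Finally, to promote $L^2$ decay to $C^k$ decay with the same rate I would differentiate the heat equation to obtain a linear parabolic equation for $\xi$ and apply interior parabolic estimates (contained in the proof of Theorem~\ref{thm:regularity}) on the cylinders $[s,s+2]\times S^1$: exponential $L^2$ bounds of $\xi$ then become exponential $C^k$ bounds on $[s+1,s+2]\times S^1$, producing the constants $c_k$ and the $C^2(S^1)$ convergence $u(s,\cdot)\to x$. Part~(B) runs verbatim: the finite-energy assumption forces $\Ss_\Vv(u(s,\cdot))$ to be bounded on $(-\infty,0]$, so Theorem~\ref{thm:apriori} supplies the same apriori bounds and every step goes through with $s$ replaced by $-s$ and the obvious sign changes (the action now increases monotonically toward the limit $x^-$).
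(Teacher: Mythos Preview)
Your proposal is correct and follows essentially the same architecture as the paper: finite energy and apriori bounds, convergence to a nondegenerate critical point, the second-order differential inequality $f''\ge\delta^2 f$ for $f(s)=\Norm{\p_su(s,\cdot)}_{L^2}^2$ driven by the spectral gap, and a parabolic bootstrap from $L^2$ to $C^k$ decay. The paper merely organizes things more modularly---the ODE step is packaged separately as theorem~\ref{thm:kerD-exp-decay} for arbitrary $\xi\in\ker\Dd_u$ (where the error term is controlled via the injectivity estimate of lemma~\ref{le:hessian} rather than your $C\Norm{\xi}_\infty f$ bound, which as stated is slightly too crude since $\Nabla{t}\xi$ also appears) and invoked through theorem~\ref{thm:exp-decay}, and the bootstrap to $C^k$ proceeds by an induction with the half-cylinder linear estimate of proposition~\ref{prop:par-linear} rather than interior parabolic regularity on compact cylinders---but the underlying ideas coincide.
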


The \emph{covariant Hessian of
$\Ss_\Vv$} at a loop $x:S^1\to M$
is the linear operator 
$A_x:W^{2,2}(S^1,x^*TM)\to L^2(S^1,x^*TM)$
given by
\begin{equation}\label{eq:Hessian}
A_x\xi
:= - \Nabla{t}\Nabla{t}\xi 
- R(\xi,\dot x)\dot x - \Hh_\Vv(x)\xi
\end{equation}
where $R$ denotes the
Riemannian curvature tensor
and the Hessian $\Hh_\Vv$ is 
defined by~(\ref{eq:Hess-Vv}).
This operator is self-adjoint 
with respect to 
the standard $L^2$ inner product.
The number 
of negative eigenvalues is finite.
It is denoted by
$\IND_\Vv(A_x)$
and called the 
Morse index of $A_x$.
If $x$ is a
critical point of $\Ss_\Vv$
we define its \emph{Morse index}
by $\IND_\Vv(x):=\IND_\Vv(A_x)$
and we
call $x$ \emph{nondegenerate}
if $A_x$ is bijective.
In this notation the linearized operator 
$\Dd_u:\Ww_u^{1,p}\to\Ll_u^p$ is given by 
\begin{equation}\label{eq:Du}
     \Dd_u\xi := \Nabla{s}\xi + A_{u_s}\xi
\end{equation}
where $u_s(t):=u(s,t)$
and the spaces 
$\Ww_u=\Ww_u^{1,p}$ and $\Ll_u=\Ll_u^p$
are defined as the completions 
of the space of smooth 
compactly supported sections of the pullback
tangent bundle $u^*TM\to\R\times S^1$
with respect to the norms
\begin{equation}\label{eq:norms}
\begin{gathered}
     \left\|\xi\right\|_{\Ll}
     = \left(\int_{-\infty}^\infty\int_0^1 
     |\xi|^p\,dtds\right)^{1/p},
     \\
     \left\|\xi\right\|_{\Ww}
     = \left(\int_{-\infty}^\infty\int_0^1 
      |\xi|^p + |\Nabla{s}\xi|^p 
      + |\Nabla{t}\Nabla{t}\xi|^p
      \,dtds\right)^{1/p}.
\end{gathered}
\end{equation}

\begin{theorem}[Fredholm]
\label{thm:fredholm}
Fix a perturbation $\Vv:\Ll M\to\R$ that 
satisfies~{\rm (V0)--(V3)}, a constant $p>1$,
and two nondegenerate
critical points $x^\pm\in\Pp(\Vv)$.
Assume $u:\R\times S^1\to M$
is a smooth map such that
$\norm{\Nabla{t}\Nabla{t}\p_su_s}_2$ 
is bounded, uniformly in $s\in\R$,
and
$$
     u_s=\exp_{x^\pm}(\eta_s^\pm),
     \quad
     \Norm{\eta_s^\pm}_{W^{2,2}}\to 0,
     \quad
     \Norm{\p_su_s}_{W^{1,2}}\to 0,
     \quad
     \text{as $s\to\pm\infty$}.
$$
Then the operator 
$\Dd_u:\Ww_u^{1,p}\to\Ll_u^p$
is Fredholm and
$$
     \INDEX\,\Dd_u
     =\IND_\Vv(x^-)-\IND_\Vv(x^+).
$$
Moreover, the formal adjoint operator 
$\Dd_u^*=-\Nabla{s}+A_{u_s}:\Ww_u^{1,p}\to\Ll_u^p$
is Fredholm with
$
     \INDEX\,\Dd_u^*
     =-\INDEX\,\Dd_u
$.
\end{theorem}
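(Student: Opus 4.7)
The plan is to run the standard Robbin--Salamon-type Fredholm analysis for an $s$-dependent family of self-adjoint operators, adapted to the parabolic setting where the domain $\Ww_u^{1,p}$ trades one $s$-derivative for two $t$-derivatives. I would organize the argument in four steps: trivialize at infinity to reduce $\Dd_u$ to $\p_s+A(s)$; invert the constant-coefficient model operator $\p_s+A_{x^\pm}$; patch via cutoffs and interior parabolic regularity to obtain semi-Fredholm estimates for $\Dd_u$ and $\Dd_u^*$; and compute the index by a spectral-flow homotopy.

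First I would trivialize $u^*TM\to\R\times S^1$ over the half-cylinders $\pm s\ge T$ by combining parallel transport in the $\eta_s^\pm$-direction with a fixed trivialization of $(x^\pm)^*TM$; under this identification $\Dd_u$ takes the form $\p_s+A(s)$ with $A(s)\to A_{x^\pm}$ in the operator norm $W^{2,p}(S^1)\to L^p(S^1)$ as $s\to\pm\infty$. The decay assumptions $\norm{\eta_s^\pm}_{W^{2,2}}\to 0$ and $\norm{\p_su_s}_{W^{1,2}}\to 0$, together with the pointwise bounds on $\grad\Vv$ and its derivatives from~(V1)--(V3) and the uniform bound on $\norm{\Nabla{t}\Nabla{t}\p_su_s}_{L^2}$, are exactly what is required to make this convergence work.

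Next I would show that for $x=x^\pm$ the constant-coefficient operator $\p_s+A_x\colon\Ww(\R\times S^1)\to\Ll(\R\times S^1)$ is an isomorphism. The key structural fact is that $A_x$ is an elliptic self-adjoint operator on $L^2(S^1,x^*TM)$ with compact resolvent, hence has discrete real spectrum bounded below, and nondegeneracy of $x$ forces $0\notin\mathrm{spec}(A_x)$. For $p=2$ one expands $\xi(s,\cdot)=\sum_k\xi_k(s)e_k$ in the $L^2$-eigenbasis of $A_x$ and reduces to the scalar ODEs $\p_s\xi_k+\lambda_k\xi_k=f_k$ on $\R$, each uniquely solvable for $\lambda_k\ne0$ with an $L^2$ estimate whose summation reconstitutes the $\Ww$-norm on the left. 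For general $p>1$ I would invoke $L^p$-maximal parabolic regularity, valid because $-A_x$ generates an analytic semigroup on $L^p(S^1,x^*TM)$ after a shift.

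With the model operator invertible at $\pm\infty$, I would patch using smooth cutoffs $\chi_\pm$ supported in $\pm s\ge T$ and $\chi_0$ supported in $\abs{s}\le T+1$. Applying the model inverse to $\chi_\pm\xi$ (with $T$ chosen so large that $\norm{A(s)-A_{x^\pm}}$ is sufficiently small on $\pm s\ge T$) and interior parabolic regularity (theorem~\ref{thm:regularity}) to $\chi_0\xi$ yields the semi-Fredholm estimate
\[
     \norm{\xi}_{\Ww}
     \le C\bigl(\norm{\Dd_u\xi}_{\Ll}
     +\norm{\xi}_{L^p([-T-1,T+1]\times S^1)}\bigr);
\]
the analogous estimate for $\Dd_u^*=-\Nabla{s}+A_{u_s}$ follows by the same argument in the dual pair of spaces. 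Combined with the Rellich-type compactness of $\Ww_u^{1,p}\INTO L^p([-T-1,T+1]\times S^1)$, these imply that $\Dd_u$ and $\Dd_u^*$ are Fredholm with $\INDEX\,\Dd_u^*=-\INDEX\,\Dd_u$. To compute $\INDEX\,\Dd_u$ I would deform $A(s)$ through Fredholm families to a standard path from $A_{x^-}$ to $A_{x^+}$ having only transverse zero-eigenvalue crossings; the Fredholm index is invariant under such deformations and on each elementary piece picks up $\pm1$ according to the sign of the crossing, so the total equals the spectral flow $\IND_\Vv(x^-)-\IND_\Vv(x^+)$. The step I expect to be hardest is the $L^p$-invertibility of the model operator for $p\ne2$: the $L^2$ spectral calculation is immediate, but passing to general $p$ requires maximal parabolic regularity in the anisotropic $\Ww$-norm (one $s$-derivative, two $t$-derivatives), which is less routine than either the $L^2$ argument or the standard isotropic parabolic $L^p$ theory.
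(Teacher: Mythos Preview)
Your outline is sound and would yield a correct proof; it is essentially the direct Robbin--Salamon parametrix method. The paper's proof differs in two places worth noting. For $p=2$ the paper does not redo the analysis but represents $\Dd_u$ in an orthonormal frame as $\p_s+A(s)+C(s)$, checks the hypotheses~(i)--(v) of~\cite{RoSa-SPEC}, and cites their Theorem~A directly for both the Fredholm property and the spectral-flow index formula; your Steps~2--4 amount to unpacking that citation. For $p\ne2$ the paper takes a genuinely different route: rather than re-establishing model invertibility and the semi-Fredholm estimate in $L^p$, it proves (propositions~\ref{prop:kernel-smooth} and~\ref{prop:coker-ker}) that $\ker\Dd_u$ and $\coker\Dd_u\simeq(\im\Dd_u)^\perp$ coincide with $p$-independent spaces $X_0$, $X_0^*$ of smooth exponentially decaying solutions. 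The tools are local regularity of weak solutions (theorem~\ref{thm:REG-L}), the pointwise a~priori bounds (theorems~\ref{thm:kerD-apriori} and~\ref{thm:kerD-apriori-II}), and $L^2$ exponential decay (theorem~\ref{thm:kerD-exp-decay}); closed range is handled separately in proposition~\ref{prop:closed-range}. Since kernel and cokernel do not depend on $p$, their dimensions and the index are those already computed for $p=2$. This sidesteps precisely the step you flagged as hardest---$L^p$ invertibility of $\p_s+A_x$ on $\R$ in the anisotropic norm---at the cost of relying on the regularity and decay machinery developed earlier in the chapter for other purposes.

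Two small corrections. The interior estimate you want is the \emph{linear} one, proposition~\ref{prop:par-linear}, not theorem~\ref{thm:regularity}, which concerns the nonlinear equation. And ``$-A_x$ generates an analytic semigroup after a shift'' is not quite the right mechanism: $A_x$ has eigenvalues of both signs, so no shift makes the two-sided problem on $\R$ fit the usual maximal-regularity setup. One instead splits $L^2(S^1)=E^-\oplus E^+$, handles the finite-dimensional $E^-$ by hand, and applies maximal regularity (or a Mikhlin multiplier argument) to the positive part $A_x|_{E^+}$; compare the Donaldson-style bootstrap the paper uses in theorem~\ref{thm:onto} for the half-cylinder model.
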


Concerning the funny assumption
on $\Nabla{t}\Nabla{t}\p_su_s$
see the footnote in 
section~\ref{sec:linearized}.

\begin{theorem}[Implicit function theorem]
\label{thm:IFT}
Fix a perturbation $\Vv:\Ll M\to\R$ 
that satisfies~{\rm (V0)--(V3)}.
Assume $x^\pm$
are nondegenerate
critical points of $\Ss_\Vv$
and $\Dd_u$ is onto
for every $u\in\Mm(x^-,x^+;\Vv)$.
Then $\Mm(x^-,x^+;\Vv)$
is a smooth manifold
of dimension
$\IND_\Vv(x^-)-\IND_\Vv(x^+)$.
\end{theorem}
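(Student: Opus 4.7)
The plan is to realize $\Mm(x^-,x^+;\Vv)$ as the zero set of a smooth Fredholm section of a Banach space and then apply the implicit function theorem in Banach spaces, combined with the regularity theorem, to conclude smoothness as a finite-dimensional manifold. Fix $p>2$. Given $u\in\Mm(x^-,x^+;\Vv)$, use theorem~\ref{thm:par-exp-decay} to pick a smooth reference map $u_0:\R\times S^1\to M$ that agrees with $u$ outside a compact set and has the form $u_0(s,t)=\exp_{x^\pm(t)}(\eta^\pm(s,t))$ for $\pm s$ large, with $\eta^\pm(s,\cdot)$ small in $W^{2,2}(S^1)$ and decaying exponentially. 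For small $\xi\in\Ww_{u_0}^{1,p}$ define $u_\xi(s,t):=\exp_{u_0(s,t)}(\xi(s,t))$ and set
$$
\Ff(\xi):=\Phi(\xi)^{-1}\bigl(\p_su_\xi-\Nabla{t}\p_tu_\xi-\grad\Vv(u_\xi)\bigr),
$$
where $\Phi(\xi)$ is parallel transport along the geodesic $r\mapsto\exp_{u_0}(r\xi)$. This turns $\Ff$ into a map between fixed Banach spaces $\Ff:U\subset\Ww_{u_0}^{1,p}\to\Ll_{u_0}^p$.

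Next I would check the analytic hypotheses. Smoothness of $\Ff$ in a neighbourhood of $0$ reduces, via the chain rule and the pointwise estimates on $\exp$ and $\Phi$, to polynomial growth of all mixed derivatives of $\grad\Vv$, which is exactly axiom~(V3); axioms~(V0)--(V2) guarantee the lower-order mapping properties needed to make $\Ff$ well-defined into $\Ll_{u_0}^p$. The differential $d\Ff(0)$ differs from $\Dd_{u_0}$ only by a compactly supported zeroth-order operator coming from $u_0\neq u$ on a compact set, and hence has the same Fredholm index. To invoke theorem~\ref{thm:fredholm} at $u_0$, the asymptotic $W^{2,2}$-exponential convergence is built into the construction, while the uniform bound on $\norm{\Nabla{t}\Nabla{t}\p_su_s}_2$ comes from theorem~\ref{thm:apriori} together with the exponential decay of $\p_su$ supplied by theorem~\ref{thm:par-exp-decay}. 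Thus $\Dd_{u_0}$ is Fredholm of index $\IND_\Vv(x^-)-\IND_\Vv(x^+)$; it is surjective since $\Dd_u$ is and the two operators differ by a compact perturbation that can be made arbitrarily small in operator norm by localizing $u_0$ close enough to $u$.

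The standard implicit function theorem then gives a smooth chart $\ker\Dd_{u_0}\supset V\to\Ff^{-1}(0)$ of the indicated dimension. To conclude the theorem I must identify this chart with a neighbourhood of $u$ in $\Mm(x^-,x^+;\Vv)$. Each $\xi\in\Ff^{-1}(0)$ in $\Ww_{u_0}^{1,p}$ produces a $\Ww^{1,p}_{loc}$-solution $u_\xi$ of~(\ref{eq:heat}), which is smooth by theorem~\ref{thm:regularity}. The membership $\xi\in\Ww_{u_0}^{1,p}$ together with theorem~\ref{thm:par-exp-decay} applied to $u_\xi$ forces $u_\xi(s,\cdot)\to x^\pm$ in $C^1(S^1)$ and $\p_su_\xi(s,\cdot)\to 0$, so~(\ref{eq:limit}) holds; hence $u_\xi\in\Mm(x^-,x^+;\Vv)$. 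Conversely, any $u'\in\Mm(x^-,x^+;\Vv)$ close to $u$ in $C^0_{loc}$ and sharing the asymptotics admits a unique small representation $u'=\exp_{u_0}(\xi)$ with $\xi\in\Ww_{u_0}^{1,p}$, again using exponential decay to place $\xi$ in the correct function space.

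The main obstacle is the interface between the Banach setup and the geometric moduli space: the IFT demands that $u_0$ carry prescribed exponential asymptotics and that the nonlinear term $\grad\Vv(u_\xi)$ be smooth with controlled derivatives on $\Ww_{u_0}^{1,p}$, whereas $\Mm(x^-,x^+;\Vv)$ is defined only in terms of $C^1(S^1)$ asymptotic limits. Bridging these two descriptions — in particular, proving that every nearby geometric solution is captured as some $u_\xi$ with $\xi$ in the Banach slice — is the technical heart of the argument and relies crucially on the combination of theorems~\ref{thm:apriori}, \ref{thm:par-exp-decay} and~\ref{thm:regularity} together with axioms~(V0)--(V3).
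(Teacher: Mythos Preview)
Your approach is correct and follows the same broad strategy as the paper—realize the moduli space locally as the zero set of a Fredholm section and apply the Banach-space implicit function theorem—but you add an unnecessary layer. The paper centres the chart directly at $u$ itself, not at an auxiliary reference map $u_0$: since $u\in\Mm(x^-,x^+;\Vv)$ is already smooth by theorem~\ref{thm:regularity}, and since theorem~\ref{thm:par-exp-decay} supplies exactly the asymptotic behaviour needed for the Fredholm theorem~\ref{thm:fredholm} (this is packaged as lemma~\ref{le:fredholm}), one has $d\Ff_u(0)=\Dd_u$ Fredholm of the stated index and surjective \emph{by hypothesis}. No perturbation argument transferring surjectivity from $\Dd_u$ to $\Dd_{u_0}$ is needed.

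The paper also handles your ``interface'' concern more cleanly by first introducing the Banach manifold $\Bb^{1,p}(x^-,x^+)$ of cylinders with the correct $\Ww^{1,p}$ asymptotics; then local charts are given by $\xi\mapsto\exp_u\xi$ around smooth $u$, and the observation that any $\Ww^{1,p}$ zero of $\Ff_u$ is automatically smooth (theorem~\ref{thm:regularity}) identifies a neighbourhood of $0$ in $\Ff_u^{-1}(0)$ with a neighbourhood of $u$ in $\Mm(x^-,x^+;\Vv)$ in one line. Your version works, but the detour through $u_0$ and the operator-norm smallness argument is avoidable once you note that $u$ itself already sits in the right Banach manifold.
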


\begin{proposition}
[Finite set]
\label{prop:finite-set}
Fix a perturbation
$\Vv:\Ll M\to\R$ 
that satisfies {\rm (V0)--(V3)}
and assume $\Ss_\Vv$ is \emph{Morse--Smale
below level $a$}
in the sense that every $u\in\Mm(x^-,x^+;\Vv)$
is \emph{regular} (i.e. the Fredholm operator 
$\Dd_u$ is surjective),
for every pair $x^\pm\in\Pp^a(\Vv)$.
Then the quotient space
$$
     \widehat\Mm(x^-,x^+;\Vv)
     :=\Mm(x^-,x^+;\Vv)/\R
$$
is a finite set 
for every such pair
of Morse index difference one.
Here the (free) action of $\R$ is given by
time shift
$
     (\sigma,u)\mapsto 
     u(\sigma+\cdot,\cdot)
$.
\end{proposition}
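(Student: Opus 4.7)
The plan is to combine the implicit function theorem with a compactness-modulo-breaking argument, and then use the Morse--Smale hypothesis together with the index difference being minimal to rule out any breaking.

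First I would verify that $\R$ acts freely on $\Mm(x^-,x^+;\Vv)$. If $u(\sigma+\cdot,\cdot)=u$ for some $\sigma\ne 0$, then $u$ is periodic in $s$; combined with the asymptotic convergence~(\ref{eq:limit}) to $x^\pm$, this forces $u$ to be $s$-independent, so $x^-=x^+$, contradicting $\IND_\Vv(x^-)-\IND_\Vv(x^+)=1$. By theorem~\ref{thm:IFT} the moduli space $\Mm(x^-,x^+;\Vv)$ is a smooth $1$-manifold, and since the time-shift action is smooth, free, and proper, the quotient $\widehat\Mm(x^-,x^+;\Vv)$ inherits the structure of a smooth $0$-manifold. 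So the task reduces to proving that $\widehat\Mm$ is compact.

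Next I would establish compactness modulo breaking. Given a sequence $[u_n]\in\widehat\Mm(x^-,x^+;\Vv)$, I pick representatives $u_n$ and use the a priori estimates of theorem~\ref{thm:apriori} (applicable since $\Ss_\Vv(u_n(s,\cdot))\in[\Ss_\Vv(x^+),\Ss_\Vv(x^-)]$ by the energy identity~(\ref{eq:par-energy})) together with bootstrapping (via the regularity theorem~\ref{thm:regularity}) to obtain uniform $C^k_{loc}$ bounds. Arzel\`a--Ascoli then yields a subsequence converging in $C^\infty_{loc}$ after suitable $s$-shifts; the exponential decay theorem~\ref{thm:par-exp-decay} controls the behavior near $s=\pm\infty$ and guarantees that the limit, together with any ``bubbles off'' obtained from shifting by unbounded sequences of times, consists of finitely many connecting orbits $v_1,\ldots,v_k$ joining a chain of critical points $x^-=y_0,y_1,\ldots,y_k=x^+$ in $\Pp^a(\Vv)$.

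By the Morse--Smale assumption below level $a$, each nonempty moduli space $\Mm(y_{i-1},y_i;\Vv)$ is a manifold of dimension $\IND_\Vv(y_{i-1})-\IND_\Vv(y_i)\ge 0$; moreover, when $y_{i-1}\ne y_i$, the shifted quotient has dimension one less, forcing $\IND_\Vv(y_{i-1})-\IND_\Vv(y_i)\ge 1$. Summing over $i$ gives $\sum_{i=1}^k\bigl(\IND_\Vv(y_{i-1})-\IND_\Vv(y_i)\bigr)=1$, which admits only $k=1$. Thus no breaking occurs and the subsequence converges, modulo a single time shift, to an element of $\Mm(x^-,x^+;\Vv)$, so $\widehat\Mm(x^-,x^+;\Vv)$ is sequentially compact, hence a compact $0$-manifold, hence finite.

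The main obstacle is the compactness step: one must carefully combine the interior $C^k_{loc}$ bounds coming from theorems~\ref{thm:regularity} and~\ref{thm:apriori} with the uniform asymptotic control furnished by theorem~\ref{thm:par-exp-decay} to ensure that the weak limit captures the full energy and that ``mass'' escaping to $s=\pm\infty$ is accounted for by genuine broken trajectories rather than being lost. Once this geometric convergence picture is set up, the index-counting conclusion is essentially combinatorial.
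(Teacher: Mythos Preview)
Your proposal is correct and follows essentially the same approach as the paper: invoke the broken-trajectory compactness (which the paper packages as lemma~\ref{le:broken}), use Morse--Smale plus index-counting to exclude breaking, and conclude that a compact $0$-manifold is finite. The only organizational difference is that instead of working with the quotient directly, the paper normalizes representatives by the slice $\Mm^*=\{u:\Ss_\Vv(u(0,\cdot))=c_*\}$ for a regular value $c_*$ between $\Ss_\Vv(x^+)$ and $\Ss_\Vv(x^-)$, which cleanly fixes the ``suitable $s$-shifts'' you left implicit and lets one verify compactness of $\Mm^*$ in $C^\infty_{loc}$ via proposition~\ref{prop:unifconvergence-compactsets}~(ii).
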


\begin{theorem}
[Refined implicit function theorem]
\label{thm:modified-IFT}
Fix a perturbation $\Vv:\Ll M\to\R$ 
that satisfies~{\rm (V0)--(V3)}
and a pair of nondegenerate critical points 
$x^\pm\in\Pp(\Vv)$ with 
$\Ss_\Vv(x^+)<\Ss_\Vv(x^-)$
and Morse index difference one.
Then, for every $p>2$
and every large constant $c_0>1$,
there are positive constants 
$\delta_0$ and $c$
such that the following holds.
Assume $\Ss_\Vv$ is Morse--Smale
below level $2c_0^2$.
Assume further that $u:\R\times S^1\to M$ 
is a smooth map such that 
$u(s,\cdot)$ converges in $W^{1,2}(S^1)$
to $x^\pm$, as $s\to\pm\infty$, and such that
$$
      \Abs{\p_su(s,t)}
      \le\frac{c_0}{1+s^2},\qquad
      \Abs{\p_tu(s,t)}
      \le c_0,\qquad
      \Abs{\Nabla{t}\p_tu(s,t)}
      \le c_0,\qquad
$$
for all $(s,t)\in\R\times S^1$ and 
$$
     \Norm{\p_su-\Nabla{t}\p_tu
     -\grad\Vv(u)}_p\le\delta_0.
$$
Then there exist elements 
$u_*\in\Mm(x^-,x^+;\Vv)$
and $\xi\in\im\Dd_{u_*}^*\cap\Ww$
satisfying
$$
     u=\exp_{u_*}(\xi),\qquad
     \Norm{\xi}_\Ww
     \le c
     \Norm{\p_su-\Nabla{t}\p_tu-\grad\Vv(u)}_p.
$$
\end{theorem}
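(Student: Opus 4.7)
The plan is to correct the approximate heat flow $u$ to a genuine trajectory $u_*\in\Mm(x^-,x^+;\Vv)$ by a Newton--Picard iteration, then to reverse the exponential to extract $\xi$. Define the nonlinear map $\Ff:\Ww_u^{1,p}\to\Ll_u^p$ sending $\eta$ to the heat equation residual of $\exp_u(\eta)$, parallel-transported along the geodesic back to $u^*TM$. Then $\Ff(0)=\p_su-\Nabla{t}\p_tu-\grad\Vv(u)$ has $L^p$-norm at most $\delta_0$, and $d\Ff(0)=\Dd_u$ is Fredholm of index $\IND_\Vv(x^-)-\IND_\Vv(x^+)=1$ by Theorem~\ref{thm:fredholm}. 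The auxiliary hypotheses of that theorem are supplied by the pointwise bounds together with axiom~(V1) (to estimate $\Nabla{t}\Nabla{t}\p_su$) and the assumed $W^{1,2}$ convergence of $u(s,\cdot)$ to $x^\pm$.

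The crux is a uniform right-inverse bound: $\Dd_u$ admits a right inverse $Q_u=\Dd_u^*(\Dd_u\Dd_u^*)^{-1}:\Ll_u^p\to\im\Dd_u^*\cap\Ww_u^{1,p}$ with $\|Q_u\|\le c$ depending only on $p,c_0,\Vv$. I would prove this by contradiction: given a sequence $u_n$ obeying the stated pointwise and asymptotic hypotheses with $\|\Ff_{u_n}(0)\|_p\to 0$ but no uniform right inverse, integrate the estimate $|\p_su_n|\le c_0/(1+s^2)$ to see that the action of $u_n(s,\cdot)$ stays below $2c_0^2$ for every $s$; the apriori estimates (Theorem~\ref{thm:apriori}, absorbing the vanishing $L^p$ forcing term) give uniform $C^k_{loc}$-bounds; and Theorem~\ref{thm:par-exp-decay} controls the ends. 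Arzel\`a--Ascoli then extracts a subsequence converging in $C^k_{loc}$ (after a time-shift if necessary) to a genuine trajectory $u_\infty\in\Mm(x^-,x^+;\Vv)$ of action at most $2c_0^2$. The Morse--Smale hypothesis below level $2c_0^2$ supplies a bounded right inverse for $\Dd_{u_\infty}$, and operator-norm convergence $\Dd_{u_n}\to\Dd_{u_\infty}$ on the relevant Banach spaces transfers the bound to $\Dd_{u_n}$ for large $n$, a contradiction.

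Next I would establish the quadratic estimate
$$
\|\Ff(\eta)-\Ff(0)-\Dd_u\eta\|_p\le C\|\eta\|_\Ww^2 \qquad\text{for } \|\eta\|_\Ww\le 1,
$$
from Taylor expansion of the exponential map (with curvature corrections), axiom~(V2), and the parabolic Sobolev embedding $\Ww_u^{1,p}\hookrightarrow C^0(\R\times S^1)$ which holds for $p>2$. Combined with $\|Q_u\|\le c$, the standard Newton--Picard iteration $\eta_{n+1}=\eta_n-Q_u\Ff(\eta_n)$, $\eta_0=0$, converges in $\Ww_u^{1,p}$ to a fixed point $\eta_*\in\im\Dd_u^*$ with $\Ff(\eta_*)=0$ and $\|\eta_*\|_\Ww\le 2c\|\Ff(0)\|_p$, provided $\delta_0$ is taken sufficiently small.

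Finally, set $u_*:=\exp_u(\eta_*)$, which is smooth by Theorem~\ref{thm:regularity}; the decay of $\eta_*$ at $\pm\infty$ and the $W^{1,2}$ convergence of $u$ place $u_*$ in $\Mm(x^-,x^+;\Vv)$. To rewrite $u=\exp_{u_*}(\xi)$ with $\xi\in\im\Dd_{u_*}^*\cap\Ww$ as the theorem demands, I would use the splitting $\Ww_{u_*}^{1,p}=\ker\Dd_{u_*}\oplus\im\Dd_{u_*}^*$, valid because $\Dd_{u_*}$ has index one and is surjective by Morse--Smale, so $\ker\Dd_{u_*}=\R\cdot\p_su_*$; absorbing the $\ker\Dd_{u_*}$-component of $\exp_{u_*}^{-1}(u)$ into a small reparametrisation $s\mapsto s+\sigma$ of $u_*$ within $\Mm$ yields the required perpendicular representative, and the norm bound $\|\xi\|_\Ww\le c\|\Ff(0)\|_p$ follows from the estimate on $\eta_*$ and a comparison of the exponential maps based at $u$ and $u_*$. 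The main obstacle throughout is the uniform right-inverse bound of the second paragraph: it is the point at which the apriori estimates, exponential decay, Fredholm theory, and the finiteness of $\Mm/\R$ from Proposition~\ref{prop:finite-set} must all be invoked together to defeat the lack of any assumed strong closeness of $u$ to the moduli space.
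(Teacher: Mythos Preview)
Your overall architecture---Newton--Picard iteration centred at the approximate solution $u$, followed by a time-shift to land in $\im\Dd_{u_*}^*$---is a plausible alternative to the paper's strategy, but there is a genuine gap at the very first step, and it is exactly the point at which the paper's argument diverges from yours.

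You assert that $\Dd_u$ is Fredholm by Theorem~\ref{thm:fredholm}, with the auxiliary hypotheses ``supplied by the pointwise bounds together with axiom~(V1) (to estimate $\Nabla{t}\Nabla{t}\p_su$) and the assumed $W^{1,2}$ convergence''. Neither justification survives inspection. Theorem~\ref{thm:fredholm} (see Hypothesis~\ref{hyp:fredholm}) requires $u_s\to x^\pm$ in $W^{2,2}(S^1)$, not merely $W^{1,2}$, and a uniform bound on $\|\Nabla{t}\Nabla{t}\p_su_s\|_2$. The theorem you are proving assumes only $W^{1,2}$ convergence and pointwise bounds on $\p_su$, $\p_tu$, $\Nabla{t}\p_tu$; no higher $t$-derivative of $\p_su$ is controlled. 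Axiom~(V1) bounds $\Nabla{t}\grad\Vv(u)$, which would relate to $\Nabla{t}\Nabla{t}\p_su$ via the heat equation \emph{if $u$ were a solution}, but $u$ is only an approximate solution, so that route is closed. Consequently you cannot even define the right inverse $Q_u$ on which your whole iteration rests. The subsequent contradiction argument for a uniform right-inverse bound inherits this problem and has a further soft spot: ``operator-norm convergence $\Dd_{u_n}\to\Dd_{u_\infty}$'' is asserted between operators acting on different bundles, with only $C^k_{loc}$ convergence of the base maps; making this rigorous requires exactly the uniform exponential end control that you have not established for the approximate solutions $u_n$.

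The paper sidesteps all of this by never touching $\Dd_u$. It argues by contradiction: given a sequence $u_\nu$ violating the conclusion, it first extracts (via the decay $|\p_su_\nu|\le c_0/(1+s^2)$, Arzel\`a--Ascoli, and the Morse--Smale hypothesis to exclude breaking) a \emph{genuine} limit trajectory $u\in\Mm(x^-,x^+;\Vv)$. All analytic machinery---the right-inverse estimate (Proposition~\ref{prop:inverse}) and the quadratic estimate (Proposition~\ref{prop:Quadest0})---is then applied at time-shifts $u^{\sigma_\nu}$ of this genuine trajectory, where the Fredholm and surjectivity properties are available for free via exponential decay (Theorem~\ref{thm:par-exp-decay}) and the Morse--Smale assumption. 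The time-shift $\sigma_\nu$ is chosen, via an intermediate-value argument on the scalar function $\theta_\nu(\sigma)=-\langle\p_su^\sigma,\eta\rangle$, so that the resulting $\eta_\nu$ lands in $\im\Dd_{u^{\sigma_\nu}}^*$. The contradiction then comes directly from the inequality $\|\Ff_{u^{\sigma_\nu}}(\eta_\nu)\|_p\ge\tfrac{1}{2C_1}\|\eta_\nu\|_\Ww$, with constants uniform because $\Mm(x^-,x^+;\Vv)/\R$ is finite (Proposition~\ref{prop:finite-set}). In short, the paper centres the linearisation at an exact solution precisely to avoid the Fredholm-verification problem that blocks your approach.
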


In the previous theorem 
``$c_0$ large''
means that the constant $c_0$
should be larger than
the constant $C_0$ in axiom~(V0).
Recall that a subset of a complete
metric space is called {\bf residual}
if it contains a countable intersection
of open and dense sets.
By Baire's category theorem
a residual subset is dense.

\begin{theorem}[Transversality]
\label{thm:transversality}
Fix a perturbation $\Vv:\Ll M\to\R$ 
that satisfies~{\rm (V0)--(V3)} and
assume $\Ss_{\Vv}$ is Morse.
Then for every regular value $a$
there is a complete metric space
$\Oo^a=\Oo^a(\Vv)$ of perturbations
supported away from $\Pp^a(\Vv)$ and
satisfying~{\rm (V0)--(V3)} such that
the following is true.
If $v\in\Oo^a$, then 
$$
     \Pp^a(\Vv)=\Pp^a(\Vv+v),
     \qquad
     {\rm H}_*\left(\left\{\Ss_\Vv\le a\right\}\right)
     \cong
     {\rm H}_*\left(\left\{
     \Ss_{\Vv+v}\le a\right\}\right).
$$
Moreover, there is a residual subset
$\Oo^a_{reg}\subset\Oo^a$
such that for each $v\in\Oo^a_{reg}$
the perturbed functional
$\Ss_{\Vv+v}$ is Morse--Smale
below level $a$.
\end{theorem}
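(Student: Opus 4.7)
The plan is to follow the standard Floer--Hofer--Salamon transversality scheme, adapted to the parabolic setting. First I would construct $\Oo^a$ as a separable Banach space of abstract perturbations modeled on Remark~\ref{rmk:pert}. Fix a countable family $\{(x_k,V_k)\}$ dense among pairs of loops and smooth potentials, together with cutoff functions $\rho_k$ supported on small $L^2$-balls about $x_k$ chosen disjoint from a fixed $L^2$-neighborhood of the finite set $\Pp^a(\Vv)$, and set
$$\Vv_k(x) := \rho_k\bigl(\|x-x_k\|_{L^2}^2\bigr)\int_0^1 V_k(t,x(t))\,dt.$$
Define $\Oo^a$ as a Floer-type weighted space of sums $v=\sum_k\eps_k\Vv_k$ with rapidly growing weights $C_k$ chosen so that every $v$ in the unit ball still satisfies (V0)--(V3); such weights exist because each axiom involves only finitely many derivatives. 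Since $\supp v$ avoids $\Pp^a(\Vv)$, every old critical point persists, and the $C^1$-smallness built into the construction prevents new critical points below $a$ from appearing, giving $\Pp^a(\Vv)=\Pp^a(\Vv+v)$. The sublevel-set homology isomorphism follows from the straight-line homotopy $\Ss_{\Vv+tv}$: $a$ remains a regular value for every $t\in[0,1]$, and a Palais--Smale deformation in the Hilbert-manifold completion of $\Ll M$ yields diffeomorphic sublevel sets.

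For each pair $(x^-,x^+)\in\Pp^a(\Vv)^2$ I then form the universal moduli space
$$\Mm^{\rm univ}(x^-,x^+) := \{(u,v):v\in\Oo^a,\ u\in\Mm(x^-,x^+;\Vv+v)\},$$
realized as the zero set of a smooth Banach-bundle section over $\Bb\times\Oo^a$, where $\Bb$ is the Banach manifold of $W^{1,p}$-maps asymptotic to $x^\pm$. The vertical linearization at a zero is the Fredholm operator $\Dd_u$ of theorem~\ref{thm:fredholm}, and the horizontal linearization sends $w$ to $-\grad w(u)$. Once the universal operator $(\xi,w)\mapsto \Dd_u\xi-\grad w(u)$ is shown surjective at every zero, $\Mm^{\rm univ}$ becomes a smooth Banach manifold, the projection $\pi:\Mm^{\rm univ}\to\Oo^a$ is Fredholm of index $\IND_\Vv(x^-)-\IND_\Vv(x^+)$, and Sard--Smale produces a residual set of regular values; intersecting over the finite set of pairs yields $\Oo^a_{reg}$, and theorem~\ref{thm:IFT} identifies the fibers over $\Oo^a_{reg}$ as smooth manifolds of the correct dimension.

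The main obstacle is exactly this universal surjectivity. By duality I must show that any $\eta\in L^q(u^*TM)$ (with $1/p+1/q=1$) annihilating the image vanishes identically. Testing against vertical variations gives $\Dd_u^*\eta=0$ weakly, and parabolic regularity for the backward heat operator $\Dd_u^*=-\Nabla{s}+A_{u_s}$ upgrades $\eta$ to smoothness; testing against each horizontal direction $\Vv_k$ yields
$$\int_{\R}\int_{S^1}\bigl\langle\eta(s,t),\,\grad\Vv_k(u_s)(t)\bigr\rangle\,dt\,ds=0.$$
Because $u$ joins distinct critical points, the set $\{s:u_s\notin\Pp^a(\Vv)\}$ is open and nonempty; density of $\{x_k\}$ in $L^2$, density of $\{V_k\}$ in $C^\infty(S^1\times M)$, and the freedom to localize $\rho_k$ arbitrarily sharply around a chosen loop $u_{s_0}$ together force $\eta(s_0,\cdot)$ to vanish on a nonempty open $t$-interval. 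To propagate this to $\eta\equiv 0$ I would invoke Aronszajn-type unique continuation for the backward parabolic operator $\Dd_u^*$, established via a Carleman estimate: a smooth solution vanishing on a nonempty open set must vanish identically. This parabolic-flavored unique continuation is the genuine subtlety distinguishing our setting from the elliptic Floer case, and, combined with the preceding Sard--Smale machinery, it closes the proof.
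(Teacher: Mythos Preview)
Your overall architecture---weighted Banach space of localized perturbations, universal moduli space, Sard--Smale, intersection over the finite set of pairs---matches the paper's. The gap is in the universal surjectivity argument, which is exactly where the paper does the real work.

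Your claim that sharp localization of $\rho_k$ around $u_{s_0}$ together with density of the $V_k$ forces $\eta(s_0,\cdot)=0$ on an open $t$-interval skips two essential steps. First, localizing $\rho_k$ in the $L^2$-distance on loop space does \emph{not} automatically localize the $s$-integral near $s_0$: the trajectory $s\mapsto u_s$ could in principle re-enter the $L^2$-ball about $u_{s_0}$ at times far from $s_0$. The paper rules this out by a separate ``No Return'' lemma, using that the action is strictly decreasing along a nonconstant flow line. Second, even with no return, $\grad\Vv_k(u_s)$ has a term proportional to $\rho_k'(\|u_s-x_k\|^2)\langle u_s-x_k,\cdot\rangle\int V_k$, and $\rho_k'$ blows up like $\eps^{-2}$ as you sharpen the cutoff. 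The paper controls this error by first proving the \emph{slicewise orthogonality} $\langle\eta_s,\p_su_s\rangle_{L^2(S^1)}=0$ for every $s$ (a short computation using $\Dd_u\p_su=0$ and $\Dd_u^*\eta=0$), which makes both factors $\langle u_s-u_{s_0},\eta_s\rangle$ and $\int V_k(u_s)$ vanish to second order at $s=s_0$; only then does a careful choice of $V$ with $\nabla V(u_{s_0})\approx\eta_{s_0}$ produce a test perturbation with strictly positive pairing. Without these two ingredients your density argument does not go through.

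Finally, your formulation of the conclusion and of unique continuation is off. The perturbations are not localized in $t$ (the cutoff lives in loop space), so the argument, done correctly, yields $\eta_{s_0}\equiv 0$ on all of $S^1$, not merely on an open $t$-interval. The unique continuation you then need is the \emph{slicewise} Agmon--Nirenberg statement for $\Dd_u^*$ (vanishing of $\eta$ on one $S^1$-slice forces $\eta\equiv 0$), which the paper establishes separately; an Aronszajn/Carleman argument from an open $(s,t)$-set is neither what is available nor what is required.
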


%%%%%%%%%%%%%%%% Subsubsection %%%%%%%%%%%%%%%%%%
\subsection*{Outlook}

The obvious next step is to relate
heat flow homology defined in
theorem~\ref{thm:d^2=0}
to singular homology of the loop space.
In our forthcoming paper~\cite{Joa-LOOPSPACE}
we establish the following result.
Throughout singular homology ${\rm H}_*$ is
meant with integer coefficients.

\begin{theorem}\label{thm:isomorphism}
Assume $\Ss_V$ is Morse and $a$ is either
a regular value of $\Ss_V$ or equal to infinity.
Then there is a natural isomorphism
$$
     \HM_*^a(\Ll M,\Ss_V)
     \cong
     {\rm H}_*(\Ll^a M),
     \qquad
     \Ll^a M:=\{\gamma\in\Ll M\mid \Ss_V(\gamma)\le a\}.
$$
If $M$ is not simply connected,
then there is a separate isomorphism for
each component of the loop space.
The isomorphism commutes with the
homomorphisms
$
     \HM_*^a(\Ll M,\Ss_V)
     \to
     \HM_*^b(\Ll M,\Ss_V)
$
and
$
     {\rm H}_*(\Ll^a M)
     \to
     {\rm H}_*(\Ll^b M)
$
for $a<b$.
\end{theorem}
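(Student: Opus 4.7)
The plan is to identify the heat flow complex with the classical Morse complex of $\Ss_V$ viewed on the Hilbert manifold $\Ll M = H^1(S^1,M)$ and then invoke the standard isomorphism with singular homology of sublevel sets. The first step is to check that on $H^1(S^1,M)$ the functional $\Ss_V$ is smooth and satisfies the Palais--Smale condition; this is classical for the energy and survives addition of the bounded smooth potential $V$. Palais--Smale minimax theory then gives $\Ll^a M$ the homotopy type of a CW complex with one $k$-cell per critical point of $\Ss_V$ of Morse index $k$, the cells being attached via the descending $H^1$-gradient flow; this is the content of the classical work of Palais and Smale, and, in the form adapted to Morse complexes on Hilbert manifolds, of Abbondandolo--Majer.

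The second step is to match the cellular boundary operator of this CW decomposition with the heat flow boundary $\p$. For each $x\in\Pp^a(V)$ the unstable manifold $W^u(x)$ of the $L^2$-gradient (heat) flow is a smooth submanifold of $\Ll M$ of finite dimension $\IND_V(x)$; its existence follows from the local unstable manifold theorem applied to the analytic semigroup $e^{-sA_x}$, combined with the regularity theorem~\ref{thm:regularity}. Under the Morse--Smale condition of theorem~\ref{thm:transversality}, the closures $\overline{W^u(x)}$ assemble into a CW structure on $\Ll^a M$ whose attaching maps are continuous by the exponential decay estimate of theorem~\ref{thm:par-exp-decay} together with the gluing construction built from theorem~\ref{thm:modified-IFT}. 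Computing the cellular boundary in the Milnor--Smale manner then amounts to counting, with signs, the index-one elements of $\widehat\Mm(x^-,x^+;V)$ provided by proposition~\ref{prop:finite-set}, which is precisely $\p$. A continuation argument comparing the $H^1$- and $L^2$-gradients shows that this CW structure is homotopy equivalent to the one obtained from Palais--Smale theory, completing the identification.

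The principal obstacle is proving that the compactified unstable manifolds $\overline{W^u(x)}$ are topological disks with well-defined continuous attaching maps taking the boundary into the union of lower-dimensional cells together with broken flow lines. This is the infinite-dimensional version of Laudenbach's theorem and relies on a full gluing theorem for broken trajectories, refining theorem~\ref{thm:modified-IFT} and using the exponential decay of theorem~\ref{thm:par-exp-decay} uniformly along approximately broken configurations. Once this structure is in place, naturality with respect to $a<b$ is automatic because the heat flow preserves sublevel sets and the inclusion of CW subcomplexes induces the natural map on both homologies. The non-simply-connected case is handled by observing that the heat equation~(\ref{eq:heat-V}) preserves the homotopy class of the loop $u(s,\cdot)$, so both the Morse complex and $\Ll^a M$ split along the components of $\Ll M$, and the isomorphism respects this splitting.
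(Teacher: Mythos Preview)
The paper does not actually prove this theorem. It is stated in the Outlook subsection as a result to appear in the forthcoming paper~\cite{Joa-LOOPSPACE}, and the text explicitly says ``Full details will be provided in~\cite{Joa-LOOPSPACE}.'' What the paper does contain is a sketch of the intended strategy, and that sketch differs substantially from yours.

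Your plan is to import the CW decomposition coming from the $H^1$-gradient flow via Palais--Smale/Abbondandolo--Majer, then argue that the heat flow boundary operator computes the same cellular boundary by a ``continuation argument comparing the $H^1$- and $L^2$-gradients.'' The paper's sketch works entirely with the heat semiflow itself: it solves the forward Cauchy problem on $\Lambda M=W^{1,2}(S^1,M)$ to obtain a continuous semiflow $\varphi$, then builds the filtration $F_0\subset F_1\subset\cdots$ not by pushing open sets forward (which fails---see below) but by taking \emph{preimages} ${\varphi_T}^{-1}(F_0)$ combined with Conley index pairs $(N_k,L_k)$ for the index-$k$ critical points.

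The gap in your proposal is precisely the point the paper isolates as the obstacle. The $L^2$-gradient generates only a forward semiflow, and the paper stresses that ``the time-$t$-map for the semiflow generated by the heat equation does not take open sets to open sets due to the extremely strong regularizing nature of the heat flow. Hence new ideas are required.'' This means the Abbondandolo--Majer machinery, which relies on a genuine flow with open time-$t$-maps, does not apply directly to the heat flow, and a continuation between the $H^1$- and $L^2$-gradients cannot be set up as a homotopy of flows: on one side you have a flow, on the other only a semiflow, and intermediate objects would inherit the bad behaviour. Your outline does not explain how to bridge this. Similarly, the Laudenbach-type statement that $\overline{W^u(x)}$ is a disk with continuous attaching map presupposes a gluing theorem for the heat equation going well beyond theorem~\ref{thm:modified-IFT}; the paper avoids this by using the Conley-index filtration instead of a cellular one built from compactified unstable manifolds.
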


For a $C^1$ gradient flow on a Banach manifold,
where the Morse functional is bounded below
and its critical points are of finite Morse index,
Abbondandolo and 
Majer~\cite{AM-LecsMCInfDimMfs}
proved the existence of a natural
isomorphism between singular and
Morse homology.
The geometric idea is that
the unstable manifolds
carry the homologically relevant information.
A major point is to construct a cellular filtration 
of $\Ll^a M$
by open forward flow invariant subsets
$F_0\subset F_1\subset\ldots\subset F_N\subset\Ll^a M$
such that $F_k$ contains all critical points
up to Morse index $k$ and relative singular
homology $\mathrm{H}_*(F_k,F_{k-1})$ is 
isomorphic to the free abelian group
generated over $\Z$ by
the critical points of index $k$.
Let $F_0$ be the union of disjoint, open, and forward flow
invariant neighborhoods of the critical points of index zero.
Then they fix small neighborhoods of the index one critical points
and consider the set exhausted by the forward flow.
Now they take the union of this set with $F_0$ to obtain $F_1$.
Clearly $F_1$ is forward flow invariant.
Moreover, it is open, because the time-$t$-map of the flow
is an open map. Next continue with the index two critical points
and so on. 

Unfortunately
the time-$t$-map for the semiflow
generated by the heat equation does
not take open sets to open sets
due to the extremely strong regularizing
nature of the heat flow.
Hence new ideas are required.
Firstly, find the right notion of
Conley index pairs of isolated invariant
sets in the infinite dimensional situation at hand.
Secondly, solve
the forward time Cauchy problem for 
the heat equation~(\ref{eq:heat})
for initial values
in the Hilbert manifold $\Lambda M=W^{1,2}(S^1,M)$
to establish existence of a continuous semiflow
$
     \varphi:[0,\infty)\times\Lambda^a M \to\Lambda^a M 
$.
Now use continuity of the time-$t$-map
to conclude that the preimage
${\varphi_T}^{-1}(F_0)$ is an open subset of $\Lambda^a M$.
Here $F_0$ consists locally of (strict) sublevel sets
near the local minimima
and $T>0$ is chosen sufficiently large
such that the time-$T$-map $\varphi_T$ maps
the exit set $L_1$ of the Conley index pair $(N_1,L_1)$
associated to the index one critical points
into $F_0$. Then the set
$F_1:=N_1\cup {\varphi_T}^{-1}(F_0)$
is open and semiflow invariant.
Next include the index two points and so on.
Full details will be provided
in~\cite{Joa-LOOPSPACE}.

%%%%%%%%%%%%%%%% Subsubsection %%%%%%%%%%%%%%%%%%
\subsection{Overview}

In appendix~\ref{sec:REG}
we recall for convenience of the reader
facts proved in~\cite{Joa-PARABOLI}
concerning parabolic regularity.
These are used extensively in the present text.
More precisely, we recall
the fundamental $L^p$ estimate
and local regularity
for the linear heat operator
$\p_su-\p_t\p_tu$ 
acting on real-valued maps $u$ defined
on the lower half plane $\HH^-$
or on cylindrical sets.
Moreover, we introduce relevant parabolic spaces
$\Ww^{k,p}$ and $\Cc^{k,p}$ and recall the
product estimate lemma~\ref{le:product-derivatives}
crucial to prove the quadratic
estimate in proposition~\ref{prop:Quadest0}.

In chapter~\ref{sec:kerD}
we study the solutions to the linearized
version of the heat equation~(\ref{eq:heat}),
in other words, the kernel of 
the operator $\Dd_u$ given by~(\ref{eq:Du}).
In theorem~\ref{thm:REG-L}
we show that these solutions are smooth.
In fact, even weak solutions are smooth.
In section~\ref{subsec:kerD-apriori}
we derive pointwise bounds
in terms of the $L^2$ norm.
Section~\ref{subsec:kerD-exp-decay}
then establishes exponential decay
of these $L^2$ norms.
The combination of these results
is used in section~\ref{sec:linearized}
to prove that the operator
$\Dd_u$ is Fredholm for a rather general
class of smooth cylinders $u$ in $M$
with nondegenerate asymptotic
limits $x^\pm\in\Pp(\Vv)$.
The main result is theorem~\ref{thm:fredholm}.

In chapter~\ref{sec:nonlinear-heat}
we study the solutions $u$ to
the (nonlinear) heat equation~(\ref{eq:heat}).
Since $\p_su$ solves the linearized
equation the results of 
chapter~\ref{sec:kerD} apply.
In section~\ref{subsec:regularity-compactness}
we prove smoothness of
$\Ww^{1,p}_{loc}$ solutions
and a compactness result for
sequences of uniformly bounded gradient
with respect to appropriate norms.
In sections~\ref{subsec:apriori}--\ref{subsec:exp-decay}
boundedness of the action is a crucial assumption.
Fix a positive constant $c_0$.
Then all solutions $u$ of~(\ref{eq:heat})
with
$$
     \sup_{s\in\R} \Ss_\Vv(u_s) 
     \le c_0
$$
admit a uniform apriori estimate
for $\norm{\p_tu}_\infty$
(theorem~\ref{thm:apriori-t}), 
uniform energy bounds 
(lemma~\ref{le:energy-bound}),
uniform gradient bounds 
(theorem~\ref{thm:gradient}), 
and uniform $L^2$ exponential decay 
(theorem~\ref{thm:exp-decay}).
In section~\ref{subsec:compactness}
we study compactness of the
moduli spaces $\Mm(x^-,x^+;\Vv)$
in the case that $\Ss_\Vv:\Ll M\to\R$ 
is a Morse function.

Chapter~\ref{sec:IFT}
deals with implicit function
type theorems. Here, in addition to
the Morse condition, the Morse--Smale
condition enters: To prove that
the moduli spaces are smooth manifolds
we not only need nondegeneracy
of the asymptotic boundary data,
that is the critical points $x^\pm$,
but in addition surjectivity 
of the linearized operators.
Under these assumptions
proposition~\ref{prop:finite-set} asserts
that modulo time shift
there are only finitely many 
heat flow lines from $x^-$ to $x^+$
whenever the Morse index difference is one.
Here the compactness results of
section~\ref{subsec:compactness} enter.
Furthermore, we prove the
refined implicit function 
theorem~\ref{thm:modified-IFT},
a major technical tool in~\cite{SaJoa-LOOP}.
Here the product estimate provided by
lemma~\ref{le:product-derivatives}
is the crucial ingredient to obtain the required
quadratic estimates.
Furthermore, the choice of the sublevel set on 
which $\Ss_\Vv$ needs to be Morse--Smale
requires care. The reason is that one
starts out only with an \emph{approximate} 
solution $u$
along which the action is not necessarily
decreasing. However, the assumptions
guarantee that all loops
$u_s$ are contained in the sublevel set
$\{\Ss_\Vv\le 2c_0^2\}$.

In chapter~\ref{sec:UC}
we prove unique continuation
for the heat equation~(\ref{eq:heat})
and its linearization.
The proof is based on 
an extension of 
a result by Agmon and Nirenberg.
In contrast to forward unique continuation
the result on backward unique continuation
is surprising at first sight.
Of course, there is an assumption.
Namely, the action along the 
two semi-infinite backward
trajectories $u,v$ which 
coincide at time $s=0$
must be bounded. In this case
we obtain that $u=v$.

In chapter~\ref{sec:transversality}
we construct a separable Banach space $Y$
of abstract perturbations that
satisfy axioms~{\rm (V0)--(V3)}.
Assume $\Ss_\Vv$ is Morse and 
$a$ is a regular value.
Then we define a Banach submanifold
$\Oo^a(\Vv)$ of admissible perturbations $v$.
These have the property
that $\Ss_\Vv$ and $\Ss_{\Vv+v}$
do have the same critical points
on their respective sublevel set
with respect to $a$
and, moreover, both sublevel sets
are homologically equivalent.
The proof that there is a residual
subset $\Oo^a_{reg}(\Vv)$ 
of regular perturbations
for which $\Ss_{\Vv+v}$ is Morse-Smale below level $a$
requires unique continuation
for the linearized heat equation
and the fact that the action is strictly
decreasing along nonconstant
heat flow trajectories.

In chapter~\ref{sec:Morse-homology}
we define Morse homology for
the heat flow.
In section~\ref{subsec:unstable}
we define the unstable manifold
of a critical point $x$
of the action functional
$\Ss_\Vv:\Ll M\to\R$
as the set of endpoints at time zero
of all backward halfcylinders
solving the heat equation~(\ref{eq:heat})
and emanating from $x$ at $-\infty$.
The main result is 
theorem~\ref{thm:unstable-mf} saying that
if the critical point $x$ is nondegenerate,
then this is a contractible submanifold
of the loop space and its dimension
equals the Morse index of $x$.
Here we use unique continuation
for the linear and the nonlinear heat equation.
In section~\ref{subsec:Morse-complex}
we put together everything
to define the Morse complex
for the negative $L^2$ gradient
of the action functional on the loop space.

Despite the title of this text
the fact that the heat equation
gives rise to a forward semiflow
is nowhere used.
Existence of
this semiflow will be proved 
and used in our forthcoming
paper~\cite{Joa-LOOPSPACE} to construct a natural
isomorphism to singular homology
of the loop space.

\begin{notation}
If $f=f(s,t)$ is a map,
then $f_s$ abbreviates 
the map $f(s,\cdot):t\mapsto f(s,t)$.
In contrast partial derivatives
are denoted by $\p_sf$ and $\p_tf$.
\end{notation}

\noindent
{\sl Acknowledgements.} 
{\small
For useful discussions 
and pleasant conversations
the author would like to thank
K.~Cieliebak,
K.~Mohnke, and
D.~Salamon.
For partial financial support and hospitality
we are grateful to 
MSRI~Berkeley
and 
SFB~647 at HU~Berlin.
}
%MSRI,SFB647,HUB
%Dietmar,Majer,Klaus,Kai,
%
%regularity: DIETMAR -> product estimate
%            Klaus,Struwe,Ecker,Naumann
%
%transversality: Dietmar
%`
%natural isomorphism: Alberto, Pietro, Dietmar

%%%%%%%%%%%%%%%%%%%%%%%%%%%%%%%%%%%%%%%%%%%%%%
%%%%%%%%%%%%%%%%%%%%%%%%%%%%%%%%%%%%%%%%%%%%%%
%%%%%%%%%%%%%%% Section %%%%%%%%%%%%%%%%%%%%%%
%%%%%%%%%%%%%%%%%%%%%%%%%%%%%%%%%%%%%%%%%%%%%%
%%%%%%%%%%%%%%%%%%%%%%%%%%%%%%%%%%%%%%%%%%%%%%
\section{The linearized heat equation}
\label{sec:kerD}

Fix a smooth function
$\Vv:\Ll M\to\R$ that
satisfies~{\rm (V0)--(V3)}
and a smooth map $u:\R\times S^1\to M$.
In this chapter
we study the linear
parabolic PDE
\begin{equation}\label{eq:kerD-heat}
     \Nabla{s}\xi
     -\Nabla{t}\Nabla{t}\xi
     -R(\xi,\p_tu)\p_tu
     -\Hh_\Vv(u)\xi
     =0
\end{equation}
for vector
fields $\xi$ along $u$.
Throughout $R$ denotes the Riemannian
curvature tensor associated to
the closed Riemannian manifold $M$
and the covariant Hessian 
$\Hh_\Vv$ of $\Vv$
at a loop $u(s,\cdot)$ is defined
by~(\ref{eq:Hess-Vv}).

In section~\ref{subsec:kerD-regularity}
we show that \emph{strong solutions},
that is solutions of class $\Ww^{1,p}_u$,
are automatically smooth.
More generally, for $\xi\in \Ll^p_u$
we define the notion of
\emph{weak solution} 
and show that even weak solutions are smooth.
In section~\ref{subsec:kerD-apriori}
we derive pointwise estimates
of $\xi$ and certain partial derivatives
in terms of the $L^2$ norm
of $\xi$ over small backward cylinders.
In section~\ref{subsec:kerD-exp-decay}
we establish asymptotic exponential decay
of the slicewise $L^2$ norm
$\norm{\xi_s}_{L^2(S^1)}$
of a solution $\xi$
whenever the covariant Hessian
$A_{u_s}$ given by~(\ref{eq:Hessian})
is asymptotically injective.
Still assuming asymptotic injectivity
we prove in section~\ref{sec:linearized}
that the linear operator
$$
     \Dd_u:\Ww^{1,p}_u\to\Ll^p_u
$$
given by the left hand side 
of~(\ref{eq:kerD-heat})
is Fredholm.

Observe that if $u$ solves the
(nonlinear) heat equation~(\ref{eq:heat})
then $\xi:=\p_su$ solves
the linear equation~(\ref{eq:kerD-heat}).
Hence the results of this chapter
will be useful in chapter~\ref{sec:nonlinear-heat}
on solutions of the nonlinear heat equation.

%%%%%%%%%%%%%%%%%%%%%%%%%%%%%%%%%%%%%%%%%%%%%%
%%%%%%%%%%%%%%% Subsection  %%%%%%%%%%%%%%%%%%
%%%%%%%%%%%%%%%%%%%%%%%%%%%%%%%%%%%%%%%%%%%%%%
\subsection{Regularity}
\label{subsec:kerD-regularity}

Define the operator
$\Dd_u^*$ by the left hand side 
of~(\ref{eq:kerD-heat})
with $\Nabla{s}$ replaced by $-\Nabla{s}$.

\begin{theorem}[Local regularity of weak solutions]
\label{thm:REG-L}
Fix a perturbation 
$\Vv:\Ll M\to\R$ that
satisfies~{\rm (V0)--(V3)}
and constants $q>1$
and $a<b$. Let
$u:(a,b]\times S^1\to M$
be a smooth map with bounded derivatives
of all orders.
Then the following is true.
If $\eta$ is a vector field
along $u$ of class $L^q_{loc}$
such that
$$
     \langle\eta,\Dd_u^*\xi\rangle=0
$$
for every smooth
vector field $\xi$ along $u$
of compact support in $(a,b)\times S^1$,
then $\eta$ is smooth.
Here $\langle\cdot,\cdot\rangle$ 
denotes integration over 
the pointwise inner products.
\end{theorem}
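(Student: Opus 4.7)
The plan is a standard parabolic bootstrap, with the only subtlety being the potentially nonlocal-in-$t$ zeroth-order operator $\Hh_\Vv(u)$. By a partition of unity it suffices to work on a coordinate neighborhood of an arbitrary point $(s_0,t_0)\in(a,b)\times S^1$. Choose a chart on $M$ centered at $u(s_0,t_0)$, so that in local coordinates $\eta$ becomes an $\R^n$-valued $L^q_{loc}$ function on some Euclidean neighborhood $U$. Expanding $\Nabla{s}$ and $\Nabla{t}\Nabla{t}$ in the chart produces ordinary derivatives plus Christoffel terms contracted with the smooth bounded functions $\p_su$, $\p_tu$, and $\Nabla{t}\p_tu$. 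Using the formal $L^2$-self-adjointness of $\Hh_\Vv(u)$ and of the curvature endomorphism $R(\cdot,\p_tu)\p_tu$, the weak identity $\langle\eta,\Dd_u^*\xi\rangle=0$ for all compactly supported smooth $\xi$ translates after integration by parts into the distributional equation
$$
(\p_s-\p_t^2)\eta = A(s,t)\p_t\eta + B(s,t)\eta + \Hh_\Vv(u)\eta
$$
on $U$, with $A,B$ smooth matrix-valued functions. By axiom~{\rm (V1)} the operator $\Hh_\Vv(u)$ maps $L^r(S^1)$ to $L^r(S^1)$ boundedly, uniformly in $s$, for every $r\ge 1$.

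First I would mollify $\eta$ in both variables (a periodic mollifier in $t$ and a compactly supported mollifier in $s$) to obtain smooth $\eta_\eps\to\eta$ in $L^q_{loc}$. Commuting $\p_s-\p_t^2$ through the convolution gives
$$
(\p_s-\p_t^2)\eta_\eps = A\p_t\eta_\eps + B\eta_\eps + \Hh_\Vv(u)\eta_\eps + r_\eps,
$$
where $r_\eps$ collects the commutators of $\phi_\eps *$ with $A\p_t$, $B$, and $\Hh_\Vv(u)$. By a Friedrichs-type commutator estimate each contribution to $r_\eps$ is uniformly bounded in $L^q_{loc}$ and tends to zero as $\eps\to 0$; for the $\Hh_\Vv(u)$ piece this uses the $L^r$-boundedness from~{\rm (V1)}. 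Since $\eta_\eps$ is smooth, the fundamental interior $L^q$ estimate for the model operator $\p_s-\p_t^2$ from appendix~\ref{sec:REG} applies on a slightly smaller domain, and after absorbing the first-order term $A\p_t\eta_\eps$ by interpolation one obtains a uniform bound on $\|\eta_\eps\|_{\Ww^{1,q}}$. Passing to the limit establishes $\eta\in\Ww^{1,q}_{loc}$.

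The bootstrap then iterates: once $\eta\in\Ww^{k,q}_{loc}$, the right-hand side of the displayed equation also lies in $\Ww^{k,q}_{loc}$, thanks to smoothness of $A,B$ and the control on covariant derivatives of $\Hh_\Vv(u)$ provided by axioms~{\rm (V2)--(V3)}. Differentiating tangentially in $t$ (or using difference quotients in $t$) and reapplying the $L^q$ estimate produces one additional parabolic derivative, so $\eta\in\Ww^{k+1,q}_{loc}$. Iterating indefinitely yields $\eta\in\Ww^{k,q}_{loc}$ for every $k$, whence $\eta\in C^\infty$ by parabolic Sobolev embedding. The main technical obstacle I anticipate is verifying that the commutators and bootstrap steps involving the nonlocal-in-$t$ operator $\Hh_\Vv(u)$ remain under control at every stage; axioms~{\rm (V0)--(V3)} are tailored for this purpose, so once the correct hierarchy of norms is set up the argument should close.
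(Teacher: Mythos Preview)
Your outline is sound --- the mollification/Friedrichs-commutator/absorption route is a legitimate way to gain the first parabolic derivative, and after that the bootstrap using {\rm (V2)--(V3)} goes as you describe. The paper reaches the same conclusion by a shorter path that avoids mollification altogether. Working in local coordinates with $v_\mu:=g_{\mu j}\eta^j$ and testing against coordinate vector fields $\xi^{(\mu,\phi)}$, the weak equation takes the form
\[
\int v_\mu\bigl(-\p_s\phi-\p_t\p_t\phi\bigr)=\int f_\mu\,\phi-\int h_\mu\,\p_t\phi,
\]
with $f_\mu,h_\mu\in L^q_{loc}$: the first-order Christoffel contribution sits in $h_\mu$, while the curvature and $\Hh_\Vv$ terms land in $f_\mu$. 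Rather than mollify and absorb $A\p_t\eta_\eps$ via interpolation on nested domains, the paper appeals to theorem~\ref{thm:local-regularity}\,b) --- a local regularity statement tailored exactly to this divergence form --- which yields $\p_t v_\mu\in L^q_{loc}$ in one stroke. That makes $\p_t h_\mu\in L^q_{loc}$, so one integrates the $h_\mu\,\p_t\phi$ term by parts and invokes part~a) of the same theorem to obtain $v_\mu\in\Ww^{1,q}_{loc}$; the induction step repeats the pattern verbatim. What the paper's route buys is that the domain-shrinking iteration you would need to absorb the first-order term is replaced by a single black-box appeal to the appendix; what your route buys is independence from theorem~\ref{thm:local-regularity}\,b), at the cost of essentially reproving a version of it along the way. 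One small omission in your sketch: you localize around points with $s_0\in(a,b)$, but the conclusion is smoothness up to the terminal slice $s=b$; the paper handles this by centering its parabolic rectangles $\Omega_r=(-r^2,0]\times(-r,r)$ at boundary points and using the $\HH^-$ version of the fundamental estimate.
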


\begin{remark}
Theorem~\ref{thm:REG-L} remains true
if we replace $\Dd_u^*$ by $\Dd_u$ and 
define $u$ on $[a,b)\times S^1$.
This follows by the variable substitution
$s\mapsto-s$.
\end{remark}

\begin{proof}
It suffices to prove
the conclusion
in a neighborhood of
any point $z\in(a,b]\times S^1$.
Shifting the $s$ and $t$
variables, if necessary,
we may assume that
$z\in\Omega_r=(-r^2,0]\times(-r,r)$
for some sufficiently small $r>0$.
Now choose local coordinates
on the manifold $M$ around
the point $u(z)$ and fix $r>0$
sufficiently small such that
$u(\overline{\Omega_r})$
is contained in the
local coordinate patch.
In these local coordinates
the vector field $\eta$ is
represented by the map
$(\eta^1,\ldots,\eta^n):\Omega_r\to\R^n$ 
of class $L^q_{loc}$
and the Riemannian metric $g$
by the matrix with
components $g_{ij}$.
Throughout we use Einstein's sum convention.
By induction we will prove that
$$
     v_\mu:=g_{\mu j}\eta^j
     \in\bigcap_{m=1}^\infty
     \Ww^{m,q}_{loc}(\Omega_r),\qquad
     \mu=1,\ldots,n.
$$
Note that the intersection of spaces
equals $C^\infty(\Omega_r)$;
see e.g.~\cite[app.~B.1]{MS}.
Now apply the inverse metric matrix
to obtain that
$\eta^j=g^{j\mu}v_\mu\in C^\infty(\Omega_r)$
and this proves the theorem.

\vspace{.1cm}
\noindent
{\bf\boldmath Step~$m=1$.}
Fix $\mu\in\{1,\ldots,n\}$
and consider vector fields
of the form
$$
     \xi^{(\mu,\phi)}=(0,\ldots,0,\phi,0,\ldots,0)
     :\Omega_r\to\R^n
$$
where a function
$\phi\in C_0^\infty(\INT\, \Omega_r)$
occupies slot $\mu$.
Via extension by zero
we view $\xi^{(\mu,\phi)}$
as a compactly supported smooth
vector field along $u$.
Now our assumption implies that
$\langle\eta,\Dd_u^*\xi^{(\mu,\phi)}\rangle=0$
for every $\phi\in C_0^\infty(\INT\, \Omega_r)$.
By straightforward calculation
this is equivalent to
$$
     \int_{\Omega_r}
     v_\mu\left(-\p_s\phi-\p_t\p_t\phi\right)
     =\int_{\Omega_r} f_\mu\phi
     -\int_{\Omega_r} h_\mu\,\p_t\phi
$$
for every $\phi\in C_0^\infty(\INT\, \Omega_r)$,
where $h_\mu=-2v_k\Gamma_{i\mu}^k \,\p_tu^i$ and
\begin{equation*}
\begin{split}
     f_\mu
    &=v_k\Bigl(
     \Gamma_{i\mu}^k \,\p_su^i
     +\frac{\p\Gamma_{i\mu}^k}{\p u^r}
     \,\p_tu^r\,\p_tu^i
     +\Gamma_{i\mu}^k \,\p_t\p_tu^i\\
    &\quad
     +\Gamma_{ij}^k \,\p_tu^i
     \Gamma_{r\mu}^j \,\p_tu^r
     +R_{\mu ij}^k \,\p_tu^i \,\p_tu^j
     +H_\mu^k \Bigr).
\end{split}
\end{equation*}
Here $R_{\ell ij}^k$ represents the Riemann
curvature operator and $H_\ell^k$
the Hessian $\Hh_\Vv(u)$ in local coordinates.
The Christoffel symbols
associated to the 
Levi Civita connection $\nabla$
are denoted by $\Gamma_{ij}^k$.

From now on the domain of
all spaces will be $\Omega_r$,
unless specified differently.
Observe that
$v_\mu\in L^q_{loc}
\subset L^1_{loc}$
by smoothness of the metric,
compactness of $M$,
and the fact that
$\eta^\ell\in L^q_{loc}$
by assumption. It follows that
$h_\mu$ and $f_\mu$ are in $L^q_{loc}$.
Here we used in addition boundedness
of the derivatives of $u$
and axiom~(V1).
Hence $\p_tv_\mu\in L^q_{loc}$
by theorem~\ref{thm:local-regularity}~b)
and this implies that
$\p_th_\mu\in L^q_{loc}$.
Now integration by parts
shows that
$$
     \int_{\Omega_r}
     v_\mu\left(-\p_s\phi-\p_t\p_t\phi\right)
     =\int_{\Omega_r} 
     \left(f_\mu+\p_th_\mu\right)\phi
$$
for every $\phi\in C_0^\infty(\INT\, \Omega_r)$
and therefore
$v_\mu\in\Ww^{1,q}_{loc}$ by
theorem~\ref{thm:local-regularity}~a).

\vspace{.1cm}
\noindent
{\bf\boldmath Induction step~$m\Rightarrow m+1$.}
Assume that
$v_\mu\in\Ww^{m,q}_{loc}$. Then
$f_\mu,h_\mu\in\Ww^{m,q}_{loc}$
by compactness of $M$,
boundedness of the derivatives of $u$,
and axiom~(V3). Hence
$\p_tv_\mu\in\Ww^{m,q}_{loc}$ by
theorem~\ref{thm:local-regularity}~b).
But this implies that
$\p_th_\mu$ is in $\Ww^{m,q}_{loc}$
and so is $f_\mu+\p_th_\mu$.
Therefore
$v_\mu\in\Ww^{m+1,q}_{loc}$ by
theorem~\ref{thm:local-regularity}~a).
\end{proof}

%%%%%%%%%%%%%%%%%%%%%%%%%%%%%%%%%%%%%%%%%%%%%%
%%%%%%%%%%%%%%% Subsection  %%%%%%%%%%%%%%%%%%
%%%%%%%%%%%%%%%%%%%%%%%%%%%%%%%%%%%%%%%%%%%%%%
\subsection{Apriori estimates}
\label{subsec:kerD-apriori}

\begin{theorem}
\label{thm:kerD-apriori}
Fix a perturbation 
$\Vv:\Ll M\to\R$ that 
satisfies~{\rm (V0)--(V2)} and a 
constant $C_0>0$.
Then there is a constant
$C=C(C_0,\Vv)>0$ such that the following is true.
Assume $u:\R\times S^1\to M$ 
is a smooth map with
$\norm{\p_tu}_\infty\le C_0$
and $\xi$ is a smooth vector field
along $u$ satisfying the
linear heat
equation~(\ref{eq:kerD-heat}).
Then
$$
     \Abs{\xi(s,t)}
     \le C \Norm{\xi}_{L^2([s-\frac{1}{2},s]\times S^1)}
$$
for every $(s,t)\in\R\times S^1$.
If in addition
$\norm{\p_su}_\infty +
\norm{\Nabla{t}\p_tu}_\infty\le C_0$, 
then
$$
     \Abs{\Nabla{t}\xi(s,t)}
     \le C \Norm{\xi}_{L^2([s-1,s]\times S^1)}
$$
for every $(s,t)\in\R\times S^1$.
\end{theorem}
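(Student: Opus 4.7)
The plan is to recast~(\ref{eq:kerD-heat}) as a scalar parabolic differential inequality for $w:=\tfrac12|\xi|^2$ and then apply a parabolic sub-mean value inequality -- equivalently, to iterate the $L^p$ estimate of appendix~\ref{sec:REG} and invoke parabolic Sobolev embedding -- in order to bound $\sup w$ on a small backward parabolic domain by the $L^1$ integral of $w$ over a slightly larger one. The estimate on $\Nabla{t}\xi$ then follows by running the same argument for $\eta:=\Nabla{t}\xi$, whose PDE is obtained by commuting a covariant $t$-derivative through~(\ref{eq:kerD-heat}).

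First I would compute, using~(\ref{eq:kerD-heat}),
$$
(\p_s-\p_t\p_t)w
= \langle\Nabla{s}\xi-\Nabla{t}\Nabla{t}\xi,\,\xi\rangle - |\Nabla{t}\xi|^2
= \langle R(\xi,\p_tu)\p_tu+\Hh_\Vv(u)\xi,\,\xi\rangle - |\Nabla{t}\xi|^2.
$$
Since $M$ is closed and $\norm{\p_tu}_\infty\le C_0$, the curvature term is bounded by $C|\xi|^2$. Applying~(V1) along a path $r\mapsto u(s+r,\cdot)$ whose $r$-derivative at $r=0$ equals $\xi_s$ shows that $\Hh_\Vv(u)$ is zeroth order with $|\Hh_\Vv(u)\xi|\le C(|\xi|+\norm{\xi_s}_{L^1(S^1)})$. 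Cauchy--Schwarz then produces
$$
(\p_s-\p_t\p_t)w \le C_1\bigl(w + \norm{\xi_s}_{L^2(S^1)}^2\bigr).
$$
Integrating this over $S^1$ kills the $\p_t\p_t w$ term and yields the ODE inequality $G'\le 2C_1 G$ for $G(s):=\norm{\xi_s}_{L^2(S^1)}^2$; Gronwall in the reverse $s$-direction gives $G(s)\le\kappa\,\norm{\xi}_{L^2(Q)}^2$ on $Q:=[s-\tfrac12,s]\times S^1$, thereby converting the source term of the scalar inequality into a constant dominated by $\norm{\xi}_{L^2(Q)}^2$. Feeding this into the parabolic $L^p$-bootstrap of appendix~\ref{sec:REG} on a nested family of backward cylinders and invoking parabolic Sobolev embedding produces $\sup w\le C_2\norm{\xi}_{L^2(Q)}^2$, which is the first claim.

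For the second claim, set $\eta:=\Nabla{t}\xi$. Using $[\Nabla{t},\Nabla{s}]=R(\p_tu,\p_su)$, one checks that
$$
\Nabla{s}\eta-\Nabla{t}\Nabla{t}\eta
= R(\p_tu,\p_su)\xi + \Nabla{t}\bigl(R(\xi,\p_tu)\p_tu\bigr) + \Nabla{t}\bigl(\Hh_\Vv(u)\xi\bigr).
$$
Under the added hypothesis $\norm{\p_su}_\infty+\norm{\Nabla{t}\p_tu}_\infty\le C_0$ each of the three terms on the right is bounded pointwise by $C(|\eta|+|\xi|+\norm{\xi_s}_{L^1(S^1)})$: the first two by boundedness of $R$, $\nabla R$, $\p_tu$, $\p_su$, and $\Nabla{t}\p_tu$ on the closed manifold; the last by the second estimate of~(V2), again via the path-substitution trick. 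Re-running the first step for $|\eta|^2/2$ and applying the same parabolic sub-solution estimate on a cylinder $Q''\subset[s-1,s]\times S^1$, and finally bounding $\norm{\eta}_{L^2(Q'')}$ in terms of $\norm{\xi}_{L^2([s-1,s]\times S^1)}$ by a Caccioppoli estimate (pair~(\ref{eq:kerD-heat}) with $\psi^2\xi$ for a cutoff $\psi$, integrate by parts in $t$, and absorb $|\Nabla{t}\xi|^2$), yields the stated bound on $|\Nabla{t}\xi|$.

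The main obstacle is conceptual rather than technical: in the first claim one has no pointwise control on $\p_su$, so a naive coordinate bootstrap of the PDE for $\xi$ -- whose right-hand side contains a term $\Gamma(\p_su)\xi$ -- fails. Working with the scalar invariant $w=|\xi|^2/2$ bypasses this obstruction, because the resulting parabolic inequality only sees $\p_tu$, $R$ and $\Hh_\Vv(u)$. Once this structural observation is made, the remainder of the argument is a routine application of the parabolic $L^p$-theory of appendix~\ref{sec:REG}.
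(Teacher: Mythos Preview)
Your approach is correct and is essentially the paper's: the paper also computes the scalar subsolution inequality for $\tfrac12|\xi|^2$, first integrates over $S^1$ to obtain the slicewise bound (corollary~\ref{co:kerD-apriori-basic}, which is exactly your Gronwall step), then applies the parabolic sub-mean value inequality of lemma~\ref{le:kerD-apriori-basic} to pass to the pointwise bound; for $\Nabla{t}\xi$ it commutes a $\Nabla{t}$ through~(\ref{eq:kerD-heat}) and repeats, with the intermediate $L^2$ control on $\Nabla{t}\xi$ supplied by corollary~\ref{co:kerD-apriori-L2}, which is a packaged form of your Caccioppoli estimate. One small correction: the tool that converts the scalar inequality into a sup bound is the sub-mean value lemma~\ref{le:kerD-apriori-basic}, not the $L^p$-bootstrap of appendix~\ref{sec:REG}---the results there (theorem~\ref{thm:local-regularity}, proposition~\ref{prop:bootstrap}) are for weak \emph{solutions} of equations and do not directly handle one-sided inequalities for $w$, so your parenthetical ``equivalently'' is not quite right as stated.
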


\begin{theorem}
\label{thm:kerD-apriori-II}
Fix a perturbation 
$\Vv:\Ll M\to\R$ that 
satisfies~{\rm (V0)--(V2)} and a 
constant $C_0>0$.
Then there is a constant
$C=C(C_0,\Vv)>0$ such that the following is true.
Assume $u:\R\times S^1\to M$ 
is a smooth map with
$$
     \norm{\p_tu}_\infty
     +\norm{\p_su}_\infty
     +\norm{\Nabla{t}\p_tu}_\infty
     +\norm{\Nabla{t}\p_su}_\infty
     +\norm{\Nabla{t}\Nabla{t}\p_tu}_\infty
     \le C_0
$$
and $\xi$ is a smooth vector field
along $u$ satisfying the
linear heat
equation~(\ref{eq:kerD-heat}).
Then
$$
     \Abs{\Nabla{t}\Nabla{t}\xi(s,t)}
     +\Abs{\Nabla{s}\xi(s,t)}
     \le C \Norm{\xi}_{L^2([s-2,s]\times S^1)}
$$
for every $(s,t)\in\R\times S^1$.
\end{theorem}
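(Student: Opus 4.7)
The plan is to obtain the pointwise bound on $\Nabla{t}\Nabla{t}\xi$ by running the argument behind Theorem~\ref{thm:kerD-apriori} on the vector field $\eta:=\Nabla{t}\xi$, which satisfies a perturbed linearized heat equation of the same principal shape, and then to read off the bound on $\Nabla{s}\xi$ directly from~(\ref{eq:kerD-heat}). The first task is to identify the PDE satisfied by $\eta$. Applying $\Nabla{t}$ to~(\ref{eq:kerD-heat}) and using the curvature commutator $[\Nabla{s},\Nabla{t}]\xi=R(\p_su,\p_tu)\xi$, a direct calculation gives
\begin{equation*}
     \Nabla{s}\eta-\Nabla{t}\Nabla{t}\eta
     -R(\eta,\p_tu)\p_tu-\Hh_\Vv(u)\eta
     =F,
\end{equation*}
where $F$ is algebraic in $\xi$ and $\eta$ with coefficients built from $\p_su,\p_tu,\Nabla{t}\p_su,\Nabla{t}\p_tu$, one covariant $t$-derivative of $R$, and the commutator $[\Nabla{t},\Hh_\Vv(u)]$. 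The latter is a zeroth-order operator by axiom~(V2), applied in the mixed form that bounds $\Nabla{t}\Nabla{s}\grad\Vv(u)$, while the curvature term is bounded on the compact manifold $M$. Under the present hypotheses all coefficients of $F$ are dominated by a constant $C=C(C_0,\Vv)$, so $|F|\le C(|\xi|+|\eta|)$.

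Next I run the mean value/subsolution iteration of Theorem~\ref{thm:kerD-apriori} in cascade. The function $|\eta|^2$ satisfies a parabolic differential inequality of the form $(\p_s-\p_t\p_t)|\eta|^2\le c(|\xi|^2+|\eta|^2)$ because the principal part of the equation for $\eta$ coincides with that of~(\ref{eq:kerD-heat}) and the forcing $F$ contributes only bounded zeroth-order terms. By Theorem~\ref{thm:kerD-apriori} applied to $\xi$, the source $|\xi|^2$ is already pointwise dominated by $\norm{\xi}_{L^2}^2$ on a slightly larger backward cylinder, so an $L^\infty$-to-$L^2$ mean value estimate upgrades $\norm{\eta}_{L^2}$ — itself controlled by $\norm{\xi}_{L^2}$ via Theorem~\ref{thm:kerD-apriori} — to a pointwise bound on $\eta$. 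Repeating the same procedure for $\zeta:=\Nabla{t}\eta=\Nabla{t}\Nabla{t}\xi$, whose equation involves the additional commutator terms that require $\norm{\Nabla{t}\Nabla{t}\p_tu}_\infty\le C_0$ to stay bounded, yields
\begin{equation*}
     \Abs{\Nabla{t}\Nabla{t}\xi(s,t)}\le C\Norm{\xi}_{L^2([s-2,s]\times S^1)}.
\end{equation*}
The bound on $\Nabla{s}\xi$ is then immediate from~(\ref{eq:kerD-heat}) written as $\Nabla{s}\xi=\Nabla{t}\Nabla{t}\xi+R(\xi,\p_tu)\p_tu+\Hh_\Vv(u)\xi$: the first summand is controlled by what we just proved, the second by $C\norm{\p_tu}_\infty^2|\xi|$, and the third by $C|\xi|$ via the zeroth-order character of $\Hh_\Vv$ from axiom~(V1).

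The principal obstacle I anticipate is in Step~1, namely the careful bookkeeping of the commutator $\Nabla{t}[\Hh_\Vv(u)\xi]-\Hh_\Vv(u)\Nabla{t}\xi$: one must verify, using axioms~(V1)--(V2) together with the hypotheses on $u$, that this quantity is a zeroth-order algebraic expression in $\xi$ with coefficients bounded by $C(C_0,\Vv)$ rather than an operator of positive order. A secondary difficulty, essentially bookkeeping, is that each additional $\Nabla{t}$ differentiation of $\xi$ consumes one more derivative bound on $u$; this is precisely why the stronger hypothesis $\norm{\Nabla{t}\Nabla{t}\p_tu}_\infty\le C_0$ enters here but not in Theorem~\ref{thm:kerD-apriori}, and why the backward cylinder must be enlarged from $[s-1,s]\times S^1$ to $[s-2,s]\times S^1$ to absorb the successive local $L^2$ estimates used at each stage of the cascade.
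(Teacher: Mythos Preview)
Your approach is essentially the paper's: one applies the parabolic mean-value inequality to $\tfrac12|\Nabla{t}\Nabla{t}\xi|^2$, feeding in the pointwise and slicewise $L^2$ bounds on $\xi$, $\Nabla{t}\xi$, and $\Nabla{t}\Nabla{t}\xi$ already obtained inside the proof of Theorem~\ref{thm:kerD-apriori}, and then reads off $\Nabla{s}\xi$ from the equation. One correction to your anticipated obstacle: the commutator $\Nabla{t}(\Hh_\Vv(u)\xi)-\Hh_\Vv(u)\Nabla{t}\xi$ is \emph{not} a purely pointwise zeroth-order expression in $\xi$; the axioms~(V1)--(V2) produce a nonlocal slicewise term $\Norm{\xi_s}_{L^1(S^1)}$ (and similarly for $\eta$), so your bound $|F|\le C(|\xi|+|\eta|)$ should read $|F|\le C(|\xi|+|\eta|+\Norm{\xi_s}_{L^1(S^1)})$. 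This is harmless once the pointwise bound on $\xi$ from Theorem~\ref{thm:kerD-apriori} is available, and the paper carries such terms explicitly throughout; at the $\Nabla{t}\Nabla{t}$ level the paper in fact invokes~(V3) for the analogous estimate on $\Nabla{t}\Nabla{t}\Hh_\Vv(u)\xi$.
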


\begin{remark}
\label{rmk:apriori-adjoint}
If in theorem~\ref{thm:kerD-apriori}
or theorem~\ref{thm:kerD-apriori-II}
the vector field
$\xi$ solves $\Dd_u^*\xi=0$,
then $\eta(s,t):=\xi(-s,t)$
solves~(\ref{eq:kerD-heat}).
The apriori estimates for
$\eta$ then translate
into apriori estimates for $\xi$.
For example, it follows that
$$
     \Abs{\xi(s,t)}
     \le C \Norm{\xi}_{L^2([s,s+\frac{1}{2}]\times S^1)}
$$
for every $(s,t)\in\R\times S^1$
and similarly for the
higher order derivatives.
\end{remark}

The proof of theorem~\ref{thm:kerD-apriori}
and theorem~\ref{thm:kerD-apriori-II}
is based on the following mean value
inequalities.
Consider the
{\bf parabolic domain}
defined for $r>0$ by 
$$
     P_r:=(-r^2,0)\times (-r,r).
$$

\begin{lemma}[{\cite[lemma~B.1]{SaJoa-LOOP}}]
\label{le:kerD-apriori-basic}
There is a constant $c_1>0$
such that the following holds 
for all $r\in(0,1]$ and $a\ge 0$. If
$w:P_r\to\R$, 
$(s,t)\mapsto w(s,t)$,
is $C^1$ in the $s$-variable 
and $C^2$ in the $t$-variable
such that
$$
     (\p_t\p_t-\p_s)w\ge -aw, \qquad w\ge0,
$$
then
$$
     w(0)\le\frac{c_1e^{ar^2}}{r^3}
     \int_{P_r} w.
$$
\end{lemma}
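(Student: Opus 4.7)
The plan is to remove both the zero-order term and the radius $r$ by two successive transformations, and then to prove the resulting scale- and coefficient-free mean value inequality by a standard Moser iteration. First I would set $v(s,t) := e^{-as}w(s,t)$; a direct computation yields
\[
(\p_t^2 - \p_s)v = e^{-as}(\p_t^2 w - \p_s w) + av \geq -ae^{-as}w + av = 0,
\]
so $v \geq 0$ is subcaloric on $P_r$. Since $s \in (-r^2, 0)$, one has $1 \leq e^{-as} \leq e^{ar^2}$, hence $v(0) = w(0)$ and $\int_{P_r} v \leq e^{ar^2} \int_{P_r} w$. Parabolic rescaling by $\Tilde v(\Tilde s, \Tilde t) := v(r^2 \Tilde s, r \Tilde t)$ on $P_1$ preserves both subcaloricity and the value at the origin, while $\int_{P_1} \Tilde v = r^{-3} \int_{P_r} v$. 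Hence the whole lemma reduces to the universal bound $v(0) \leq c_1 \int_{P_1} v$ for every nonnegative subcaloric $v$ on $P_1$, with the announced factor $c_1 e^{ar^2}/r^3$ appearing automatically upon undoing the two reductions.

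The universal bound is standard Moser iteration. The driving observation is that $v^p$ is still subcaloric for every $p \geq 1$, since
\[
(\p_t^2 - \p_s)v^p = p v^{p-1}(\p_t^2 - \p_s)v + p(p-1)v^{p-2}(\p_t v)^2 \geq 0.
\]
Choose nested radii $r_k := \frac{1}{2}(1 + 2^{-k})$ and cutoffs $\eta_k \in C_c^\infty(P_{r_k})$ with $\eta_k \equiv 1$ on $P_{r_{k+1}}$ and $|\p_s \eta_k|, |\p_t \eta_k|^2 \leq C\,4^k$. Testing the subcaloric inequality for $v^p$ against $\eta_k^2 v^p$ and integrating by parts in $t$ produces the Caccioppoli estimate
\[
\sup_{s \in [-r_{k+1}^2,\,0]} \|(\eta_k v^{p/2})(s,\cdot)\|_{L^2_t}^2 + \|\p_t(\eta_k v^{p/2})\|_{L^2(P_{r_k})}^2 \leq C\,4^k \int_{P_{r_k}} v^p.
\]
Combining this with the slicewise one-dimensional bound $\|f\|_{L^\infty_t}^2 \leq 2\|f\|_{L^2_t}\|\p_t f\|_{L^2_t}$ for compactly supported $f$ and the interpolation $\|g\|_{L^4_{s,t}}^2 \leq \|g\|_{L^\infty_s L^2_t}\|g\|_{L^2_s L^\infty_t}$ yields the reverse H\"older bound $\|v\|_{L^{2p}(P_{r_{k+1}})}^p \leq C'\,4^k\,\|v\|_{L^p(P_{r_k})}^p$. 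Iterating with exponents $p_k := 2^k$ gives $\|v\|_{L^{p_{k+1}}(P_{r_{k+1}})} \leq (C'\,4^k)^{1/p_k}\|v\|_{L^{p_k}(P_{r_k})}$, and since $\sum_k k/p_k < \infty$ the infinite product converges, producing $\sup_{P_{1/2}} v \leq C\|v\|_{L^1(P_1)}$; applied at the corner point $(0,0)$ this is the required inequality.

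The main technical obstacle I expect is justifying the inequality $(\p_t^2 - \p_s)v^p \geq 0$ rigorously near the zero set of $v$, where the factor $v^{p-2}(\p_t v)^2$ is only distributionally defined for $p < 2$. This is handled by the standard regularization $v_\varepsilon := v + \varepsilon > 0$, which itself satisfies the subcaloric inequality with the same constants, and by passing $\varepsilon \to 0$ in the final iterated estimate via dominated convergence. The remaining bookkeeping --- the exact Caccioppoli derivation, the parabolic Sobolev interpolation, and the summability of the Moser chain --- is routine.
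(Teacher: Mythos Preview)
The paper does not prove this lemma; it is quoted from \cite[lemma~B.1]{SaJoa-LOOP}, so there is no in-paper argument to compare against. Your two reductions (the weight $e^{-as}$ and the parabolic rescaling) are clean and correct, and Moser iteration is the standard way to obtain the remaining scale-free bound $v(0)\le c_1\int_{P_1}v$ for nonnegative subcaloric $v$.

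Two points in the iteration sketch need repair. First, testing the subcaloric inequality for $v^p$ against $\eta_k^2 v^p$ yields a right-hand side $C\,4^k\int v^{2p}$, not $\int v^p$; the Caccioppoli estimate you actually wrote (with $v^{p/2}$ on the left and $\int v^p$ on the right) comes from testing the inequality for $v$ against $\eta_k^2 v^{p-1}$, and the resulting gradient coefficient $\tfrac{4(p-1)}{p^2}$ degenerates at $p=1$. Second, and consequently, the chain cannot start at $p_0=1$: there is no reverse-H\"older step $\|v\|_{L^2}\le C\|v\|_{L^1}$ for subcaloric functions. The standard cure is to run Moser from $L^2$ to obtain $\sup_{P_\rho}v \le C(R-\rho)^{-3/2}\|v\|_{L^2(P_R)}$ uniformly for $\rho<R\le 1$, combine with $\|v\|_{L^2(P_R)}^2\le\|v\|_{L^\infty(P_R)}\|v\|_{L^1(P_1)}$, and absorb the factor $\|v\|_{L^\infty(P_R)}^{1/2}$ by a second iteration over radii. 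A minor third point: the cutoffs must not lie in $C_c^\infty(P_{r_k})$ but rather vanish only near the lateral and initial boundaries while remaining equal to $1$ up to the terminal slice $\{s=0\}$; otherwise the corner point $(0,0)$ is never captured. With these adjustments the argument goes through.
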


\begin{corollary}
\label{co:kerD-apriori-basic}
Let $c_1$ be the constant
of lemma~\ref{le:kerD-apriori-basic}
and fix two constants
$r\in(0,1]$ and $\mu\ge 0$.
Then the following is true.
If $F:[-r^2,0]\to\R$
is a $C^1$ function such that
$$
     -F^\prime+\mu F\ge 0, \qquad F\ge0,
$$
then
$$
     F(0)\le\frac{2c_1e^{\mu r^2}}{r^2}
     \int_{-r^2}^0 F(s)\: ds.
$$
\end{corollary}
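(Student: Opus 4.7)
The plan is to reduce the one-dimensional ODE inequality to the two-dimensional parabolic inequality of lemma~\ref{le:kerD-apriori-basic} by a trivial extension in the $t$-variable. Define $w:P_r\to\R$ by $w(s,t):=F(s)$, i.e.\ extend $F$ to be constant in $t$. Then $w$ is $C^1$ in $s$ and (trivially) $C^2$ in $t$, and $w\ge 0$ by hypothesis.

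Next I would verify that the differential inequality of the lemma is satisfied with constant $a:=\mu$. Since $w$ is independent of $t$, we have $\p_t\p_t w\equiv 0$, so
\[
(\p_t\p_t-\p_s)w(s,t) = -F'(s) \ge -\mu F(s) = -\mu\, w(s,t),
\]
using the assumption $-F'+\mu F\ge 0$. Thus lemma~\ref{le:kerD-apriori-basic} applies and yields
\[
F(0) = w(0) \le \frac{c_1 e^{\mu r^2}}{r^3}\int_{P_r} w.
\]

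Finally I would compute the right-hand integral explicitly. Since $w(s,t)=F(s)$ and $P_r=(-r^2,0)\times(-r,r)$,
\[
\int_{P_r} w = \int_{-r^2}^0\!\!\int_{-r}^r F(s)\,dt\,ds = 2r\int_{-r^2}^0 F(s)\,ds.
\]
Substituting gives $F(0)\le \frac{2 c_1 e^{\mu r^2}}{r^2}\int_{-r^2}^0 F(s)\,ds$, which is the claimed bound with the same constant $c_1$ as in the lemma. There is no real obstacle here; the only thing to check is that the factor of $2r$ from the width of the $t$-interval combines correctly with the $r^{-3}$ from the lemma to give the advertised $r^{-2}$.
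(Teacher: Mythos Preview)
Your proof is correct and is exactly the approach the paper takes: the paper's proof is the single line ``Lemma~\ref{le:kerD-apriori-basic} with $w(s,t):=F(s)$,'' and you have simply spelled out the implicit verification and the computation $\int_{P_r} w = 2r\int_{-r^2}^0 F(s)\,ds$.
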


\begin{proof}
Lemma~\ref{le:kerD-apriori-basic}
with $w(s,t):=F(s)$.
\end{proof}

\begin{lemma}[{\cite[lemma~B.4]{SaJoa-LOOP}}]
\label{le:kerD-apriori-L2}
Let $R,r>0$ and $u:P_{R+r}\to\R$, 
$(s,t)\mapsto u(s,t)$, be $C^1$
in the $s$-variable 
and $C^2$ in the $t$-variable 
and $f,g:P_{R+r}\to\R$ 
be continuous functions such that
$$
     \left(\p_t\p_t-\p_s\right)u\ge g-f, \qquad
     u\ge 0, \qquad f\ge 0, \qquad g\ge 0.
$$
Then
$$
     \int_{P_R}g
     \le\int_{P_{R+r}}f+\left(\frac{4}{r^2}
     +\frac{1}{Rr}\right)
     \int_{P_{R+r}\setminus P_R}u.
$$
\end{lemma}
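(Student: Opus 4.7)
The plan is to test the differential inequality against a nonnegative cutoff and integrate by parts, using the positivity hypotheses to discard the unfavorable boundary terms. Concretely, I would construct a product cutoff $\phi(s,t)=\phi_1(t)\phi_2(s)$ with the following properties: $\phi_1\in C^2(\R)$ satisfies $0\le\phi_1\le 1$, $\phi_1\equiv 1$ on $[-R,R]$, $\phi_1\equiv 0$ outside $(-R-r,R+r)$, and $\p_t^2\phi_1\le 4/r^2$ pointwise; and $\phi_2\in C^1(\R)$ satisfies $0\le\phi_2\le 1$, $\phi_2\equiv 1$ on $[-R^2,0]$, $\phi_2\equiv 0$ on $(-\infty,-(R+r)^2]$, and $\p_s\phi_2\le 1/(Rr)$ pointwise; for $\phi_2$ one may take a piecewise linear function on the transition interval $[-(R+r)^2,-R^2]$ of length $2Rr+r^2\ge Rr$ and mollify it slightly, and for $\phi_1$ a standard smooth bump construction works.

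Next, multiplying $(\p_t^2-\p_s)u\ge g-f$ by $\phi\ge 0$ and integrating over $P_{R+r}$ yields
$$
\iint_{P_{R+r}}\phi(g-f)\le\iint_{P_{R+r}}\phi\,(\p_t^2-\p_s)u.
$$
Integration by parts twice in $t$ produces no boundary contribution, since $\phi$ and $\p_t\phi$ both vanish at $t=\pm(R+r)$; this gives $\iint\phi\,\p_t^2 u=\iint(\p_t^2\phi)\,u$. Integration by parts once in $s$ yields boundary contributions at $s=-(R+r)^2$, which vanish because $\phi_2(-(R+r)^2)=0$, and at $s=0$, which equals $-\int\phi(0,t)u(0,t)\,dt\le 0$ since $\phi,u\ge 0$; dropping this nonpositive term gives the inequality $-\iint\phi\,\p_s u\le\iint(\p_s\phi)\,u$.

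Assembling these computations produces
$$
\iint_{P_{R+r}}\phi\,g\le\iint_{P_{R+r}}\phi\,f+\iint_{P_{R+r}}(\p_t^2\phi+\p_s\phi)\,u.
$$
Now I would use that $\phi\le 1$ and $f\ge 0$ to bound the $f$ term by $\iint_{P_{R+r}}f$, that $\phi\equiv 1$ on $P_R$ and $g\ge 0$ to bound $\iint_{P_R}g\le\iint\phi\,g$, and finally that both $\p_t^2\phi$ and $\p_s\phi$ vanish on $P_R$ (as $\phi$ is constant there), so the last integral is supported on $P_{R+r}\setminus P_R$ and controlled by $(4/r^2+1/(Rr))\iint_{P_{R+r}\setminus P_R}u$ since $u\ge 0$.

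The main subtlety is that one cannot simply discard the boundary term at $s=0$ without knowing its sign; this is where the hypothesis $u\ge 0$ is crucial. The positivity of $f$ and $g$ plays the analogous role of allowing one to replace $\phi$ by $1$ or by $\mathbf{1}_{P_R}$ in the other two integrals. A minor technical nuisance is that one wants the pointwise bounds $\p_t^2\phi\le 4/r^2$ and $\p_s\phi\le 1/(Rr)$ rather than absolute-value bounds, but since $u\ge 0$ only the positive parts enter the estimate, so in fact any cutoff with $(\p_t^2\phi+\p_s\phi)^+\le 4/r^2+1/(Rr)$ on the transition layer suffices; this can be arranged by the explicit piecewise construction indicated above and a routine mollification.
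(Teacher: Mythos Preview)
Your proposal is correct and is the standard cutoff--and--integrate--by--parts argument for parabolic energy inequalities. Note that the paper does not supply its own proof of this lemma; it is simply quoted from \cite[Lemma~B.4]{SaJoa-LOOP}, so there is no in-paper argument to compare against. Your approach is precisely what one expects that reference to contain.

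Two minor remarks. First, to hit the exact constant $4/r^2$ as an \emph{upper} bound on $\p_t^2\phi_1$ (not an absolute-value bound), the piecewise-quadratic bump you allude to is the right choice; a generic smooth bump would give a slightly worse constant. Since $u$ is $C^2$ in $t$ and the piecewise-quadratic $\phi_1$ has $\phi_1'\in W^{1,\infty}$, the two integrations by parts in $t$ go through without needing further mollification. Second, the domain $P_{R+r}$ is open and excludes $s=0$, so the boundary term you drop at $s=0$ tacitly assumes $u$ extends continuously (or remains bounded) there; in every application in the paper $u$ is defined on a neighbourhood of $\overline{P_{R+r}}$, so this is harmless.
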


\begin{corollary}\label{co:kerD-apriori-L2}
Fix two positive constants $r,R$
and three functions
$U,F,G:[-(R+r)^2,0]\to\R$ 
such that $U$ is $C^1$ and 
$F,G$ are continuous. If
$$
     -U'\ge G-F,
     \qquad U\ge 0,\qquad
     F\ge 0,\qquad G\ge 0,
$$
then
$$
     \int_{-R^2}^0G(s)\,ds 
     \le\frac{R+r}{R}\left(
     \int_{-(R+r)^2}^0F(s)\,ds
     +
     \left(\frac{4}{r^2}+\frac{1}{Rr}\right)
     \int_{-(R+r)^2}^0U(s)\,ds\right).
$$
\end{corollary}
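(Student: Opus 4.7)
The plan is to derive the corollary from Lemma~\ref{le:kerD-apriori-L2} by choosing the auxiliary functions on the parabolic domain to be independent of the $t$-variable.

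First I would set $u(s,t):=U(s)$, $f(s,t):=F(s)$, and $g(s,t):=G(s)$ on $P_{R+r}=(-(R+r)^2,0)\times(-(R+r),R+r)$. Since these functions are constant in $t$, the differential inequality reduces to
$$
(\p_t\p_t-\p_s)u(s,t)=-U'(s)\ge G(s)-F(s)=g(s,t)-f(s,t),
$$
and the nonnegativity hypotheses on $u,f,g$ are inherited directly from those on $U,F,G$. So all hypotheses of Lemma~\ref{le:kerD-apriori-L2} are met.

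Next I would unpack the conclusion of the lemma,
$$
\int_{P_R}g\le\int_{P_{R+r}}f+\left(\frac{4}{r^2}+\frac{1}{Rr}\right)\int_{P_{R+r}\setminus P_R}u,
$$
by carrying out the trivial $t$-integrations. The left side yields $2R\int_{-R^2}^0 G(s)\,ds$, the first term on the right yields $2(R+r)\int_{-(R+r)^2}^0 F(s)\,ds$, and for the last term I would simply enlarge the domain of integration from $P_{R+r}\setminus P_R$ to all of $P_{R+r}$ (legitimate because $u\ge 0$), giving at most $2(R+r)\int_{-(R+r)^2}^0 U(s)\,ds$. Dividing through by $2R$ produces exactly the factor $\frac{R+r}{R}$ in front of both terms, which is the claimed inequality.

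There is no serious obstacle here; the only point that requires a moment's care is that $P_{R+r}\setminus P_R$ really is contained in $P_{R+r}$, so enlarging the integration domain is allowed when the integrand is nonnegative, and that the $t$-widths $2R$ and $2(R+r)$ combine to give precisely the factor $\frac{R+r}{R}$ appearing in the statement.
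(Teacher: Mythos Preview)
Your proof is correct and follows exactly the same approach as the paper, which simply applies Lemma~\ref{le:kerD-apriori-L2} with $u(s,t)=U(s)$, $f(s,t)=F(s)$, and $g(s,t)=G(s)$. Your additional unpacking of the $t$-integrations and the enlargement of the domain $P_{R+r}\setminus P_R$ to $P_{R+r}$ is precisely what is needed to recover the stated constants.
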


\begin{proof}
Lemma~\ref{le:kerD-apriori-L2}
with $u(s,t)=U(s)$, $f(s,t)=F(s)$, 
and $g(s,t)=G(s)$.
\end{proof}

\begin{proof}
[Proof of theorem~\ref{thm:kerD-apriori}]
We prove the theorem
in three steps.
The idea is to
prove in step~1
the desired pointwise estimate
in its integrated form
(slicewise estimate).
In steps~2 and 3
this is then used to prove
the pointwise estimates.
Note that in step~3
we provide an estimate
which is not used in the
current proof, but 
later on in the proof of 
theorem~\ref{thm:kerD-apriori-II}.
Occasionaly
we denote $\xi(s,t)$ by $\xi_s(t)$
and in this case
$\norm{\xi_s}$ abbreviates
$\norm{\xi_s}_{L^2(S^1)}$.

%%%%%%%%%%%% STEP 1%%%%%%%%%%%%%
\vspace{.1cm}
\noindent
{\bf Step 1.}
{\it There is a constant $C_1=C_1(C_0,\Vv)>0$
such that
$$
     \int_0^1\Abs{\xi(s,t)}^2\: dt
     +\int_{s-\frac{1}{16}}^s\int_0^1
     \Abs{\Nabla{t}\xi(s,t)}^2\: dtds
     \le C_1 \Norm{\xi}_{L^2([s-\frac{1}{4},s]\times S^1)}^2
$$
for every $s\in\R$.
}

\vspace{.1cm}
\noindent
Define the functions
$f,g:\R\times S^1\to\R$
and $F,G:\R\to\R$ by
$$
     2f:=\abs{\xi}^2,\quad
     2g:=\abs{\Nabla{t}\xi}^2,\quad
     F(s):=\int_0^1f(s,t)\: dt,\quad
     G(s):=\int_0^1g(s,t)\: dt,
$$
and abbreviate
$$
     L:=\p_t\p_t-\p_s,\qquad
     \Ll:=\Nabla{t}\Nabla{t}-\Nabla{s}.
$$
Then
\begin{equation}\label{eq:ker-Lf}
     Lf=2g+U,\qquad
     U:=\langle\xi,\Ll\xi\rangle.
\end{equation}
Assume that $U$
satisfies the pointwise inequality
\begin{equation}\label{eq:ker-U}
    \abs{U}\le
    \mu f+\frac{1}{2}
    \norm{\xi_s}^2
\end{equation}
for a suitable constant $\mu=\mu(C_0,\Vv)>0$.
Hence
$
     Lf+\mu f+F\ge 2g
$
by~(\ref{eq:ker-Lf})
and integration over the interval
$0\le t\le1$ shows that
$$
     -F^\prime+(\mu+1)F\ge 2G.
$$
Step~1 follows by
Corollary~\ref{co:kerD-apriori-basic}
with $r=\frac{1}{2}$ and
corollary~\ref{co:kerD-apriori-L2}
with $R=r=\frac{1}{4}$.

It remains to prove~(\ref{eq:ker-U}).
Since $\xi$
solves the linear
heat equation~(\ref{eq:kerD-heat}),
it follows that
\begin{equation*}
\begin{split}
      \Abs{U}
     &=\Abs{\langle\xi,\Nabla{t}\Nabla{t}\xi
      -\Nabla{s}\xi\rangle}\\
     &=\Abs{\langle\xi,R(\xi,\p_t u)\p_t u
      +\Hh_\Vv(u)\xi\rangle}\\
     &\le \Norm{R}_\infty
     \Norm{\p_t u}_\infty^2\Abs{\xi}^2
     +c_1 \Abs{\xi}
     \bigl(\Abs{\xi}+
     \Norm{\xi_s}_{L^1(S^1)}\bigr) \\
    &\le \left(2C_0^2\Norm{R}_\infty
     +2c_1+{c_1}^2\right)
     \frac{1}{2}\Abs{\xi}^2
     +\frac{1}{2}\Norm{\xi_s}^2.
\end{split}
\end{equation*}
Here we used
the assumption on $\p_t u$,
axiom~(V1) with constant $c_1$,
and the fact that
$\norm{\cdot}_{L^1(S^1)}
\le\norm{\cdot}_{L^2(S^1)}$
by H\"older's inequality.
This proves~(\ref{eq:ker-U}).

%%%%%%%%%%%% STEP 2%%%%%%%%%%%%%
\vspace{.1cm}
\noindent
{\bf Step 2.}
{\it We prove the estimate
for $\abs{\xi}$ in
theorem~\ref{thm:kerD-apriori}.}

\vspace{.1cm}
\noindent
Note that
$Lf\ge-\abs{U}$ by~(\ref{eq:ker-U}).
Hence the estimate~(\ref{eq:ker-U})
for $\abs{U}$
and the slicewise estimate
for $\xi_s$
provided by step~1
prove the pointwise
inequality
$$
     Lf
     \ge-\mu f-2C_1 
     \Norm{\xi}_{L^2([s-\frac{1}{4},s]\times S^1)}^2
$$
for all $s$ and $t$.
Fix $(s_0,t_0)$ and set
$
     a=a(s_0):=\frac{2C_1}{\mu}
     \Norm{\xi}_{L^2([s_0-\frac{1}{2},s_0]\times S^1)}^2
$.
Then
$$
     L\left(f+a\right)
     \ge-\mu\left(f+a\right)
$$
for all $t$ and $s\in[s_0-\frac{1}{4},s_0]$.
Hence
lemma~\ref{le:kerD-apriori-basic} 
with $r=\frac{1}{2}$
applies to the
function
$w(s,t):=f(s_0+s,t_0+t)+a$
and we obtain that
\begin{equation*}
\begin{split}
     f(s_0,t_0)
    &\le 8c_1e^{\mu/4}
     \int_{-\frac{1}{4}}^0\int_0^1
     \left( f(s_0+s,t_0+t)+a \right)\: dtds\\
    &\le 8c_1e^{\mu/4}\left(\frac{1}{2}
     +\frac{C_1}{2\mu}\right) 
     \Norm{\xi}_{L^2([s_0-\frac{1}{2},s_0]\times S^1)}^2.
\end{split}
\end{equation*}
Since $s_0\in\R$ 
and $t_0\in S^1$
were chosen arbitrarily,
this proves step~2.

%%%%%%%%%%%% STEP 3%%%%%%%%%%%%%
\vspace{.1cm}
\noindent
{\bf Step 3.}
{\it There is a constant $C_3=C_3(C_0,\Vv)>0$
such that
$$
     \int_{s-\frac{1}{4}}^s\int_0^1
     \Abs{\Nabla{t}\Nabla{t}\xi(s,t)}^2\: dtds
     \le C_3 \Norm{\xi}_{L^2([s-\frac{5}{4},s]\times S^1)}^2
$$
for every $s\in\R$.
Moreover, the estimate
for $\abs{\Nabla{t}\xi}$ in
theorem~\ref{thm:kerD-apriori}
holds true.}

\vspace{.1cm}
\noindent
Define the functions
$f_1,g_1:\R\times S^1\to\R$ by
$$
     2f_1:=\abs{\Nabla{t}\xi}^2,\quad
     2g_1:=\abs{\Nabla{t}\Nabla{t}\xi}^2
$$
and the functions
$F_1,G_1:\R\to\R$ by
$$
     F_1(s):=\int_0^1f_1(s,t)\: dt,\quad
     G_1(s):=\int_0^1g_1(s,t)\: dt.
$$
Then
\begin{equation}\label{eq:kerD-Lf1}
     Lf_1=2g_1+U_t,\qquad
     U_t:=\langle\Nabla{t}\xi,
     \Ll\Nabla{t}\xi\rangle.
\end{equation}
Since $\xi$ solves 
the linear heat 
equation~(\ref{eq:kerD-heat}),
it follows that
\begin{equation*}
\begin{split}
     \Ll \Nabla{t}\xi
    &=\Nabla{t}\left(\Nabla{t}\Nabla{t}\xi
     -\Nabla{s}\xi\right)
     -[\Nabla{s},\Nabla{t}]\xi\\
    &=\Nabla{t}\left(-R(\xi,\p_tu)\p_tu
     -\Hh_\Vv(u)\xi\right)
     -R(\p_su,\p_tu)\xi\\
    &=-\left(\Nabla{t} R\right)
     (\xi,\p_tu)\p_tu
     -R(\Nabla{t}\xi,\p_tu)\p_tu
     -R(\xi,\Nabla{t}\p_tu)\p_tu\\
    &\quad
     -R(\xi,\p_tu)\Nabla{t}\p_tu
     -\Nabla{t}\Hh_\Vv(u)\xi
     -R(\p_su,\p_tu)\xi.
\end{split}
\end{equation*}
Now take the pointwise
inner product of
this identity and
$\Nabla{t}\xi$ and estimate
the resulting six terms
separately using the 
$L^\infty$ boundedness
assumption of the various 
derivatives of $u$.
For instance, term five
satisfies the estimate
$$
     \Abs{\langle\Nabla{t}\xi,
     \Nabla{t}\Hh_\Vv(u)\xi\rangle}
     \le
     c_2\Abs{\Nabla{t}\xi}
     \left(
     \Abs{\Nabla{t}\xi}
     +(1+\abs{\p_tu})\left(
     \Abs{\xi}+\Norm{\xi_s}_{L^1(S^1)}\right)
     \right)
$$
by the second inequality
of axiom~(V2) with constant $c_2$.
It follows that $U_t$
satisfies the pointwise inequality
\begin{equation*}
    \abs{U_t}\le
    \mu f_1+\mu\Abs{\xi}^2
    +\mu\Norm{\xi_s}_{L^2(S^1)}^2
\end{equation*}
for a suitable constant
$\mu=\mu(C_0,\Vv)>0$.
Hence
\begin{equation}\label{eq:Lf_1}
     L f_1
     \ge 2g_1-\mu f_1
     -\mu \Abs{\xi}^2
     -\mu \Norm{\xi_s}_{L^2(S^1)}^2
\end{equation}
pointwise for all $s$ and $t$.
Integrate this
inequality over $t\in[0,1]$
to obtain that
$$
     -F_1^\prime
     \ge 2G_1-\mu F_1
     -2\mu F
$$
pointwise for every $s\in\R$.
Then corollary~\ref{co:kerD-apriori-L2}
with $R=r=\frac{1}{2}$
shows that
$$
     \int_{s_0-\frac{1}{4}}^{s_0}
     \Norm{\Nabla{t}\Nabla{t}\xi_s}^2 ds
     \le (\mu+20) \int_{s_0-1}^{s_0}
     \Norm{\Nabla{t}\xi_s}^2 ds
     +2\mu \int_{s_0-1}^{s_0}
     \Norm{\xi_s}^2 ds
$$
for every $s_0\in\R$.
Now
$$
     \int_{s_0-1}^{s_0}
     \Norm{\Nabla{t}\xi_s}^2 ds
     \le 16C_1 \int_{s_0-\frac{5}{4}}^{s_0}
     \Norm{\xi_s}^2 ds
$$
by step~1
and this proves
the first assertion of step~3.
(We need this result
only in the proof of
theorem~\ref{thm:kerD-apriori-II} below.)

To prove the second assertion
of step~3, that is the 
estimate for
$\abs{\Nabla{t}\xi}$,
note that estimate~(\ref{eq:Lf_1}),
step~1, and step~2
imply the pointwise estimate
$$
     Lf_1\ge -\mu f_1
     -\mu
     \Norm{\xi}_{L^2([s-\frac{1}{2},s]\times S^1)}^2
$$
for all $s$ and $t$.
Here we have chosen a
larger value for
the constant $\mu$.
Fix $(s_0,t_0)\in\R\times S^1$
and set
$
     a=a(s_0):=
     \Norm{\xi}_{L^2([s_0-1,s_0]\times S^1)}^2
$.
Then
$$
     L\left(f_1+a\right)
     \ge-\mu\left(f_1+a\right)
$$
for all $t$ and $s\in[s_0-\frac{1}{2},s_0]$.
Hence
lemma~\ref{le:kerD-apriori-basic} 
with $r=\frac{1}{2}$ applies to the
function
$w(s,t):=f_1(s_0+s,t_0+t)+a$
and proves the desired estimate, namely
\begin{equation*}
\begin{split}
     f_1(s_0,t_0)
    &\le 8c_1e^{\mu/4}
     \int_{-\frac{1}{4}}^0\int_0^1
     \left( f_1(s_0+s,t_0+t)+a \right)\: dtds\\
    &=8c_1e^{\mu/4}\left(\frac{1}{2}
     \int_{s_0-\frac{1}{4}}^{s_0}\int_0^1
     \Abs{\Nabla{t}\xi(s,t)}^2 dtds
     +\frac{a}{4}\right)\\
    &\le 8c_1e^{\mu/4}\left(
     2\Norm{\xi}_{L^2([s_0-\frac{1}{2},s_0]\times S^1)}^2
     +\frac{1}{4}
     \Norm{\xi}_{L^2([s_0-1,s_0]\times S^1)}^2
     \right)
\end{split}
\end{equation*}
for all $s_0\in\R$ and $t_0\in S^1$.
The final inequality
uses the estimate of step~1.
This concludes the proof 
of step~3 and
theorem~\ref{thm:kerD-apriori}.
\end{proof}

\begin{proof}
[Proof of theorem~\ref{thm:kerD-apriori-II}]
Occasionaly
we denote $\xi(s,t)$ by $\xi_s(t)$.
Define the functions
$f_2,g_2:\R\times S^1\to\R$
by
$$
     f_2:=\frac{1}{2}\abs{\Nabla{t}\Nabla{t}\xi}^2,\qquad
     g_2:=\frac{1}{2}\abs{\Nabla{t}\Nabla{t}\Nabla{t}\xi}^2
$$
and abbreviate
$
     L:=\p_t\p_t-\p_s
$
and
$
     \Ll:=\Nabla{t}\Nabla{t}-\Nabla{s}
$.
Then
\begin{equation}\label{eq:ker-Lf_tt}
     Lf_2=2g_2+U_{tt},\qquad
     U_{tt}:=\langle\Nabla{t}\Nabla{t}\xi,
     \Ll\Nabla{t}\Nabla{t}\xi\rangle.
\end{equation}
We estimate $\abs{U_{tt}}$.
Since $\xi$ solves 
the linear heat 
equation~(\ref{eq:kerD-heat}),
it follows that
\begin{equation*}
\begin{split}
     \Ll \Nabla{t}\Nabla{t}\xi
    &=\Nabla{t}\Nabla{t}\left(\Nabla{t}\Nabla{t}\xi
     -\Nabla{s}\xi\right)
     +[\Nabla{t}\Nabla{t},\Nabla{s}]\xi\\
    &=\Nabla{t}\Nabla{t}\left(-R(\xi,\p_tu)\p_tu
     -\Hh_\Vv(u)\xi\right)
     +\Nabla{t}[\Nabla{t},\Nabla{s}]\xi
     +[\Nabla{t},\Nabla{s}]\Nabla{t}\xi\\
    &=\Nabla{t}\Bigl(
     -\left(\Nabla{t} R\right)
     (\xi,\p_tu)\p_tu
     -R(\Nabla{t}\xi,\p_tu)\p_tu
     -R(\xi,\Nabla{t}\p_tu)\p_tu\\
    &\quad
     -R(\xi,\p_tu)\Nabla{t}\p_tu\Bigr)
     -\Nabla{t}\Nabla{t}\Hh_\Vv(u)\xi
     +\left(\Nabla{t} R\right)
     (\p_tu,\p_su)\xi\\
    &\quad
     +R(\Nabla{t}\p_tu,\p_su)\xi
     +R(\p_tu,\Nabla{t}\p_su)\xi
     +2R(\p_tu,\p_su)\Nabla{t}\xi.
\end{split}
\end{equation*}
Now take the pointwise
inner product of this identity
and $\Nabla{t}\Nabla{t}\xi$. Estimate
the resulting sum term by term and
use the assumption that
various derivatives of $u$
are bounded in $L^\infty$.
It follows that
$$
     \Abs{U_{tt}}
     \le \mu_1\Abs{\Nabla{t}\Nabla{t}\xi}
     \left(\Abs{\xi}+\Abs{\Nabla{t}\xi}
     +\Abs{\Nabla{t}\Nabla{t}\xi}\right)
     +\Abs{\Nabla{t}\Nabla{t}\xi}\cdot
     \Abs{\Nabla{t}\Nabla{t}\Hh_\Vv(u)\xi}
$$
for some positive constant
$\mu_1$ which depends only
on the $L^\infty$ bound $C_0$.
Note that by axiom~(V3)
there is a positive constant
$c_3=c_3(\Vv)$ such that
\begin{equation*}
\begin{split}
     \Abs{\Nabla{t}\Nabla{t}\Hh_\Vv(u)\xi}
    &\le c_3
     \Abs{\Nabla{t}\Nabla{t}\xi}
     +c_3\left(1+\Abs{\p_tu}\right)\Abs{\Nabla{t}\xi}\\
    &\quad
     +c_3\left(1+\Abs{\p_tu}^2
     +\Abs{\Nabla{t}\p_tu}\right)
     \left(\Abs{\xi}+\Norm{\xi_s}_{L^1(S^1)}\right).
\end{split}
\end{equation*}
Hence there is a
positive constant
$\mu_2=\mu_2(C_0,\Vv)$ such that
$$
     \Abs{U_{tt}}
     \le \mu_2
     \left(f_2
     +\Abs{\Nabla{t}\xi}^2
     +\Abs{\xi}^2
     +\Norm{\xi_s}_{L^2(S^1)}^2
     \right).
$$
Theorem~\ref{thm:kerD-apriori}
applied to the last three 
terms of this sum
implies that
\begin{equation*}
    \abs{U_{tt}}\le
    \mu f_2+\mu
    \Norm{\xi}_{L^2([s-1,s]\times S^1)}^2
\end{equation*}
pointwise for all $s$ and $t$
and with a suitable constant
$\mu=\mu(C_0,\Vv)>0$.
Now $Lf_2\ge-\abs{U_{tt}}$
by~(\ref{eq:ker-Lf_tt})
and therefore
$$
     Lf_2
     \ge-\mu f_2-\mu
     \Norm{\xi}_{L^2([s-1,s]\times S^1)}^2
$$
pointwise
for all $s$ and $t$.
Fix $s_0\in\R$ and set
$
     a:=
     \Norm{\xi}_{L^2([s_0-2,s_0]\times S^1)}^2
$,
then
$$
     L\left(f_2+a\right)
     \ge-\mu\left(f_2+a\right)
$$
for all $t\in S^1$ and $s\in[s_0-1,s_0]$.
Fix $t_0\in S^1$
and apply
lemma~\ref{le:kerD-apriori-basic} 
with $r=1$
to the function
$w(s,t):=f_2(s_0+s,t_0+t)+a$
to obtain that
\begin{equation*}
\begin{split}
     f_2(s_0,t_0)
    &\le c_1e^\mu
     \int_{-1}^0\int_{-1}^{+1}
     \left( f_2(s_0+s,t_0+t)+a \right)
     \: dtds\\
    &=c_1e^\mu\left(
     \int_{s_0-1}^{s_0}\int_{0}^{1}
     \Abs{\Nabla{t}\Nabla{t}\xi(s,t)}^2\: dtds
     +2a\right)\\
    &\le c_1e^\mu\left(4C_3+2\right)
     \Norm{\xi}_{L^2([s_0-2,s_0]
     \times S^1)}^2.
\end{split}
\end{equation*}
Here the last inequality
follows by the estimate of step~3
in the proof of
theorem~\ref{thm:kerD-apriori}
with constant $C_3=C_3(C_0,\Vv)>0$.
Since $s_0\in\R$ 
and $t_0\in S^1$
were chosen arbitrarily,
the proof of the first
estimate of
theorem~\ref{thm:kerD-apriori-II}
is complete.

The second estimate,
that is the one for $\abs{\Nabla{s}\xi}$,
follows easily from the
fact that $\xi$
solves the linear heat
equation~(\ref{eq:kerD-heat}),
the estimate for 
$\abs{\Nabla{t}\Nabla{t}\xi}$
which we just proved,
the estimate for $\abs{\xi}$
of
theorem~\ref{thm:kerD-apriori},
and the estimate for
$\abs{\Hh_\Vv(u)\xi}$ 
provided by axiom~(V1).
This concludes the
proof of 
theorem~\ref{thm:kerD-apriori-II}.
\end{proof}

%%%%%%%%%%%%%%%%%%%%%%%%%%%%%%%%%%%%%%%%%%%%%%
%%%%%%%%%%%%%%% Subsection  %%%%%%%%%%%%%%%%%%
%%%%%%%%%%%%%%%%%%%%%%%%%%%%%%%%%%%%%%%%%%%%%%
\subsection{Exponential decay}
\label{subsec:kerD-exp-decay}

Given a smooth loop $x:S^1\to M$
consider the linear operator
defined by
\begin{equation}\label{eq:A_x}
     A_x\xi
     =-\Nabla{t}\Nabla{t}\xi
     -R(\xi,\p_tx)\p_tx
     -\Hh_\Vv(x)\xi
\end{equation}
on $L^2(S^1,x^*TM)$ 
with dense domain 
$W^{2,2}(S^1,x^*TM)$.
With respect to the $L^2$ inner product 
$\langle\cdot,\cdot\rangle$
this operator is self-adjoint;
see e.g.~\cite{Joa-INDEX}
for the case of geometric
perturbations $V_t$
and use lemma~\ref{le:symmetry}
in the general case.

\begin{theorem}[Backward exponential decay]
\label{thm:kerD-exp-decay}
Fix a perturbation $\Vv:\Ll M\to\R$ 
that satisfies~{\rm (V0)--(V2)}
and a constant $c_0>0$.
Then there exist positive constants
$\delta,\rho,C$
such that the following holds.
Let $x:S^1\to M$ be a smooth loop
such that $A_x$ given
by~(\ref{eq:A_x}) is injective and
$
     \Norm{\p_tx}_2+\Norm{\Nabla{t}\p_tx}_2
     \le c_0
$.
Assume
$u:(-\infty,0]\times S^1\to M$
is a smooth map and $T_0>0$ 
is a constant such that
$$
     u_s=\exp_x \eta_s,
     \quad
     \Norm{\eta_s}_{W^{2,2}}\le \delta,
     \quad
     \Norm{\p_su_s}_2+\Norm{\Nabla{s}\p_tu_s}_2
     \le\delta,
$$
whenever $s\le-T_0$.
Assume further that $\xi$ is a smooth vector
field along $u$ such that the function
$s\mapsto\norm{\xi_s}_2$
is bounded by a constant $c=c(\xi)$
and $\xi$ solves one of two equations
\begin{equation}\label{eq:pm-heat}
     \pm\Nabla{s}\xi
     -\Nabla{t}\Nabla{t}\xi
     -R(\xi,\p_tu)\p_tu
     -\Hh_\Vv(u)\xi=0.
\end{equation}
Then
$$
     \Norm{\xi_s}_2^2
     \le e^{\rho(s+T_0)}
     \Norm{\xi_{-T_0}}_2^2
     \le c^2 e^{\rho(s+T_0)}
$$
and
$$
     \Norm{\xi}_{L^2((-\infty,s]\times S^1)}^2
     \le {\textstyle\frac{C^2}{\rho}} e^{\rho(s+T_0)}
     \Norm{\xi}_{L^2([-T_0-1,-T_0]\times S^1)}^2
$$
for every $s\le -T_0$.
\end{theorem}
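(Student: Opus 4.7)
The plan is to reduce the backward exponential decay to a second-order ODE inequality $g''(s)\geq\rho^2 g(s)$ for the slicewise norm $g(s):=\|\xi_s\|_{L^2(S^1)}$, and then extract the two stated estimates via an ODE barrier argument combined with the apriori bound of Theorem~\ref{thm:kerD-apriori}. The two sign choices in~(\ref{eq:pm-heat}) reduce to the same ODE (up to $s\mapsto -s$) and can be treated simultaneously.

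The first ingredient is a uniform spectral gap for the operators $A_{u_s}$. Since $A_x$ is self-adjoint and elliptic of order two on $S^1$, its spectrum is discrete; injectivity then gives $\lambda>0$ with $\|A_x\eta\|_{L^2}\geq\lambda\|\eta\|_{L^2}$ for all $\eta\in W^{2,2}(S^1,x^*TM)$. I would identify sections of $u_s^*TM$ with sections of $x^*TM$ by parallel transport along the short geodesics $\tau\mapsto\exp_x(\tau\eta_s(t))$. Under this identification, axioms (V0)--(V2), compactness of $M$, and $\|\eta_s\|_{W^{2,2}}\leq\delta$ (hence $\|\eta_s\|_{C^1}\leq C\delta$ by Sobolev) show that $A_{u_s}$ differs from $A_x$ by an operator of $W^{2,2}\to L^2$-norm $O(\delta)$. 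Combining with the elliptic estimate $\|\xi\|_{W^{2,2}}\leq C(\|A_{u_s}\xi\|_{L^2}+\|\xi\|_{L^2})$ and shrinking $\delta$ yields
$$
\|A_{u_s}\xi\|_{L^2}\geq\tfrac{\lambda}{2}\|\xi\|_{L^2},\qquad s\leq -T_0.
$$

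Let $f(s):=\tfrac{1}{2}\|\xi_s\|_{L^2}^2$. Writing~(\ref{eq:pm-heat}) as $\Nabla{s}\xi=\mp A_{u_s}\xi$ and using self-adjointness of $A_{u_s}$ gives $f''(s)=2\|A_{u_s}\xi\|_{L^2}^2\mp\langle\xi,(\Nabla{s}A_u)\xi\rangle$. The commutator $[\Nabla{s},A_u]$ decomposes into terms whose coefficients involve $\partial_su$, $\Nabla{s}\partial_tu=\Nabla{t}\partial_su$, derivatives of the curvature, and contributions from the Hessian of $\Vv$ controlled by axiom (V2). Since $\|\partial_su_s\|_{W^{1,2}(S^1)}\leq\delta$ forces $\|\partial_su_s\|_{L^\infty}\leq C\delta$ by Sobolev, Cauchy--Schwarz together with absorption of top-order pieces into $\|\xi\|_{W^{2,2}}$ via elliptic regularity yields $|\langle\xi,(\Nabla{s}A_u)\xi\rangle|\leq C\delta(\|A_{u_s}\xi\|_{L^2}^2+\|\xi\|_{L^2}^2)$. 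After one further shrinking of $\delta$, $f''(s)\geq\|A_{u_s}\xi\|_{L^2}^2\geq(\lambda/2)^2 g(s)^2$. Cauchy--Schwarz $(f')^2=\langle\xi,\Nabla{s}\xi\rangle^2\leq 2f\|A_{u_s}\xi\|_{L^2}^2$ combined with the identity $gg''=f''-(g')^2$ and $(g')^2=(f')^2/(2f)\leq\|A_{u_s}\xi\|_{L^2}^2$ gives $gg''\geq\|A_{u_s}\xi\|_{L^2}^2\geq(\lambda/2)^2g^2$, hence $g''\geq\rho^2g$ on $\{g>0\}$ with $\rho:=\lambda/2$.

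For the ODE comparison, define for $\epsilon>0$ the barrier $g_\epsilon(s):=g(-T_0)e^{\rho(s+T_0)}+\epsilon e^{-\rho(s+T_0)}$, which satisfies $g_\epsilon''=\rho^2g_\epsilon$. Then $(g-g_\epsilon)''\geq\rho^2(g-g_\epsilon)$, with $(g-g_\epsilon)(-T_0)=-\epsilon<0$ and $(g-g_\epsilon)(s)\to-\infty$ as $s\to-\infty$, since $g\leq c$ while $g_\epsilon\to\infty$. A positive interior maximum at some $s_0$ would give $(g-g_\epsilon)''(s_0)\leq 0<\rho^2(g-g_\epsilon)(s_0)$, a contradiction; hence $g\leq g_\epsilon$ throughout $(-\infty,-T_0]$, and letting $\epsilon\to 0$ gives $g(s)\leq g(-T_0)e^{\rho(s+T_0)}$. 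Squaring and renaming $2\rho\mapsto\rho$ produces the first estimate. Integrating over $(-\infty,s]$ and using Theorem~\ref{thm:kerD-apriori} (with Remark~\ref{rmk:apriori-adjoint} in the case $\xi$ solves $\Dd_u^*\xi=0$) to bound $\|\xi_{-T_0}\|_{L^2}^2\leq C^2\|\xi\|_{L^2([-T_0-1,-T_0]\times S^1)}^2$ gives the second. The hardest step is the coercivity: axiom (V1) only guarantees that $\Hh_\Vv$ is \emph{almost} a zeroth-order operator (an $L^1$-type nonlocal contribution is permitted), so the comparison of $A_{u_s}$ with $A_x$ and the commutator estimate both require care in tracking operator-norm versus pointwise bounds under parallel transport.
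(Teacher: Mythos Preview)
Your argument follows the paper's essentially step for step: perturb the spectral gap of $A_x$ to $A_{u_s}$ (this is lemma~\ref{le:hessian}), derive a second-order convexity estimate for the slicewise $L^2$-norm from the equation, and extract exponential decay via an ODE comparison (your barrier argument is equivalent to the paper's lemma~\ref{le:inequality}). There is one arithmetic slip in your detour through $g=\|\xi_s\|_2$: from $f''\ge\|A_{u_s}\xi\|^2$ and $(g')^2\le\|A_{u_s}\xi\|^2$ you only obtain $gg''=f''-(g')^2\ge 0$, not $gg''\ge\|A_{u_s}\xi\|^2$ as you claim. The repair is to retain the sharper bound $f''\ge(2-\varepsilon)\|A_{u_s}\xi\|^2$ before subtracting $(g')^2$. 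In fact the paper avoids this entirely by staying with $F=\tfrac12\|\xi_s\|_2^2$: after absorbing the commutator one already has $F''\ge\|\Nabla{s}\xi_s\|_2^2=\|A_{u_s}\xi_s\|_2^2\ge\mu^{-2}\|\xi_s\|_2^2=2\mu^{-2}F$, and lemma~\ref{le:inequality} applies directly to $F$ --- no passage to $g$ is needed, which also sidesteps any regularity issue for $g$ where $\xi_s$ might vanish.
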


Note the weak assumption
($L^2$ versus $L^\infty$) 
on the $s$-derivatives of $\p_tu_s$
and its base component $u_s$.
To prove theorem~\ref{thm:kerD-exp-decay}
we need two lemmas.

\begin{remark}[Forward exponential decay]
\label{rmk:decay-forward}
If the
domain of $u$
is the forward half cylinder
$[0,\infty)\times S^1$
and the vector field $\xi$
along $u$ solves~$\pm(\ref{eq:pm-heat})$,
then theorem~\ref{thm:kerD-exp-decay}
applies to
$v(\sigma,t):=u(-\sigma,t)$
and $\eta(\sigma,t):=\xi(-\sigma,t)$,
since $\eta$
solves~$\mp(\ref{eq:pm-heat})$.
The estimates obtained
for $\eta$ provide
estimates for $\xi$, for instance
$$
     \Norm{\xi}_{L^2([\sigma,\infty)\times S^1)}^2
     \le {\textstyle\frac{C^2}{\rho}} e^{\rho(-\sigma+T_0)}
     \Norm{\xi}_{L^2([T_0,T_0+1]\times S^1)}^2
$$
for every $\sigma\ge T_0$.
\end{remark}

\begin{lemma}[Stability of injectivity]\label{le:hessian}
Fix a perturbation $\Vv:\Ll M\to\R$ 
that satisfies~{\rm (V0)--(V2)}
and a constant $c_0>1$.
Then there are
constants $\mu,\delta_0>0$ 
such that the following holds.
If $x$ and $\gamma$ are
smooth loops in $M$
such that the operator
$A_x$ is injective and
$$
    \gamma=\exp_x(\eta),
    \qquad
    \Norm{\eta}_{W^{2,2}}
    \le\delta_0,
    \qquad
    \norm{\p_t x}_2
    +\norm{\Nabla{t}\p_t x}_2
    \le c_0,
$$
then
$$
    \Norm{\xi}_2+\Norm{\Nabla{t}\xi}_2
    +\Norm{\Nabla{t}\Nabla{t}\xi}_2
    \le \mu\Norm{A_\gamma \xi}_2
$$
for every $\xi\in\Om^0(S^1,\gamma^*TM)$.
\end{lemma}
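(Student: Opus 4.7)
The plan is to first establish invertibility with a quantitative estimate for $A_x$ itself, then transfer this to $A_\gamma$ by parallel transport and absorb a small perturbation via a Neumann-type argument.

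\textbf{Step 1 (invertibility of $A_x$).} The operator $A_x$ is a formally self-adjoint, second-order elliptic operator on the circle with smooth coefficients: they are controlled by axiom~(V1) together with the hypothesis $\Norm{\p_t x}_2+\Norm{\Nabla{t}\p_t x}_2\le c_0$, which by Sobolev embedding $W^{1,2}(S^1)\hookrightarrow C^0(S^1)$ yields a $C^0$ bound on $\p_t x$. Hence $A_x:W^{2,2}(S^1,x^*TM)\to L^2(S^1,x^*TM)$ is Fredholm of index zero, so the hypothesized injectivity upgrades to bijectivity. Elliptic regularity on the circle then produces a constant $\mu_0=\mu_0(x,\Vv,c_0)$ with
\[
\Norm{\xi}_2+\Norm{\Nabla{t}\xi}_2+\Norm{\Nabla{t}\Nabla{t}\xi}_2
\le \mu_0\Norm{A_x\xi}_2
\qquad\text{for every }\xi\in W^{2,2}(S^1,x^*TM).
\]

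\textbf{Step 2 (transfer by parallel transport).} For each $t\in S^1$ let $\Phi_t:T_{\gamma(t)}M\to T_{x(t)}M$ denote parallel transport along the short geodesic $s\mapsto\exp_{x(t)}(s\eta(t))$, $s\in[0,1]$; this is a fiberwise isometry, so $\Phi$ preserves the $L^2$ norm exactly. Differentiating the parallel-transport ODE shows that $\Phi$ intertwines the two connections up to a zeroth-order error of pointwise size $O(\Abs{\eta}+\Abs{\Nabla{t}\eta})$, and similarly for second derivatives. Consequently, the conjugated operator $\widetilde A:=\Phi^{-1}\circ A_x\circ\Phi$, acting on sections of $\gamma^*TM$, inherits from Step~1 the same estimate with constant at most $2\mu_0$ once $\Norm{\eta}_{W^{2,2}}$ is small enough (the $W^{2,2}\hookrightarrow C^1$ embedding turns $\delta_0$-smallness into pointwise smallness).

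\textbf{Step 3 (comparison $\widetilde A-A_\gamma$).} I would bound $\widetilde A-A_\gamma$ as an operator $W^{2,2}\to L^2$ by splitting it into the three natural pieces. For the $-\Nabla{t}\Nabla{t}$-piece, the commutator of $\Phi$ with $\Nabla{t}$ and $\Nabla{t}\Nabla{t}$ is again controlled by the parallel-transport ODE and is of order $\Abs{\eta}+\Abs{\Nabla{t}\eta}+\Abs{\Nabla{t}\Nabla{t}\eta}$ acting on $\xi$ and $\Nabla{t}\xi$. For the curvature piece, Taylor expansion of $R$ and of $\p_t\gamma=\Phi^{-1}\p_t x+\Nabla{t}\eta+O(\Abs{\eta}(1+\Abs{\p_t x}))$ produces the bound using the $c_0$-hypothesis on $x$. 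For the Hessian piece I would connect $x$ to $\gamma$ by the family $u_\sigma(t):=\exp_{x(t)}(\sigma\eta(t))$, $\sigma\in[0,1]$, and integrate in $\sigma$: the $\sigma$-derivative of $\Phi_\sigma^{-1}\Hh_\Vv(u_\sigma)\Phi_\sigma\,\xi$ is an expression of the type controlled by axiom~(V2), applied to the geodesic variation field $\p_\sigma u_\sigma$ whose $L^2$-norm is equivalent to $\Norm{\eta}_{W^{2,2}}$. Combining the three pieces I expect
\[
\Norm{(\widetilde A-A_\gamma)\xi}_2
\le C_1\Norm{\eta}_{W^{2,2}}
\bigl(\Norm{\xi}_2+\Norm{\Nabla{t}\xi}_2+\Norm{\Nabla{t}\Nabla{t}\xi}_2\bigr),
\qquad C_1=C_1(c_0,\Vv).
\]

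Combining Steps~2 and 3 and choosing $\delta_0$ so that $2\mu_0 C_1\delta_0\le\tfrac12$, I move the error term to the left-hand side and obtain the asserted estimate with $\mu:=4\mu_0$. The main obstacle is the careful accounting in Step~3, especially the Hessian piece: here axiom~(V2) is essential, since it is exactly the hypothesis that converts operator-level smallness of $\Hh_\Vv(\gamma)-\Phi^{-1}\Hh_\Vv(x)\Phi$ into an honest $L^2\to L^2$ bound proportional to $\Norm{\eta}_{W^{2,2}}$.
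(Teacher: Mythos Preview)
Your argument is essentially the paper's own proof: invertibility of $A_x$ from self-adjointness plus injectivity (the paper phrases this via the open mapping theorem rather than Fredholm index zero, but the content is the same), conjugation by parallel transport $\Phi$ along $\tau\mapsto\exp_x(\tau\eta)$, an operator-norm bound $\Norm{\Phi A_x\Phi^{-1}\xi-A_\gamma\xi}_2\le C\Norm{\eta}_{W^{2,2}}\Norm{\xi}_{W^{1,2}}$ with axiom~(V2) handling the Hessian piece, and absorption of the small error.

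One point to watch: in Step~1 you write $\mu_0=\mu_0(x,\Vv,c_0)$, so your final $\mu=4\mu_0$ and $\delta_0$ depend on the particular loop $x$, whereas the lemma asks for constants depending only on $\Vv$ and $c_0$, uniform over all admissible $x$. The paper's proof simply asserts $\mu_0=\mu_0(\Vv,c_0)$ at this step without further justification; in the actual applications the loop $x$ is always one of finitely many nondegenerate critical points on a fixed sublevel set, so the uniformity is harmless there, but as written your constants do not quite match the stated conclusion.
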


\begin{proof}
By self-adjointness and injectivity
the operator $A_x$
is bijective. Hence it
admits a bounded inverse
by the open mapping theorem.
This proves the estimate
in the case $\gamma=x$
for some positive constant, 
say $\mu_0=\mu_0(\Vv,c_0)>1$.
Since bijectivity is preserved
under small perturbations
(with respect to the operator norm),
the result for general $x$
follows from continuous dependence
of the operator family on $\eta$
with respect to the
$W^{2,2}$ topology.
More precisely, 
given a smooth vector
field $\xi$ along $\gamma$,
define $X=\Phi^{-1}\xi$ where $\Phi=\Phi(x,\eta)$
denotes parallel transport
along the geodesic
$[0,1]\ni\tau\mapsto\exp_x(\tau\eta)$.
Recall that $\Phi$ is pointwise
an isometry, then
straightforward calculation shows that
$$
    \Norm{\xi}_2+\Norm{\Nabla{t}\xi}_2
    +\Norm{\Nabla{t}\Nabla{t}\xi}_2
    \le cc_0^2\mu_0\Norm{\Phi A_x\Phi^{-1}\xi}_2
$$
where the constant
$c>1$ depends only on the
closed Riemannian manifold $M$
and the constant $c_1$
associated to the Sobolev
embedding $W^{1,2}\hookrightarrow C^0$.
Now
$$
     \Norm{\Phi A_x\Phi^{-1}\xi-A_\gamma\xi}_2
    \le C \Norm{\eta}_{W^{2,2}}\Norm{\xi}_{W^{1,2}}
    \le \delta_0 C\Norm{\xi}_{W^{1,2}}
$$
by straightforward calculation,
where the constant $C>1$
depends on $\norm{R}_\infty$, $c_0$, $c_1$,
$\delta_0$, and the constant in axiom~(V2)
and where we estimated 
the term quadratic in $\Nabla{t}\eta$ by
$
     \norm{\Nabla{t}\eta}_\infty^2
     \le c_1^2\norm{\eta}_{W^{2,2}}^2
$.
The second inequality
uses the assumption on $\eta$.
Now combine both estimates
and choose $\delta_0>0$
sufficiently small to obtain
the assertion of the lemma
with $\mu=2cc_0^2\mu_0$.
\end{proof}

\begin{lemma}\label{le:inequality}
Let $f\ge 0$ be a $C^2$
function on the interval $(-\infty,-T_0]$.
If $f$ is bounded by a constant $c$
and satisfies
the differential inequality
$f^{\prime\prime}\ge \rho^2 f$
for some constant $\rho\ge 0$,
then
$$
     f(s)\le e^{\rho(s+T_0)} f(-T_0)
$$
for every $s\le-T_0$.
\end{lemma}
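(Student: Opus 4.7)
My plan is to reduce the second-order inequality to a first-order one via the auxiliary function $g(s):=f'(s)-\rho f(s)$, and then integrate. The heart of the argument is using the boundedness hypothesis to extract asymptotic information at $-\infty$.

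The first step will be to extract monotonicity from convexity together with the uniform bound. Since $f''\ge\rho^2 f\ge 0$, the function $f$ is convex, so $f'$ is nondecreasing on $(-\infty,-T_0]$. Suppose $f'(s_0)<0$ for some $s_0\le-T_0$; then $f'(s)\le f'(s_0)<0$ for all $s\le s_0$, which forces $f(s)\ge f(s_0)+|f'(s_0)|(s_0-s)\to\infty$ as $s\to-\infty$, contradicting $f\le c$. Hence $f'\ge 0$ and $f$ is nondecreasing. An analogous argument shows that $\ell:=\lim_{s\to-\infty}f'(s)$ must vanish, for otherwise $f(-T_0)-f(s)\ge\ell(-T_0-s)$ would blow up. In particular $f'$ is bounded, so $g=f'-\rho f$ is bounded.

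The second step will be to derive the first-order inequality. A direct computation gives $g'+\rho g=f''-\rho^2 f\ge 0$, equivalently $\bigl(e^{\rho s}g(s)\bigr)'\ge 0$, so $e^{\rho s}g(s)$ is nondecreasing on $(-\infty,-T_0]$. For $\rho>0$, boundedness of $g$ combined with $e^{\rho s}\to 0$ at $-\infty$ yields $e^{\rho s}g(s)\to 0$, and a nondecreasing function with limit $0$ at $-\infty$ is nonnegative; hence $f'(s)\ge\rho f(s)$ for every $s\le-T_0$. In the degenerate case $\rho=0$ the desired conclusion $f'\ge 0$ has already been established.

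Finally, the inequality $f'\ge\rho f$ rewrites as $\bigl(e^{-\rho s}f(s)\bigr)'=e^{-\rho s}g(s)\ge 0$, so $e^{-\rho s}f(s)$ is nondecreasing on $(-\infty,-T_0]$. Comparing values at $s$ and at $-T_0$ gives $e^{-\rho s}f(s)\le e^{\rho T_0}f(-T_0)$, which is the claimed bound. The only subtle point in the whole argument is the backward asymptotic $f'(s)\to 0$; it is here that the uniform bound $f\le c$ is indispensable, since without it the reduction to a first-order inequality would fail at $-\infty$. Everything else is routine calculus.
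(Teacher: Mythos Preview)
Your proof is correct and follows essentially the same route as the paper: both reduce the problem to the first-order inequality $f'(s)\ge\rho f(s)$ via the monotonicity of $e^{\rho s}\bigl(f'(s)-\rho f(s)\bigr)$, and then finish by observing that $e^{-\rho s}f(s)$ is nondecreasing. The only organizational difference is that you first establish $f'\ge 0$ and $f'\to 0$ at $-\infty$ (via convexity and boundedness) to make $g$ bounded and force $e^{\rho s}g(s)\to 0$, whereas the paper argues directly by contradiction that $g(s_0)<0$ would force $f'\to-\infty$ and hence $f$ unbounded; this is a minor variation, not a different method.
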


\begin{proof}
Although the 
argument is standard,
see e.g.~\cite{DOSA},
we provide the details
for the sake of completeness.
The main point is to
observe that
$
     f^\prime(s)-\rho f(s)\ge 0
$
for every $s\le-T_0$.
To see this assume by 
contradiction that 
$
     f^\prime(s_0)-\rho f(s_0)< 0
$
for some time
$s_0\le -T_0$.
Note that the function 
$ 
     g(s)
     =e^{\rho s}\left(
     f^\prime(s)-\rho f(s)\right)
$
satisfies $g^\prime\ge 0$
on $(-\infty,-T_0]$.
Hence
$g(s)\le g(s_0)$, or
equivalently
$$
     f^\prime(s)
     \le e^{\rho(s_0-s)}\left(
     f^\prime(s_0)-\rho f(s_0)\right)
     +\rho c
$$
for every $s\le s_0$.
It follows that
$f^\prime(s)\to-\infty$
as $s\to-\infty$
and therefore
$$
     \int_s^{s_0} f^\prime(\sigma)\; d\sigma
     \to
     -\infty,\quad
     \text{as $s\to-\infty$}.
$$
But this contradicts the fact that
by boundedness of $f$
$$
     \int_s^{s_0} f^\prime(\sigma)\; d\sigma
     =f(s_0)-f(s)
     \ge -c
$$
for every $s\le s_0$.
To conclude the proof
consider the function $h(s)=e^{-\rho s} f(s)$ 
on the interval $(-\infty,-T_0]$.
It follows from the
observation above that
$h^\prime\ge 0$.
Hence $h(s)\le h(-T_0)$ 
for every $s\le-T_0$
and this proves the lemma.
\end{proof}

To prove theorem~\ref{thm:kerD-exp-decay}
it is useful to 
denote $\exp_u(\xi)$ by $E(u,\xi)$
and define linear maps
$$
  E_i (u,\xi):T_uM\to T_{exp_u\xi}M ,\qquad
  E_{ij}(u,\xi):T_uM\times T_uM\to T_{exp_u\xi}M
$$
for $\xi\in T_xM$ and $i,j\in\{1,2\}$.
If $u:\R\to M$ is a smooth curve
and $\xi,\eta$ are smooth vector fields 
along $u$, then the maps $E_i$ and $E_{ij}$
are characterized by the identities
\begin{equation} \label{eq:exponential-identity}
\begin{split}
     \frac{d}{ds}\exp_u(\xi)
    &=E_1(u,\xi)\p_su
     +E_2(u,\xi)\Nabla{s}\xi
    \\
     \Nabla{s}\left( E_1(u,\xi)\eta\right)
    &=E_{11}(u,\xi)\left(\eta,\p_s u\right)
      +E_{12}(u,\xi)\left(\eta,\Nabla{s}\xi\right)
      +E_1(u,\xi)\Nabla{s}\eta
    \\
     \Nabla{s}\left( E_2(u,\xi)\eta\right)
    &=E_{21}(u,\xi)\left(\eta,\p_s u\right)
      +E_{22}(u,\xi)\left(\eta,\Nabla{s}\xi\right)
      +E_2(u,\xi)\Nabla{s}\eta.
\end{split}
\end{equation}
These maps satisfy the symmetry properties
\begin{equation} \label{eq:E12-symmetry}
     E_{12}(u,\xi)\left(\eta,\eta^\prime\right)
     =E_{21}(u,\xi)\left(\eta^\prime,\eta\right),\quad
     E_{22}(u,\xi)\left(\eta,\eta^\prime\right)
     =E_{22}(u,\xi)\left(\eta^\prime,\eta\right),
\end{equation}
and the identities
\begin{equation}\label{eq:E_ij(0)}
     E_{11}(u,0)=E_{12}(u,0)=E_{22}(u,0)=0,\qquad
     E_1(u,0)=E_2(u,0)=\1.
\end{equation}
Alternatively $E_2$ can be defined by
$$
     E_2(u,\xi)\eta 
     :=\left.\frac{d}{d\tau}\right|_{\tau=0}
     \exp_u (\xi+\tau \eta)
$$
for $\xi,\eta\in T_uM$ and $\tau\in\R$.
An explicit definition of $E_1$ 
and the maps $E_{ij}$
can be given in local coordinates.

%%%%%%% PROOF EXPONENTIAL DECAY %%%%%%%%%%%%%%%%%%%%%%%%
\begin{proof}[Proof of theorem~\ref{thm:kerD-exp-decay}]
Fix $c_0$ and $\Vv$ and let $\mu$ and 
$\delta_0$ be the constants of 
lemma~\ref{le:hessian} 
and $C$ be the constant of
theorem~\ref{thm:kerD-apriori}
with this choice. Set $\delta:=\delta_0$
and suppose $u,x,T_0,\xi$ satisfy
the assumptions of the theorem.
Then lemma~\ref{le:hessian}
for $\gamma=u_s$
and vector fields $\eta=\eta_s$
and $\xi=\xi_s$ asserts that
\begin{equation}\label{eq:kerD-injective}
     \Norm{\xi_s}_2^2
     +\Norm{\Nabla{t}\xi_s}_2^2
     +\Norm{\Nabla{t}\Nabla{t}\xi_s}_2^2
     \le \mu^2
     \Norm{A_{u_s}\xi_s}_2^2
     = \mu^2
     \Norm{\Nabla{s}\xi_s}_2^2
\end{equation}
whenever $s\le -T_0$.
The last step uses the consequence
$\Nabla{s}\xi_s=\mp A_{u_s}\xi_s$
of~(\ref{eq:A_x})
and~(\ref{eq:pm-heat}).
From now on we assume that $s\le -T_0$.
Observe that
\begin{equation*}
\begin{split}
     \p_tu_s
    &=E_1(x,\eta_s)\p_tx+E_2(x,\eta_s)\Nabla{t}\eta_s
     \\
     \Nabla{t}\p_tu_s
    &=E_{11}(x,\eta_s)\left(\p_tx,\p_tx\right)
     +2E_{12}(x,\eta_s)\left(\p_tx,\Nabla{t}\eta_s\right)
     +E_1(x,\eta_s)\Nabla{t}\p_tx\\
    &\qquad
     +E_{22}(x,\eta_s)\left(\Nabla{t}\eta_s,\Nabla{t}\eta_s\right)
     +E_2(x,\eta_s) \Nabla{t}\Nabla{t}\eta_s.
\end{split}
\end{equation*}
By the identities~(\ref{eq:E_ij(0)})
we can choose $\delta>0$
smaller, if necessary,
such that
$$
     \Norm{\p_tu_s}_2
     \le\Norm{E_1(x,\eta_s)}_\infty\Norm{\p_tx}_2
     +\Norm{E_2(x,\eta_s)}_\infty\Norm{\Nabla{t}\eta_s}_2
     \le 2c_0.
$$
and, similarly, that
$
     \Norm{\Nabla{t}\p_tu_s}_2
     \le 2c_0
$.

%%%%%%%%%%%% CLAIM %%%%%%%%%%%%%
\vspace{.1cm}
\noindent
{\bf Claim.}
{\it Consider the function
$$
     F(s)
     :=\frac{1}{2}\Norm{\xi_s}_2^2
     =\frac{1}{2}\int_0^1\abs{\xi(s,t)}^2\; dt.
$$
Then there is a sufficiently small
constant $\delta>0$ such that
$$
     F^{\prime\prime}(s)
     \ge \frac{1}{\mu^2} F(s)
$$
whenever $s\le-T_0$.
}
\vspace{.1cm}

\noindent
Before proving the claim
we show how it implies the conclusions
of theorem~\ref{thm:kerD-exp-decay}.
Set $\rho=\rho(c_0,\Vv):=1/\mu$,
then $F^{\prime\prime}\ge \rho^2 F$
on $(-\infty,T_0]$.
Hence lemma~\ref{le:inequality} 
proves the first conclusion of
theorem~\ref{thm:kerD-exp-decay}.
Use this conclusion, the fact that
$\norm{\cdot}_2\le\norm{\cdot}_\infty$
on the domain $S^1$,
and theorem~\ref{thm:kerD-apriori}
with constant $C=C(c_0,\Vv)$
to obtain that
$$
     \Norm{\xi_s}_2^2
     \le e^{\rho(s+T_0)}
     \Norm{\xi_{-T_0}}_\infty^2
     \le C^2 e^{\rho(s+T_0)}
     \Norm{\xi}_{L^2([-T_0-1,-T_0]\times S^1)}^2
$$
whenever $s\le -T_0$.
Fix $\sigma\le-T_0$
and integrate this estimate
over $s\in(-\infty,\sigma]$.
This proves the final
conclusion of
theorem~\ref{thm:kerD-exp-decay}.

It remains to
prove the claim.
In the following calculation
we drop the subindex $s$ for simplicity
and denote the $L^2(S^1)$ inner product
by $\langle\cdot,\cdot\rangle$.
By straightforward computation
it follows that
$$
     F^{\prime\prime}(s)
     =\Norm{\Nabla{s}\xi_s}_2^2
     +\bigl\langle\xi,
     \Nabla{s}\Nabla{s}\xi\bigr\rangle
$$
and
\begin{equation*}
\begin{split}
     \bigl\langle\xi,
     \Nabla{s}\Nabla{s}\xi\bigr\rangle
%1
    &=\pm
     \bigl\langle\xi,
     \Nabla{s}\left(\Nabla{t}\Nabla{t}\xi
     +R(\xi,\p_tu)\p_tu+\Hh_\Vv(u)\xi\right)
     \bigr\rangle \\
%2
    &=\pm
     \bigl\langle\xi,
     [\Nabla{s},\Nabla{t}\Nabla{t}]\xi
     +\Nabla{t}\Nabla{t}\Nabla{s}\xi
     +\Nabla{s}\left( R(\xi,\p_tu)\p_tu
     +\Hh_\Vv(u)\xi\right)
     \bigr\rangle \\
%3
    &=\pm
     \bigl\langle\xi,
     \Nabla{t}[\Nabla{s},\Nabla{t}]\xi
     +[\Nabla{s},\Nabla{t}]\Nabla{t}\xi
     +\Nabla{s}\left( R(\xi,\p_tu)\p_tu
     +\Hh_\Vv(u)\xi\right)
     \bigr\rangle\\
    &\quad\pm\bigl\langle\Nabla{t}\Nabla{t}\xi,
     \Nabla{s}\xi\bigr\rangle \\
%4
    &=\pm\bigl\langle
     \pm\Nabla{s}\xi-R(\xi,\p_tu)\p_tu-\Hh_\Vv(u)\xi,
     \Nabla{s}\xi\bigr\rangle\\
    &\quad\pm\bigl\langle\xi,
     \bigl(\Nabla{t}R\bigr)(\p_su,\p_tu)\xi
     +R(\Nabla{t}\p_su,\p_tu)\xi
     +R(\p_su,\Nabla{t}\p_tu)\xi\\
    &\qquad\quad\;
     +2R(\p_su,\p_tu)\Nabla{t}\xi
     +\bigl(\Nabla{s}R\bigr)(\xi,\p_tu)\p_tu
     +R(\Nabla{s}\xi,\p_tu)\p_tu\\
    &\qquad\quad\;
     +R(\xi,\Nabla{s}\p_tu)\p_tu
     +R(\xi,\p_tu)\Nabla{s}\p_tu
     +\Nabla{s}\Hh_\Vv(u)\xi
     \bigr\rangle\\
%5
    &=\Norm{\Nabla{s}\xi}_2^2
     \pm\bigl\langle\xi,
     \Nabla{s}\Hh_\Vv(u)\xi
     -\Hh_\Vv(u)\Nabla{s}\xi
     \bigr\rangle\\
    &\quad
     \pm\bigl\langle\xi,
     \bigl(\Nabla{t}R\bigr)(\p_su,\p_tu)\xi
     +2R(\xi,\p_tu)\Nabla{t}\p_su
     +R(\p_su,\Nabla{t}\p_tu)\xi\\
    &\qquad\quad\;
     +2R(\p_su,\p_tu)\Nabla{t}\xi
     +\bigl(\Nabla{s}R\bigr)(\xi,\p_tu)\p_tu
     \bigr\rangle.\\
\end{split}
\end{equation*}
To obtain the first
and the fourth step
we replaced $\xi$
according to~(\ref{eq:pm-heat}).
The third step
is by integration by parts.
In the final step
we used twice the first 
Bianchi identity
and lemma~\ref{le:symmetry}
on symmetry of the Hessian.
Note that the term $\Nabla{t}\p_su$
forces us to assume $W^{1,2}$ 
and not only $L^\infty$ smallness
of $\p_su_s$.

\noindent
Abbreviate $\norm{\cdot}_{1,2}:=\norm{\cdot}_{W^{1,2}(S^1)}$
and assume from now on that $s\le-T_0$.
Recall that 
$\norm{\p_tu_s}_\infty
\le c_1\norm{\p_tu_s}_{1,2}
\le 4c_0c_1$ where $c_1$
is the Sobolev constant of the embedding
$W^{1,2}(S^1)\hookrightarrow C^0(S^1)$.
Then the former two identities
imply that
\begin{equation*}
\begin{split}
     F^{\prime\prime}(s)
    &\ge 2\Norm{\Nabla{s}\xi_s}_2^2
     -C_1\left(
     \Norm{\p_su_s}_\infty+\Norm{\Nabla{t}\p_su_s}_2
     \right)
     \left(
     \Norm{\xi_s}_\infty^2
     +\Norm{\xi_s}_\infty\Norm{\Nabla{t}\xi}_2
     \right)
   \\
     &\ge 2\Norm{\Nabla{s}\xi_s}_2^2
     -C_2 \Norm{\p_su_s}_{1,2}
     \Norm{\xi_s}_{1,2}^2
\end{split}
\end{equation*}
for positive constants
$C_1=C_1(c_0,c_1,\Vv,\norm{R}_{C^2})$
and $C_2=C_2(c_1,C_1)$.
Choose $\delta>0$
again smaller, if necessary,
namely such that
$\delta<1/(2\mu^2C_2)$.
Hence
$$
     \Norm{\p_su_s}_{1,2}
     \le \delta
     < \frac{1}{2\mu^2C_2}
$$
where the first inequality is by assumption.
Therefore
$$
     F^{\prime\prime}(s)
     \ge 2\Norm{\Nabla{s}\xi_s}_2^2
     -\frac{1}{2\mu^2}\Norm{\xi_s}_{1,2}^2
     \ge \Norm{\Nabla{s}\xi_s}_2^2
$$
where the second inequality
is by~(\ref{eq:kerD-injective}). But 
$$
     \norm{\Nabla{s}\xi_s}_2^2
     \ge\frac{1}{\mu^2} \norm{\xi_s}_2^2
     =\frac{2}{\mu^2} F(s)
$$
again by~(\ref{eq:kerD-injective})
and definition of $F$.
This proves the claim and 
theorem~\ref{thm:kerD-exp-decay}.
\end{proof}

\begin{lemma}[Symmetry of the Hessian]
\label{le:symmetry}
Fix a smooth map $\Vv:\Ll M\to\R$
and let $x:S^1\to M$
be a smooth loop. Then
$$
     \langle\Hh_\Vv(x)\xi,\eta\rangle
     =\langle\xi,\Hh_\Vv(x)\eta\rangle
$$
for all smooth vector fields
$\xi$ and $\eta$ along $x$.
\end{lemma}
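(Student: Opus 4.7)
The plan is to reduce symmetry of $\Hh_\Vv(x)$ to equality of mixed partials of $\Vv$ along a two-parameter variation of the loop $x$. Given smooth vector fields $\xi,\eta$ along $x$, I would first construct a smooth two-parameter family of loops
$$
u(s_1,s_2,t):=\exp_{x(t)}\!\left(s_1\xi(t)+s_2\eta(t)\right),
$$
defined for $(s_1,s_2)$ in a small neighborhood of the origin in $\R^2$. By construction $u(0,0,\cdot)=x$, $\p_{s_1}u|_{(0,0)}=\xi$, and $\p_{s_2}u|_{(0,0)}=\eta$.

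Next I would consider the smooth function $f:\R^2\to\R$ defined by $f(s_1,s_2):=\Vv(u(s_1,s_2,\cdot))$. Because $\Vv$ is smooth and $u$ depends smoothly on $(s_1,s_2)$, Clairaut's theorem gives $\p_{s_1}\p_{s_2}f=\p_{s_2}\p_{s_1}f$. The defining property of $\grad\Vv$ yields
$$
\p_{s_i}f(s_1,s_2)=\int_0^1\inner{\grad\Vv(u)}{\p_{s_i}u}\,dt,
$$
and differentiating once more — using that the one-parameter slice $s_j\mapsto u(s_1,s_2,\cdot)$ is a smooth path in $\Ll M$, so that the definition~(\ref{eq:Hess-Vv}) of $\Hh_\Vv$ applies with $\Nabla{s_j}\grad\Vv(u)=\Hh_\Vv(u)\p_{s_j}u$ — gives
$$
\p_{s_j}\p_{s_i}f=\int_0^1\inner{\Hh_\Vv(u)\p_{s_j}u}{\p_{s_i}u}\,dt
+\int_0^1\inner{\grad\Vv(u)}{\Nabla{s_j}\p_{s_i}u}\,dt.
$$

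Finally I would invoke the torsion-free property of the Levi-Civita connection, $\Nabla{s_2}\p_{s_1}u=\Nabla{s_1}\p_{s_2}u$, which makes the two ``second'' terms obtained by choosing $(i,j)=(1,2)$ and $(i,j)=(2,1)$ equal. Subtracting the two expressions for the mixed partials and evaluating at $(s_1,s_2)=(0,0)$ leaves
$$
\inner{\Hh_\Vv(x)\xi}{\eta}-\inner{\Hh_\Vv(x)\eta}{\xi}=0,
$$
which is the claim. There is no real obstacle here: the only subtlety is observing that the abstract definition of $\Hh_\Vv$, which is phrased for one-parameter families, can be applied along each coordinate slice of a two-parameter family, and that the $L^2$-gradient identification makes the first-order derivative of $f$ genuinely an integral against $\p_{s_i}u$. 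Once this is set up, symmetry is forced by Clairaut and torsion-freeness.
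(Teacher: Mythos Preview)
Your proof is correct and follows essentially the same approach as the paper: both pick a two-parameter variation of $x$ with velocities $\xi$ and $\eta$, compute the mixed second partials of $\Vv$ along it via the $L^2$-gradient and the definition~(\ref{eq:Hess-Vv}) of $\Hh_\Vv$, and conclude by Clairaut together with the torsion-free identity $\Nabla{s_2}\p_{s_1}u=\Nabla{s_1}\p_{s_2}u$. The only cosmetic difference is that you construct the variation explicitly via the exponential map, whereas the paper just posits an abstract $h:\R^2\to\Ll M$ with the required first-order data.
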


\begin{proof}
Let $h:\R^2\to\Ll M$,
$(\sigma,\tau)\mapsto h(\sigma,\tau)$
be a smooth map
such that
$$
    h(0,0)=x,
    \qquad
    \left.\frac{\p}{\p\sigma}\right|_{0}h(\sigma,0)
    =\xi,
    \qquad
    \left.\frac{\p}{\p\tau}\right|_{0}h(0,\tau)
    =\eta.
$$
Observe that
\begin{equation*}
\begin{split}
    &\left.\frac{\p^2}{\p\tau\p\sigma}\right|_{(0,0)}
     \Vv(h(\sigma,\tau))\\
    &=
     \left.\frac{d}{d\tau}\right|_{0}
     d\Vv\mid_{h(0,\tau)}
     \left(\left.\frac{\p}{\p\sigma}\right|_{0}
     h(\sigma,\tau)\right)\\
    &=
     \left.\frac{d}{d\tau}\right|_{0}
     \left\langle\grad\Vv\mid_{h(0,\tau)},
     \left.\frac{\p}{\p\sigma}\right|_{0}
     h(\sigma,\tau)\right\rangle\\
    &=
     \left\langle
     \left.\frac{D}{d\tau}\right|_{0}
     \grad\Vv\mid_{h(0,\tau)},
     \left.\frac{\p}{\p\sigma}\right|_{0}
     h(\sigma,0)\right\rangle
     +
     \left\langle
     \grad\Vv(x),
     \left.\frac{D}{d\tau}\right|_{0}
     \left.\frac{\p}{\p\sigma}\right|_{0}
     h(\sigma,\tau)\right\rangle\\
   &=
     \left\langle
     \Hh_\Vv(x)\eta,\xi\right\rangle
     +
     \left\langle
     \grad\Vv(x),
     \left.\frac{D}{d\tau}\right|_{0}
     \left.\frac{\p}{\p\sigma}\right|_{0}
     h(\sigma,\tau)\right\rangle.
\end{split}
\end{equation*}
Now interchange the order of partial
differentiation and use the fact that
this is still valid for two-parameter maps.
\end{proof}

%%%%%%%%%%%%%%%%%%%%%%%%%%%%%%%%%%%%%%%%%%%%%%
%%%%%%%%%%%%%%%%%%%%%%%%%%%%%%%%%%%%%%%%%%%%%%
%%%%%%%%%%%%%%%% Subsection %%%%%%%%%%%%%%%%%%
%%%%%%%%%%%%%%%%%%%%%%%%%%%%%%%%%%%%%%%%%%%%%%
%%%%%%%%%%%%%%%%%%%%%%%%%%%%%%%%%%%%%%%%%%%%%%
\subsection{The Fredholm operator}
\label{sec:linearized}

\begin{hypothesis}\label{hyp:fredholm}
Throughout this section we fix a perturbation
$\Vv$ that satisfies~{\rm (V0)--(V3)} and
two nondegenerate critical points $x^\pm$
of $\Ss_\Vv$. Fix
a smooth map $u:\R\times S^1\to M$
such that $u_s$ converges to $x^\pm$ in $W^{2,2}(S^1)$
and $\p_su_s$ converges to zero in $W^{1,2}(S^1)$,
as $s\to\pm\infty$.
Moreover, assume that
$\norm{\Nabla{t}\Nabla{t}\p_su_s}_2$
is bounded, uniformly in $s\in\R$;
see footnote below.
Set $x=x^-$ and $y=x^+$.
\end{hypothesis}

Note that by theorem~\ref{thm:par-exp-decay},
proved in section~\ref{subsec:exp-decay} below,
these assumptions are satisfied
if $\Ss_\Vv$ is Morse and $u$ is 
a finite energy solution
of the heat equation~(\ref{eq:heat}).
On the other hand,
the hypothesis guarantees that the
assumptions of the exponential decay
theorem~\ref{thm:kerD-exp-decay}
and the local regularity
theorem~\ref{thm:REG-L} --
only here~(V3) is needed --
are satisfied.
More precisely, set $a=\max\{\Ss_\Vv(x),\Ss_\Vv(y)\}$.
Then~(\ref{eq:action}) and~(\ref{eq:crit})
imply that
$$
     \Norm{\p_tx}_2^2=2a+2\Vv(x)
     \le 2(a+C_0),
     \qquad
     \Norm{\Nabla{t}\p_tx}_2=\Norm{\grad\,\Vv(x)}_2
     \le C_0.
$$
Here $C_0>0$ is the constant in axiom~(V0).
Similar estimates hold true for $y$.
Precisely as in the proof of 
theorem~\ref{thm:kerD-exp-decay}
it follows that $T=T(u)>0$ can be chosen
sufficiently large such that
$$
     \Norm{\p_tu_s}_2^2
     \le 2c_0,
     \qquad
     \Norm{\Nabla{t}\p_tu_s}_2=\Norm{\grad\,\Vv(x)}_2
     \le 2c_0
$$
whenever $\abs{s}\ge T_0$
and where $c_0=2(\abs{a}+C_0)$.
Hence by smoothness of $u$ and compactness
of the remaining domain $[-T,T]\times S^1$
we conclude that
\begin{equation}\label{eq:u-W-bound}
    \Norm{\p_tu_s}_\infty
    \le c_1  \Norm{\p_tu_s}_{W^{1,2}}
    \le c_2
\end{equation}
for every $s\in\R$
and where $c_2=c_2(x,y,u,\Vv)$.
Similarly it follows that
\begin{equation}\label{eq:p_su-W-bound}
    \Norm{\p_su_s}_\infty
    \le c_1\Norm{\p_su_s}_{W^{1,2}}
    \le c_3
\end{equation}
for every $s\in\R$
and some constant $c_3=c_3(x,y,u,\Vv)$.

Now consider the linear operator
$\Dd_u$ given by
\begin{equation}\label{eq:lin-op}
     \Dd_u\xi 
     = \Nabla{s}\xi
       - \Nabla{t}\Nabla{t}\xi 
       - R(\xi,\p_tu)\p_tu
       - \Hh_\Vv(u)\xi
\end{equation}
for smooth vector fields
$\xi$ along $u$.
Recall that $R$ denotes the
Riemannian curvature tensor on $M$.
The operator $\Dd_u$ arises, for instance,
by linearizing the heat equation~(\ref{eq:heat})
at a solution $u$;
see~\cite[app.~A.2]{Joa-PHD}.
Recall the definition of the 
Banach spaces $\Ll_u^p$ and $\Ww_u^{1,p}$
and their norms in~(\ref{eq:norms}).
The goal of this section
is to prove that
$\Dd_u:\Ww_u^{1,p}\to\Ll_u^p$
is a {\bf Fredholm operator}
whenever $p>1$ and $u$ satisfies
nondegenerate asymptotic boundary conditions 
as in hypothesis~\ref{hyp:fredholm}.
By definition this means that 
$\Dd_u$ is a bounded linear operator 
with closed range and
finite dimensional kernel and cokernel.
The difference of these dimensions
is called the {\bf Fredholm index} of $\Dd_u$
and denoted by $\INDEX\,\Dd_u$.
The {\bf formal adjoint operator}
$\Dd_u^*:\Ww^{1,p}_u\to\Ll^p_u$
with respect to the $L^2$-inner product
has the form
\begin{equation}\label{eq:formal-adjoint}
     \Dd_u^* \xi 
     = - \Nabla{s}\xi
       - \Nabla{t}\Nabla{t}\xi 
       - R(\xi,\p_tu)\p_tu
       - \Hh_\Vv(u)\xi.
\end{equation}

We proceed as follows.
In the case $p=2$ we show that
our situation suits the assumptions
of~\cite{RoSa-SPEC} where the
Fredholm property is proved.
Then we reduce the case $p>1$
to the case $p=2$ by
proving that the kernel and the cokernel
do actually not depend on $p$.
The argument is based on exponential decay
and local regularity,
theorem~\ref{thm:kerD-exp-decay} 
and theorem~\ref{thm:REG-L},
respectively.

%%%%%%%%%%%%%%%% Subsection %%%%%%%%%%%%%%%%%%
\subsubsection*{Fredholm property
and index for \boldmath$p=2$}
\label{subsec:Fredholm_p=2}

To prove that $\Dd_u$ is Fredholm
it is useful to choose a representation
with respect to an orthonormal
frame along $u$. However, since $M$
is not necessarily orientable,
a frame which is periodic
in the $t$-variable might not exist.
Hence, given a smooth map
$u:\R\times S^1\to M$, we define
$$
     \sigma=\sigma(u)
     :=\begin{cases}
     +1,&\text{if $u^*TM\to\R\times S^1$ 
               is trivial}\\
     -1,&\text{else}
     \end{cases}
$$
and $E_\sigma:=\diag(\sigma,1,\dots,1)
\in\R^{n\times n}$.
The orthogonal group $\O(n)$
has two connected
components, one contains $E_1=\1$
and the other one $E_{-1}$.
Hence there exists an orthonormal frame
$\phi=\phi_\sigma:\R\times[0,1]\to u^*TM$
such that $\phi(s,1)=\phi(s,0)E_\sigma$
for all $s\in\R$.
The vector space of smooth sections
of $u^*TM$ is isomorphic to the space
$C^\infty_\sigma$
of all maps $X\in C^\infty(\R\times [0,1],\R^n)$
such that $X(s,1)=E_\sigma X(s,0)$,
for every $s\in\R$, and such that this
condition also holds for all derivatives
of $X$ with respect to the $t$-variable.

Denote by $W$ the closure of $C^\infty_\sigma$
with respect to the Sobolev
$W^{2,2}$ norm and by $H$
its closure with respect to the $L^2$ norm.
Then $\Dd_u:\Ww_u^{1,2}\to \Ll_u^2$
given by~(\ref{eq:lin-op})
is represented by the
Atiyah-Patodi-Singer type operator
\begin{equation}\label{eq:D_A}
     D_{A+C}:=\phi^{-1}\Dd_u\phi
     =\frac{d}{ds} + A(s) + C(s)
\end{equation}
from
$\Ww^{1,2}:=
L^2(\R,W)\cap W^{1,2}(\R,H)$
to
$L^2(\R,H)$.
Here $A(s)$ is the family 
of symmetric second order operators 
on $H$ with dense
domain $W$ given by
$$
     A(s)=
     -\frac{d^2}{dt^2}
     -B(s,t)-Q(s,t)
$$
where
$$
     Q
     =\phi^{-1} R(\phi,\p_tu)\p_tu
     +\phi^{-1}\Hh_\Vv(u)\phi
$$
and 
$$
     B=(\p_tP)+2P\p_t+P^2.
$$
The families of skew-symmetric
matrices $P(s,t)$ and $C(s,t)$
are determined by the identities
$$
     \phi^{-1}\Nabla{t}\phi=\p_t+P,\qquad
     \phi^{-1}\Nabla{s}\phi=\p_s+C.
$$
Hypothesis~\ref{hyp:fredholm}
implies that $\p_s u_s$
converges to zero
in $C^0(S^1)$, as $s\to\pm \infty$,
and therefore
$
     \lim_{s\to\pm\infty} C(s,t) = 0
$,
uniformly in $t$.
It follows that the family $C(s)$ of bounded operators
on $H$ -- defined pointwise by
matrix multiplication with $C(s,t)$ --
converges to zero in the norm topology
as $s\to\pm\infty$.
Hence the linear operator
$C:\Ww^{1,2}\to L^2$
is a compact perturbation
of $D_A$ by~\cite[lem.~3.18]{RoSa-SPEC}.
Since the Fredholm property
and the Fredholm index are
invariant under compact perturbations,
it suffices to
prove that $D_A$ is Fredholm
and compute its index.
By~\cite[thm.~A]{RoSa-SPEC}
it remains to verify
the following properties.
\begin{enumerate}
\item[(i)]
     The inclusion of Hilbert spaces $W\hookrightarrow H$
     is compact with dense image.
\item[(ii)]
     The operator
     $A(s):H\to H$ with dense domain $W$
     is unbounded and self-adjoint
     for every $s$.
\item[(iii)]
     The norm of $W$
     is equivalent to the graph norm
     of $A(s)$ for every $s$.
\item[(iv)]
     The map $\R\to\Ll(W,H):s\mapsto A(s)$
     is continuously differentiable
     with respect to the weak operator
     topology.
\item[(v)]
     There exist invertible operators
     $A^\pm\in\Ll(W,H)$ which
     are the limits
     of $A(s)$ in the norm topology,
     as $s$ tends to $\pm\infty$.
\end{enumerate}
Statements~(i) and~(ii)
follow by the Sobolev embedding
theorem, the well known fact
that the 1-dimensional Laplacian
$-d^2/dt^2$ on $[0,1]$
with periodic boundary conditions
is self-adjoint, and by the Kato-Rellich
Theorem since the perturbation
$B+Q$ is of relative bound zero;
see~\cite{ReSi-II}.
To prove~(iii) one has to
establish that
the $W$ norm is bounded above
by a constant times
the graph norm and vice versa.
The first inequality uses the
elliptic estimate for the operator $A(s)$
and the second one follows
since $\norm{\p_tu_s}_\infty$
and $\norm{\Nabla{t}\p_tu_s}_2$
are bounded by~(\ref{eq:u-W-bound})
and the Hessian
$\Hh_\Vv(u_s)$ is a bounded linear
operator on $L^2(S^1,{u_s}^*TM)$ by axiom~(V1).
To prove~(iv) we need to show that, 
given any $\xi\in W$
and $\eta\in H$, the map
$s\mapsto \langle\eta, A(s)\xi\rangle$
is in $C^1(\R,\R)$. 
This follows by the bounds in~(\ref{eq:u-W-bound}) 
and~(\ref{eq:p_su-W-bound}),
by the final estimate in axiom~(V2),
and the apparently 
%% FOOTNOTE BEGIN %%
unnatural\footnote{
     If in~\cite[thm.A]{RoSa-SPEC},
     hence in~(iv),
     \emph{continuously differentiable}
     could be replaced by
     \emph{continuous},
     then the assumption on
     $\norm{\Nabla{t}\Nabla{t}\p_su_s}_2$
     can be dropped in
     hypothesis~\ref{hyp:fredholm}
     and theorem~\ref{thm:fredholm}.}
%% FOOTNOTE END %%
%%
assumption in hypothesis~\ref{hyp:fredholm}
that $\Nabla{t}\Nabla{t}\p_su_s$
be uniformly $L^2$ bounded.
Statement~(v) is true, since
the critical points $x^\pm$
are nondegenerate
and $u_s$ and $\p_tu_s$ converge in $C^0$
to $x^\pm$ and $\p_tx^\pm$, respectively,
and $\Nabla{t}\p_tu_s$ converges in $L^2$
to $\Nabla{t}\p_tx^\pm$, all as $s\to\pm\infty$.

The properties~(i--v)
are precisely the assumptions
of theorem~A in~\cite{RoSa-SPEC}
which asserts that the operator
$D_A:\Ww^{1,2}
\to L^2$ is Fredholm
and its index is given by the
spectral flow of the operator
family $A(s)$.
The spectral flow
represents the net change
in the number of negative
eigenvalues of $A(s)$ as
$s$ runs from $-\infty$ to $\infty$.
It is equal to
$\IND(A^-)-\IND(A^+)$
where $\IND(A^\pm)$
denotes the Morse index,
i.e. the number of negative 
eigenvalues of the
self-adjoint operator $A^\pm$.
To see this observe
that $\IND(A^+)$ equals
$\IND(A^-)$ plus the number of
eigenvalues changing from
positive to negative
minus the number of those
changing sign in the opposite direction.
Finally, the Fredholm indices of $D_A$
and $D_{A+C}$ are equal, since 
$\{D_{A+\tau C}\}_{\tau\in[0,1]}$ 
is an interpolating family of
Fredholm operators.
This proves theorem~\ref{thm:fredholm}
in the case $p=2$.

\begin{remark}[The formal adjoint]
\label{rmk:adjoint_fredholm_p=2}
If $\Dd_u:\Ww_u^{1,2}\to\Ll_u^2$ is represented
with respect to an orthonormal frame 
by the operator
$D_{A+C}$ in~(\ref{eq:D_A}),
then $\Dd_u^*$
is represented
by $-D_{-A-C}$. 
Above we proved that $A$
satisfies~(i-v), hence
so does $-A$. Thus
$D_{-A}$ is a Fredholm operator
again by~\cite[thm.~A]{RoSa-SPEC}
and its index is given by
minus the spectral flow of
the operator family $A=A(s)$.
But if $D_{-A}$
is Fredholm, so is its negative $-D_{-A}$
and both Fredholm indices are equal,
since both kernels and both cokernels coincide.
Now $-D_{-A}$ and $-D_{-A-C}$ are homotopic
through the family
$\{-D_{-A-\tau C}\}_{\tau\in[0,1]}$
of Fredholm operators.
This proves that
the formal adjoint operator
$\Dd_u^*:\Ww_u^{1,2}\to\Ll_u^2$ 
is Fredholm
and $\INDEX \Dd_u^*
=-\INDEX \Dd_u$.
\end{remark}

%%%%%%%%%%%%%%%%%%%%%%%%%%%%%%%%%%%%%%%%%%%%%%
%%%%%%%%%%%%%%%% Subsection %%%%%%%%%%%%%%%%%%
%%%%%%%%%%%%%%%%%%%%%%%%%%%%%%%%%%%%%%%%%%%%%%
\subsubsection*{Fredholm property and index
for \boldmath$p>1$}
\label{subsec:Fredholm_p>1}

Still assuming hypothesis~\ref{hyp:fredholm}
consider the vector space given by
\begin{equation*}
\begin{split}
     X_0
    &:=\Bigl\{
     \xi\in
     C^\infty(\R\times S^1,u^*TM)
     \mid\text{
     $\Dd_u\xi=0$,
     $\exists c,\delta>0\;
     \forall s\in\R:$}
     \\
    &\qquad
     \text{
     $
       \Norm{\xi_s}_\infty
       +\Norm{\Nabla{t}\xi_s}_\infty
       +\Norm{\Nabla{t}\Nabla{t}\xi_s}_\infty
       +\Norm{\Nabla{s}\xi_s}_\infty
       \le ce^{\delta\abs{s}}
     $}
     \Bigr\}.
\end{split}
\end{equation*}
Define $X_0^*$ by using $\Dd_u^*$
in the definition.
Note that $p$ does not enter.

\begin{proposition}\label{prop:kernel-smooth}
Let $p>1$, then
$$
     \ker\left[\Dd_u:
     \Ww^{1,p}_u\to\Ll^p_u\right]
     =X_0,
$$
and
$$
     \ker\left[\Dd_u^*:
     \Ww^{1,p}_u\to\Ll^p_u\right]
     =X_0^*.
$$
\end{proposition}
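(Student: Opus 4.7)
\emph{Plan.} I will prove the kernel identity by two inclusions; the corresponding identity for $\Dd_u^*$ follows by the time-reversal substitution $s \mapsto -s$ (which interchanges $\Dd_u$ and $\Dd_u^*$, and the asymptotic loops $x^\pm$) together with the adjoint apriori and decay estimates in remarks~\ref{rmk:apriori-adjoint} and~\ref{rmk:decay-forward}.

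\emph{Inclusion $\ker \subseteq X_0$.} Let $\xi \in \Ww_u^{1,p}$ satisfy $\Dd_u\xi = 0$. The argument has three steps. (i) \emph{Regularity.} Since $\xi \in L^p_\loc$ is a distributional solution, integration by parts yields $\langle \xi, \Dd_u^*\phi\rangle = 0$ for every smooth compactly supported test section $\phi$, and theorem~\ref{thm:REG-L} (applied with $q = p > 1$) promotes $\xi$ to a smooth vector field. (ii) \emph{Pointwise control.} The $L^\infty$-hypotheses on $u$ required by theorems~\ref{thm:kerD-apriori} and~\ref{thm:kerD-apriori-II} are met via~(\ref{eq:u-W-bound}) and~(\ref{eq:p_su-W-bound}); the higher-order bounds $\|\Nabla{t}\p_tu\|_\infty$, $\|\Nabla{t}\p_su\|_\infty$, $\|\Nabla{t}\Nabla{t}\p_tu\|_\infty$ are obtained by applying the same apriori estimates to $\p_s u$, which itself satisfies the linearized equation~(\ref{eq:kerD-heat}). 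The apriori theorems then control $|\xi|, |\Nabla{t}\xi|, |\Nabla{t}\Nabla{t}\xi|$ and $|\Nabla{s}\xi|$ pointwise by local $L^2$-norms of $\xi$ on backward parabolic cylinders. (iii) \emph{Exponential decay.} Global $L^p$-integrability forces the local $L^2$-norms on $[s-1,s]\times S^1$ to tend to zero as $|s|\to\infty$ (Hölder on finite-volume slabs for $p\ge 2$, one bootstrap step via parabolic regularity for $1<p<2$); hence $s\mapsto\|\xi_s\|_2$ is bounded. Because $u_s \to x^\pm$ in $W^{2,2}$ and $A_{x^\pm}$ are injective by nondegeneracy, theorem~\ref{thm:kerD-exp-decay} together with its forward version (remark~\ref{rmk:decay-forward}) supplies $\|\xi_s\|_2 \le C e^{-\rho|s|}$ at both ends. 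Feeding this back into the apriori pointwise estimates produces the four uniform exponential bounds demanded by the definition of $X_0$.

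\emph{Inclusion $X_0 \subseteq \ker$.} Let $\xi \in X_0$, so $\xi$ is smooth, solves $\Dd_u\xi = 0$ and its four relevant slicewise $L^\infty$-quantities are exponentially controlled. The $\Ww_u^{1,p}$-norm involves only the $L^p$-integrals of $|\xi|$, $|\Nabla{s}\xi|$ and $|\Nabla{t}\Nabla{t}\xi|$ over $\R\times S^1$. The pointwise exponential control supplied by the $X_0$-definition, combined with the spectral-gap mechanism underlying theorem~\ref{thm:kerD-exp-decay}, yields finiteness of these integrals: any slicewise $L^2$-norm of a solution of $\Dd_u\xi=0$ satisfies the differential inequality $F''\ge \rho^2 F$ asymptotically, and this together with the asymptotic hyperbolicity of $A_{x^\pm}$ forces actual exponential decay, which makes each of the three integrands absolutely integrable. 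Hence $\xi \in \Ww_u^{1,p}$.

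\emph{Main obstacle.} The technical heart is inclusion~(iii) of the first direction: one must interleave the local apriori estimates with the exponential decay theorem. The delicate point is that theorem~\ref{thm:kerD-apriori-II} requires uniform $L^\infty$-control of $u$ through third order, which is not directly in hypothesis~\ref{hyp:fredholm} and has to be obtained by bootstrapping the apriori estimates on $\p_s u$ and combining smoothness on compact regions with asymptotic exponential decay of $\p_s u$. A secondary, routine, issue is promoting local $L^p$- to local $L^2$-integrability in the range $1<p<2$; this is handled by one step of parabolic Sobolev embedding applied to the smooth solution produced in step~(i).
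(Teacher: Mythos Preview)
Your overall architecture matches the paper's: regularity via theorem~\ref{thm:REG-L}, then pointwise apriori estimates, then $L^2$ exponential decay, and finally reflection $s\mapsto -s$ for $\Dd_u^*$. Two points deserve correction.

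\textbf{A genuine error.} In step~(ii) you assert that the higher-order $L^\infty$ bounds on $u$ needed for theorem~\ref{thm:kerD-apriori-II} follow because ``$\p_su$ itself satisfies the linearized equation~(\ref{eq:kerD-heat}).'' This is false under hypothesis~\ref{hyp:fredholm}: there $u$ is merely a smooth cylinder with the stated asymptotics, \emph{not} a solution of the nonlinear heat equation~(\ref{eq:heat}). The identity $\Dd_u(\p_su)=0$ is obtained by differentiating~(\ref{eq:heat}) in $s$ and therefore requires $u$ to solve it; for a general $u$ as in hypothesis~\ref{hyp:fredholm} no such equation for $\p_su$ is available, and your bootstrap argument collapses. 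The bounds $\|\Nabla{t}\p_su\|_\infty$ do follow directly from the hypothesis (since $\p_su_s\to 0$ in $W^{1,2}$ together with the uniform $\|\Nabla{t}\Nabla{t}\p_su_s\|_2$ bound yields $\p_su_s$ bounded in $W^{2,2}(S^1)\hookrightarrow C^1(S^1)$), but the bounds $\|\Nabla{t}\p_tu\|_\infty$ and $\|\Nabla{t}\Nabla{t}\p_tu\|_\infty$ cannot be manufactured in the way you claim.

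\textbf{The reverse inclusion is trivial.} Your treatment of $X_0\subset\ker\Dd_u$ is unnecessarily elaborate. The definition of $X_0$ already supplies pointwise exponential decay of $\xi$, $\Nabla{s}\xi$, $\Nabla{t}\Nabla{t}\xi$; each of the three integrals in $\|\xi\|_{\Ww^{1,p}}^p$ is then dominated by $\int_\R c^pe^{-p\delta|s|}\,ds<\infty$. No spectral-gap argument or differential inequality is needed --- the paper simply calls this inclusion trivial.

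\textbf{Boundedness of $\|\xi_s\|_2$.} Your case split ($p\ge 2$ via H\"older, $1<p<2$ via an unspecified ``bootstrap'') is replaced in the paper by a single clean argument valid for all $p>1$: since $\xi,\Nabla{t}\xi\in L^p(\R\times S^1)$, the slicewise norms $\|\xi_s\|_p+\|\Nabla{t}\xi_s\|_p$ tend to zero as $|s|\to\infty$, and the one-dimensional Sobolev embedding $W^{1,p}(S^1)\hookrightarrow C^0(S^1)$ gives $\|\xi_s\|_2\le\|\xi_s\|_\infty\le c_p(\|\xi_s\|_p+\|\Nabla{t}\xi_s\|_p)$.
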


\begin{proof}
The inclusion $\supset$ is trivial.
To prove the
inclusion $\subset$
assume that $\xi\in\Ww^{1,p}$
solves $\Dd_u\xi=0$ almost everywhere.
Being a local property
smoothness of $\xi$
follows from theorem~\ref{thm:REG-L}
using integration by parts.
Exponential $L^\infty$ decay
follows by combining the apriori estimates
theorem~\ref{thm:kerD-apriori}
and theorem~\ref{thm:kerD-apriori-II}
with the $L^2$ exponential
decay results
theorem~\ref{thm:kerD-exp-decay}
and remark~\ref{rmk:decay-forward}.
The last two results require nondegeneracy of
the critical points $x^\pm$
and boundedness of the map
$s\mapsto\norm{\xi_s}_2$.
To see the latter note that
$\norm{\xi_s}_p$
and $\norm{\Nabla{t}\xi_s}_p$
converge to zero
as $s\to\pm\infty$, because
$\xi$ and $\Nabla{t}\xi$ 
are $L^p$ integrable on $\R\times S^1$. 
Hence $\norm{\xi_s}_p
+\norm{\Nabla{t}\xi_s}_p\le C$
for some constant $C=C(p,\xi)$.
Now observe that
$$
     \Norm{\xi_s}_2
     \le\Norm{\xi_s}_\infty
     \le c_q\left(\Norm{\xi_s}_p
     +\Norm{\Nabla{t}\xi_s}_p\right)
$$
by the Sobolev embedding
$W^{1,p}(S^1)\hookrightarrow C^0(S^1)$
with constant $c_p$.
This proves that $X_0$ is the kernel of $\Dd_u$.
The result for $\Dd_u^*$ follows 
by reflection $s\mapsto -s$.
\end{proof}

\begin{proposition}\label{prop:closed-range}
The range of
$\Dd_u,\Dd_u^*:\Ww^{1,p}_u\to\Ll^p_u$
is closed whenever $p>1$.
\end{proposition}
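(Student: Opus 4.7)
The plan is to reduce the closed-range statement to a semi-Fredholm type a priori estimate
$$
\|\xi\|_{\Ww^{1,p}_u} \le c \|\Dd_u \xi\|_{\Ll^p_u} + c \|\xi\|_{L^p([-T,T] \times S^1)},
$$
valid for all $\xi \in \Ww^{1,p}_u$ and some sufficiently large $T>0$. Since the restriction map $\Ww^{1,p}_u \to L^p([-T,T] \times S^1)$ is compact (a parabolic Rellich--Kondrachov argument on the compact cylinder, using that $\xi$, $\Nabla{s}\xi$ and $\Nabla{t}\Nabla{t}\xi$ are all locally $L^p$), such an estimate implies via standard functional analysis that $\Dd_u$ has finite-dimensional kernel and closed range. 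The statement for $\Dd_u^*$ then follows by the variable substitution $s \mapsto -s$, which swaps the two operators up to a change of sign of the connection term.

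The first ingredient is the global interior Calder\'on--Zygmund type estimate
$$
\|\xi\|_{\Ww^{1,p}_u} \le C \|\Dd_u \xi\|_{\Ll^p_u} + C \|\xi\|_{\Ll^p_u}.
$$
To prove this, I would represent $\xi$ in the twisted orthonormal frame along $u$ introduced in the $p=2$ discussion, and then apply the fundamental $L^p$ estimate for the linear heat operator $\p_s - \p_t\p_t$ from appendix~\ref{sec:REG} on overlapping cylindrical patches. The zeroth-order terms $-R(\xi,\p_t u)\p_t u - \Hh_\Vv(u)\xi$ are absorbed into the $\Ll^p$ norm of $\xi$ on the right-hand side, using the $L^\infty$ bound on $\p_t u$ provided by~(\ref{eq:u-W-bound}), boundedness of the curvature $R$ on the closed manifold $M$, and the pointwise bound on the Hessian supplied by axiom~(V1).

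The second ingredient is an end estimate. For $\abs{s}$ large, the coefficients of $\Dd_u$ are close in $L^\infty$ to those of the translation-invariant asymptotic operator $\Dd^{\pm} := \Nabla{s} + A_{x^\pm}$, because $u_s \to x^\pm$ in $W^{2,2}(S^1)$ and $\p_s u_s \to 0$ in $W^{1,2}(S^1)$ by hypothesis~\ref{hyp:fredholm}. Since $x^\pm$ are nondegenerate critical points, the self-adjoint operator $A_{x^\pm}$ has spectrum bounded away from zero, and one shows that $\Dd^\pm$ is an isomorphism $\Ww^{1,p}(\R\times S^1) \to \Ll^p(\R\times S^1)$ for every $p>1$. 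Choose $T$ so large that $\|\Dd_u - \Dd^\pm\|_{op}$ on the half-cylinders $\{\pm s \ge T\}$ is less than half the inverse norm of $(\Dd^\pm)^{-1}$; a Neumann-series argument then gives $\|\xi\|_{\Ww^{1,p}} \le 2\|(\Dd^\pm)^{-1}\|\,\|\Dd_u \xi\|_{\Ll^p}$ for all $\xi$ supported in $\{\abs{s}\ge T\}$. A cutoff $\chi$ equal to $1$ on $\{\abs{s}\ge T+1\}$ and vanishing on $\{\abs{s}\le T\}$, applied to $\xi$ and combined with the global interior estimate on the support of $1-\chi$, then produces the desired semi-Fredholm bound (the commutator $[\Dd_u,\chi]\xi = \chi'\xi$ is controlled by $\|\xi\|_{L^p([-T-1,T+1]\times S^1)}$).

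The main obstacle will be verifying the $L^p$-invertibility of the asymptotic operator $\Dd^\pm = \Nabla{s} + A_{x^\pm}$ when $p\ne 2$. For $p=2$ this is immediate from the spectral theorem, since $A_{x^\pm}$ is self-adjoint with spectrum bounded away from zero. For general $p>1$ I would argue that $-A_{x^\pm}$ is a uniformly elliptic second-order operator on the compact base $S^1$ and hence generates an analytic semigroup on $L^p(S^1,(x^\pm)^*TM)$ whose spectrum is bounded away from zero; the inverse of $\Dd^\pm$ is then given by convolution in $s$ against an exponentially decaying semigroup kernel, which is bounded on $L^p(\R\times S^1)$ by Young's inequality. (Alternatively, Fourier-decomposing in the $s$-variable diagonalizes $\Dd^\pm$ and one invokes the Mikhlin multiplier theorem.) This completes the proof for $\Dd_u$, and the proof for $\Dd_u^*$ is identical.
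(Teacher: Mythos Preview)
Your proposal is correct and follows essentially the same standard architecture as the paper's proof: the global linear estimate $\|\xi\|_{\Ww^{1,p}}\le c(\|\Dd_u\xi\|_p+\|\xi\|_p)$ via proposition~\ref{prop:par-linear} and axiom~(V1), invertibility of the translation-invariant asymptotic operator at the nondegenerate ends, and a cutoff-function patching argument (the paper cites~\cite[thm~2.2]{Sa-FLOER} for this). The only minor difference is in how you establish $L^p$-invertibility of the asymptotic operator for $p\ne2$ --- you invoke analytic semigroups and Young's inequality (or Mikhlin), whereas the paper refers to its theorem~\ref{thm:onto} for $p\ge2$ and handles $1<p\le2$ by duality --- but both routes are standard and valid.
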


\begin{proof}
The structure of proof 
is standard; see 
e.g.~\cite[sec.~2]{Sa-FLOER}.
We sketch the two key steps
for $\Dd_u$.
Step one is the linear estimate
$$
     \Norm{\xi}_{\Ww^{1,p}}
     \le c_p\left(
     \Norm{\Dd_u\xi}_p
     +\Norm{\xi}_p\right)
$$
for compactly supported
vector fields $\xi$ along $u$.
This follows immediately
from proposition~\ref{prop:par-linear},
lemma~\ref{le:plus-minus},
the $L^\infty$ bound for $\p_tu$ in~(\ref{eq:u-W-bound})
and axiom~(V1).
Step two is to prove bijectivity
of $\Dd_u$ in the case of
the constant cylinder $u(s,t)=x(t)$,
whenever $x$ is a nondegenerate
critical point of $\Ss_\Vv$.
We give a proof for $p\ge2$
in the related
situation of half cylinders
in theorem~\ref{thm:onto} below.
The case $1<p\le2$
follows by duality;
see~\cite[exc.~2.5]{Sa-FLOER}.
Both steps are then combined
by a cutoff function
argument; see~\cite[thm~2.2]{Sa-FLOER}.
\end{proof}

Proposition~\ref{prop:closed-range}
enables us to define the cokernels
of $\Dd_u:\Ww^{1,p}_u\to\Ll^p_u$
and 
$\Dd_u^*:\Ww^{1,p}_u\to\Ll^p_u$
as Banach space quotients, namely
for $p>1$ set
$$
      \coker\Dd_u
      :=\frac{\Ll^p_u}{\im \Dd_u},
$$
and
$$
      \coker\Dd_u^*
      :=\frac{\Ll^p_u}{\im \Dd_u^*}.
$$
The next result
shows that these spaces are again
independent of $p$.

\begin{proposition}\label{prop:coker-ker}
Let $p>1$, then
$$
     \coker\left[\Dd_u:
     \Ww^{1,p}_u\to\Ll^p_u\right]
     =X_0^*,
$$
and 
$$
     \coker\left[\Dd_u^*:
     \Ww^{1,p}_u\to\Ll^p_u\right]
     =X_0.
$$
\end{proposition}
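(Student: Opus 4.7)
My plan is to identify, for the conjugate exponent $q$ defined by $1/p+1/q=1$, the annihilator
$$
     (\im\Dd_u)^\perp
     := \bigl\{\eta\in\Ll^q_u \,:\, \langle\eta,\Dd_u\xi\rangle=0 \ \text{for all}\ \xi\in\Ww^{1,p}_u\bigr\}
     \subset \Ll^q_u
$$
with $X_0^*$, and analogously $(\im\Dd_u^*)^\perp$ with $X_0$. Closedness of $\im\Dd_u$ (Proposition~\ref{prop:closed-range}) together with Hahn--Banach canonically identifies the quotient $\Ll^p_u/\im\Dd_u$ with the dual of this annihilator. The annihilator $X_0^*$ is finite-dimensional, since by Proposition~\ref{prop:kernel-smooth} it coincides with $\ker\Dd_u^*$ for every exponent and the Fredholm property for $p=2$ is already established. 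Hence the $L^2$-pairing identifies $\coker\Dd_u$ with $X_0^*$; the argument for $\Dd_u^*$ is identical with the roles swapped, using the reflection $s\mapsto-s$ that exchanges $\Dd_u$ and $\Dd_u^*$ up to reparametrization of $u$.

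The inclusion $X_0^* \subset (\im\Dd_u)^\perp$ is immediate: the exponential pointwise bounds defining $X_0^*$ place any $\eta\in X_0^*$ in $\Ll^r_u$ for every $r\ge 1$, and for smooth compactly supported $\xi$ two integrations by parts in $t$ and one in $s$ give $\langle\eta,\Dd_u\xi\rangle=\langle\Dd_u^*\eta,\xi\rangle=0$; density of $C^\infty_c$ in $\Ww^{1,p}_u$ extends the identity to every test section.

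For the reverse inclusion $(\im\Dd_u)^\perp\subset X_0^*$, take $\eta\in\Ll^q_u$ with $\langle\eta,\Dd_u\xi\rangle=0$ for every smooth compactly supported $\xi$, so that $\eta$ is a weak $\Ll^q$-solution of $\Dd_u^*\eta=0$. After the substitution $s\mapsto-s$, which recasts $\Dd_u^*$ in the form treated by Theorem~\ref{thm:REG-L}, that theorem yields smoothness of $\eta$. The next task is uniform-in-$s$ control of the slab norm $\norm{\eta}_{L^2([s-1,s]\times S^1)}$: when $q\ge 2$ this follows at once from H\"older and $\eta\in\Ll^q_u$, while when $1<q<2$ one first combines the local regularity of Theorem~\ref{thm:REG-L} with a standard parabolic bootstrap to deduce $\eta\in L^\infty_{\loc}$, after which H\"older again suffices on each unit slab. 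The apriori estimates of Theorems~\ref{thm:kerD-apriori}--\ref{thm:kerD-apriori-II}, applied to $\Dd_u^*\eta=0$ via Remark~\ref{rmk:apriori-adjoint}, then convert the slab bound into uniform pointwise control of $\eta,\Nabla{t}\eta,\Nabla{t}\Nabla{t}\eta,\Nabla{s}\eta$, and in particular show that $s\mapsto\norm{\eta_s}_2$ is uniformly bounded. Nondegeneracy of $x^\pm$ and the asymptotic data of Hypothesis~\ref{hyp:fredholm} now license Theorem~\ref{thm:kerD-exp-decay}, together with Remark~\ref{rmk:decay-forward} at $+\infty$, yielding exponential decay $\norm{\eta_s}_2=O(e^{-\rho|s|})$; feeding this back into the same apriori estimates upgrades it to the exponential $L^\infty$-decay of $\eta$ and its covariant derivatives demanded by the definition of $X_0^*$. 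The main technical obstacle is precisely this bootstrap from bare $\Ll^q$-integrability to uniform slicewise $L^2$-control (delicate for $1<q<2$), combined with the careful bookkeeping of the reflection $s\mapsto-s$ needed to transfer the $\Dd_u$-apriori estimates to solutions of $\Dd_u^*\eta=0$ while preserving the two-sided exponential decay.
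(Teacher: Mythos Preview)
Your overall strategy is the same as the paper's: show that the annihilator of the range coincides with $X_0^*$ (resp.\ $X_0$) by invoking local regularity (Theorem~\ref{thm:REG-L}) for smoothness, then the apriori estimates together with Theorem~\ref{thm:kerD-exp-decay} for exponential decay. The only structural difference is that the paper takes the $L^2$-orthogonal complement of $\im\Dd_u^*$ \emph{inside} $\Ll^p_u$, whereas you place the annihilator in the dual $\Ll^q_u$; once one unwinds the paper's identification this goes through the dual as well, so the two routes are equivalent.

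There is, however, a genuine gap in your treatment of the case $1<q<2$. Knowing only that $\eta\in L^\infty_{\loc}$ gives a bound on each fixed slab, but the bound may depend on the slab; H\"older on a unit slab then yields $\Norm{\eta}_{L^2([s-1,s]\times S^1)}\le C(s)$ with no control of $C(s)$ as $|s|\to\infty$. This is precisely what Theorem~\ref{thm:kerD-exp-decay} needs (a \emph{global} bound on $s\mapsto\Norm{\eta_s}_2$), so the argument as written does not close.

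The clean fix---and this is what the paper does, implicitly, via its reference to Proposition~\ref{prop:kernel-smooth}---is to first upgrade $\eta\in\Ll^q_u$ to $\eta\in\Ww^{1,q}_u$. Once $\eta$ is smooth with $\Dd_u^*\eta=0$, apply the linear estimate of Proposition~\ref{prop:par-linear} to $\beta_R\eta$ for a cutoff $\beta_R$ supported in $[-R,R]$; since $\Dd_u^*(\beta_R\eta)=-\beta_R'\eta$, the $\Ww^{1,q}$ norm of $\beta_R\eta$ is bounded by a constant times $\Norm{\eta}_q$, uniformly in $R$, and letting $R\to\infty$ gives $\eta\in\Ww^{1,q}_u$. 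Now $\eta$ lies in $\ker[\Dd_u^*:\Ww^{1,q}_u\to\Ll^q_u]$, which equals $X_0^*$ by Proposition~\ref{prop:kernel-smooth} applied with exponent $q$. In particular the Sobolev argument there ($W^{1,q}(S^1)\hookrightarrow C^0(S^1)$, using that $\Norm{\eta_s}_q+\Norm{\Nabla{t}\eta_s}_q\to0$) supplies the missing uniform bound on $\Norm{\eta_s}_2$.
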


\begin{proof}
We prove the second identity.
The other one follows 
by reflection $s\mapsto -s$.
Note that there is a natural complement
of the image of $\Dd_u^*$ in $\Ll^p_u$,
namely its orthogonal complement
with respect to the $L^2$ inner product.
Hence we identify
$$
     \coker\Dd_u^*
     \simeq\left(\im \Dd_u^*\right)^\perp.
$$
The inclusion $\supset$ is trivial.
To prove the inclusion $\subset$
assume that $\xi\in\left(\im \Dd_u^*\right)^\perp$.
This means that $\xi\in\Ll^p_u$ and that
$
     \langle\xi,\Dd_u^*\eta\rangle=0
$
for all $\eta\in C^\infty_0(\R\times S^1)$.
Hence $\xi$ is smooth by
theorem~\ref{thm:REG-L}.
Integration by parts then
shows that $\Dd_u\xi=0$.
Exponential decay
follows by combining
theorem~\ref{thm:kerD-apriori}
and
theorem~\ref{thm:kerD-apriori-II}
with theorem~\ref{thm:kerD-exp-decay}
and remark~\ref{rmk:decay-forward}
as explained in the proof of 
proposition~\ref{prop:kernel-smooth}.
\end{proof}

\begin{remark}\label{rmk:onto}
It is an easy but important
consequence of 
proposition~\ref{prop:coker-ker}
that if $\Dd_u:\Ww^{1,p}_u\to\Ll^p_u$
is surjective for some $p>1$,
then it is surjective for all
$p>1$. This justifies
the phrase
``$\Dd_u$ is surjective''
encountered occasionally.
\end{remark}

\begin{proof}[Proof of theorem~\ref{thm:fredholm}]
The range of
$\Dd_u:\Ww^{1,p}_u\to\Ll^p_u$ 
is closed by
proposition~\ref{prop:closed-range}.
Moreover, by
proposition~\ref{prop:kernel-smooth}
and 
proposition~\ref{prop:coker-ker}
the kernel and the cokernel
of $\Dd_u:\Ww^{1,p}_u\to\Ll^p_u$ 
are given by $X_0$ and $X_0^*$, respectively.
Now these vector 
spaces do not depend on $p>1$.
But for $p=2$ we proved in the
previous subsection
that they are finite dimensional
and the difference
of their dimensions equals
$\IND_\Vv(x^-)-\IND_\Vv(x^+)$.
The claim for $\Dd_u^*$ follows similarly.
\end{proof}

%%%%%%%%%%%%%%%%%%%%%%%%%%%%%%%%%%%%%%%%%%%%%%
%%%%%%%%%%%%%%%%%%%%%%%%%%%%%%%%%%%%%%%%%%%%%%
%%%%%%%%%%%%%%%% Section %%%%%%%%%%%%%%%%%%%%%
%%%%%%%%%%%%%%%%%%%%%%%%%%%%%%%%%%%%%%%%%%%%%%
%%%%%%%%%%%%%%%%%%%%%%%%%%%%%%%%%%%%%%%%%%%%%%
\section{Solutions of the nonlinear heat equation}
\label{sec:nonlinear-heat}

%%%%%%%%%%%%%%%%%%%%%%%%%%%%%%%%%%%%%%%%%%%%%%
%%%%%%%%%%%%%%%%%%%%%%%%%%%%%%%%%%%%%%%%%%%%%%
%%%%%%%%%%%%%%%% Subsection %%%%%%%%%%%%%%%%%%
%%%%%%%%%%%%%%%%%%%%%%%%%%%%%%%%%%%%%%%%%%%%%%
%%%%%%%%%%%%%%%%%%%%%%%%%%%%%%%%%%%%%%%%%%%%%%
\subsection{Regularity and compactness}
\label{subsec:regularity-compactness}

Throughout this subsection
we embed the compact Riemannian manifold $M$
isometrically into some
Euclidean space $\R^N$
and view any continuous map
$u:Z=(-T,0]\times S^1\to M$ as a map
into $\R^N$ taking values
in the embedded manifold.
We indicate this by the notation
$u:Z\to M\hookrightarrow \R^N$.
Then the heat equation~(\ref{eq:heat})
is of the form
\begin{equation}\label{eq:heat-embedded}
     \p_su-\p_t\p_tu
     =\Gamma(u)\left(\p_tu,\p_tu\right)
     +F.
\end{equation}
Here and throughout
this section $\Gamma$
denotes the second fundamental form
associated to the embedding 
$M\hookrightarrow\R^N$ and
the map $F:Z\to \R^N$ is given by
\begin{equation}\label{eq:F}
     F(s,t):=(\grad\Vv(u_s))(t).
\end{equation}
Recall the definition of
the $\Ww^{k,p}$ and the $\Cc^k$
norm in~(\ref{eq:parabolic-Wk}) 
and~(\ref{eq:parabolic-Ck}), respectively.

\begin{proposition}\label{prop:reg-estimate}
Fix a perturbation 
$\Vv:\Ll M\to\R$ that satisfies~{\rm (V0)--(V3)},
constants $p>2$ and $\mu_0>0$,
and cylinders
$$
     Z=(-T,0]\times S^1,\qquad
     Z^\prime=(-T^\prime,0]\times S^1,\qquad
     T>T^\prime>0.
$$
Then for every integer $k\ge 1$
there is a constant
$c_k=c_k(p,\mu_0,T,T^\prime,\Vv)$
such that the following is true.
If $u:Z\to M\hookrightarrow \R^N$
is a $\Ww^{1,p}$ map such that
\begin{equation}\label{eq:reg-assumption}
     \Norm{u}_p
     +\Norm{\p_su}_p
     +\Norm{\p_tu}_p
     +\Norm{\p_t\p_tu}_p
     \le \mu_0
\end{equation}
and which satisfies the heat
equation~(\ref{eq:heat-embedded})
almost everywhere, then
$$
     \Norm{u}_{\Ww^{k,p}(Z^\prime,\R^N)}
     \le c_k.
$$
\end{proposition}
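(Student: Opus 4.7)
The proof I would give is a standard parabolic bootstrap on nested subcylinders, using the appendix's linear regularity for $L = \partial_s - \partial_t\partial_t$ combined with the product estimate (lemma~\ref{le:product-derivatives}) to handle the quadratic nonlinearity $\Gamma(u)(\partial_t u,\partial_t u)$ and axiom~(V3) to handle $F = \grad\Vv(u)$. Fix a sequence of nested cylinders $Z = Z_0 \supset Z_1 \supset Z_2 \supset \cdots \supset Z'$, each strictly contained in the previous one. The plan is to prove inductively that $\|u\|_{\Ww^{k,p}(Z_k,\R^N)} \le c_k$ for every $k \ge 1$, with $c_k$ depending only on $p,\mu_0,T,T^\prime,\Vv$.

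The base case $k=1$ is exactly the hypothesis~(\ref{eq:reg-assumption}) (recall that in the paper's convention $\Ww^{1,p}$ controls $u,\partial_s u,\partial_t u,\partial_t\partial_t u$ in $L^p$). For the inductive step, assume the bound on $Z_{k-1}$. Since $p>2$, parabolic Sobolev embedding $\Ww^{1,p} \hookrightarrow C^0$ gives a uniform $C^0$ bound on $u$, hence a uniform $C^\infty$ bound on $\Gamma(u)$ and all its tangential derivatives (using that $M$ is compact and smoothly embedded). The key step is to estimate the right-hand side
\[
g := \Gamma(u)(\partial_t u,\partial_t u) + F
\]
in $\Ww^{k-1,p}(Z_{k-1})$. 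For the first term I would apply the chain rule and iterate the product estimate of lemma~\ref{le:product-derivatives}; the bilinear product $(\partial_t u)(\partial_t u)$ is exactly the situation this product lemma is designed for, and the factors of $\Gamma(u)$ and its derivatives are controlled uniformly. For $F$, axiom~(V3) expresses $\nabla_t^\ell \nabla_s^k \grad\Vv(u)$ as a sum of products of covariant derivatives of $u$ with mixed pointwise and $L^{p_j}$-over-$S^1$ factors; these are dominated by products of the inductively-controlled $\Ww^{k-1,p}$ norms (again via the product estimate and Hölder). The result is a bound $\|g\|_{\Ww^{k-1,p}(Z_{k-1})} \le C_k$.

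Now apply the linear parabolic Schauder/$L^p$ estimate from the appendix: since $Lu = g$ componentwise in $\R^N$, and $Z_k \Subset Z_{k-1}$, one obtains
\[
\|u\|_{\Ww^{k+1,p}(Z_k,\R^N)} \le c\bigl(\|g\|_{\Ww^{k-1,p}(Z_{k-1})} + \|u\|_{L^p(Z_{k-1})}\bigr),
\]
which gives the next bound and closes the induction. The main technical difficulty is the careful bookkeeping in estimating $g$: the nonlinearity $\Gamma(u)(\partial_t u,\partial_t u)$ sits on the borderline because each $\partial_t u$ factor is only in $L^p$ a priori, but the two-dimensional parabolic embeddings compensate for this, and the product estimate is precisely tailored to absorb such quadratic terms. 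Once the induction is complete, choosing the cylinders so that $Z_k \supset Z'$ for all $k$ (e.g. geometrically converging to $Z'$) yields $\|u\|_{\Ww^{k,p}(Z',\R^N)} \le c_k$, as claimed.
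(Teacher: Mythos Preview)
Your bootstrap-on-nested-cylinders strategy is exactly right, but the paper organizes it differently and the difference is worth noting. Rather than bounding the full right-hand side $g=\Gamma(u)(\partial_tu,\partial_tu)+F$ at each step, the paper invokes proposition~\ref{prop:bootstrap} and corollary~\ref{cor:bootstrap} (proved in the companion paper~\cite{Joa-PARABOLI}), which already absorb the quadratic term $\Gamma(u)(\partial_tu,\partial_tu)$ internally. The induction in the paper is therefore \emph{only} on the regularity of $F=\grad\Vv(u)$: one shows $F\in\Ww^{\ell,p}(Z_{\ell+1})$ for every $\ell\ge1$, and then corollary~\ref{cor:bootstrap} delivers $u\in\Ww^{\ell+1,p}(Z_{\ell+2})$ automatically. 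This split pays off because the anisotropic bookkeeping for the quadratic term is delicate---for instance, obtaining $\partial_tu\in C^0$ from $u\in\Ww^{1,p}$ already requires an application of proposition~\ref{prop:bootstrap}(i), not merely Sobolev embedding, since $u\in\Ww^{1,p}$ does not directly control $\partial_s\partial_tu$.

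Two concrete corrections to your sketch. The product lemma you cite, lemma~\ref{le:product-derivatives}, only bounds $\abs{\Nabla{t}\xi}\,\abs{\Nabla{t}X}$ in $L^p$ and is used elsewhere in the paper (for the quadratic estimate, proposition~\ref{prop:Quadest0}); the tools actually used here are lemma~\ref{le:product-Sobolev} and lemma~\ref{le:product-s}, which estimate products $\partial_tu\cdot v$ and $\partial_su\cdot v$ in $\Ww^{\ell,p}$. And the purely linear interior estimate you invoke is not stated as such in the appendix---the quantitative version on offer is precisely proposition~\ref{prop:bootstrap}, with the nonlinearity already built in.
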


Proposition~\ref{prop:reg-estimate} follows
by induction from
the bootstrap
proposition~\ref{prop:bootstrap}
using all axioms~{\rm (V0)--(V3)}
and a product estimate,
lemma~\ref{le:product-s} below.
By standard arguments
proposition~\ref{prop:reg-estimate}
immediately implies
theorem~\ref{thm:regularity-local}
on regularity and
theorem~\ref{thm:compactness-gradbound}
on compactness.

\begin{theorem}[Regularity]
\label{thm:regularity-local}
Fix a perturbation
$\Vv:\Ll M\to\R$ that satisfies~{\rm (V0)--(V3)}
and constants $p>2$ and $a<b$. Let $u$
be a map $(a,b]\times S^1\to M\hookrightarrow \R^N$
which is of Sobolev class
$\Ww^{1,p}$ and solves the heat
equation~(\ref{eq:heat-embedded})
almost everywhere.
Then $u$ is smooth.
\end{theorem}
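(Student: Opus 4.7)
The plan is to reduce smoothness to a purely local statement and invoke Proposition~\ref{prop:reg-estimate}. Being $C^\infty$ is a local property, and by standard parabolic Sobolev embedding (using $p>2$) $\Ww^{k,p}$ regularity for every $k$ implies $C^\infty$ regularity. So it suffices to show that for each $s_0\in(a,b]$ there is a backward parabolic cylinder $(s_0-T',s_0]\times S^1$ on which $u$ lies in $\Ww^{k,p}$ for every integer $k\ge 1$.

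For this, fix $s_0\in(a,b]$ and choose $T>T'>0$ with $s_0-T>a$. Translate so that $s_0$ becomes $0$, i.e.\ consider $\tilde u(s,t):=u(s+s_0,t)$ on $Z=(-T,0]\times S^1$. Because $u$ is of class $\Ww^{1,p}$ on $(a,b]\times S^1$ and takes values in the compact manifold $M\subset\R^N$, the $L^p$ bounds required by~(\ref{eq:reg-assumption}) hold on $Z$ for some finite constant $\mu_0$; moreover $\tilde u$ still solves~(\ref{eq:heat-embedded}) almost everywhere, as the equation is autonomous in $s$. Proposition~\ref{prop:reg-estimate} then supplies, for each $k\ge 1$, a constant $c_k$ with $\Norm{\tilde u}_{\Ww^{k,p}((-T',0]\times S^1,\R^N)}\le c_k$, which translates back to a $\Ww^{k,p}$ bound for $u$ on $(s_0-T',s_0]\times S^1$. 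Covering any compact subset of $(a,b]\times S^1$ by finitely many such cylinders completes the local-to-global step.

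The deep analytic content has already been packaged into Proposition~\ref{prop:reg-estimate}, so the present theorem is mostly book-keeping. The one point that warrants a brief moment of care is the right endpoint $s=b$: one needs smoothness up to and including the top boundary. This is automatic because the parabolic cylinders in the proposition are of the form $(-T,0]\times S^1$, which are closed on the right, and the conclusion of the proposition bounds derivatives up to $s=0$. I therefore do not anticipate any genuine obstacle, matching the author's remark that the theorem follows ``by standard arguments.''
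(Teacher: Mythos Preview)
Your proposal is correct and follows essentially the same route as the paper: both reduce to Proposition~\ref{prop:reg-estimate} via a time shift and then use that $\bigcap_k \Ww^{k,p}=C^\infty$. The only cosmetic difference is that the paper applies the proposition once with $s_0=b$, $T=b-a$, and arbitrary $T'=b-a'$, thereby obtaining smoothness on every $(a',b]\times S^1$ in a single stroke and avoiding your covering argument; your version is slightly less economical but equally valid.
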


\begin{theorem}[Compactness]
\label{thm:compactness-gradbound}
Fix a perturbation $\Vv:\Ll M\to\R$ that
satisfies~{\rm (V0)--(V3)}
and constants $p>2$ and $a<b$.
Let $u^\nu:(a,b]\times S^1\to M\hookrightarrow \R^N$
be a sequence of smooth solutions
of the heat
equation~(\ref{eq:heat-embedded})
such that
$$
     \sup_\nu\Norm{\p_tu^\nu}_\infty
     +\sup_\nu\Norm{\p_su^\nu}_p
     <\infty.
$$
Then there is
a smooth solution $u:(a,b]\times S^1\to M$
of~(\ref{eq:heat-embedded})
and a subsequence, still denoted by $u^\nu$,
such that $u^\nu$ converges to $u$,
uniformly with all derivatives
on every compact subset
of $(a,b]\times S^1$.
\end{theorem}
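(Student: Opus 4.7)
The plan is to reduce the compactness assertion to Proposition~\ref{prop:reg-estimate} by first upgrading the given uniform bounds to an $L^p$ bound on $\partial_t\partial_t u^\nu$, and then to extract a $C^\infty_{\loc}$ convergent subsequence by a standard Sobolev--Arzel\`a--Ascoli diagonal argument. Fix any $a<a'<a''<b$ and set $Z:=(a',b]\times S^1$ and $Z':=(a'',b]\times S^1$.

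First I verify the hypothesis~(\ref{eq:reg-assumption}) of Proposition~\ref{prop:reg-estimate} uniformly in $\nu$ on $Z$. Since $u^\nu$ takes values in the compact embedded manifold $M\subset\R^N$, the $L^p(Z,\R^N)$ norm $\|u^\nu\|_p$ is bounded by $\diam(M)\cdot|Z|^{1/p}$. The bound on $\|\partial_s u^\nu\|_p$ is assumed, and $\|\partial_t u^\nu\|_p\le|Z|^{1/p}\|\partial_t u^\nu\|_\infty$ is uniform by assumption. To bound $\|\partial_t\partial_t u^\nu\|_p$ I solve the heat equation~(\ref{eq:heat-embedded}) for this term:
$$
\partial_t\partial_t u^\nu = -\,\partial_s u^\nu + \Gamma(u^\nu)(\partial_t u^\nu,\partial_t u^\nu) + F^\nu,
$$
where $F^\nu(s,t)=(\grad\Vv(u^\nu_s))(t)$. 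The first summand is bounded in $L^p$ by hypothesis; the second summand is bounded pointwise (hence in $L^p$ on the bounded cylinder $Z$) because $\Gamma$ is bounded on $M$ and $\|\partial_t u^\nu\|_\infty$ is uniformly bounded; and $F^\nu$ is bounded in $L^\infty$ by axiom~(V0). Hence there is a uniform constant $\mu_0$ with~(\ref{eq:reg-assumption}) on $Z$ for every $\nu$.

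Next I apply Proposition~\ref{prop:reg-estimate} on the pair $(Z,Z')$: for every integer $k\ge1$ there is a constant $c_k$, independent of $\nu$, with
$$
\|u^\nu\|_{\Ww^{k,p}(Z',\R^N)}\le c_k.
$$
The parabolic Sobolev embeddings recalled in the appendix (relating $\Ww^{k,p}$ to $\Cc^{k',p}$ and hence to ordinary $C^m$ spaces for $p>2$) then produce, on any compact subcylinder strictly inside $Z'$, uniform $C^m$ bounds for every $m\ge0$. By Arzel\`a--Ascoli I extract, for each $m$, a subsequence converging in $C^m$ on a fixed exhausting sequence of compact subcylinders of $(a,b]\times S^1$. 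A standard Cantor diagonal then yields a single subsequence, still denoted $u^\nu$, that converges in $C^\infty_{\loc}$ to some smooth map $u:(a,b]\times S^1\to M$ (values lie in $M$ because $M$ is closed in $\R^N$ and the convergence is in particular uniform on compact sets). Passing to the limit in~(\ref{eq:heat-embedded}) term by term (the second fundamental form $\Gamma$ is smooth on $M$ and $\grad\Vv$ is continuous with respect to the $C^0$ topology by axiom~(V0)) shows that $u$ solves the heat equation on $(a,b]\times S^1$.

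The content is almost entirely carried by Proposition~\ref{prop:reg-estimate}; the only real point of care is the first paragraph, where one must use the heat equation itself to convert the given $L^\infty$ bound on $\partial_t u^\nu$ and $L^p$ bound on $\partial_s u^\nu$ into the $L^p$ bound on $\partial_t\partial_t u^\nu$ required by~(\ref{eq:reg-assumption}). The uniform $L^\infty$ bound on $\grad\Vv$ from axiom~(V0) is exactly what makes the perturbation term $F^\nu$ harmless in this step; without it, one would need an additional apriori estimate to close the bootstrap.
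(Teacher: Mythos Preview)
Your proof is correct and follows essentially the same approach as the paper's: verify hypothesis~(\ref{eq:reg-assumption}) uniformly in $\nu$ using the heat equation and axiom~(V0), apply Proposition~\ref{prop:reg-estimate}, then Sobolev-embed and diagonalize. Two cosmetic slips worth fixing: from~(\ref{eq:heat-embedded}) the correct signs are $\partial_t\partial_t u^\nu = \partial_s u^\nu - \Gamma(u^\nu)(\partial_t u^\nu,\partial_t u^\nu) - F^\nu$, and the continuity of $\grad\Vv$ you invoke in the final limit is not contained in~(V0) (which only asserts continuity of $\Vv$ itself and an $L^\infty$ bound on $\grad\Vv$) but follows e.g.\ from~(V1) or simply from the $C^k$ convergence of $u^\nu$ with $k\ge 2$.
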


\begin{lemma}\label{le:product-s}
Fix a constant $p>2$ and
a bounded open subset
$\Omega\subset\R^2$ with area $\abs{\Omega}$.
Then for every
integer $k\ge 1$
there is a constant $c=c(k,\abs{\Omega})$
such that
$$
     \Norm{\p_su\cdot v}_{\Ww^{k,p}}
     \le c \Norm{\p_su}_{\Ww^{k,p}}\Norm{v}_\infty
     +c\left(\Norm{u}_{\Cc^k}+\Norm{\p_tu}_{\Cc^k}\right)
     \Norm{v}_{\Ww^{k,p}}
$$
for all functions $u,v\in
C^\infty(\overline{\Omega})$.
\end{lemma}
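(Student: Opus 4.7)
The plan is to apply the parabolic Leibniz rule to each mixed partial derivative $\p_s^\alpha\p_t^\beta(\p_su\cdot v)$ with parabolic weight $2\alpha+\beta\le 2k$, and to estimate the resulting finite sum term by term in $L^p(\Omega)$. Writing
$$
\p_s^\alpha\p_t^\beta(\p_su\cdot v) = \sum_{\substack{\alpha_1+\alpha_2=\alpha \\ \beta_1+\beta_2=\beta}} \binom{\alpha}{\alpha_1}\binom{\beta}{\beta_1}\, \p_s^{\alpha_1+1}\p_t^{\beta_1}u \cdot \p_s^{\alpha_2}\p_t^{\beta_2}v,
$$
the parabolic weights of the two factors satisfy $(2\alpha_1+\beta_1)+(2\alpha_2+\beta_2)\le 2k$. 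I would split the summands into two cases according to whether the $v$-factor carries any derivative, and in each case apply Hölder's inequality with one factor in $L^\infty$ and the other in $L^p$.

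In the case $2\alpha_2+\beta_2=0$ the summand is $\p_s^{\alpha_1+1}\p_t^{\beta_1}u\cdot v$ with $2\alpha_1+\beta_1\le 2k$, so putting $v$ in $L^\infty$ gives
$$
\Norm{\p_s^{\alpha_1+1}\p_t^{\beta_1}u\cdot v}_p \le \Norm{\p_s^{\alpha_1+1}\p_t^{\beta_1}u}_p\Norm{v}_\infty \le \Norm{\p_su}_{\Ww^{k,p}}\Norm{v}_\infty,
$$
where the last inequality uses that $\p_s^{\alpha_1+1}\p_t^{\beta_1}u$ is a parabolic weight $\le 2k$ derivative of $\p_su$. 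This yields the first term on the right-hand side of the lemma.

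In the case $2\alpha_2+\beta_2\ge 1$ I would instead put the $u$-factor in $L^\infty$ and the $v$-factor in $L^p$. The $v$-factor is then directly controlled by $\Norm{v}_{\Ww^{k,p}}$ since $2\alpha_2+\beta_2\le 2k$. The key subtlety lies in the $u$-factor $\p_s^{\alpha_1+1}\p_t^{\beta_1}u$, whose parabolic weight is $2(\alpha_1+1)+\beta_1\le 2k+1$. Since this upper bound is odd, I would argue by parity: if $\beta_1\ge 1$, then $\p_s^{\alpha_1+1}\p_t^{\beta_1}u=\p_s^{\alpha_1+1}\p_t^{\beta_1-1}(\p_tu)$ has weight $\le 2k$ as a derivative of $\p_tu$, hence is $L^\infty$-bounded by $\Norm{\p_tu}_{\Cc^k}$; if $\beta_1=0$, then $2(\alpha_1+1)\le 2k+1$ combined with integrality forces $\alpha_1+1\le k$, and $\p_s^{\alpha_1+1}u$ is $L^\infty$-bounded by $\Norm{u}_{\Cc^k}$.

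Summing the finitely many Leibniz terms and integrating over $\Omega$ then produces the required estimate, the constant $c$ absorbing the binomial coefficients together with any volume factors arising from comparing $L^p$- and $L^\infty$-norms on the bounded domain $\Omega$. The main technical obstacle is exactly the parity bookkeeping above: it is the fact that the parabolic weight $2k+1$ is odd and can therefore be decreased by one (either by absorbing a $\p_t$ into $\p_tu$, or by parity when no $\p_t$ is present) that explains why both $\Norm{u}_{\Cc^k}$ and $\Norm{\p_tu}_{\Cc^k}$, rather than just one of them, must appear on the right-hand side.
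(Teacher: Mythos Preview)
Your argument is correct, and it actually follows a different route from the paper's proof. The paper proceeds by induction on $k$: it expands $\Norm{\p_su\cdot v}_{\Ww^{\ell+1,p}}$ in terms of $\Ww^{\ell,p}$-norms of products such as $\p_su\cdot\p_sv$, applies the induction hypothesis to each, and then invokes the Sobolev embedding $\Ww^{1,p}\hookrightarrow C^0$ (this is where the hypothesis $p>2$ and the dependence of the constant on $\abs{\Omega}$ enter) to trade a stray $\Norm{\p_sv}_\infty$ for $\Norm{v}_{\Ww^{2,p}}$.

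Your direct Leibniz expansion with the parity bookkeeping is more elementary: by always placing one factor in $L^\infty$ and the other in $L^p$ via H\"older, you never need a Sobolev embedding, and in fact your constant depends only on $k$ (the remark about volume factors is harmless but unnecessary here). In particular your argument goes through for any $p\ge 1$, which is slightly stronger than stated. The paper's inductive scheme has the advantage of being mechanical once the base case is in hand, but your approach makes the structural reason for the appearance of both $\Norm{u}_{\Cc^k}$ and $\Norm{\p_tu}_{\Cc^k}$ transparent: the odd parabolic weight $2k+1$ on the $u$-factor must be lowered by one, and whether a $\p_t$ is available to absorb decides which of the two norms receives the term.
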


\begin{proof}
The proof is by induction on $k$.
By definition of the
$\Ww^{\ell,p}$ norm
\begin{equation}\label{eq:ell+1-s}
\begin{split}
     \Norm{\p_su\cdot v}_{\Ww^{\ell+1,p}}
    &\le 
     \Norm{\p_su\cdot v}_{\Ww^{\ell,p}}
     +\Norm{\p_t\p_su\cdot v+\p_su\cdot\p_tv}_{\Ww^{\ell,p}}\\
    &\quad+\Norm{\p_t\p_t\p_su\cdot v+2\p_t\p_su\cdot\p_tv
     +\p_su\cdot\p_t\p_tv}_{\Ww^{\ell,p}}\\
    &\quad
     +\Norm{\p_s\p_su\cdot v+\p_su\cdot\p_sv}_{\Ww^{\ell,p}}.
\end{split}
\end{equation}

\vspace{.1cm}
\noindent
{\bf\boldmath Step $k=1$.}
Estimate~(\ref{eq:ell+1-s}) for $\ell=0$ shows that
\begin{equation*}
\begin{split}
     \Norm{\p_su\cdot v}_{\Ww^{1,p}}
    &\le\left(\Norm{\p_su}_p
     +\Norm{\p_t\p_su}_p+\Norm{\p_t\p_t\p_su}_p
     +\Norm{\p_s\p_su}_p\right)\Norm{v}_\infty\\
    &\quad
     +2\Norm{\p_t\p_su}_\infty\Norm{\p_tv}_p\\
    &\quad+\Norm{\p_su}_\infty
     \left(\Norm{\p_tv}_p+\Norm{\p_t\p_tv}_p
     +\Norm{\p_sv}_p\right).
\end{split}
\end{equation*}
Since $\p_t\p_su=\p_s\p_tu$
this proves the lemma for $k=1$.

\vspace{.1cm}
\noindent
{\bf\boldmath Induction step $k\Rightarrow k+1$.}
Consider estimate~(\ref{eq:ell+1-s}) for $\ell=k$,
then inspect the right hand side term by term
using the induction hypothesis
for the appropriate functions
to conclude the proof.
To illustrate this we give full details
for the last term in~(\ref{eq:ell+1-s}),
namely
\begin{equation*}
\begin{split}
     \Norm{\p_su\cdot\p_sv}_{\Ww^{k,p}}
    &\le c\Norm{\p_su}_{\Ww^{k,p}}
     \Norm{\p_sv}_\infty
     +c\left(\Norm{u}_{\Cc^k}+\Norm{\p_tu}_{\Cc^k}\right)
     \Norm{\p_sv}_{\Ww^{k,p}}\\
    &\le cc_1\Abs{\Omega} \Norm{\p_su}_{\Cc^k}
     \Norm{\p_sv}_{\Ww^{1,p}}
     +c\left(\Norm{u}_{\Cc^k}+\Norm{\p_tu}_{\Cc^k}\right)
     \Norm{v}_{\Ww^{k+1,p}}\\
    &\le cc_1\Abs{\Omega} \Norm{u}_{\Cc^{k+1}}
     \Norm{v}_{\Ww^{2,p}}
     +c\left(\Norm{u}_{\Cc^k}+\Norm{\p_tu}_{\Cc^k}\right)
     \Norm{v}_{\Ww^{k+1,p}}.
\end{split}
\end{equation*}
The first step is by the induction
hypothesis for the function $\p_sv$. 
In the second step
we pulled out the $L^\infty$ norms
of all derivatives of $\p_su$
and for the term $\p_sv$
we applied the Sobolev embedding
$\Ww^{1,p}\subset W^{1,p}\hookrightarrow C^0$
with constant $c_1$.
Here our assumptions $p>2$
and $\Omega$ bounded enter.
Step three is obvious.
Note that $k\ge1$ implies that
$\Ww^{k+1,p}\hookrightarrow\Ww^{2,p}$.
\end{proof}

\begin{proof}[Proof of
proposition~\ref{prop:reg-estimate}]
Consider the family
$$
     T_r:=T^\prime +\frac{T-T^\prime}{r},
     \qquad r\in[1,\infty),
$$
and the corresponding nested 
sequence of cylinders
$Z_r:=(-T_r,0]\times S^1$ with
$$
     Z=Z_1\supset
     Z_2\supset
     Z_3\supset
     \ldots\supset
     Z^\prime.
$$
Denote by $C_0$ the constant in~(V0).
More generally, for $\ell\ge1$ choose $C_\ell$
larger than $C_{\ell-1}$ and 
larger than all constants
$C(k^\prime,\ell^\prime,\Vv)$ in~(V3)
for which $2k^\prime+\ell^\prime\le \ell$.

%%%%%%%%%%%% CLAIM %%%%%%%%%%%%%
\vspace{.1cm}
\noindent
{\bf Claim.} 
{\it
The map $F$ given by~(\ref{eq:F}) is in
$\Ww^{\ell,p}(Z_{\ell+1})$
for every integer $\ell\ge 1$.
}
%%%%%%%%%%%% END OF CLAIM %%%%%%

\vspace{.1cm}
\noindent
Proposition~\ref{prop:reg-estimate}
immediately follows:
Given any integer $k\ge1$,
then 
$F\in\Ww^{k,p}(Z_{k+1})$ by the claim.
Furthermore, by
inclusion $Z_{k+1}\subset Z$ 
and~(\ref{eq:reg-assumption})
$$
     \Norm{u}_{\Ww^{1,p}(Z_{k+1})}
     \le\Norm{u}_{\Ww^{1,p}(Z)}
     \le\mu_0.
$$
Hence by corollary~\ref{cor:bootstrap}
for the pair $Z_{k+2}\subset Z_{k+1}$ 
there is a constant
$c_{k+1}$ depending on
$p$, $\mu_0$, $Z_{k+2}$, $Z_{k+1}$,
$\norm{\Gamma}_{C^{2k+2}}$, and
$\norm{F}_{\Ww^{k,p}(Z_{k+1})}$
such that
$$
    \Norm{u}_{\Ww^{k+1,p}(Z^\prime)}
    \le \Norm{u}_{\Ww^{k+1,p}(Z_{k+2})}
    \le c_{k+1}.
$$
It remains 
to prove the claim.
The proof is by induction.

\vspace{.1cm}
\noindent
{\bf\boldmath Step $\ell=1$.}
We need to prove that
$F$, $\p_tF$, $\p_sF$, and $\p_t\p_tF$
are in $L^p(Z_2)$.
The domain of all norms
of $\Gamma$ and its derivatives
is the compact manifold $M$.
The domain of all other norms
is the cylinder $Z$ unless indicated differently.
By axiom~(V0) with constant $C_0$
it follows
(even on the larger domain $Z$)
that
\begin{equation}\label{eq:Finfty}
     \Norm{F}_\infty
     =\sup_{s\in(-T,0]} 
     \Norm{\grad\Vv(u_s)}_{L^\infty(S^1)}
     \le C_0
\end{equation}
and therefore
$$
     \Norm{F}_p
     \le \Norm{F}_\infty
     \left(\Vol\, Z\right)^{1/p}
     \le C_0 T^{1/p}.
$$
Next we  use axiom~(V1)
with constant $C_1\ge C_0$
to obtain that
\begin{equation*}
\begin{split}
     \Norm{\p_t F}_p
    &\le
     \Norm{\Nabla{t}\grad\Vv(u)}_p
     +\Norm{\Gamma(u)\left(\p_tu,
     \grad\Vv(u)\right)}_p
     \\
    &\le
     C_1\left(1+\Norm{\p_tu}_p
     \right)
     +\Norm{\Gamma}_\infty
     \Norm{\p_tu}_p
     \Norm{F}_\infty
     \\
    &\le
     C_1(1+\mu_0)+\Norm{\Gamma}_\infty \mu_0 C_0.
\end{split}
\end{equation*}
Here we used the
assumption~(\ref{eq:reg-assumption})
in the last step.
Now by the bootstrap
proposition~\ref{prop:bootstrap}~(i)
for $k=1$ and the pair
$Z_{4/3}\subset Z$ there is a constant
$a_1$ depending on $p$, $\mu_0$, $Z_{4/3}$, $Z$,
$\norm{\Gamma}_{C^4}$,
and the $L^p(Z)$ norms of $F$ and $\p_tF$
such that
$
     \norm{\p_tu}_{\Ww^{1,p}(Z_{4/3})}
     \le a_1
$.
Then by the Sobolev embedding
$W^{1,p}\hookrightarrow C^0$
with constant $c^\prime=c^\prime(p,Z_{5/3})$
it follows that
$\p_tu$ is continuous on $Z_{4/3}$
and
\begin{equation}\label{eq:u-C0}
     \Norm{\p_tu}_{C^0(Z_{5/3})}
     \le c^\prime
     \Norm{\p_tu}_{\Ww^{1,p}(Z_{5/3})}
     \le a_1 c^\prime.
\end{equation}
Again using axiom~(V1)
we obtain similarly that
\begin{equation*}
\begin{split}
     \Norm{\p_s F}_p
    &\le
     \Norm{\Nabla{s}\grad\Vv(u)}_p
     +\Norm{\Gamma(u)\left(\p_su,
     \grad\Vv(u)\right)}_p \\
    &\le
     2C_1\Norm{\p_su}_p
     +\Norm{\Gamma}_\infty
     \Norm{\p_su}_p
     \Norm{F}_\infty \\
    &\le\mu_0\left(
     2C_1+\Norm{\Gamma}_\infty C_0\right).
\end{split}
\end{equation*}
In order to estimate $\p_t\p_tF$
observe first that
\begin{equation*}
\begin{split}
     \Norm{\Nabla{t}\p_tu}_{L^p(Z_{5/3})}
    &\le
     \Norm{\p_t\p_tu}_{L^p(Z_{5/3})}
     +\Norm{\Gamma}_\infty
     \Norm{\Abs{\p_tu}\cdot\Abs{\p_tu}}_{L^p(Z_{5/3})}
     \\
    &\le \mu_0
     +\Norm{\Gamma}_\infty
     \Norm{\p_tu}_{C^0(Z_{5/3})}
     \Norm{\p_tu}_{L^p(Z_{5/3})}
     \\
    &\le \mu_0
     +\Norm{\Gamma}_\infty 
     a_1c^\prime \mu_0.
\end{split}
\end{equation*}
Here the last step
uses assumption~(\ref{eq:reg-assumption})
and the $C^0$ estimate~(\ref{eq:u-C0})
for $\p_tu$
which requires shrinking of the domain.
Now by axiom~(V3) for $k=0$ and $\ell=2$
there is a constant still denoted by $C_1=C_1(\Vv)$
such that
\begin{equation}\label{eq:axiomV2-tt}
     \Abs{\Nabla{t}\Nabla{t} F} 
     \le C_1\Bigl(1+\Abs{\p_tu}
     +\Abs{\Nabla{t}\p_tu}
     \Bigr)
\end{equation}
pointwise for every $(s,t)$.
Integrating this inequality to the power $p$
implies that
\begin{equation*}
\begin{split}
     \Norm{\Nabla{t}\Nabla{t} F}_{L^p(Z_{5/3})}
    &\le
     C_1\left(1+\Norm{\p_tu}_{L^p(Z_{5/3})}
     +\Norm{\Nabla{t}\p_tu}_{L^p(Z_{5/3})}
     \right)
     \\
    &\le 
     C_1\left(1+2\mu_0
     +\Norm{\Gamma}_\infty a_1c^\prime \mu_0
     \right).
\end{split}
\end{equation*}
Straightforward calculation shows that
\begin{equation*}
\begin{split}
     \Norm{\p_t\p_t F}_{L^p(Z_{5/3})}
    &\le
     \Norm{\Nabla{t}\Nabla{t} F}_{L^p}
     +\Norm{d\Gamma}_\infty
     \Norm{\p_tu}_{C^0}
     \Norm{\p_tu}_{L^p}
     \Norm{F}_{C^0}\\
    &\quad+\Norm{\Gamma}_\infty
     \Norm{\p_t\p_tu}_{L^p}
     \Norm{F}_{C^0}
     +2\Norm{\Gamma}_\infty
     \Norm{\p_tu}_{C^0}
     \Norm{\p_tF}_{L^p}\\
    &\quad+\Norm{\Gamma}_\infty^2
     \Norm{\p_tu}_{C^0}
     \Norm{\p_tu}_{L^p}
     \Norm{F}_{C^0}
\end{split}
\end{equation*}
is bounded by a constant
$c=c(p,\mu_0,c^\prime,C_1,
\norm{\Gamma}_{C^1})$.
Here all $C^0$ and $L^p$ norms
are on the domain $Z_{5/3}$.
We used again
assumption~(\ref{eq:reg-assumption}),
the estimates for $F$
and its derivatives
obtained earlier, and~(\ref{eq:u-C0}).

\vspace{.1cm}
\noindent
{\bf\boldmath Induction step $\ell\Rightarrow \ell+1$.}
Let $\ell\ge 1$ and assume that
the claim is true for $\ell$.
This means that
$F$ is in $\Ww^{\ell,p}(Z_{\ell+1}$,
hence
$$
     \alpha_\ell:=\Norm{F}_{\Ww^{\ell,p}(Z_{\ell+1})}<\infty.
$$
Therefore by
corollary~\ref{cor:bootstrap}
for the integer $\ell$ and
the pair of sets $Z_{\ell+1}\supset Z_{\ell+3/2}$
there is a constant
$c_{\ell}=c_\ell(p,\mu_0,T_{\ell+1},T_{\ell+3/2},
\norm{\Gamma}_{C^{2\ell+2}},\alpha_\ell)$
such that
\begin{equation}\label{eq:u-ell+1}
     \Norm{u}_{\Ww^{\ell+1,p}(Z_{\ell+3/2})}
     \le c_\ell,\qquad
     \Norm{u}_{\Cc^\ell(Z_{\ell+3/2})}
     \le c_\ell.
\end{equation}
The second inequality follows
from the first
by the Sobolev embedding
$W^{1,p}\hookrightarrow C^0$ applied to
each term in the $\Cc^\ell$ norm.
Then choose $c_\ell$ larger, if necessary.
It remains to prove
that the $\Ww^{\ell,p}(Z_{\ell+2})$ norms
of $\p_tF$, $\p_sF$,
and $\p_t\p_tF$ are finite.
Similarly as in step $\ell=1$
we obtain that
\begin{equation*}
\begin{split}
     \Norm{\p_t F}_{\Ww^{\ell,p}(Z_{\ell+3/2})}
    &\le
     \Norm{\Nabla{t} F}_{\Ww^{\ell,p}}
     +\Norm{\Gamma(u)\left(\p_tu,F\right)}_{\Ww^{\ell,p}}
     \\
    &\le
     C_1\left(\Norm{1}_{\Ww^{\ell,p}}
     +\Norm{\p_tu}_{\Ww^{\ell,p}}\right)\\
    &\quad
     +\tilde{c}\Norm{\Gamma}_{\Cc^\ell}
     \left(
     \Norm{\p_tu}_{\Ww^{\ell,p}} \Norm{F}_\infty
     +\Norm{u}_{\Cc^\ell} \Norm{F}_{\Ww^{\ell,p}}
     \right)\\
    &\le C_1\,(T^{1/p}+c_\ell)
     +\tilde{c}\Norm{\Gamma}_{\Cc^\ell}
     \left(
     c_\ell C_0+c_\ell \alpha_\ell
     \right).
\end{split}
\end{equation*}
Here the domain of all norms, except the one
of $\Gamma$, is $Z_{\ell+3/2}$.
The first step is by definition
of the covariant derivative
and the triangle inequality.
Step two uses axiom~(V1)
and lemma~\ref{le:product-Sobolev}
with constant $\tilde{c}$.
The last step uses the
estimates~(\ref{eq:Finfty}),~(\ref{eq:u-ell+1}),
and the definition of $\alpha_\ell$ 
in the induction hypothesis.
Now by the refined bootstrap
proposition~\ref{prop:bootstrap}
there is a constant $a_{\ell+1}$ such that
\begin{equation}\label{eq:p_tu-ell+1}
     \Norm{\p_tu}_{\Ww^{\ell+1,p}(Z_{\ell+2})}
     \le a_{\ell+1},\qquad
     \Norm{\p_tu}_{\Cc^\ell(Z_{\ell+2})}
     \le a_{\ell+1}.
\end{equation}
Next observe that
\begin{equation*}
\begin{split}
    &\Norm{\p_s F}_{\Ww^{\ell,p}(Z_{\ell+2})}\\
    &\le
     \Norm{\Nabla{s} F}_{\Ww^{\ell,p}}
     +\Norm{\Gamma(u)\left(\p_su,F\right)}_{\Ww^{\ell,p}}
     \\
    &\le
     2C_1\Norm{\p_su}_{\Ww^{\ell,p}}
%    &\quad
     +C^\prime\Norm{\Gamma}_{\Cc^\ell}
     \left(
     \Norm{\p_su}_{\Ww^{\ell,p}} \Norm{F}_\infty
     +\left(\Norm{u}_{\Cc^\ell}+\Norm{\p_tu}_{\Cc^\ell}\right)
     \Norm{F}_{\Ww^{\ell,p}}
     \right)\\
    &\le 2C_1c_\ell
     +C^\prime\Norm{\Gamma}_{\Cc^\ell}
     \left( c_\ell C_0 +(c_\ell+a_{\ell+1})\alpha_\ell
     \right).
\end{split}
\end{equation*}
Here the domain of all norms, except the one
of $\Gamma$, is $Z_{\ell+2}$.
Again the first step is by definition
of the covariant derivative
and the triangle inequality.
Step two uses axiom~(V1)
and lemma~\ref{le:product-s}
with constant $C^\prime$.
The last step uses the
estimates~(\ref{eq:Finfty}),~(\ref{eq:u-ell+1}),~(\ref{eq:p_tu-ell+1}),
and the definition of $\alpha_\ell$
in the induction hypothesis.
Similarly as in step $\ell=1$
we obtain that
\begin{equation*}
\begin{split}
    &\Norm{\p_t\p_t F}_{\Ww^{\ell,p}(Z_{\ell+2})}\\
    &\le
     \Norm{\Nabla{t}\Nabla{t} F}_{\Ww^{\ell,p}}
     +\Norm{d\Gamma(u)\left(\p_tu,\p_tu,F\right)}_{\Ww^{\ell,p}}
     \\
    &\quad
     +\Norm{\Gamma(u)\left(\p_t\p_tu,F\right)}_{\Ww^{\ell,p}}
     +2\Norm{\Gamma(u)\left(\p_tu,\p_tF\right)}_{\Ww^{\ell,p}}
     \\
    &\quad
     +\Norm{\Gamma(u)\left(\p_tu,\Gamma(u)\left(\p_tu,F\right)
     \right)}_{\Ww^{\ell,p}}
     \\
    &\le C_1\left(T^{1/p}
     +\Norm{\p_tu}_{\Ww^{\ell,p}}
     +\Norm{\p_t\p_tu}_{\Ww^{\ell,p}}
     +\Norm{\Gamma}_{\Cc^\ell}\Norm{\p_tu}_{\Cc^\ell}
     \Norm{\p_tu}_{\Ww^{\ell,p}}\right)\\
    &\quad
     +\Norm{d\Gamma}_{\Cc^\ell}
     \Norm{\p_tu}_{\Cc^\ell}^2\Norm{F}_{\Ww^{\ell,p}}\\
    &\quad
     +\tilde{c}\Norm{\Gamma}_{\Cc^\ell}
     \left(
     \Norm{\p_t\p_tu}_{\Ww^{\ell,p}}\Norm{F}_\infty
     +\Norm{\p_tu}_{\Cc^\ell}\Norm{F}_{\Ww^{\ell,p}}
     \right)\\
    &\quad
     +2\Norm{\Gamma}_{\Cc^\ell}
     \Norm{\p_tu}_{\Cc^\ell}
     \Norm{\p_tF}_{\Ww^{\ell,p}}\\
    &\quad
     +\Norm{\Gamma}_{\Cc^\ell}^2
     \Norm{\p_tu}_{\Cc^\ell}^2
     \Norm{F}_{\Ww^{\ell,p}}.
\end{split}
\end{equation*}
Here the domain of all norms, except the one
of $\Gamma$, is $Z_{\ell+2}$.
In the second step we used axiom~(V2)
with constant $C_1$ to estimate the term
$\Nabla{t}\Nabla{t} F$ and we spelled
out the covariant derivative arising
in $\Nabla{t}\p_tu$. Moreover,
crudely pulling out $\Cc^\ell$ norms
worked for all terms but the third one,
the one involving $\p_t\p_tu$, here we
used lemma~\ref{le:product-s}
with constant $\tilde{c}$
for the functions $\p_t\p_tu$ and $F$.
Now all terms appearing on the right hand side
have been estimated earlier.
This proves the induction step
and therefore the claim and
proposition~\ref{prop:reg-estimate}.
\end{proof}

\begin{proof}[Proof of
theorem~\ref{thm:regularity-local}]
Fix any point
$z\in Z=(a,b]\times S^1$
and a subcylinder
$Z^\prime=(a^\prime,b]\times S^1$
that contains $z$ and
where $a^\prime\in(a,b)$.
Set $\mu_0=\norm{u}_{\Ww^{1,p}(Z,\R^N)}$,
then proposition~\ref{prop:reg-estimate}
for the function $\tilde{u}(s,t):=u(s+b,t)$
and the constants $T=b-a$ and $T^\prime=b-a^\prime$
implies that
$$
     u\in
     \bigcap_{k\ge 0} \Ww^{k,p}(Z^\prime,\R^N)
     =\bigcap_{k\ge 0} W^{k,p}(Z^\prime,\R^N)
     =C^\infty(\overline{Z^\prime},\R^N).
$$
See~\cite[app.~B.1]{MS}
for the last step.
Hence $u$ is locally smooth.
\end{proof}

\begin{proof}[Proof of
theorem~\ref{thm:regularity}]
Theorem~\ref{thm:regularity-local}.
\end{proof}

\begin{proof}[Proof of
theorem~\ref{thm:compactness-gradbound}]
Shifting the $s$ variable
by $b$ and setting $T=b-a$,
if necessary, we may assume
without loss of generality that
the maps $u^\nu$ are defined on $(-T,0]$
and, furthermore, 
by composition with
the isometric embedding
$M\hookrightarrow \R^N$ that
they take values in $\R^N$.
All norms are taken on 
the domain $(-T,0]\times S^1$,
unless indicated otherwise.
To apply proposition~\ref{prop:reg-estimate}
we need to verify that the maps
$u^\nu:(-T,0]\times S^1\to \R^N$
satisfy the four apriori estimates
in~(\ref{eq:reg-assumption})
for some constant $\mu_0$
independent of $\nu$.
To see this observe that
$$
     \Norm{u^\nu}_p
     \le \Norm{u^\nu}_\infty \Vol\, ((-T,0]\times S^1)
     \le c_1 T^{1/p}
$$
for some constant $c_1$
depending only on the
isometric embedding $M\hookrightarrow \R^N$
and the diameter of the compact manifold $M$.
By assumption
there is a constant $c_2$ independent of $\nu$
such that
$$
     \Norm{\p_tu^\nu}_p
     \le \Norm{\p_tu^\nu}_\infty T^{1/p}
     \le c_2 T^{1/p}
$$
and
$$
     \Norm{\p_su^\nu}_p
     \le c_2.
$$
Then it follows 
by the heat
equation~(\ref{eq:heat-embedded})
that
$$
     \Norm{\Nabla{t}\p_tu^\nu}_p
     \le \Norm{\p_su^\nu}_p +\Norm{\grad\Vv(u^\nu)}_p
     \le c_2+C_0T^{1/p}.
$$
In the second step we used~(V0)
to estimate $\grad\Vv(u^\nu)$
in $L^\infty$  from above by
a constant $C_0=C_0(\Vv)$.
By definition of
the covariant derivative
\begin{equation*}
\begin{split}
     \Norm{\p_t\p_tu^\nu}_p
    &\le \Norm{\Nabla{t}\p_tu^\nu}_p
     +\Norm{\Gamma}_{C^0(M)} 
     \Norm{\p_tu^\nu}_\infty
     \Norm{\p_tu^\nu}_p\\
    &\le c_2+C_0T^{1/p}
     +c_2^2T^{1/p}\Norm{\Gamma}_{C^0(M)}.
\end{split}
\end{equation*}
Now set
$
     \mu_0
     :=c_2+C_0T^{1/p}
     +c_2^2T^{1/p}\Norm{\Gamma}_{C^0(M)}
     +(c_1+c_2)T^{1/p}
$.
Then proposition~\ref{prop:reg-estimate}
asserts that for every
constant $T^\prime\in(0,T)$
and every integer $k\ge 2$
there is a constant $c_k=c_k(p,\mu_0,T,T^\prime,\Vv)$
such that
$$
     \Norm{u^\nu}_{\Ww^{k,p}(Q,\R^N)}
     \le c_k
$$
where $Q=[-T^\prime,0]\times S^1$.
Recall that the inclusion
$W^{k,p}(Q)\hookrightarrow C^{k-1}(Q)$
is compact;
see e.g.~\cite[B.1.11]{MS}.
Hence there is a subsequence
which converges on $Q$ in the $C^k$
topology. We denote the limit by $u\in C^k(Q)$.
Since this is true for every $k\ge2$
there is a subsequence, still denoted by $u^\nu$,
converging on $Q$ to $u$, uniformly with all derivatives.
Since this is true for every compact
subcylinder $Q$ of $(-T,0]\times S^1$,
the theorem follows by choosing
a diagonal subsequence associated
to an exhausting sequence
by such $Q$'s.
Because, in particular, the convergence is in $C^0$
and the $u^\nu$ take values in $M$,
so does the limit $u$.
By $C^k$ convergence with $k\ge2$
the limit $u$ satisfies
the heat equation~(\ref{eq:heat-embedded}).
\end{proof}

%%%%%%%%%%%%%%%%%%%%%%%%%%%%%%%%%%%%%%%%%%%%%%
%%%%%%%%%%%%%%%%%%%%%%%%%%%%%%%%%%%%%%%%%%%%%%
%%%%%%%%%%%%%%%% Subsection %%%%%%%%%%%%%%%%%%
%%%%%%%%%%%%%%%%%%%%%%%%%%%%%%%%%%%%%%%%%%%%%%
%%%%%%%%%%%%%%%%%%%%%%%%%%%%%%%%%%%%%%%%%%%%%%
\subsection{An apriori estimate}
\label{subsec:apriori}

\begin{theorem}
\label{thm:apriori-t}
Fix a perturbation $\Vv:\Ll M\to\R$ that 
satisfies~{\rm (V0)--(V1)} and a constant $c_0>0$. 
Then there is a constant
$C=C(c_0,\Vv)>0$ such that the following holds.
If $u:\R\times S^1\to M$ is a smooth solution
of~(\ref{eq:heat}) such that
\begin{equation}\label{eq:action-bound}
     \sup_{s\in\R}
     \Ss_\Vv(u(s,\cdot))\le c_0
\end{equation}
then
$\left\|\p_tu\right\|_\infty\le C$.
\end{theorem}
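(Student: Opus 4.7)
The plan is to derive a scalar parabolic differential inequality for $f:=\tfrac12|\p_tu|^2$ and apply the mean value estimate of lemma~\ref{le:kerD-apriori-basic}, combining the pointwise output with the slicewise $L^1$ bound extracted from the action bound.

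First I would exploit~(\ref{eq:action-bound}) and axiom~(V0) to obtain a uniform slicewise $L^2$ estimate for $\p_tu$. Since $\Ss_\Vv(u_s)=\tfrac12\|\p_tu_s\|_2^2-\Vv(u_s)$ and $|\Vv|\le C_0$ by~(V0), the action bound yields
$$
\int_0^1 f(s,t)\,dt=\tfrac12\Norm{\p_tu_s}_2^2\le c_0+C_0
$$
uniformly in $s\in\R$. Next I would derive an equation for $\xi:=\p_tu$ by applying $\Nabla{t}$ to the heat equation~(\ref{eq:heat}). Using the torsion-freeness of the Levi-Civita connection, which gives $\Nabla{t}\p_su=\Nabla{s}\p_tu$, this produces
$$
\Nabla{s}\xi-\Nabla{t}\Nabla{t}\xi=\Nabla{t}\grad\Vv(u).
$$
A direct computation, using $\p_sf=\langle\xi,\Nabla{s}\xi\rangle$ and $\p_t\p_tf=|\Nabla{t}\xi|^2+\langle\xi,\Nabla{t}\Nabla{t}\xi\rangle$, then yields the identity
$$
(\p_t\p_t-\p_s)f=|\Nabla{t}\xi|^2-\langle\xi,\Nabla{t}\grad\Vv(u)\rangle.
$$

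The second inequality of axiom~(V1) bounds $|\Nabla{t}\grad\Vv(u)|\le C(1+|\xi|)$ pointwise, so after absorbing terms via Young's inequality,
$$
(\p_t\p_t-\p_s)f\ge -C|\xi|(1+|\xi|)\ge -C'-C'f
$$
for a constant $C'=C'(\Vv)$. Setting $w:=1+f\ge1\ge0$, this reads $(\p_t\p_t-\p_s)w\ge -C'w$ pointwise on $\R\times S^1$. I would then apply lemma~\ref{le:kerD-apriori-basic} with $a=C'$ and $r=1$ to the shifted function $w(s_0+\cdot,t_0+\cdot)$ at an arbitrary $(s_0,t_0)$; by 1-periodicity in $t$ and the slicewise bound of the first step,
$$
\int_{P_1}w(s_0+s,t_0+t)\,dt\,ds\le 2\bigl(1+c_0+C_0\bigr).
$$
Hence $1+f(s_0,t_0)\le 2c_1e^{C'}(1+c_0+C_0)$ for all $(s_0,t_0)$, giving the desired uniform bound on $\|\p_tu\|_\infty$.

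The only genuinely nontrivial step is the derivation of the evolution equation for $\xi=\p_tu$ and the careful identification of the forcing term $\Nabla{t}\grad\Vv(u)$ as controllable by~(V1); once this is in place the remainder is a textbook application of the subsolution mean value inequality already recorded in lemma~\ref{le:kerD-apriori-basic}. No curvature term obstructs the calculation because $R(\xi,\p_tu)\p_tu$ does not appear directly: we work with the heat equation for $u$ rather than with the linearized equation, so no appeal to the linearization or to symmetry of the Hessian is needed.
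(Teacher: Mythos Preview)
Your proof is correct and follows essentially the same route as the paper: derive the slicewise $L^2$ bound on $\p_tu$ from the action bound and~(V0), compute the parabolic inequality $(\p_t\p_t-\p_s)\abs{\p_tu}^2\ge -C_1-3C_1\abs{\p_tu}^2$ using the heat equation and~(V1), and apply the mean value lemma~\ref{le:apriori-basic} to the shifted nonnegative function. The only differences are cosmetic normalizations (you take $w=1+\tfrac12\abs{\p_tu}^2$ where the paper takes $w=\tfrac13+\abs{\p_tu}^2$).
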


The proof of theorem~\ref{thm:apriori-t}
is based on the following mean value
inequality.
For $r>0$ define the 
{\bf open parabolic rectangle}
$P_r\subset \R^2$ by 
$$
     P_r:=(-r^2,0)\times (-r,r).
$$

\begin{lemma}[{\cite[lemma~B.1]{SaJoa-LOOP}}]
\label{le:apriori-basic}
There is a constant $c_1>0$
such that the following holds 
for all $r\in(0,1]$ and $a\ge 0$. If
$w:P_r\to\R$, 
$(s,t)\mapsto w(s,t)$,
is $C^1$ in the $s$-variable 
and $C^2$ in the $t$-variable
such that
$$
     (\p_t\p_t-\p_s)w\ge -aw, \qquad w\ge0,
$$
then
$$
     w(0)\le\frac{c_1e^{ar^2}}{r^3}
     \int_{P_r} w.
$$
\end{lemma}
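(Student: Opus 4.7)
The plan is to reduce the inequality to a scale-invariant mean-value inequality for nonnegative subsolutions of the one-dimensional heat equation, which is then proved by Moser iteration.

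\emph{Reductions.} Set $v(s,t):=e^{-as}w(s,t)$; a direct computation gives
\[
 (\p_t\p_t-\p_s)v = e^{-as}\bigl[(\p_t\p_t-\p_s)w+aw\bigr]\ge 0,
\]
so $v\ge 0$ is a smooth subsolution of the heat equation on $P_r$, with $v(0)=w(0)$ and $v\le e^{ar^2}w$ on $\overline{P_r}$. Rescaling by $\tilde v(\sigma,\tau):=v(r^2\sigma,r\tau)$ on $P_1$ gives $(\p_\tau\p_\tau-\p_\sigma)\tilde v\ge 0$, $\tilde v(0)=v(0)$, and $\int_{P_1}\tilde v=r^{-3}\int_{P_r}v$. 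Hence it suffices to produce a universal constant $c_1>0$ with $\tilde v(0)\le c_1\int_{P_1}\tilde v$ for every nonnegative smooth subsolution $\tilde v$ of the heat equation on $P_1$; unwinding then puts back the factor $e^{ar^2}/r^3$.

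\emph{Moser iteration.} For an exponent $q\ge 2$, write $u=\tilde v$, multiply the subsolution inequality by $u^{q-1}\phi_k^2\ge 0$ for a smooth cutoff $\phi_k$ equal to $1$ on a smaller rectangle $P_{r_{k+1}}$ and supported in $\overline{P_{r_k}}$, vanishing on the lateral and bottom parabolic boundary of $P_{r_k}$ but unrestricted on the top face $\{s=0\}$, and integrate by parts on $(-r_k^2,s_0]\times(-r_k,r_k)$ for arbitrary $s_0$. The boundary contribution at $\{s=s_0\}$ carries the correct sign to yield, after taking the supremum in $s_0$, the Caccioppoli-type estimate
\[
 \sup_{s_0\in(-r_{k+1}^2,0]}\int \phi_k^2 u^q(s_0,\cdot)\,dt
 + \int_{P_{r_k}} \phi_k^2 \abs{\p_t(u^{q/2})}^2
 \le \frac{C}{(r_k-r_{k+1})^2}\int_{P_{r_k}} u^q.
\]
Coupling this with the one-dimensional inequality $\|f\|_{L^\infty_t}^2\le 2\|f\|_{L^2_t}\|\p_t f\|_{L^2_t}$ (applied to $f=\phi_k u^{q/2}$, which vanishes at $t=\pm r_k$) and with H\"older in $s$ produces the single-step gain
\[
 \|u\|_{L^{2q}(P_{r_{k+1}})}\le \Bigl(\frac{C}{(r_k-r_{k+1})^4}\Bigr)^{\!1/(2q)}\|u\|_{L^q(P_{r_k})}.
\]
With the dyadic choice $q_k=2^{k+1}$ and $r_k-r_{k+1}=2^{-k-2}$, the infinite product of constants converges and iteration yields $\|u\|_{L^\infty(P_{1/2})}\le C\|u\|_{L^2(P_1)}$.

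\emph{From $L^2$ to $L^1$, and main obstacle.} Running the same Moser argument between general concentric rectangles gives $\|u\|_{L^\infty(P_\rho)}\le C(R-\rho)^{-\beta}\|u\|_{L^2(P_R)}$ for $\tfrac12\le\rho<R\le 1$ and a fixed $\beta$. Combining this with the elementary interpolation $\|u\|_{L^2(P_R)}^2\le\|u\|_{L^\infty(P_R)}\|u\|_{L^1(P_R)}$ and the standard iteration lemma for functions $\psi(\rho)$ satisfying $\psi(\rho)\le A(R-\rho)^{-\beta}\psi(R)^{1/2}+B$ upgrades the estimate to $\|u\|_{L^\infty(P_{1/2})}\le c_1\|u\|_{L^1(P_1)}$, hence $\tilde v(0)\le c_1\int_{P_1}\tilde v$. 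The delicate point throughout is the $s$-integration by parts: the top-face boundary contribution must feed the $\sup_{s_0}$ quantity on the left of the Caccioppoli inequality rather than appear as an uncontrolled error, which forces the cutoffs to be chosen nonzero on $\{s=0\}$ while still vanishing on the rest of the parabolic boundary. Everything else is routine bookkeeping once the single-step estimate and the dyadic radii are in place.
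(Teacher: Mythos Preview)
The paper does not give its own proof of this lemma; it is quoted verbatim from \cite[lemma~B.1]{SaJoa-LOOP} and used as a black box. Your argument is the standard parabolic Moser iteration and is correct: the exponential weight $e^{-as}$ kills the zeroth-order term, parabolic rescaling reduces to $r=1$, and the Caccioppoli step together with the one-dimensional Gagliardo--Nirenberg inequality and the $L^2$-to-$L^1$ upgrade produces the universal constant. One small caveat: you call $\tilde v$ a ``smooth subsolution,'' but the hypothesis only provides $C^1$ regularity in $s$ and $C^2$ in $t$; this is exactly enough for the integrations by parts you need (one $t$-derivative moved onto the test function, fundamental theorem in $s$), so nothing changes. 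Since there is no in-paper proof to compare against, there is nothing further to contrast.
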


\begin{corollary}
\label{co:apriori-basic}
Fix two constants
$r\in(0,1]$ and $\mu\ge 0$.
Let $c_1$ be the constant
of lemma~\ref{le:apriori-basic}.
If $F:[-r^2,0]\to\R$
is a $C^2$ function satisfying
$$
     -F^\prime+\mu F\ge 0, \qquad F\ge0,
$$
then
$$
     F(0)\le\frac{2c_1e^{\mu r^2}}{r^2}
     \int_{-r^2}^0 F(s)\: ds.
$$
\end{corollary}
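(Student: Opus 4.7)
The plan is to reduce this one–dimensional differential inequality to the two–dimensional mean value estimate of Lemma~\ref{le:apriori-basic} by the trivial device of extending $F$ to be $t$--independent. Concretely, I would set $w(s,t):=F(s)$ on the open parabolic rectangle $P_r=(-r^2,0)\times(-r,r)$.

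First I would verify that $w$ satisfies the hypotheses of Lemma~\ref{le:apriori-basic} with $a=\mu$. Since $w$ does not depend on $t$, we have $\p_t\p_t w\equiv 0$ and $\p_s w=F'(s)$, so $(\p_t\p_t-\p_s)w=-F'\geq -\mu F=-\mu w$ by the assumed inequality $-F'+\mu F\geq 0$. Moreover $w=F\geq 0$. The smoothness requirements are met since $F\in C^2$. Thus Lemma~\ref{le:apriori-basic} applies with $a=\mu$.

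The conclusion of the lemma gives
$$
F(0)=w(0)\le \frac{c_1 e^{\mu r^2}}{r^3}\int_{P_r} w
=\frac{c_1 e^{\mu r^2}}{r^3}\int_{-r^2}^0\!\!\int_{-r}^{r} F(s)\,dt\,ds
=\frac{2c_1 e^{\mu r^2}}{r^2}\int_{-r^2}^0 F(s)\,ds,
$$
which is precisely the desired inequality. There is no real obstacle here; the entire content is the observation that a one–dimensional ODE inequality of the form $-F'+\mu F\ge 0$ fits into the PDE framework of Lemma~\ref{le:apriori-basic} by regarding $F$ as a $t$--constant function on a parabolic rectangle. The factor $2r$ in the integral over the extra $t$--direction combines with the $1/r^3$ from the lemma to produce the stated $2/r^2$.
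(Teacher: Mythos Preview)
Your proof is correct and is exactly the paper's approach: the paper's entire proof reads ``This follows immediately from lemma~\ref{le:apriori-basic} with $w(s,t):=F(s)$,'' and you have simply spelled out the verification and the arithmetic that produces the factor $2/r^2$.
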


\begin{proof}
This follows immediately
from lemma~\ref{le:apriori-basic}
with $w(s,t):=f(s)$.
\end{proof}

\begin{proof}
[Proof of theorem~\ref{thm:apriori-t}]
The idea is to first derive
slicewise $L^2$ bounds, then
verify the differential
inequality in lemma~\ref{le:apriori-basic}
and apply the lemma
using the slicewise bounds
on the right hand side.
The slicewise bound
for $\p_t u$ follows easily
from the assumption
$$
     c_0
     \ge \Ss_\Vv(u_s)
     =\frac{1}{2}\norm{\p_t u_s}_{L^2(S^1)}^2
     -\Vv(u_s)
$$
where $u_s(t):=u(s,t)$.
Let $C_0$ denote the constant in~(V0),
then this implies
\begin{equation}\label{eq:slice-p_tu}
     \norm{\p_t u_s}_{L^2(S^1)}^2
     \le 2c_0+2\Vv(u_s)
     \le 2c_0 + 2C_0
\end{equation}
for every $s\in\R$.
Consider the pointwise
differential inequality
given by
\begin{equation*}
\begin{split}
\left(\p_t\p_t-\p_s\right)\Abs{\p_t u}^2
&=2\Abs{\Nabla{t}\p_t u}^2
     +2\inner{(\Nabla{t}\Nabla{t}-\Nabla{s})
      \p_t u}{\p_t u} \\
&=2\Abs{\Nabla{t}\p_t u}^2
     -2\inner{\Nabla{t} \grad \Vv(u)}
     {\p_t u} \\
&\ge -2C_1\left(1+\Abs{\p_t u}\right) 
     \Abs{\p_t u} \\
&\ge -C_1-3C_1\Abs{\p_t u}^2.
\end{split}
\end{equation*}
To obtain the second
step we replaced $\Nabla{t}\p_t u$
according to the
heat equation~(\ref{eq:heat})
and used the fact that
$\Nabla{t}\p_s u=\Nabla{s}\p_t u$.
The third step is by
condition~(V1) with constant $C_1$.
Choose $(s_0,t_0)\in \R\times S^1$ and
apply lemma~\ref{le:apriori-basic}
in the case $r=1$ and with
$$
     w(s,t):=\frac{1}{3}+\abs{\p_t u(s_0+s,t_0+t)}^2
$$
and $a=3C_1$ to obtain
\begin{equation*}
\begin{split}
     w(0)
    &\le c_1e^a\int_{-1}^0\int_{-1}^{+1}\left(
     \frac{1}{3}
     +\Abs{\p_tu(s_0+s,t_0+t)}^2\right)dtds\\
    &= c_1e^{3C_1}\left(
     \frac{2}{3}
     +2\int_{-1}^0
     \Norm{\p_tu_{s_0+s}}_{L^2(S^1)}^2
     ds \right).
\end{split}
\end{equation*}
Theorem~\ref{thm:apriori-t}
then follows from the slicewise
estimate~(\ref{eq:slice-p_tu}).
\end{proof}

\begin{lemma}\label{le:energy-bound}
Fix a constant $c>0$
and a perturbation $\Vv:\Ll M\to\R$ that 
satisfies~{\rm (V0)} with constant $C>0$.
If $u:\R\times S^1\to M$ is a solution
of~(\ref{eq:heat}) then
$$
     \sup_{s\in\R}\Ss_\Vv(u(s,\cdot))
     \le c
     \quad
     \Rightarrow\quad
     E(u)\le c+C.
$$
\end{lemma}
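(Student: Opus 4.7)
The plan is to exploit the fact that the heat equation~(\ref{eq:heat}) is the downward $L^2$-gradient flow of $\Ss_\Vv$, so the action decreases along solutions at a rate exactly equal to $\|\p_su_s\|_{L^2(S^1)}^2$. Integrating this identity yields a telescoping formula linking the total energy to the difference of action values; the two-sided action bound from the hypothesis together with axiom~(V0) then controls that difference.

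First, I would verify the standard gradient flow identity
\[
\frac{d}{ds}\Ss_\Vv(u_s)
= \int_0^1\bigl\langle\p_tu,\Nabla{s}\p_tu\bigr\rangle\,dt
 -\bigl\langle\grad\Vv(u_s),\p_su_s\bigr\rangle_{L^2}.
\]
Using $\Nabla{s}\p_tu=\Nabla{t}\p_su$ and integration by parts in $t$ turns the first term into $-\int_0^1\langle\Nabla{t}\p_tu,\p_su\rangle\,dt$, and substituting the heat equation~(\ref{eq:heat}), which says $\Nabla{t}\p_tu+\grad\Vv(u)=\p_su$, one obtains
\[
\frac{d}{ds}\Ss_\Vv(u_s) = -\Norm{\p_su_s}_{L^2(S^1)}^2.
\]

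Next, I would integrate this identity over an arbitrary finite interval $[-T_1,T_2]\subset\R$ to get
\[
\int_{-T_1}^{T_2}\int_0^1\Abs{\p_su}^2\,dt\,ds
= \Ss_\Vv(u_{-T_1})-\Ss_\Vv(u_{T_2}).
\]
The hypothesis bounds the first term above by $c$. For the second, axiom~(V0) gives $|\Vv(x)|\le C$ for every loop $x$, whence
\[
\Ss_\Vv(u_{T_2})
= \tfrac12\Norm{\p_tu_{T_2}}_{L^2}^2-\Vv(u_{T_2})
\ge -C.
\]
Combining, the integral is at most $c+C$, uniformly in $T_1,T_2$. Sending $T_1,T_2\to\infty$ and invoking monotone convergence yields $E(u)\le c+C$.

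There is no real obstacle here: the argument is the standard gradient-flow energy identity, and the only subtlety is the use of axiom~(V0) to furnish the lower bound on $\Ss_\Vv$ in the absence of any assumption that the asymptotic limits exist.
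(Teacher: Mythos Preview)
Your proof is correct and follows essentially the same approach as the paper: compute the energy over a finite interval as the action drop $\Ss_\Vv(u_{-T_1})-\Ss_\Vv(u_{T_2})$ via the gradient-flow identity, bound the first term by $c$ from the hypothesis and the second from below by $-C$ using axiom~(V0), then let the interval exhaust $\R$. The paper's version is slightly terser (it takes a symmetric interval $[-T,T]$ and states the gradient-flow identity without the intermediate integration-by-parts derivation), but the argument is the same.
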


\begin{proof}
Let $u_s(t):=u(s,t)$ and
choose $T>0$, then
\begin{equation*}
\begin{split}
     E_{[-T,T]}(u)
    &=\int_{-T}^T\int_0^1
     \Abs{\p_s u(s,t}^2 \: dt ds\\
    &=-\int_{-T}^T\langle
     \nabla \Ss_\Vv(u_s),\p_su_s
     \rangle_{L^2} ds\\
    &=-\int_{-T}^T\frac{d}{ds}\Ss_\Vv(u_s)\: ds\\
    &=\Ss_\Vv(u_{-T})-\Ss_\Vv(u_T).
\end{split}
\end{equation*}
Here we used the fact
that the heat equation~(\ref{eq:heat})
is the negative $L^2$ gradient
flow equation for the action functional.
Now the crucial property
of the action functional is
its boundedness from
below, namely
$\Ss_\Vv(x)\ge -C$
for every $x\in\Ll M$
by~(V0). Hence
$\Ss_\Vv(u_{-T})-\Ss_\Vv(u_T)
\le c+C$
and this proves the lemma.
\end{proof}

%%%%%%%%%%%%%%%%%%%%%%%%%%%%%%%%%%%%%%%%%%%%%%
%%%%%%%%%%%%%%%%%%%%%%%%%%%%%%%%%%%%%%%%%%%%%%
%%%%%%%%%%%%%%%% Subsection %%%%%%%%%%%%%%%%%%
%%%%%%%%%%%%%%%%%%%%%%%%%%%%%%%%%%%%%%%%%%%%%%
%%%%%%%%%%%%%%%%%%%%%%%%%%%%%%%%%%%%%%%%%%%%%%
\subsection{Gradient bounds}
\label{subsec:gradient}

\begin{theorem}
\label{thm:gradient}
Fix a perturbation $\Vv:\Ll M\to\R$ that 
satisfies~{\rm (V0)--(V2)} and a constant $c_0>0$. 
Then there is a constant
$C=C(c_0,\Vv)>0$ such that the following holds.
If $u:\R\times S^1\to M$ 
is a smooth solution
of~(\ref{eq:heat}) that
satisfies~(\ref{eq:action-bound}),
i.e. $\sup_{s\in\R}\Ss_\Vv(u(s,\cdot))\le c_0$,
then
\begin{equation*}
\begin{gathered}
     \Abs{\p_s u(s,t)}^2
     +\Abs{\Nabla{t}\p_s u(s,t)}^2
     \le C E_{[s-1,s]}(u)
     \\
     \Abs{\Nabla{s}\p_s u(s,t)}^2
     +\Abs{\Nabla{t}\Nabla{t}\p_s u(s,t)}^2
     \le C E_{[s-2,s]}(u)
\end{gathered}
\end{equation*}
for every $(s,t)\in\R\times S^1$.
Here $E_I(u)$ denotes the energy
of $u$ over the set $I\times S^1$.
\end{theorem}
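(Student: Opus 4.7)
The plan is to exploit the fundamental observation recorded at the end of chapter~\ref{sec:kerD}: if $u$ solves the nonlinear heat equation~(\ref{eq:heat}), then $\xi:=\p_s u$ solves the linearized equation~(\ref{eq:kerD-heat}). Once this is recognized, both assertions of theorem~\ref{thm:gradient} should follow from the apriori estimates of theorems~\ref{thm:kerD-apriori} and~\ref{thm:kerD-apriori-II} applied to $\xi$, after a short bootstrap that verifies the $L^\infty$ hypotheses on derivatives of $u$ required by those theorems. Observe moreover that $\norm{\p_s u}_{L^2([s-r,s]\times S^1)}^2 = E_{[s-r,s]}(u)$, so the conclusions translate directly into energy bounds.

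First I would invoke theorem~\ref{thm:apriori-t} together with the action bound~(\ref{eq:action-bound}) to obtain a uniform constant $C_1=C_1(c_0,\Vv)$ with $\norm{\p_t u}_\infty \le C_1$. This gives the $L^\infty$ bound on $\p_t u$ needed for the first half of theorem~\ref{thm:kerD-apriori}. Applying it to $\xi=\p_s u$ yields
\[
   \abs{\p_s u(s,t)}^2 \;\le\; C^2 \Norm{\p_s u}_{L^2([s-\tfrac12,s]\times S^1)}^2
   \;\le\; C^2 E_{[s-1,s]}(u),
\]
which is the first half of the first displayed estimate. Combining this pointwise bound with the global energy estimate $E(u) \le c_0 + C_0$ of lemma~\ref{le:energy-bound}, one also gets a uniform $\norm{\p_s u}_\infty \le C_2(c_0,\Vv)$. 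Then the heat equation~(\ref{eq:heat}) in the form $\Nabla{t}\p_t u = \p_s u - \grad\Vv(u)$, together with axiom~(V0), furnishes a uniform $L^\infty$ bound on $\Nabla{t}\p_t u$. With these three bounds in hand, the second half of theorem~\ref{thm:kerD-apriori} applied to $\xi=\p_s u$ gives
\[
   \abs{\Nabla{t}\p_s u(s,t)}^2 \;\le\; C^2 \Norm{\p_s u}_{L^2([s-1,s]\times S^1)}^2
   \;=\; C^2 E_{[s-1,s]}(u),
\]
completing the first displayed estimate, and at the same time producing a uniform $L^\infty$ bound on $\Nabla{t}\p_s u$.

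For the second displayed estimate I would use theorem~\ref{thm:kerD-apriori-II}, which requires uniform $L^\infty$ control on $\p_t u$, $\p_s u$, $\Nabla{t}\p_t u$, $\Nabla{t}\p_s u$, and $\Nabla{t}\Nabla{t}\p_t u$. The first four are already in place from the previous paragraph. For the fifth, apply $\Nabla{t}$ to the heat equation to get
\[
   \Nabla{t}\Nabla{t}\p_t u \;=\; \Nabla{t}\p_s u - \Nabla{t}\grad\Vv(u),
\]
where the first term is already bounded in $L^\infty$ and the second is controlled by $C(1+\abs{\p_t u})$ via axiom~(V1), hence uniformly bounded. Then theorem~\ref{thm:kerD-apriori-II} applied to $\xi=\p_s u$ delivers
\[
   \abs{\Nabla{t}\Nabla{t}\p_s u(s,t)} + \abs{\Nabla{s}\p_s u(s,t)}
   \;\le\; C\Norm{\p_s u}_{L^2([s-2,s]\times S^1)},
\]
whose square is exactly the claimed bound in terms of $E_{[s-2,s]}(u)$.

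There is no serious obstacle; the only thing to watch is the bootstrap order, namely that each application of theorem~\ref{thm:kerD-apriori} or~\ref{thm:kerD-apriori-II} to $\xi=\p_s u$ is legal only after the $L^\infty$ bounds on the relevant derivatives of $u$ have been established. The chain $\norm{\p_t u}_\infty \leadsto \norm{\p_s u}_\infty \leadsto \norm{\Nabla{t}\p_t u}_\infty \leadsto \norm{\Nabla{t}\p_s u}_\infty \leadsto \norm{\Nabla{t}\Nabla{t}\p_t u}_\infty$ proceeds smoothly because at each step the new bound is either a direct consequence of the heat equation (plus (V0) or (V1)) or of the corresponding slice-wise estimate in theorem~\ref{thm:kerD-apriori}, combined with the uniform total-energy bound of lemma~\ref{le:energy-bound}.
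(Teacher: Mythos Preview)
Your proposal is correct and follows essentially the same approach as the paper's proof: recognize that $\xi=\p_s u$ solves the linearized equation, then run the bootstrap chain $\norm{\p_t u}_\infty \to \norm{\p_s u}_\infty \to \norm{\Nabla{t}\p_t u}_\infty \to \norm{\Nabla{t}\p_s u}_\infty \to \norm{\Nabla{t}\Nabla{t}\p_t u}_\infty$ via theorem~\ref{thm:apriori-t}, theorem~\ref{thm:kerD-apriori}, lemma~\ref{le:energy-bound}, the heat equation with (V0)--(V1), and finally theorem~\ref{thm:kerD-apriori-II}. The order of steps and the ingredients invoked match the paper exactly.
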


\begin{proof}
By theorem~\ref{thm:apriori-t}
there is a constant $C_0=C_0(c_0,\Vv)>0$
such that
$$
     \Norm{\p_tu}_\infty
     \le C_0.
$$
Let $C=C(C_0,\Vv)$ be the constant
of theorem~\ref{thm:kerD-apriori}
with this choice of $C_0$.
Observe that $\xi:=\p_su$
solves the linearized heat equation.
Hence theorem~\ref{thm:kerD-apriori}
shows that
$$
     \Abs{\p_s u(s,t)}^2
     \le C^2 E_{[s-1,s]}(u)
     \le C^2(c_0+c^\prime)
$$
for every $(s,t)\in\R\times S^1$.
Here the last step
is by lemma~\ref{le:energy-bound}
and axiom~(V0) with constant $c^\prime$.
Use that $u$ solves~(\ref{eq:heat})
and satisfies axiom~(V0)
to obtain that
$$
     \Norm{\Nabla{t}\p_tu}_\infty
     \le \Norm{\p_su}_\infty
     +\Norm{\grad\Vv(u)}_\infty
     \le C\sqrt{c_0+c^\prime} + c^\prime.
$$
Now choose $C_0$
larger than 
$2C\sqrt{c_0+c^\prime} + c^\prime$
and let $C=C(C_0,\Vv)$ 
be the constant
of theorem~\ref{thm:kerD-apriori}
with this new choice of $C_0$.
Theorem~\ref{thm:kerD-apriori}
then proves the desired
estimate for $\abs{\Nabla{t}\p_su}$.
It follows that
$\norm{\Nabla{t}\p_su}_\infty$
is bounded. Therefore
$\norm{\Nabla{t}\Nabla{t}\p_tu}_\infty$
is bounded by~(\ref{eq:heat})
and axiom~(V1).
Hence
theorem~\ref{thm:kerD-apriori-II}
applies with a new choice of $C_0$
and proves the
remaining two estimates
of theorem~\ref{thm:gradient}.
\end{proof}

\begin{proof}
[Proof of theorem~\ref{thm:apriori}]
Theorem~\ref{thm:apriori-t},
theorem~\ref{thm:gradient}
and lemma~\ref{le:energy-bound}.
Only~{\rm (V0)--(V1)} are used.
Use~(\ref{eq:heat})
and~(V0) to obtain the
estimate for $\Nabla{t}\p_tu$.
\end{proof}

%%%%%%%%%%%%%%%%%%%%%%%%%%%%%%%%%%%%%%%%%%%%%%
%%%%%%%%%%%%%%%%%%%%%%%%%%%%%%%%%%%%%%%%%%%%%%
%%%%%%%%%%%%%%%% Subsection %%%%%%%%%%%%%%%%%%
%%%%%%%%%%%%%%%%%%%%%%%%%%%%%%%%%%%%%%%%%%%%%%
%%%%%%%%%%%%%%%%%%%%%%%%%%%%%%%%%%%%%%%%%%%%%%
\subsection{Exponential decay}
\label{subsec:exp-decay}

\begin{theorem}\label{thm:exp-decay}
Fix a perturbation $\Vv:\Ll M\to\R$ 
that satisfies~{\rm (V0)--(V2)}.
Suppose $\Ss_\Vv$ is Morse and let $a\in\R$ be 
a regular value of $\Ss_\Vv$.
Then there exist constants
$\delta,c,\rho>0$ 
such that the following holds.
If $u:\R\times S^1\to M$ 
is a smooth solution
of~(\ref{eq:heat}) that
satisfies~(\ref{eq:action-bound}),
i.e. $\sup_{s\in\R}\Ss_\Vv(u(s,\cdot))\le a$,
and
\begin{equation}\label{eq:small-energy}
     E_{\R\setminus[-T_0,T_0]}(u)
     <\delta
\end{equation}
for some $T_0>0$, then
$$
     E_{\R\setminus[-T,T]}(u)
     \le ce^{-\rho(T-T_0)}
     E_{\R\setminus[-T_0,T_0]}(u)
$$
for every $T\ge T_0+1$.
\end{theorem}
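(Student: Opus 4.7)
The plan is to deduce exponential decay of the tail energies by applying theorem~\ref{thm:kerD-exp-decay} and remark~\ref{rmk:decay-forward} to the vector field $\xi:=\p_su$ along $u$, which solves the linearized heat equation~(\ref{eq:kerD-heat}) whenever $u$ solves~(\ref{eq:heat}). The forward half-cylinder $[T_0+1,\infty)\times S^1$ and the backward half-cylinder $(-\infty,-T_0-1]\times S^1$ are treated separately and symmetrically.

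First I would record the uniform apriori bounds: theorems~\ref{thm:apriori} and~\ref{thm:gradient} provide a constant $C_0=C_0(a,\Vv)$ with
$$
\Norm{\p_tu}_\infty+\Norm{\Nabla{t}\p_tu}_\infty+\Norm{\p_su}_\infty+\Norm{\Nabla{t}\p_su}_\infty+\Norm{\Nabla{s}\p_su}_\infty\le C_0
$$
and the energy-localised estimate $\Abs{\p_su(s,t)}^2+\Abs{\Nabla{t}\p_su(s,t)}^2\le C_0\,E_{[s-1,s]}(u)$. Since $\Nabla{s}\p_tu=\Nabla{t}\p_su$, the same bound controls $\Norm{\Nabla{s}\p_tu_s}_2^2$ by $C_0\,E_{[s-1,s]}(u)$.

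The central step is a quantitative Palais--Smale lemma. Because $\Ss_\Vv$ is Morse and $a$ is a regular value, $\Pp^a(\Vv)$ is finite and its elements are $W^{2,2}$-isolated. Specifically, for every $\eps>0$ there exists $\delta_1>0$ such that any smooth loop $\gamma$ with $\Ss_\Vv(\gamma)\le a$, $\Norm{\p_t\gamma}_\infty+\Norm{\Nabla{t}\p_t\gamma}_\infty\le C_0$, and $\Norm{\Nabla{t}\p_t\gamma+\grad\Vv(\gamma)}_2<\delta_1$ satisfies $\dist_{W^{2,2}}(\gamma,\Pp^a(\Vv))<\eps$. This follows by contradiction: the $C^1$ bound together with the $L^2$ bound on $\Nabla{t}\p_t\gamma=-\grad\Vv(\gamma)+o(1)$ yields $W^{2,2}$-precompactness, Rellich gives a $W^{1,q}$-convergent subsequence, and axioms~(V0)--(V1) upgrade this to strong $W^{2,2}$-convergence to some element of $\Pp^a(\Vv)$. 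Applying the lemma with $\gamma=u_s$, using $\Nabla{t}\p_tu_s+\grad\Vv(u_s)=\p_su_s$ together with the energy-localised bound, shrinking $\delta$ forces $\dist_{W^{2,2}}(u_s,\Pp^a(\Vv))<\eps$ for every $s\ge T_0+1$ (resp.\ $s\le-T_0-1$). Choosing $\eps$ smaller than half the $W^{2,2}$-separation of distinct elements of $\Pp^a(\Vv)$ and invoking continuity of $s\mapsto u_s$ in $W^{2,2}$, the selected critical point is constant along each half-cylinder; call them $x^+$ and $x^-$. Morse-ness makes $A_{x^\pm}$ injective.

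Finally, after one further shrinking of $\delta$ so that $\Norm{\eta_s}_{W^{2,2}}$ (where $u_s=\exp_{x^\pm}\eta_s$) and $\Norm{\p_su_s}_2+\Norm{\Nabla{s}\p_tu_s}_2$ both fall below the smallness constant of theorem~\ref{thm:kerD-exp-decay} at $c_0=C_0$, I would apply theorem~\ref{thm:kerD-exp-decay} directly on the backward side (with its $T_0$ equal to our $T_0+1$) and via remark~\ref{rmk:decay-forward} on the forward side. This produces constants $\rho,C>0$ with
$$
E_{[T,\infty)}(u)+E_{(-\infty,-T]}(u)\le\frac{2C^2}{\rho}\,e^{-\rho(T-T_0-1)}\,E_{\R\setminus[-T_0,T_0]}(u)
$$
for every $T\ge T_0+1$, which is the assertion with $c=(2C^2/\rho)e^\rho$. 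The hard part will be the quantitative Palais--Smale step combined with the continuity argument that pins down a single asymptotic critical point on each half-cylinder; without the latter, theorem~\ref{thm:kerD-exp-decay} cannot be invoked with a fixed reference loop $x^\pm$.
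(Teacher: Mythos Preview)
Your proposal is correct and follows essentially the same route as the paper. The paper's proof uses the gradient bounds (theorem~\ref{thm:gradient}) to make $\|\p_su_s\|_\infty+\|\Nabla{t}\p_su_s\|_\infty$ small for $|s|\ge T_0+1$, invokes lemma~\ref{le:nearby-Crit} (which is exactly the quantitative Palais--Smale statement you sketch, phrased in $L^\infty/C^2$ rather than $L^2/W^{2,2}$ norms) to write $u_s=\exp_{x^\pm}(\eta_s)$ with $\|\eta_s\|_{C^2}$ small, pins down a single $x^\pm$ via the minimal $C^0$ separation $\kappa$ of $\Pp^a(\Vv)$, and then applies theorem~\ref{thm:kerD-exp-decay} and remark~\ref{rmk:decay-forward} to $\xi=\p_su$; your choice of norms differs only cosmetically and the paper already supplies the lemma you identified as the hard part.
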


\begin{corollary}\label{cor:exp-decay}
Fix a perturbation $\Vv:\Ll M\to\R$ 
that satisfies~{\rm (V0)--(V2)}.
Suppose $\Ss_\Vv$ is Morse and let
$x^\pm\in\Pp(\Vv)$.
Then there exist constants
$\delta,c,\rho>0$ 
such that the following holds.
Suppose that $u\in\Mm(x^-,x^+;\Vv)$
satisfies~(\ref{eq:small-energy})
for some $T_0>0$.
Then
$$
     \Abs{\p_su(s,t)}^2
     +\Abs{\Nabla{t}\p_su(s,t)}^2
     \le ce^{-\rho(s-T_0)}
     E_{\R\setminus[-T_0,T_0]}(u)
$$
for every $s\ge T_0+2$.
\end{corollary}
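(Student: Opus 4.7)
The plan is to combine the energy exponential decay theorem~\ref{thm:exp-decay} with the pointwise gradient bounds in theorem~\ref{thm:gradient}. Since a connecting orbit $u\in\Mm(x^-,x^+;\Vv)$ is a finite-energy solution whose action is bounded above by $a:=\max\{\Ss_\Vv(x^-),\Ss_\Vv(x^+)\}$, hypothesis~(\ref{eq:action-bound}) is automatic, and both theorems apply with the same constants depending only on $a$ and $\Vv$. So the constants $\delta,c,\rho$ will be those produced by theorem~\ref{thm:exp-decay} (possibly shrunk/enlarged), and the constant $C$ is that of theorem~\ref{thm:gradient}.

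First, observe that $\xi:=\p_su$ solves the linearized heat equation~(\ref{eq:kerD-heat}), so theorem~\ref{thm:gradient} yields the pointwise estimate
$$
     \Abs{\p_su(s,t)}^2+\Abs{\Nabla{t}\p_su(s,t)}^2
     \le C\, E_{[s-1,s]}(u)
$$
for every $(s,t)\in\R\times S^1$. Next, for $s\ge T_0+2$ apply theorem~\ref{thm:exp-decay} with $T:=s-1\ge T_0+1$ to obtain
$$
     E_{\R\setminus[-(s-1),s-1]}(u)
     \le c\,e^{-\rho(s-1-T_0)}
     E_{\R\setminus[-T_0,T_0]}(u).
$$
Since $s-1\ge T_0+1>0$ we have $[s-1,s]\subset\R\setminus[-(s-1),s-1]$, so
$$
     E_{[s-1,s]}(u)
     \le E_{\R\setminus[-(s-1),s-1]}(u)
     \le c\,e^\rho\,e^{-\rho(s-T_0)}
     E_{\R\setminus[-T_0,T_0]}(u).
$$

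Chaining the two estimates and absorbing $Cce^\rho$ into a new constant, still denoted $c$, yields the claim. The only thing requiring a moment of care is the choice of the parameter $\delta$: one uses the $\delta$ from theorem~\ref{thm:exp-decay} directly, and the assumption $s\ge T_0+2$ in the corollary (as opposed to $T\ge T_0+1$ in the theorem) is precisely what makes the interval $[s-1,s]$ on which we apply the pointwise gradient bound lie strictly outside $[-(s-1),s-1]$. No genuine obstacle arises; the result is an immediate synthesis of the two preceding theorems.
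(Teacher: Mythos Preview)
Your proof is correct and follows the paper's own approach exactly; the paper's proof consists of the single line ``Theorem~\ref{thm:gradient} and theorem~\ref{thm:exp-decay},'' and you have simply filled in the chaining of inequalities that this line leaves implicit. One small point: your choice $a:=\max\{\Ss_\Vv(x^-),\Ss_\Vv(x^+)\}=\Ss_\Vv(x^-)$ is a \emph{critical} value, whereas theorem~\ref{thm:exp-decay} is stated for a \emph{regular} value $a$; since $\Ss_\Vv$ is Morse the critical values are isolated, so just take any regular value slightly above $\Ss_\Vv(x^-)$ and the argument goes through verbatim.
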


\begin{proof}
Theorem~\ref{thm:gradient} and
theorem~\ref{thm:exp-decay}.
\end{proof}

The proof of theorem~\ref{thm:exp-decay}
is based on the following lemma
which asserts existence
of a true critical point nearby
an approximate one.

\begin{lemma}[Critical point nearby 
approximate one]
\label{le:nearby-Crit}
Fix a perturbation $\Vv:\Ll M\to\R$ 
that satisfies~{\rm (V0)}
and let $a\in\R$ be
a regular value of $\Ss_\Vv$. 
Then, for every $\delta_0>0$,
there is a constant $\delta_1>0$ 
such that the following is true.
Suppose $x:S^1\to M$ is
a smooth loop such that
$$
     \Ss_\Vv(x)\le a,\qquad
     \Norm{\Nabla{t}\p_t x+\grad\Vv(x)}_\infty
     <\delta_1.
$$
Then there is a critical point
$x_0\in\Pp^a(\Vv)$ and a vector field
$\xi_0$ along $x_0$ such that
$
    x=\exp_{x_0}(\xi_0)
$
and
$$
    \Norm{\xi_0}_\infty 
    +\Norm{\Nabla{t}\xi_0}_\infty 
    +\Norm{\Nabla{t}\Nabla{t}\xi_0}_\infty
    \le\delta_0.
$$
\end{lemma}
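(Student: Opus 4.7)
The plan is to argue by contradiction and extract a convergent subsequence, whose limit will be the desired critical point.

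Suppose the conclusion fails. Then there is a constant $\delta_0>0$ and a sequence of smooth loops $x_\nu:S^1\to M$ with
$$
     \Ss_\Vv(x_\nu)\le a,\qquad
     \Norm{\Nabla{t}\p_tx_\nu+\grad\Vv(x_\nu)}_\infty\to 0,
$$
such that no critical point $x_0\in\Pp^a(\Vv)$ admits a representation $x_\nu=\exp_{x_0}(\xi_{0,\nu})$ with $\Norm{\xi_{0,\nu}}_\infty+\Norm{\Nabla{t}\xi_{0,\nu}}_\infty+\Norm{\Nabla{t}\Nabla{t}\xi_{0,\nu}}_\infty\le\delta_0$.

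First I would derive uniform apriori bounds. From the action bound and axiom~(V0) with constant $C_0=C(\Vv)$,
$$
     \Norm{\p_tx_\nu}_{L^2(S^1)}^2
     =2\Ss_\Vv(x_\nu)+2\Vv(x_\nu)
     \le 2a+2C_0.
$$
Combined with the hypothesis on $\Nabla{t}\p_tx_\nu$ and the $L^\infty$ bound on $\grad\Vv$ from~(V0), one obtains $\Norm{\Nabla{t}\p_tx_\nu}_\infty\le 1+C_0$ for $\nu$ large. Embedding $M\hookrightarrow\R^N$ isometrically and using $\p_t\p_tx_\nu=\Nabla{t}\p_tx_\nu-\Gamma(x_\nu)(\p_tx_\nu,\p_tx_\nu)$, a standard one-dimensional bootstrap on $S^1$ (integrate $\Nabla{t}\p_tx_\nu$ starting from a point where $\p_tx_\nu$ achieves its $L^2$-mean to promote the $L^2$ bound to $L^\infty$, then bound $\p_t\p_tx_\nu$ in $L^\infty$) yields a uniform $C^2$ bound on $x_\nu$ in $\R^N$.

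Second, by Arzel\`a--Ascoli a subsequence (still denoted $x_\nu$) converges in $C^1(S^1,M)$ to some loop $x_0\in C^1(S^1,M)$. The uniform $C^2$ bound combined with equicontinuity of $\p_t\p_tx_\nu$ (which inherits modulus-of-continuity control from the equation) gives $C^2$ convergence along a further subsequence. Passing to the limit in $\Nabla{t}\p_tx_\nu+\grad\Vv(x_\nu)\to 0$, using that $\grad\Vv$ sends $C^0$-convergent loops to $C^0$-convergent (in fact $L^\infty$-convergent) gradient vector fields — a continuity property consistent with axiom~(V0) — we conclude
$$
     \Nabla{t}\p_tx_0+\grad\Vv(x_0)=0,
$$
so $x_0\in\Pp(\Vv)$. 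By $C^1$ convergence, $\Ss_\Vv(x_0)\le a$, hence $x_0\in\Pp^a(\Vv)$.

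Finally, for $\nu$ large the loop $x_\nu$ lies in a tubular neighborhood of $x_0$, so we may write $x_\nu=\exp_{x_0}(\xi_{0,\nu})$ with $\xi_{0,\nu}:=\exp_{x_0}^{-1}(x_\nu)$. The $C^2$ convergence $x_\nu\to x_0$ implies $\xi_{0,\nu}\to 0$ in $C^2(S^1,x_0^*TM)$, and in particular $\Norm{\xi_{0,\nu}}_\infty+\Norm{\Nabla{t}\xi_{0,\nu}}_\infty+\Norm{\Nabla{t}\Nabla{t}\xi_{0,\nu}}_\infty\le\delta_0$ for $\nu$ large, contradicting the standing assumption.

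The main obstacle is Step~1: promoting the $L^2$ bound on $\p_tx_\nu$ together with the $L^\infty$ bound on $\Nabla{t}\p_tx_\nu$ to a uniform $C^2$ bound, since only axiom~(V0) is available and so no direct regularity information on $\grad\Vv$ beyond its $L^\infty$ bound can be used. The argument must therefore rely purely on the elliptic (one-dimensional) nature of the equation $\Nabla{t}\p_tx_\nu=-\grad\Vv(x_\nu)+o(1)_{L^\infty}$ on $S^1$, combined with the quadratic correction from the second fundamental form $\Gamma$ of the embedding $M\hookrightarrow\R^N$.
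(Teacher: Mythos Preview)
Your overall strategy---contradiction, uniform $C^2$ a~priori bounds, Arzel\`a--Ascoli, identify the limit as a critical point---is exactly the paper's. The a~priori $L^\infty$ bound on $\p_tx_\nu$ is also obtained the same way (the paper uses a differential inequality on $|\p_tx|^2$ and integrates twice; your mean-value integration argument is equivalent).

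The one genuine gap is your upgrade from $C^1$ to $C^2$ convergence. You claim ``equicontinuity of $\p_t\p_tx_\nu$ \ldots\ from the equation'', but axiom~(V0) gives only a uniform $L^\infty$ bound on $\grad\Vv(x_\nu)$, no modulus of continuity in~$t$; so the right-hand side of $\Nabla{t}\p_tx_\nu=-\grad\Vv(x_\nu)+o(1)_{L^\infty}$ provides no equicontinuity, and Arzel\`a--Ascoli does not apply at the $C^2$ level. The paper fills this step differently: it uses Banach--Alaoglu to extract a weak $L^2$ limit $v$ of $\Nabla{t}\p_tx_\nu$, identifies $v$ as the weak $t$-derivative of $\p_tx_0$, uses strong $L^2$ convergence $\grad\Vv(x_\nu)\to\grad\Vv(x_0)$ together with the hypothesis to conclude $v=-\grad\Vv(x_0)$, hence $x_0\in C^2$ solves the critical point equation, and only \emph{then} reads off $C^0$ convergence of $\Nabla{t}\p_tx_\nu$ from the equation. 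Your ``main obstacle'' paragraph misplaces the difficulty: the $C^2$ \emph{bound} is routine; it is the $C^2$ \emph{convergence} that needs this weak-limit argument.
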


\begin{proof}
First note that 
$$
     \Norm{\p_t x}_2^2
     =\int_0^1\left|\p_t x(t)\right|^2 dt
     =2\Ss_\Vv(x)+2\Vv(x)
     \le 2(a+C)
$$
where $C$ is the constant in~(V0).
Now, assuming $\delta_1\le 1$, we have
\begin{equation*}
\begin{split}
     \left|\frac{d}{dt}\left|\p_t x\right|^2\right|
    &=2\bigl|\inner{\p_t x}
     {\Nabla{t}\p_t x+\grad\Vv(x)}
     -\inner{\p_t x}{\grad\Vv(x)}\bigr| \\
    &\le 2\left(\delta_1+C\right)\Abs{\p_t x}
     \le \left(1+C\right)^2 
     + \Abs{\p_t x}^2.
\end{split}
\end{equation*}
Integrate this inequality to obtain that
$$
     \Abs{\p_t x(t_1)}^2-\Abs{\p_t x(t_0)}^2
     \le \left(1+C\right)^2 + \Norm{\p_t x}_2^2
$$
for $t_0,t_1\in[0,1]$.
Integrating again over the interval 
$0\le t_0\le 1$ gives
\begin{equation}\label{eq:yc}
     \Norm{\p_t x}_\infty
     \le \sqrt{\left(1+C\right)^2 
     + 2\Norm{\p_t x}_2^2}
     \le c
\end{equation}
where $c^2:=\left(1+C\right)^2+4\left(a+C\right)$. 

Now suppose that the assertion is wrong. 
Then there is a constant $\delta_0>0$ 
and a sequence of smooth 
loops $x_\nu:S^1\to M$ satisfying 
$$
     \Ss_\Vv(x_\nu)\le a,\qquad
     \lim_{\nu\to\infty}\bigl(
     \Norm{\Nabla{t}\p_t x_\nu
     +\grad\Vv(x_\nu)}_\infty
     \bigr) = 0,
$$
but not the conclusion of 
the lemma for the given constant 
$\delta_0$.
By~(V0) we have
$\sup_\nu\Norm{\Nabla{t}\p_t x_\nu}_\infty<\infty$
and~(\ref{eq:yc}) implies
$\sup_\nu\Norm{\p_t x_\nu}_\infty<\infty$.
Hence, by the Arzela--Ascoli theorem, 
there exists a  subsequence,
still denoted by $x_\nu$, that 
converges in the $C^1$-topology.
Let $x_0\in C^1(S^1,M)$ be the limit.
We claim that this subsequence 
actually converges in the 
$C^2$-topology.
Then $\Nabla{t}\p_t x_0+\grad\Vv(x_0)=0$.
Hence $x_0\in\Pp^a(\Vv)$ and
$x_\nu$ converges
to $x_0$ in the $C^2$-topology.  
This contradicts our assumption on the sequence 
$x_\nu$ and proves the lemma.

It remains to prove the claim.
For simplicity
let us assume that $M$ is
isometrically embedded in
Euclidean space $\R^N$
for some sufficiently large integer $N$.
Since 
$\sup_\nu\Norm{\Nabla{t}\p_t x_\nu}_2<\infty$,
the Banach-Alaoglu Theorem
asserts existence of a subsequence,
still denoted by $x_\nu$,
and an element
$v\in L^2$
such that $\Nabla{t}\p_t x_\nu$
converges to $v$ weakly in $L^2$.
In fact $v$ equals the weak
$t$-derivative of $\p_t x$.
Now $\grad\Vv(x_\nu)$ converges
to $\grad\Vv(x_0)$ in $L^2$
and to $-v$ weakly in $L^2$.
But the weak limit 
equals the strong limit, hence
$v=-\grad\Vv(x_0)\in C^1$.
Therefore $\p_t x_0\in C^1$ and
$\Nabla{t}\p_t x_0$ equals
the weak $t$-derivative $v$
of $\p_t x_0$.
Now $x_0\in C^2$ satisfies
\begin{equation}\label{eq:lim-crit}
     \Nabla{t}\p_t x_0+\grad\Vv(x_0)
     =0,
\end{equation}
because $\Nabla{t}\p_t x_\nu$
converges to $v=\Nabla{t}\p_t x_0$
weakly in $L^2$
and to $-\grad\Vv(x_0)$ strongly in $L^2$.
By induction~(\ref{eq:lim-crit})
implies that $x_0\in C^\infty$.
Moreover, it follows using~(\ref{eq:lim-crit})
that $\Nabla{t}\p_t x_\nu$
converges to $\Nabla{t}\p_t x_0$
in $C^0$ and this proves the claim.
\end{proof}

\begin{proof}[Proof of theorem~\ref{thm:exp-decay}]
Given $a$ and $\Vv$,
let $C=C(a,\Vv)$ be the constant of
theorem~\ref{thm:apriori}
and theorem~\ref{thm:gradient}
with this choice.
Let $C_0>1$ be the constant in~(V0). Then
$E(u)\le a+C_0$ by
lemma~\ref{le:energy-bound} and
$
     \norm{\p_su}_\infty
     \le C E(u)
     \le C(a+C_0)
$
by theorem~\ref{thm:gradient}.
Hence
$$
     \Norm{\p_tu}_\infty
     +\Norm{\Nabla{t}\p_tu}_\infty
     \le c_0
$$
by theorem~\ref{thm:apriori}
and by replacing $\Nabla{t}\p_tu$
according to the heat
equation~(\ref{eq:heat}).
Here
$c_0=C(a+2C_0)+C_0$.
Let $\delta_0$ and $\rho_0$
be the positive constants of
theorem~\ref{thm:kerD-exp-decay}
with this choice of $c_0$.
Choose $\delta_0$
smaller than one quarter
the minimal $C^0$
distance $\kappa=\kappa(a)$ of any
two elements of $\Pp^a(\Vv)$.
Let $\delta_1>0$
be the constant of
lemma~\ref{le:nearby-Crit}
associated to $a$ and $\delta_0$
and set
$$
     \delta
     :=
     \min\left\{\frac{\delta_0^2}{4C},
     \frac{\delta_1^2}{4C}\right\}.
$$
Note that $\delta_0$, $\rho_0$, 
$\delta_1$, and $\delta$
depend only on $a$, $\Vv$, and 
the constant $C_0$ of axiom~(V0).
Note furthermore that
$\xi:=\p_su$
solves the linear
heat equation~(\ref{eq:kerD-heat})
and that the continuous function
$s\mapsto\norm{\p_su_s}_{L^2(S^1)}$
is bounded, because
its integral over $\R$
is the energy $E(u)$
which is finite.

If $\abs{s}\ge T_0+1$, then
$E_{[s-1,s]}(u)\le
E_{\R\setminus [-T_0,T_0]}(u)$
and it follows that
\begin{equation}\label{eq:delta1}
     \Norm{\p_su_s}_\infty
     +\Norm{\Nabla{t}\p_su_s}_\infty
     \le \sqrt{CE_{[s-1,s]}(u)}
     \le \sqrt{C\delta}
     <\min\left\{\delta_0,
     \delta_1\right\}.
\end{equation}
Here we used
theorem~\ref{thm:gradient}
in step one,
assumption~(\ref{eq:small-energy})
in step two,
and the definition of $\delta$
in the last step.
Hence, by 
lemma~\ref{le:nearby-Crit},
there are critical points
$x^\pm\in\Pp^a(\Vv)$
such that
$$
     u_s=\exp_{x^\pm}(\eta^\pm_s),\qquad
     \norm{\eta_s}_{C^2(S^1)}\le\delta_0
$$
whenever $\pm s\ge T_0+1$.
Although the critical points
$x^\pm$ apriori depend on $s$
they are in fact independent,
because $\delta_0<\kappa/4$
and $\Pp^a(\Vv)$ is a finite set by the
Morse condition.
Moreover, injectivity of the operators
$A_{x^\pm}$ is equivalent
to nondegeneracy
of the critical points $x^\pm$
and this is true again by
the Morse condition.
Now
theorem~\ref{thm:kerD-exp-decay}
and remark~\ref{rmk:decay-forward}
conclude the
proof of 
theorem~\ref{thm:exp-decay}.
\end{proof}

\begin{proof}[Proof of
 theorem~\ref{thm:par-exp-decay}]
We prove exponential decay
in three steps.

I) Firstly, the energy of $u$ is finite.
In the case (B) this is part
of the assumptions. 
In the case (F)
it follows as in the
proof of lemma~\ref{le:energy-bound}
for $u:[0,\infty)\times S^1\to\R$.
Namely, let $C_0>0$ be the 
constant in~(V0)
and set $u_0(t):=u(0,t)$, then
$
     E(u)\le \Ss_\Vv(u_0)+C_0
$.

II) Secondly, we establish
the existence of asymptotic
limits. Consider the forward case (F).
We claim that
$\p_su(s,t)\to0$ as $s\to\infty$,
uniformly in $t$.
Let $C>0$ be the constant
in theorem~\ref{thm:gradient}
and let $s\ge1$, then
$$
     \abs{\p_su(s,t)}
     \le C E_{[s-1,s]}(u)
     =C\int_{s-1}^s
     \norm{\p_su_\sigma}^2_{L^2(S^1)} 
     d\sigma
     \stackrel{s\to\infty}{\longrightarrow} 0.
$$
Here the last step follows by
finite energy of $u$ and
this proves the claim.
Because $\p_su_s$
converges to zero in
$L^\infty(S^1)$ so does
$\Nabla{t}\p_tu_s+\grad \Vv(u_s)$
by~(\ref{eq:heat}).
Hence it follows from 
lemma~\ref{le:nearby-Crit}
that there is a critical point
$x^+\in\Pp(\Vv)$ and,
for every sufficiently large $s$,
there is a smooth vector field
$\xi_s$ along $x^+$
such that
$$
     u_s=\exp_{x^+} (\xi_s),
     \qquad
     \norm{\xi_s}_\infty
     +\norm{\Nabla{t}\xi_s}_\infty
     +\norm{\Nabla{t}\Nabla{t}\xi_s}_\infty
     \stackrel{s\to\infty}{\longrightarrow} 0.
$$
(Here we used the fact that --
since $\Ss_\Vv$ is Morse --
there are only finitely many elements 
in $\Pp(\Vv)$ below any fixed action level.)
This and
the identities for the maps $E_{ij}$ 
in~(\ref{eq:exponential-identity})
imply that
\begin{equation}\label{eq:Linfty-bounds}
     \norm{\p_su}_\infty
     +\norm{\p_tu}_\infty
     +\norm{\Nabla{t}\p_tu}_\infty
     <\infty.
\end{equation}
The same arguments apply in
case (B) with corresponding
asymptotic limit $x^-$.

III) The third step
is to prove exponential
decay of the $C^k$ norm of $\p_su$.
Consider the forward case (F).
We prove by induction that
for every $k\in\N$
there is a constant
$c_k^\prime>0$
such that
$$
     \Norm{\p_su}_{W^{k,2}
     ([s,\infty)\times S^1)}
     \le c_k^\prime \Norm{\p_su}
     _{L^2([s-k,\infty)\times S^1)}
$$
for every $s\ge k$.
This estimate,
the definition of the energy 
in~(\ref{eq:par-energy}), and
theorem~\ref{thm:exp-decay} 
with constants
$\delta,c,\rho,T_0>0$, where $T_0$
is chosen sufficiently large such
that~(\ref{eq:small-energy})
holds true, then show that
$$
     \Norm{\p_su}_{W^{k,2}
     ([s,\infty)\times S^1)}
     \le c_k^\prime
     \sqrt{E_{[s-k,\infty]}(u)}
     \le c_k^\prime
     \sqrt{c\delta} 
     e^{-\rho(s-k-T_0)/2}
$$
whenever $s\ge k+T_0+1$.
The Sobolev embedding
$W^{k,2}\hookrightarrow C^{k-2}$,
e.g.~on the compact set
$[s,s+1]\times S^1$,
concludes the proof of forward
exponential decay~(F).

It remains to carry out the
induction argument.
It is based on the identity
\begin{equation}\label{eq:p_su-in-ker}
     \left(\Nabla{s}
     -\Nabla{t}\Nabla{t}\right)\p_su
     =R(\p_su,\p_tu)\p_tu+\Hh_\Vv(u)\p_su
\end{equation}
-- which follows by
linearizing the heat
equation~(\ref{eq:heat})
in the $s$-direction
to obtain that
$\p_su \in \ker\, \Dd_u$
in the notation of
section~\ref{sec:linearized} --
and the subsequent estimate.
Proposition~\ref{prop:par-linear}
with $p=2$
%%%%% FOOTNOTE begin %%%%%%%%
applies\footnote{Formally
add to $u$
\emph{any} smooth half cylinder imposing
a uniform limit as $s\to-\infty$.}
%%%%% FOOTNOTE end %%%%%%%%%%
by~(\ref{eq:Linfty-bounds})
and shows that there
is a constant $c^\prime>0$
with the following significance.
If $s_0\ge1$ then
\begin{equation}\label{eq:half-cylinder}
\begin{split}
    &\Norm{\Nabla{s}\xi}
     _{L^2([s_0,\infty)\times S^1)}
     +\Norm{\Nabla{t}\xi}
     _{L^2([s_0,\infty)\times S^1)}
     +\Norm{\Nabla{t}\Nabla{t}\xi}
     _{L^2([s_0,\infty)\times S^1)}\\
    &\le c^\prime \left(
    \Norm{\Nabla{s}\xi-\Nabla{t}\Nabla{t}\xi}
     _{L^2([s_0-1,\infty)\times S^1)}
     + \Norm{\xi}
     _{L^2([s_0-1,\infty)\times S^1)}
     \right)
\end{split}
\end{equation}
for every $\xi\in\Omega^0([0,\infty)
\times S^1)$ of compact support.
To see this fix a smooth
nondecreasing cutoff function
$\beta:\R\to[0,1]$
which equals zero for $s\le s_0-1$
and one for $s\ge s_0$
and whose slope is at most two.
Via extension by zero
we interpret $\beta\xi$
as a smooth compactly supported vector field
along the extended cylinder $u:\R\times S^1\to M$.
Now proposition~\ref{prop:par-linear}
applies to $\beta \xi$ and
proves~(\ref{eq:half-cylinder}).
Note that $c^\prime$
depends on the $L^\infty$
norms of $\p_s\beta$, $\p_t\beta$,
and $\p_t\p_t\beta$.
We also used lemma~\ref{le:plus-minus}
to deal with the term $\Nabla{t}\xi$ 
which appears on the right hand side.

We prove the induction
hypothesis in the case $k=1$.
Let $s\ge1$
and denote by $C_1>0$
the constant in~(V1).
By~(\ref{eq:half-cylinder})
with $\xi=\p_su$ and~(\ref{eq:p_su-in-ker})
it follows that
\begin{equation*}%\label{eq:half-cylinder}
\begin{split}
    &\Norm{\Nabla{s}\p_su}
     _{L^2([s,\infty)\times S^1)}
     +\Norm{\Nabla{t}\p_su}
     _{L^2([s,\infty)\times S^1)}
     +\Norm{\Nabla{t}\Nabla{t}\p_su}
     _{L^2([s,\infty)\times S^1)}\\
    &\le c^\prime \left(
    \Norm{(\Nabla{s}-\Nabla{t}\Nabla{t})\p_su}
     _{L^2([s-1,\infty)\times S^1)}
     + \Norm{\p_su}
     _{L^2([s-1,\infty)\times S^1)}
     \right)\\
    &= c^\prime \left(
    \Norm{R(\p_su,\p_tu)\p_tu+\Hh_\Vv(u)\p_su}
     _{L^2([s-1,\infty)\times S^1)}
     + \Norm{\p_su}
     _{L^2([s-1,\infty)\times S^1)}
     \right)\\
    &\le c^\prime \left(
    \norm{R}_\infty
    \norm{\p_tu}_\infty^2
    +2C_1+1\right)
     \Norm{\p_su}
     _{L^2([s-1,\infty)\times S^1)}.
\end{split}
\end{equation*}

We prove the induction
hypothesis for $k=2$.
Assume $s\ge 2$.
Then by~(\ref{eq:half-cylinder})
with $\xi=\Nabla{s}\p_su$
and~(\ref{eq:p_su-in-ker})
it follows that
\begin{equation*}%\label{eq:half-cylinder}
\begin{split}
    &\Norm{\Nabla{s}\Nabla{s}\p_su}
     _{L^2([s,\infty)\times S^1)}
     +\Norm{\Nabla{t}\Nabla{s}\p_su}
     _{L^2([s,\infty)\times S^1)}
     +\Norm{\Nabla{t}\Nabla{t}\Nabla{s}\p_su}
     _{L^2([s,\infty)\times S^1)}\\
    &\le c^\prime \Bigl(
     \Norm{\Nabla{s}\left( R(\p_su,\p_tu)\p_tu
     +\Hh_\Vv(u)\p_su\right)
     +[\Nabla{s},\Nabla{t}\Nabla{t}]\p_su}
     _{L^2([s-1,\infty)\times S^1)}\\
    &\quad + \Norm{\Nabla{s}\p_su}
     _{L^2([s-1,\infty)\times S^1)}
     \Bigr).
\end{split}
\end{equation*}
Now use $s\ge 2$,
the apriori estimates~(\ref{eq:Linfty-bounds}),
axiom~(V2),
and the case $k=1$
to bound the right hand side
by a constant times $\norm{\p_su}
_{L^2([s-2,\infty)\times S^1)}$.
An $L^2$ bound for
$\Nabla{t}\Nabla{t}\p_su$
was obtained earlier in the case $k=1$
and the identity
$$
     \Nabla{s}\Nabla{t}\p_su
     =\Nabla{t}\Nabla{s}\p_su
     -R(\p_tu,\p_su)\p_su
$$
implies one for
$\Nabla{s}\Nabla{t}\p_su$.

Proving the induction
hypothesis in the case $k=3$
requires additional information:
Theorem~\ref{thm:apriori-t}
and theorem~\ref{thm:gradient}
only assume an upper action bound
for the heat flow solution.
In the case at hand
this is provided
by $\Ss_\Vv(u(0,\cdot))$.
This reproves~(\ref{eq:Linfty-bounds})
and in addition shows that
$
     \norm{\Nabla{t}\p_su}_\infty<\infty
$.
This estimate is crucial,
since~(\ref{eq:half-cylinder})
with $\xi=\Nabla{s}\Nabla{s}\p_su$
and~(\ref{eq:p_su-in-ker})
lead to terms of the form
$$
     \norm{R(\Nabla{s}\p_su,
     \Nabla{t}\p_su)\p_tu}
     _{L^2([s,\infty)\times S^1)},
$$
but our induction hypothesis
in the case $k=2$
only provides a $C^0$
bound for $\p_su$.
The remaining part of
proof follows the same
pattern as in the case $k=2$.
Here we use axiom~(V3).

Fix an integer $k\ge3$ and
assume the induction
hypothesis is true for
every $\ell\in\{1,\dots,k\}$.
In particular, we have
$W^{k,2}$ and $C^{k-2}$
bounds for $\p_su$
on the appropriate domains.
Apply~(\ref{eq:half-cylinder})
with $\xi={\Nabla{s}}^k\p_su$
and~(\ref{eq:p_su-in-ker})
to obtain $L^2$ bounds
for ${\Nabla{s}}^{k+1}\p_su$ and
$\Nabla{t}{\Nabla{s}}^k\p_su$.
Here we use axiom~(V3)
and the induction hypothesis for
$\ell\in\{1,\dots,k\}$.
A problem of the type
encountered in the case $k=3$ does not
arise, since we have $C^{k-2}$ bounds
for $\p_su$ with $k\ge3$.
To obtain $L^2$ estimates
for the  remaining terms of the form
${\Nabla{t}}^j{\Nabla{s}}^{k-j}\p_su$ with
$j\ge2$ use~(\ref{eq:p_su-in-ker})
to treat any $\Nabla{t}\Nabla{t}$ for
one $\Nabla{s}$.
This reduces the order of the term,
hence the induction hypothesis
can be applied.
This completes the induction step
and proves~(F).
The backward case (B) follows similarly.
This proves
theorem~\ref{thm:par-exp-decay}.
\end{proof}

\begin{lemma}\label{le:fredholm}
Fix a perturbation $\Vv:\Ll M\to\R$
that satisfies~{\rm (V0)--(V3)},
a constant $p>1$,
and nondegenerate critical points
$x^\pm$ of $\Ss_\Vv$.
If $u\in\Mm(x^-;x^+;\Vv)$, 
then the operators 
$\Dd_u,\Dd_u^*:\Ww^{1,p}_u\to\Ll^p_u$
are Fredholm and 
$$
     \INDEX\,\Dd_u 
     =\IND_\Vv(x^-)-\IND_\Vv(x^+)
     =-\INDEX\,\Dd_u^*.
$$
\end{lemma}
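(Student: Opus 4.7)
The natural plan is simply to verify that every $u\in\Mm(x^-,x^+;\Vv)$ satisfies the hypotheses of the already-proved Fredholm theorem~\ref{thm:fredholm} (i.e.\ that hypothesis~\ref{hyp:fredholm} holds for $u$), and then quote that theorem together with its companion statement on $\Dd_u^*$. All of the necessary ingredients are in place: the exponential decay theorem~\ref{thm:par-exp-decay}, the a priori estimates of theorem~\ref{thm:apriori}, and the linearization identity for $\p_su$.

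First I would note that by definition of $\Mm(x^-,x^+;\Vv)$ the map $u$ is smooth with finite energy $E(u)=\Ss_\Vv(x^-)-\Ss_\Vv(x^+)$, and the action $s\mapsto\Ss_\Vv(u_s)$ is monotonically decreasing between $\Ss_\Vv(x^+)$ and $\Ss_\Vv(x^-)$, hence bounded above by $c_0:=\Ss_\Vv(x^-)$. This makes both parts~(F) and~(B) of theorem~\ref{thm:par-exp-decay} applicable (to $u|_{[0,\infty)\times S^1}$ and $u|_{(-\infty,0]\times S^1}$ respectively). The theorem produces, for every $k$, exponential $C^k$ decay of $\p_su$ at both ends, and asserts $C^2(S^1)$ convergence $u_s\to x^\pm$ as $s\to\pm\infty$ (the asymptotic orbits produced by the theorem must coincide with $x^\pm$ by uniqueness of limits built into the definition of $\Mm$). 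Writing $u_s=\exp_{x^\pm}(\eta_s^\pm)$ for $\abs{s}$ large, the $C^2$ convergence $\eta_s^\pm\to 0$ immediately upgrades to $W^{2,2}(S^1)$ convergence because $S^1$ has finite measure, and the $C^k$ decay of $\p_su$ yields $\norm{\p_su_s}_{W^{1,2}}\to 0$ as $s\to\pm\infty$.

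It remains to check the uniform bound $\sup_{s\in\R}\norm{\Nabla{t}\Nabla{t}\p_su_s}_2<\infty$. For this I would invoke theorem~\ref{thm:apriori} (whose hypotheses are satisfied since $\Ss_\Vv(u_s)\le c_0$) to get a global $L^\infty$ bound on $\p_tu$, $\Nabla{t}\p_tu$, $\p_su$, $\Nabla{t}\p_su$ and $\Nabla{s}\p_su$. Since $\xi=\p_su$ solves the linearized heat equation~(\ref{eq:kerD-heat}), rearranging gives
$$
\Nabla{t}\Nabla{t}\p_su=\Nabla{s}\p_su-R(\p_su,\p_tu)\p_tu-\Hh_\Vv(u)\p_su,
$$
and each term on the right is bounded in $L^\infty(\R\times S^1)$ by theorem~\ref{thm:apriori} together with axiom~(V1). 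Hence $\norm{\Nabla{t}\Nabla{t}\p_su_s}_\infty$, and a fortiori $\norm{\Nabla{t}\Nabla{t}\p_su_s}_{L^2(S^1)}$, is bounded uniformly in $s$.

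With all conditions of hypothesis~\ref{hyp:fredholm} verified, theorem~\ref{thm:fredholm} applies directly and yields that $\Dd_u,\Dd_u^*:\Ww^{1,p}_u\to\Ll^p_u$ are Fredholm with $\INDEX\,\Dd_u=\IND_\Vv(x^-)-\IND_\Vv(x^+)=-\INDEX\,\Dd_u^*$. There is no real obstacle in this argument; it is purely a bookkeeping matter of cross-referencing previously established regularity, decay and a priori bounds, and the only mild subtlety is remembering that the asymptotic $L^2$ bound on $\Nabla{t}\Nabla{t}\p_su_s$ (the slightly unnatural condition flagged in the footnote of section~\ref{sec:linearized}) must be extracted from the nonlinear equation by solving for $\Nabla{t}\Nabla{t}\p_su$ in terms of $\Nabla{s}\p_su$ and lower order curvature/Hessian terms.
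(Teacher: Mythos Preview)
Your proposal is correct and follows exactly the same approach as the paper: the paper's proof consists of the single sentence that by theorem~\ref{thm:par-exp-decay} on exponential decay $u$ satisfies the assumptions of the Fredholm theorem~\ref{thm:fredholm}. You have simply unpacked this sentence by explicitly verifying each clause of hypothesis~\ref{hyp:fredholm}, including the slightly awkward uniform $L^2$ bound on $\Nabla{t}\Nabla{t}\p_su_s$ (which could alternatively be read off directly from the $C^k$ exponential decay of $\p_su$ furnished by theorem~\ref{thm:par-exp-decay}, though your route via theorem~\ref{thm:apriori} and the linearized equation is equally valid).
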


\begin{proof}
By theorem~\ref{thm:par-exp-decay}
on exponential decay
$u$ satisfies the assumptions
of the Fredholm 
theorem~\ref{thm:fredholm}.
\end{proof}

%%%%%%%%%%%%%%%%%%%%%%%%%%%%%%%%%%%%%%%%%%%%%%
%%%%%%%%%%%%%%%%%%%%%%%%%%%%%%%%%%%%%%%%%%%%%%
%%%%%%%%%%%%%%%% Subsection %%%%%%%%%%%%%%%%%%
%%%%%%%%%%%%%%%%%%%%%%%%%%%%%%%%%%%%%%%%%%%%%%
%%%%%%%%%%%%%%%%%%%%%%%%%%%%%%%%%%%%%%%%%%%%%%
\subsection{Compactness up to 
broken trajectories}
\label{subsec:compactness}

\begin{proposition}[Convergence on compact sets]
\label{prop:unifconvergence-compactsets}
Assume that the perturbation
$\Vv:\Ll M\to\R$
satisfies~{\rm (V0)--(V3)}
and that $\Ss_\Vv$ is Morse.
Fix critical points
$x^\pm\in\Pp(\Vv)$ and
a sequence of connecting
trajectories $u^\nu\in\Mm(x^-,x^+;\Vv)$.
Then there is a pair
$x_0,x_1\in\Pp(\Vv)$,
a connecting trajectory
$u\in\Mm(x_0,x_1;\Vv)$,
and a subsequence,
still denoted by $u^\nu$,
such that
the following hold:
\begin{enumerate}
\item[\rm\bfseries(i)]
  The subsequence $u^\nu$
  converges to $u$,
  uniformly with all derivatives
  on every compact subset
  of $\R\times S^1$.
\item[\rm\bfseries(ii)]
  For all $s\in\R$
  and $T>0$
  \begin{align*}
     \Ss_\Vv\bigl(u(s,\cdot)\bigr)
     =\lim_{\nu\to\infty}
     \Ss_\Vv\bigl(u^\nu(s,\cdot)\bigr)\\
     E_{[-T,T]}(u)
     =\lim_{\nu\to\infty}
     E_{[-T,T]}(u^\nu).
  \end{align*}
\end{enumerate}
\end{proposition}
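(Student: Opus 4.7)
The plan is to derive uniform $C^k$ bounds from the action bound inherited along the sequence, extract a subsequence via the compactness theorem, then identify the limit as a connecting trajectory using finite energy and exponential decay.

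First I would note that since each $u^\nu\in\Mm(x^-,x^+;\Vv)$ is a negative $L^2$ gradient flow line, the action is nonincreasing, so $\Ss_\Vv(u^\nu(s,\cdot))\le \Ss_\Vv(x^-)=:c_0$ for every $s\in\R$ and every $\nu$. Consequently theorem~\ref{thm:apriori} provides a uniform constant $C=C(c_0,\Vv)$ bounding $\norm{\p_tu^\nu}_\infty$, $\norm{\p_su^\nu}_\infty$, and the various $\Nabla{}$-derivatives of order up to two along the whole cylinder, uniformly in $\nu$.

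Second, these uniform bounds feed into theorem~\ref{thm:compactness-gradbound} on every compact cylinder $(a,b]\times S^1$. A standard diagonal subsequence argument -- exhausting $\R\times S^1$ by such cylinders -- produces a subsequence, still denoted $u^\nu$, and a smooth solution $u:\R\times S^1\to M$ of the heat equation~(\ref{eq:heat}) such that $u^\nu\to u$, uniformly with all derivatives, on every compact set. This yields part~(i).

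Third, I would identify the asymptotic limits of $u$. Each trajectory has total energy $E(u^\nu)=\Ss_\Vv(x^-)-\Ss_\Vv(x^+)$ by~(\ref{eq:par-energy}). Uniform convergence of $\p_su^\nu$ on compact cylinders gives
\[
     E_{[-T,T]}(u)
     =\lim_{\nu\to\infty} E_{[-T,T]}(u^\nu)
     \le \Ss_\Vv(x^-)-\Ss_\Vv(x^+)
\]
for every $T>0$, so $E(u)$ is finite. Since $\Ss_\Vv$ is Morse, both parts (F) and (B) of theorem~\ref{thm:par-exp-decay} apply to $u$ (the backward half needs the finite energy we just established), producing critical points $x_1\in\Pp(\Vv)$ with $u(s,\cdot)\to x_1$ in $C^2(S^1)$ as $s\to+\infty$ and $x_0\in\Pp(\Vv)$ with $u(s,\cdot)\to x_0$ as $s\to-\infty$. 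Thus $u\in\Mm(x_0,x_1;\Vv)$.

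Fourth, part~(ii) follows from continuity considerations. Uniform convergence with the first $t$-derivative on $\{s\}\times S^1$ shows that $\int_0^1|\p_tu^\nu(s,\cdot)|^2\,dt\to\int_0^1|\p_tu(s,\cdot)|^2\,dt$, while $C^0$ continuity of $\Vv$ from axiom~(V0) gives $\Vv(u^\nu(s,\cdot))\to\Vv(u(s,\cdot))$; adding these proves the action statement. The energy statement is immediate from uniform convergence $\p_su^\nu\to\p_su$ on the compact set $[-T,T]\times S^1$. The main obstacle is really the asymptotic analysis for the limit $u$: convergence on compact sets by itself does not guarantee that $x_0=x^-$ or $x_1=x^+$ (indeed bubbling off of intermediate critical points is possible, which is why the statement only asserts $u\in\Mm(x_0,x_1;\Vv)$ for \emph{some} pair), and the heart of the argument is leveraging the finite energy bound together with theorem~\ref{thm:par-exp-decay} to force the existence of critical point limits at all.
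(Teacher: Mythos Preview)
Your proposal is correct and follows essentially the same route as the paper: uniform action bound along the gradient flow, a priori estimates (the paper cites theorem~\ref{thm:apriori-t} and theorem~\ref{thm:gradient} separately rather than the combined theorem~\ref{thm:apriori}), compactness via theorem~\ref{thm:compactness-gradbound} plus a diagonal subsequence, then finite energy and theorem~\ref{thm:par-exp-decay} to obtain $u\in\Mm(x_0,x_1;\Vv)$, and finally $C^0$-continuity of $\Vv$ from~(V0) together with $C^1$ convergence on slices for~(ii).
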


\begin{proof}
Since the flow lines $u^\nu$
connect $x^-$ to $x^+$
and the action $\Ss_\Vv$ decreases
along flow lines,
it follows that
$$
     \sup_{s\in\R}\Ss_\Vv(u^\nu(s,\cdot))
     =\Ss_\Vv(x^-)=:c_0.
$$
Hence by the apriori estimates
theorem~\ref{thm:apriori-t}
and theorem~\ref{thm:gradient}
there is a constant $C=C(c_0,\Vv)$
such that
$$
     \Abs{\p_tu^\nu(s,t)}\le C,
$$
and
$$
     \Abs{\p_su^\nu(s,t)}
     \le C\sqrt{\Ss_\Vv(x^-)-\Ss_\Vv(x^+)},
$$
for every $(s,t)\in\R\times S^1$.
To obtain the second estimate
we used the energy identity~(\ref{eq:par-energy})
for connecting orbits.
Now fix a constant $p>2$
and pick an integer $\ell\ge2$.
Then the assumptions of
theorem~\ref{thm:compactness-gradbound}
are satisfied for the sequence
$u^\nu$ restricted to the cylinder 
$Z_\ell=(-\ell,\ell]\times S^1$.
Hence there is a smooth solution $u:Z_\ell\to M$
of the heat equation~(\ref{eq:heat})
and a subsequence, still denoted by $u^\nu$,
such that $u^\nu$ converges to $u$,
uniformly with all derivatives
on the compact subset
$[-\ell+1,\ell]\times S^1$ of $Z_\ell$.
Now~(i) follows by choosing
a diagonal subsequence
associated to the exhausting sequence
$Z_2\subset Z_3\subset \dots$
%$(Z_\ell)_{\ell\ge 2}$
of $\R\times S^1$.

To prove~(ii) note that
\begin{equation*}
\begin{split}
     E_{[-T,T]}(u)
    &=\lim_{\nu\to\infty}
     \int_{-T}^T\int_0^1\Abs{\p_su^\nu}^2\,dt\,ds\\
    &=\lim_{\nu\to\infty} E_{[-T,T]}(u^\nu)\\
    &\le \Ss_\Vv(x^-)-\Ss_\Vv(x^+)
\end{split}
\end{equation*}
for every $T>0$. Here the first step
uses that, by~(i), the sequence
$\p_su^\nu$ converges to $\p_su$,
uniformly on compact sets.
The second step is by definition of the energy
and the last step is again by the 
energy identity~(\ref{eq:par-energy}).
Hence the limit $u:\R\times S^1\to M$
has finite energy and so, by
theorem~\ref{thm:par-exp-decay},
belongs to the moduli space
$\Mm(x_0,x_1;\Vv)$ for some $x_0,x_1\in\Pp(\Vv)$.
To prove convergence of
the action at time $s$ note that
$$
     \Vv\bigl(u(s,\cdot)\bigr)
     =\lim_{\nu\to\infty}\Vv\bigl(u^\nu(s,\cdot)\bigr),
$$
because $\Vv$ is continuous
with respect to the $C^0$
topology on $\Ll M$ by axiom~(V0).
Convergence of
the action at time $s$
then follows from the fact that
$\p_tu^\nu(s,\cdot)$ converges
to $\p_tu(s,\cdot)$ in
$L^\infty(S^1)$.
\end{proof}

\begin{lemma}[Compactness up to broken trajectories]
\label{le:broken}
Assume that the perturbation
$\Vv:\Ll M\to\R$
satisfies~{\rm (V0)--(V3)}
and that $\Ss_\Vv$ is Morse.
Fix distinct critical points
$x^\pm\in\Pp(\Vv)$ 
and let $u^\nu\in\Mm(x^-,x^+;\Vv)$
be a sequence of connecting
trajectories.
Then there exist a subsequence, 
still denoted by $u^\nu$,
finitely many critical points
$x_0$,\ldots,$x_m$
with $x_0=x^+$ and $x_m=x^-$,
finitely many solutions
$$
     u_k\in\Mm(x_k,x_{k-1};\Vv),
     \qquad \p_su_k\not\equiv 0,
     \qquad k=1,\ldots,m,
$$
and finitely many sequences $s_k^\nu$,
such that the shifted sequence
$u^\nu(s_k^\nu+s,t)$
converges to $u_k(s,t)$,
uniformly with all derivatives
on every compact subset
of $\R\times S^1$.
Moreover, these limit solutions
satisfy
$\sum_{k=1}^mE(u_k)
=\Ss_\Vv(x^-)-\Ss_\Vv(x^+)$.
\end{lemma}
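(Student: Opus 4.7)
The plan is an iterated ``bubbling off'' of limit trajectories using Proposition \ref{prop:unifconvergence-compactsets} at each step, combined with energy quantization coming from the Morse hypothesis. Since $\Pp^{\Ss_\Vv(x^-)}(\Vv)$ is finite, only finitely many critical values lie in $[\Ss_\Vv(x^+), \Ss_\Vv(x^-)]$, and they are separated by some positive gap $\hbar > 0$; this will force the process to terminate after at most $(\Ss_\Vv(x^-) - \Ss_\Vv(x^+))/\hbar$ steps.

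First, I would observe that since $x^\pm$ are distinct critical points joined by a gradient flow line, the action is strictly decreasing along each $u^\nu$; in particular $\Ss_\Vv(x^+) < \Ss_\Vv(x^-)$. Enumerating the critical values in this interval as $a_0 = \Ss_\Vv(x^+) < a_1 < \cdots < a_N = \Ss_\Vv(x^-)$, I pick regular values $c_k \in (a_{k-1}, a_k)$ for $k = 1, \dots, N$ and, for each $\nu$ and $k$, let $s_k^\nu$ be the unique time with $\Ss_\Vv(u^\nu(s_k^\nu, \cdot)) = c_k$. The shifted sequence $v_k^\nu(s,t) := u^\nu(s + s_k^\nu, t)$ still belongs to $\Mm(x^-, x^+; \Vv)$, so Proposition \ref{prop:unifconvergence-compactsets}, together with a diagonal argument over $k$, extracts a single subsequence for which each $v_k^\nu$ converges on compact sets to a limit $w_k \in \Mm(y_k, z_k; \Vv)$ with $\Ss_\Vv(w_k(0, \cdot)) = c_k$ by part (ii) of the proposition. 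In particular $w_k$ is nonconstant, $\Ss_\Vv(y_k) \ge a_k$, and $\Ss_\Vv(z_k) \le a_{k-1}$.

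Next I would discard duplicates: whenever $s_k^\nu - s_{k'}^\nu$ stays bounded along the subsequence (pass to a further subsequence so that it either converges to a real number or diverges), the limits $w_k$ and $w_{k'}$ coincide up to a time shift and represent the same underlying trajectory piece. After keeping one representative per equivalence class and reindexing as $u_1, \dots, u_m$ with centers $s_1^\nu < \dots < s_m^\nu$ satisfying $s_{k+1}^\nu - s_k^\nu \to +\infty$, I obtain non-constant trajectories $u_k \in \Mm(x_k, x_{k-1}; \Vv)$ for certain critical points $x_0, x_1, \dots, x_m$. The key matching assertion is that $x_0 = x^+$, $x_m = x^-$, and consecutive asymptotic limits agree. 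This will follow from Theorem \ref{thm:exp-decay} combined with Lemma \ref{le:nearby-Crit}: on each intermediate window $[s_{k-1}^\nu + T, s_k^\nu - T]$ (where the action sits arbitrarily close to the critical value $\Ss_\Vv(x_{k-1})$ over a time-length tending to $+\infty$), the exponential decay estimate and the ``critical point nearby an approximate one'' lemma force $u^\nu(s,\cdot)$ to lie $C^2$-close to a single nondegenerate critical point, which must then coincide with both the $-\infty$-limit of $u_{k-1}$ and the $+\infty$-limit of $u_k$. The analogous argument on the semi-infinite tails identifies $x_0$ with $x^+$ and $x_m$ with $x^-$. The energy identity $\sum_{k=1}^m E(u_k) = \Ss_\Vv(x^-) - \Ss_\Vv(x^+)$ then telescopes directly from the per-orbit identity \eqref{eq:par-energy}.

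The main obstacle will be this asymptotic matching step: ensuring that no intermediate critical point is ``jumped over'' between two consecutive pieces and that the outermost endpoints actually coincide with $x^\pm$ rather than with some other critical point at the same action level (since the Morse hypothesis does not preclude distinct critical points sharing an action value). The key quantitative input is the small-energy statement in Theorem \ref{thm:exp-decay}, which translates smallness of the tail energy along a segment of $u^\nu$ into exponential action decay and, via Lemma \ref{le:nearby-Crit}, into $C^2$-closeness to a unique nondegenerate critical point on that segment. Combined with the gap $\hbar$, this yields both termination of the extraction procedure and the matching property that each $u_k$ hooks onto $u_{k-1}$ through a common critical point.
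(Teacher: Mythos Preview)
Your outline is correct and is precisely the standard broken-trajectory argument that the paper invokes by deferring to \cite[Proof of lemma~10.3]{SaJoa-LOOP}; the paper gives no independent proof here. Your identification of the asymptotic matching as the crux, and of Lemma~\ref{le:nearby-Crit} together with the gradient bounds as the key tools, is exactly right---though note that for the finite intermediate windows it is Theorem~\ref{thm:gradient} (pointwise control of $\p_s u$ by local energy) rather than Theorem~\ref{thm:exp-decay} that feeds directly into Lemma~\ref{le:nearby-Crit}.
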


\begin{proof}
In~\cite[Proof of lemma~10.3]{SaJoa-LOOP}
replace lemma~10.2 by
prop.~\ref{prop:unifconvergence-compactsets}.
\end{proof}

%%%%%%%%%%%%%%%%%%%%%%%%%%%%%%%%%%%%%%%%%%%%%%
%%%%%%%%%%%%%%%%%%%%%%%%%%%%%%%%%%%%%%%%%%%%%%
%%%%%%%%%%%%%%%% Section %%%%%%%%%%%%%%%%%%%%%
%%%%%%%%%%%%%%%%%%%%%%%%%%%%%%%%%%%%%%%%%%%%%%
%%%%%%%%%%%%%%%%%%%%%%%%%%%%%%%%%%%%%%%%%%%%%%
\section{The implicit function theorem}
\label{sec:IFT}

Throughout this section we
fix a smooth perturbation
$\Vv:\Ll M\to\R$ that
satisfies~{\rm (V0)--(V3)}
and two
nondegenerate critical points
$x^\pm$ of $\Ss_\Vv$.
The idea to prove
the manifold property
and the dimension formula in
theorem~\ref{thm:IFT}
is to construct
a smooth Banach manifold
which contains the moduli space
$\Mm(x^-,x^+;\Vv)$ and
then prove these statements
locally near each element of 
the moduli space.

Fix a real number
$p>2$ and denote
by
\begin{equation}\label{eq:B}
     \Bb^{1,p}
     =\Bb^{1,p}(x^-,x^+)
\end{equation}
the space of continuous
maps $u:\R\times S^1\to M$,
which satisfy the
limit conditions~(\ref{eq:limit}),
are locally of class $\Ww^{1,p}$,
and satisfy the asymptotic
conditions
$\xi^-\in\Ww^{1,p}
((-\infty,-T]\times S^1,u^*TM)$
and
$\xi^+\in\Ww^{1,p}
([T,\infty)\times S^1,u^*TM)$
for some sufficiently large $T>0$.
Here $\xi^\pm$ are defined pointwise
by the identity
$\exp_{x^\pm(t)}\xi^\pm(s,t)=u(s,t)$.
For $p>2$
the space $\Bb^{1,p}$ carries
the structure of a smooth 
infinite dimensional Banach manifold.
The tangent space $T_u\Bb^{1,p}$
is given by the Banach space $\Ww_u^{1,p}$
whose norm is defined in~(\ref{eq:norms}).
Around any \emph{smooth}  map $u$
local coordinates
are provided by
the inverse of the map
${\varphi_u}^{-1}:
V_u\to\Bb^{1,p}$
given by
$\xi\mapsto[(s,t)\mapsto \exp_{u(s,t)}\xi(s,t)]$
where $V_u\subset\Ww_u^{1,p}$
is a sufficiently small
neighborhood of zero.
By abuse of notation we shall denote
this map again by $\xi\mapsto \exp_u\xi$.
Moreover, note that if \emph{some} $u\in\Bb^{1,p}$
satisfies the heat equation~(\ref{eq:heat})
almost everywhere, then
$u$ is smooth by
theorem~\ref{thm:regularity},
hence $u\in\Mm(x^-,x^+;\Vv)$.

For $x\in M$ and $\xi\in T_xM$
denote parallel transport with
respect to the Levi-Civita
connection along the geodesic
$\tau\mapsto\exp_x(\tau\xi)$ by
$$
     \Phi(x,\xi):T_xM\to T_{\exp_x(\xi)}M.
$$
For $u\in\Bb^{1,p}$ the map
$\Ff_u:\Ww^{1,p}_u\to \Ll^p_u$
defined by
\begin{equation}\label{eq:Ff_u}
     \Ff_u(\xi)
     :=
     \Phi(u,\xi)^{-1}
     \left(
     \p_s(\exp_u\xi)
     -\Nabla{t}\p_t(\exp_u\xi)
     -\grad\Vv(\exp_u\xi)
     \right)
\end{equation}
is induced by pointwise
evaluation at $(s,t)$.
Its significance
lies in the following three facts.
Firstly, it is a smooth map
between Banach spaces,
hence the implicit
function theorem for
Banach spaces applies.
Secondly, the differential
$d\Ff_u(0):\Ww^{1,p}_u\to \Ll^p_u$
is given by the linear operator
$\Dd_u$; see~\cite[app.~A.3]{Joa-PHD}.
Thirdly, the map $\xi\mapsto\exp_u\xi$
identifies a neigborhood $V$ of zero
in ${\Ff_u}^{-1}(0)$ 
with a neigborhood of $u$
in $\Mm(x^-,x^+;\Vv)$.
Now theorem~\ref{thm:IFT}
follows immediately.

\begin{proof}[Proof of theorem~\ref{thm:IFT}]
Fix $p>2$.
Then the operator
$d\Ff_u(0)=\Dd_u:
\Ww^{1,p}_u\to \Ll^p_u$
is Fredholm by
theorem~\ref{thm:fredholm}
and surjective by assumption.
Since every surjective
Fredholm operator admits
a right inverse,
the implicit function
theorem for Banach spaces,
see e.g.~\cite[thm~A.3.3]{MS},
applies to $\Ff_u$ restricted to a
small neighborhood $V$ of zero.
It asserts that ${\Ff_u}^{-1}(0)\cap V$
is a smooth manifold
whose tangent space at zero
is given by the kernel of $\Dd_u$.
Since $\Dd_u$ is onto, it follows that
$\dim \ker \Dd_u
=\INDEX\, \Dd_u$ by definition of the Fredholm index.
On the other hand, the Fredholm index equals
$\IND_\Vv(x^-)-\IND_\Vv(x^+)$
by theorem~\ref{thm:fredholm}.
\end{proof}

\begin{proof}[Proof of
proposition~\ref{prop:finite-set}]
Set $c_*=\frac{1}{2}(\Ss_\Vv(x^-)-\Ss_\Vv(x^+))$
and identify
$$
     \widehat\Mm(x^-,x^+;\Vv)
     \simeq
     \Mm^*:=\{u\in \Mm(x^-,x^+;\Vv)\mid
     \Ss_\Vv(u(0,\cdot))=c_*\}.
$$
Here we use that
the action $\Ss_\Vv$ is 
strictly decreasing along
nonconstant (in the $s$-variable)
heat flow trajectories.
This is standard and
follows from the first
variation formula
for the functional $\Ss_\Vv$;
see e.g.~\cite[sec.~12]{MILNOR1}.
Now choose a sequence
$u^\nu$ in $\Mm^*$.
By lemma~\ref{le:broken}
there is a subsequence, still denoted by $u^\nu$,
finitely many critical points
$x_0=x^+,x_1,\ldots,x_m=x^-$,
finitely many connecting trajectories
$u_k\in\Mm(x_k,x_{k-1};\Vv)$
and sequences $s_k^\nu$ where
$k=1,\ldots,m$,
such that each shifted sequence
$u^\nu(s_k^\nu+s,t)$ converges
to $u_k(s,t)$ in $C^\infty_{loc}$.
Note that $m\ge 1$.
By the Morse--Smale assumption
theorem~\ref{thm:IFT} applies to all
moduli spaces.
Since $\p_su_k\not\equiv0$
and the heat equation~(\ref{eq:heat})
is $s$-shift invariant
this implies
$$
     \IND_\Vv(x_k)-\IND_\Vv(x_{k-1})
     =\dim \Mm(x_k,x_{k-1};\Vv)
     \ge 1,
     \quad \forall k\in\{1,\ldots,m\}.
$$
Use these inequalities
to obtain that
$\IND_\Vv(x^-)-\IND_\Vv(x^+)\ge m\ge 1$.
But by assumption the index difference is one
and therefore $m=1$.
Now this means that the
subsequence $u^\nu$
converges in $C^\infty_{loc}$ to
$u:=u_1\in\Mm(x^-,x^+;\Vv)$. 
In fact, convergence of the action functional
for fixed time $s=0$, see
proposition~\ref{prop:unifconvergence-compactsets}~(ii),
shows that $u\in\Mm^*$.
Hence $\Mm^*$ is compact
in the $C^\infty_{loc}$ topology.
On the other hand, 
the moduli space $\Mm(x^-,x^+;\Vv)$
is a manifold of dimension one by
theorem~\ref{thm:IFT}.
Now the $\R$ action
is free and therefore the quotient,
hence $\Mm^*$, is a manifold of
dimension zero.
But a zero dimensional
compact manifold consists of
finitely many points.
\end{proof}

%%%%%%%%%%%%%%%%%%%%%%%%%%%%%%%%%%%%%%%%%%%%%%
%%%%%%%%%%%%%%%% Subsection %%%%%%%%%%%%%%%%%%
%%%%%%%%%%%%%%%%%%%%%%%%%%%%%%%%%%%%%%%%%%%%%%
\subsection*{The refined
implicit function theorem}
\label{subsec:refined-IFT}

\begin{proposition}[The estimate for the right inverse]
\label{prop:inverse}
Fix a perturbation $\Vv:\Ll M\to\R$
that satisfies~{\rm (V0)--(V3)}
and nondegenerate critical points
$x^\pm$ of $\Ss_\Vv$.
Assume $u\in\Mm(x^-;x^+;\Vv)$ and
$\Dd_u$ is onto.
Then, for every $p>1$, 
there is a positive constant 
$c=c(p,u)$ invariant under $s$-shifts of~$u$
such that
\begin{equation}\label{eq:D}
      \Norm{\xi^*}_{\Ww_u^{1,p}}
      \le c\Norm{\Dd_u\xi^*}_p
\end{equation}
for every
$\xi^*\in \im(
\Dd_u^*:\Ww^{2,p}_u\to\Ww^{1,p}_u)$.
Here $\Ww^{2,p}_u:=\{\xi\in\Ww^{1,p}_u
\mid \Dd_u\xi\in\Ww^{1,p}_u\}$.
\end{proposition}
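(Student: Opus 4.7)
The strategy is to apply the open mapping theorem after projecting out the finite-dimensional kernel of $\Dd_u$. By lemma~\ref{le:fredholm}, $\Dd_u:\Ww^{1,p}_u\to\Ll^p_u$ is Fredholm and, by assumption, surjective. Its kernel $X_0$ is finite dimensional and, by proposition~\ref{prop:kernel-smooth}, is independent of $p$ and consists of smooth vector fields whose derivatives of every order decay exponentially as $s\to\pm\infty$. Fix an $L^2$-orthonormal basis $\kappa_1,\ldots,\kappa_N$ of $X_0$. Each $\kappa_i$ lies in $L^{p'}$ (H\"older dual to $p$) so that $\xi\mapsto\langle\xi,\kappa_i\rangle_{L^2}$ defines a continuous linear functional on $\Ww^{1,p}_u$, and hence
$$
     Y:=\bigcap_{i=1}^N\ker\langle\cdot,\kappa_i\rangle_{L^2}\subset\Ww^{1,p}_u
$$
is a closed, and thus Banach, subspace.

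The restriction $\Dd_u|_Y:Y\to\Ll^p_u$ is bijective: surjectivity follows since $\xi-\sum_i\langle\xi,\kappa_i\rangle\kappa_i\in Y$ has the same image under $\Dd_u$ as any preimage $\xi$ (using $\kappa_i\in\ker\Dd_u$), while injectivity holds because $Y\cap X_0=0$. The open mapping theorem then supplies a constant $c=c(p,u)$ with
$$
     \Norm{\xi}_{\Ww^{1,p}_u}\le c\Norm{\Dd_u\xi}_{\Ll^p_u}\qquad\forall\xi\in Y,
$$
and the desired inequality follows once we verify the inclusion $\im(\Dd_u^*:\Ww^{2,p}_u\to\Ww^{1,p}_u)\subset Y$. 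Formally this is $\langle\Dd_u^*\eta,\kappa_i\rangle=\langle\eta,\Dd_u\kappa_i\rangle=0$; to justify it rigorously I would use a cutoff $\chi_R$ with $\chi_R\equiv 1$ on $[-R,R]$, $\supp\chi_R\subset[-2R,2R]$, $\abs{\chi_R'}\le C/R$, for which
$$
     \langle\Dd_u^*\eta,\chi_R\kappa_i\rangle_{L^2}
     =\langle\eta,\Dd_u(\chi_R\kappa_i)\rangle_{L^2}
     =\langle\eta,\chi_R'\kappa_i\rangle_{L^2}.
$$
The right hand side tends to zero as $R\to\infty$ by H\"older's inequality, the exponential decay of $\kappa_i$, and the global $L^p$ bound on $\eta$; the left hand side converges to $\langle\Dd_u^*\eta,\kappa_i\rangle$ because $\chi_R\kappa_i\to\kappa_i$ in $L^{p'}$.

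Invariance of $c$ under $s$-shifts of $u$ is then automatic: translation $(s,t)\mapsto(s+\sigma,t)$ is an isometry of $\Ww^{1,p}_u$ and $\Ll^p_u$, it intertwines $\Dd_u$ with the linearised operator along the shifted cylinder, and it carries $X_0$ bijectively to the kernel of the shifted operator, so the operator norm of $(\Dd_u|_Y)^{-1}$ is preserved. The only delicate step is the cutoff justification of the integration by parts, since elements $\eta\in\Ww^{2,p}_u$ need not decay a priori at infinity; the uniform exponential decay of each $\kappa_i$ together with all of its derivatives supplies the slack that makes the boundary contribution vanish irrespective of the behaviour of $\eta$.
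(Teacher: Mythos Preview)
Your proof is correct and takes a more direct route than the paper's. The paper, following \cite[lemma~4.5]{DOSA}, works through duality: it first establishes the injectivity estimate $\|\eta\|_{\Ww^{1,q}}\le c_1\|\Dd_u^*\eta\|_q$ for the adjoint on the dual exponent $q=p/(p-1)$ (via propositions~\ref{prop:kernel-smooth} and~\ref{prop:coker-ker} and the open mapping theorem), then shows via a pairing argument that $\|\Dd_u^*\xi\|_p\le c_1c_2\|\Dd_u\Dd_u^*\xi\|_p$ --- this is where the $L^2$ orthonormal basis of $\ker\Dd_u$ enters on their side as well --- and finally invokes the linear estimate of proposition~\ref{prop:par-linear} to upgrade the $L^p$ bound on $\xi^*=\Dd_u^*\xi$ to a $\Ww^{1,p}$ bound.

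Your approach bypasses both the duality and the separate appeal to proposition~\ref{prop:par-linear}: you observe directly that $\im\Dd_u^*$ lies in the closed complement $Y=(\ker\Dd_u)^{\perp_{L^2}}\cap\Ww^{1,p}_u$ via the cutoff computation, and then the open mapping theorem applied to the bijection $\Dd_u|_Y:Y\to\Ll^p_u$ delivers the full $\Ww^{1,p}$ estimate in one stroke. This is cleaner in the present setting; the paper's detour through $L^q$ is inherited from the elliptic Floer argument in \cite{DOSA}, where that structure is used to gain uniformity in additional parameters. Your explicit verification of shift invariance via the conjugation $\Dd_{u^\sigma}T_\sigma=T_\sigma\Dd_u$ is also a nice touch --- the paper simply asserts that each of its constants $c_1,c_2,c_3$ is shift invariant.
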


\begin{proof}
[Proof of proposition~\ref{prop:inverse}]
The proof of~\cite[lemma~4.5]{DOSA}
carries over. 
We include it for the sake of completeness.
Fix $p>1$ and let $1/q+1/p=1$.
By lemma~\ref{le:fredholm}
the operators $\Dd_u$ and $\Dd_u^*$ 
are Fredholm.  
Since $\Dd_u$ is onto, the operator
$\Dd_u^*$ is injective
by proposition~\ref{prop:kernel-smooth}
and proposition~\ref{prop:coker-ker}
(hypothesis~\ref{hyp:fredholm}
is satisfied
by theorem~\ref{thm:par-exp-decay} 
on exponential decay).
Hence by the open mapping theorem
$\Dd_u^*$ satisfies the injectivity
estimate
\begin{equation}\label{eq:step1}
     \norm{\eta}_q+\norm{\Nabla{s}\eta}_q
     +\norm{\Nabla{t}\Nabla{t}\eta}_q
     \le c_1\Norm{\Dd_u^*\eta}_q
\end{equation}
for every
$\eta\in\Ww^{1,q}_u$
and with shift invariant
constant $c_1=c_1(q,u)>0$.
Next observe that
\begin{equation}\label{eq:step2}
     \frac{\langle\Dd_u^*\xi,\Dd_u^*\eta\rangle}
     {\norm{\Dd_u^*\eta}_q}
     =\frac{\langle\Dd_u\Dd_u^*\xi,\eta\rangle}
     {\norm{\Dd_u^*\eta}_q}
     \le \Norm{\Dd_u\Dd_u^*\xi}_p
     \frac{\norm{\eta}_q}
     {\norm{\Dd_u^*\eta}_q}
     \le c_1 \Norm{\Dd_u\Dd_u^*\xi}_p
\end{equation}
for all $\xi\in\Ww^{2,p}_u$
and $\eta\in\Ww^{1,q}_u$.
Here the first step
is by definition of the
formal adjoint
and the second one by
H\"older's inequality.
The third step is
by~(\ref{eq:step1}).
Now there is a shift invariant constant
$c_2=c_2(p,u)>0$ such that
\begin{equation}\label{eq:step3}
     \Norm{\Dd_u^*\xi}_p
     \le c_2 \sup_{\eta\in\Ww^{1,q}_u}
     \frac{\langle\Dd_u^*\xi,\Dd_u^*\eta\rangle}
     {\norm{\Dd_u^*\eta}_q}
\end{equation}
for every $\xi\in\Ww^{2,p}_u$.
The argument uses that
$\Dd_u$ is onto and
$\dim\ker\Dd_u<\infty$.
The constant $c_2$ depends also
on the choice of an $L^2$ orthonormal
basis of $\ker\Dd_u$.
Full details are given 
in step~2 of the proof of
lemma~4.5 in~\cite{DOSA}.
Now the linear estimate
proposition~\ref{prop:par-linear}
for $\xi^*:=\Dd_u^*\xi$
shows that
$$
     \Norm{\xi^*}_{\Ww^{1,p}_u}
     \le c_3\left(\Norm{\Dd_u\xi^*}_p
     +\Norm{\xi^*}_p\right)
$$
where the constant
$c_3(p,u)$ is again shift invariant.
To estimate the second term in the sum
apply~(\ref{eq:step3})
and~(\ref{eq:step2})
to obtain that
$\norm{\xi^*}_p
\le c_1c_2 \norm{\Dd_u\xi^*}_p$.
\end{proof}

\begin{proposition}[Quadratic estimate]
\label{prop:Quadest0}
Fix a perturbation $\Vv:\Ll M\to\R$
that satisfies~{\rm (V0)--(V1)}.
Let $\iota>0$ be the
injectivity radius of $M$
and fix constants
$1<p<\infty$ and $c_0>0$.
Then there is a constant $C=C(p,c_0)>0$ 
such that the following is true.
If $u:\R\times S^1\to M$
is a smooth map
and $\xi$
is a compactly supported
smooth vector field along $u$ 
such that
$$
     \Norm{\p_su}_\infty+\Norm{\p_tu}_\infty
     +\Norm{\Nabla{t}\p_tu}_\infty\le c_0,\quad
     \Norm{\xi}_\infty \le \iota,
$$
then
$$
     \Norm{\Ff_u(\xi)-\Ff_u(0)-d\Ff_u(0) \xi}_p
     \le C \Norm{\xi}_\infty \Norm{\xi}_{\Ww^{1,p}_u} 
     \left( 1+ \Norm{\xi}_{\Ww^{1,p}_u} \right).
$$
\end{proposition}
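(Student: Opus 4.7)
The approach is to use Taylor's theorem in the variable $\tau$ along the geodesic $\tau \mapsto \bar{u}_\tau := \exp_u(\tau\xi)$. Define
$$
G(\tau) := \Phi(u,\tau\xi)^{-1}\Bigl(\p_s\bar{u}_\tau - \Nabla{t}\p_t\bar{u}_\tau - \grad\Vv(\bar{u}_\tau)\Bigr).
$$
Then $G(0) = \Ff_u(0)$, while $G'(0) = \Dd_u\xi$ by definition of $\Dd_u$ as the linearization of $\Ff_u$ at zero. Hence
$$
\Ff_u(\xi) - \Ff_u(0) - \Dd_u\xi = \int_0^1 (1-\tau)\,G''(\tau)\,d\tau,
$$
and the task reduces to bounding $G''(\tau)$ pointwise on $\R\times S^1$ by a quadratic expression in $\xi$ and its covariant derivatives, uniformly in $\tau\in[0,1]$, and then taking the $L^p$ norm.

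To compute $G''(\tau)$ I would use that differentiation through parallel transport satisfies $\frac{d}{d\tau}\Phi(u,\tau\xi)^{-1} X(\tau) = \Phi(u,\tau\xi)^{-1}\Nabla{\tau} X(\tau)$, so that $G''(\tau) = \Phi(u,\tau\xi)^{-1}\Nabla{\tau}\Nabla{\tau}\bigl(\p_s\bar{u}_\tau - \Nabla{t}\p_t\bar{u}_\tau - \grad\Vv(\bar{u}_\tau)\bigr)$. Commuting $\Nabla{\tau}$ past $\p_s$ and $\p_t$ produces Riemann curvature terms $R(\p_\tau\bar{u}_\tau,\cdot)\cdot$ applied to $\p_s\bar{u}_\tau$ and $\p_t\bar{u}_\tau$, while the identities~(\ref{eq:exponential-identity}) together with the vanishing relations~(\ref{eq:E_ij(0)}) express all remaining contributions as sums of bilinear expressions in the $E_{ij}$'s evaluated on pairs drawn from $\{\xi,\p_su,\p_tu,\Nabla{s}\xi,\Nabla{t}\xi,\Nabla{t}\Nabla{t}\xi,\Nabla{t}\p_tu\}$. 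The result is a pointwise bound of the schematic form
$$
|G''(\tau)| \le C\bigl(|\xi|+|\Nabla{t}\xi|\bigr)^2 + C|\xi|\bigl(|\p_su|+|\p_tu|+|\Nabla{t}\p_tu|+|\Nabla{s}\xi|+|\Nabla{t}\Nabla{t}\xi|\bigr) + \bigl|R_\Vv(\tau)\bigr|,
$$
with coefficients depending only on $\|R\|_\infty$ and the $E_{ij}$'s (uniformly bounded since $\|\xi\|_\infty\le\iota$). The perturbation remainder $R_\Vv(\tau)$ is estimated by applying the third inequality of axiom~(V2) to the smooth family $\tau\mapsto\bar{u}_\tau$ (with $\tau$ in the role of the $s$-variable), combined with axiom~(V1) for the piece $\Hh_\Vv(\bar{u}_\tau)\Nabla{\tau}\p_\tau\bar{u}_\tau$; this yields $|R_\Vv(\tau)| \le C(|\xi|+\|\xi\|_{L^2(S^1)})^2$.

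The final step passes from the pointwise quadratic bound to the $L^p$ estimate. Every product containing a factor bounded in $L^\infty$ by the hypothesis $\|\p_su\|_\infty + \|\p_tu\|_\infty + \|\Nabla{t}\p_tu\|_\infty \le c_0$ or by $\|\xi\|_\infty\le\iota$ is immediately controlled by $\|\xi\|_\infty \|\xi\|_{\Ww^{1,p}_u}$ through H\"older. The delicate terms are the ones quadratic in derivatives of $\xi$ themselves, notably $|\Nabla{t}\xi|^2$ and $|\xi|\cdot|\Nabla{t}\Nabla{t}\xi|$: these are exactly what the parabolic product estimate of lemma~\ref{le:product-derivatives} is designed to handle, converting them into bounds of the form $C\|\xi\|_\infty\|\xi\|_{\Ww^{1,p}_u}^2$; summing the two types of contributions produces the advertised factor $\|\xi\|_\infty\|\xi\|_{\Ww^{1,p}_u}(1+\|\xi\|_{\Ww^{1,p}_u})$. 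The main obstacle will be precisely this last step, since $\Nabla{t}\xi$ is not a component of the $\Ww^{1,p}_u$ norm and thus its squared $L^p$ norm cannot be extracted by simply pulling out an $L^\infty$ factor; one must distribute derivatives between the two factors via the product estimate, which in turn relies on the carefully chosen definition of the parabolic spaces $\Ww^{k,p}$.
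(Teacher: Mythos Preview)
Your integral-remainder approach is essentially the paper's: the paper decomposes $\Ff_u(\xi)-\Ff_u(0)-d\Ff_u(0)\xi$ into its $\p_s$, $\Nabla{t}\p_t$, and $\grad\Vv$ pieces, expands each via the maps $E_i,E_{ij}$, and reads off vanishing orders at $\xi=0$ rather than computing $G''(\tau)$ for all $\tau$. Both routes lead to the same pointwise structure and the same appeal to lemma~\ref{le:product-derivatives}.

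There is one bookkeeping slip. Your schematic $|G''(\tau)|\le C(|\xi|+|\Nabla{t}\xi|)^2+\cdots$ produces a bare $|\Nabla{t}\xi|^2$ term, and the product estimate (take $X=\xi$ in lemma~\ref{le:product-derivatives}) converts $\|\,|\Nabla{t}\xi|^2\|_p$ only into $C\|\xi\|_{\Ww^{1,p}_u}^2$, \emph{without} a factor $\|\xi\|_\infty$; this is not dominated by the asserted bound when $\|\xi\|_\infty$ is small. The fix is that a careful computation shows this term actually carries an extra $|\xi|$: it arises from $E_{22}(u,\tau\xi)(\Nabla{t}\xi,\Nabla{t}\xi)$ in $\Nabla{t}\p_t\bar u_\tau$, and $E_{22}(u,0)=0$ supplies the missing factor. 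One pulls out $\|\xi\|_\infty$ first, \emph{then} applies the product lemma---exactly as in the paper's treatment of its $g_5$ term. So the $\|\xi\|_\infty$ comes from the geometry, not from the product estimate.

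A side remark: you invoke~(V2) for the perturbation piece while the proposition states only~(V0)--(V1). Second-order control of $\grad\Vv$ is indeed what the Taylor remainder needs, and the paper's own argument for the $h$ term appears to have the same hidden dependence despite claiming the constant depends only on~(V1); in practice this is harmless since all perturbations used in the paper satisfy~(V0)--(V3).
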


\begin{proof}
Recall the definition~(\ref{eq:exponential-identity}) 
of the maps $E_i$ and $E_{ij}$
and write
$$
     \Ff_u(\xi)-\Ff_u(0)
     -\left.\frac{d}{d\tau}\right|_{\tau=0}
     \Ff_u(\tau\xi)
     =f(\xi)-g(\xi)-h(\xi)
$$
where
\begin{equation*}
\begin{split}
     f(\xi)
    &:=\Phi(u,\xi)^{-1}\p_sE(u,\xi)-\p_su
     -\left.\frac{d}{d\tau}\right|_{\tau=0}
     \Phi(u,\tau\xi)^{-1}\p_su\\
    &\quad
     -\left.\frac{d}{d\tau}\right|_{\tau=0}
     \p_s E(u,\tau\xi)
    \\
     g(\xi)
    &:=\Phi(u,\xi)^{-1}\Nabla{t}\p_tE(u,\xi)
     -\Nabla{t}\p_tu
     +\left(\Nabla{2}\Phi(u,0)\xi\right)
     \Nabla{t}\p_tu\\
    &\quad
     -\left.\frac{d}{d\tau}\right|_{\tau=0}
     \Nabla{t}\p_t E(u,\tau\xi)
    \\
     h(\xi)
    &:=\Phi(u,\xi)^{-1}\grad\,\Vv(E(u,\xi))
     -\grad\,\Vv(u)
     +\left(\Nabla{2}\Phi(u,0)\xi\right)
     \grad\,\Vv(u)\\
    &\quad
     -\left.\frac{d}{d\tau}\right|_{\tau=0}
     \grad\,\Vv(E(u,\tau\xi)).
\end{split}
\end{equation*}
Here we used that $\Phi(u,0)=\1$.
Straightforward calculation using the 
identities~(\ref{eq:E_ij(0)})
shows that
$f(\xi)=f_1(\xi)\Nabla{s}\xi+f_2(\xi)$
where
\begin{equation*}
\begin{split}
     f_1(\xi)\Nabla{s}\xi
    &=\left(\Phi(u,\xi)^{-1}E_2(u,\xi)
     -\1\right)\Nabla{s}\xi
    \\
     f_2(\xi)\p_su
    &=\left(\Phi(u,\xi)^{-1}E_1(u,\xi)-\1
     +\Nabla{2}\Phi(u,0)\xi\right)\p_su,
\end{split}
\end{equation*}
that
$$
     g
     =g_1\circ\Nabla{t}\p_tu
     +g_2\circ\left(\p_tu,\p_tu\right)
     +g_3\circ\Nabla{t}\Nabla{t}\xi
     +g_4\circ\left(\p_tu,\Nabla{t}\xi\right)
     +g_5\circ\left(\Nabla{t}\xi,\Nabla{t}\xi
     \right)
$$
where
\begin{equation*}
\begin{split}
     g_1(\xi)
    &=\Phi(u,\xi)^{-1}E_1(u,\xi)-\1
     +\Nabla{2}\Phi(u,0)\xi
    \\
     g_2(\xi)
    &=\Phi(u,\xi)^{-1}E_{11}(u,\xi)
     -\left.\frac{d}{d\tau}\right|_{\tau=0}
     E_{11}(u,\tau\xi)
    \\
     g_3(\xi)
    &=\Phi(u,\xi)^{-1}E_2(u,\xi)-\1
    \\
     g_4(\xi)
    &=2\Phi(u,\xi)^{-1}E_{12}(u,\xi)
    \\
     g_5(\xi)
    &=\Phi(u,\xi)^{-1}E_{22}(u,\xi),
\end{split}
\end{equation*}
and that
$$
     h(\xi)
     =\Phi(u,\xi)^{-1}\grad\,\Vv(E(u,\xi))
     -\left(\1
     -\left(\Nabla{2}\Phi(u,0)\xi\right)\right)
     \grad\,\Vv(u)
     -\Hh_\Vv(u)\xi.
$$
Here $\Hh_\Vv$ denotes the covariant Hessian
of $\Vv$ given by~(\ref{eq:Hess-Vv}).
It follows by inspection using the
identities~(\ref{eq:E_ij(0)})
that the maps $f_2,g_1,g_2$, and $h$
together with their first derivative
are zero at $\xi=0$.
Therefore there exists
a constant $c>0$ which depends
continuously on $\abs{\xi}$
and the constant in~(V1)
such that
$$
     \Abs{(f_2+g_1+g_2+h)(\xi)}
     \le c \Abs{\xi}^2\left(
     \Abs{\p_su}+\Abs{\Nabla{t}\p_tu}
     +\Abs{\p_tu}^2+1
     \right)
$$
pointwise at every $(s,t)$.
Similarly, it follows that
the remaining functions
are zero at $\xi=0$
and therefore
$$
     \Abs{(f_1+g_3+g_4+g_5)(\xi)}
     \le c \Abs{\xi}\left(
     \Abs{\Nabla{s}\xi}
     +\Abs{\Nabla{t}\Nabla{t}\xi}
     +\Abs{\Nabla{t}\xi}\Abs{\p_tu}
     +\Abs{\Nabla{t}\xi}^2
     \right).
$$
Take these pointwise estimates
to the power $p$,
integrate them over $\R\times S^1$
and pull out $L^\infty$ norms
of $\p_su,\p_tu$, and
$\Nabla{t}\p_tu$
to obtain the conclusion
of proposition~\ref{prop:Quadest0}.
The term $\Abs{\xi}\cdot\Abs{\Nabla{t}\xi}^2$
involving a product of
first order terms is taken
care of by the product estimate
lemma~\ref{le:product-derivatives} and
remark~\ref{rmk:product-estimate}.
Here we use the fact that
the (compact) support of $\xi$
is contained in some
set $(a,b]\times S^1$.
\end{proof}

%%%%%%%%%%%%%%%% Subsection* %%%%%%%%%%%%%%%%%
\subsubsection*{Proof of the refined implicit function
theorem~\ref{thm:modified-IFT}}
\label{subsubsec:proof-modified-IFT}

Assume the result is false.
Then there exist constants $p>2$
and $c_0>0$ and a sequence
of smooth maps
$u_\nu:\R\times S^1\to M$
such that
$\lim_{s\to\pm\infty} u_\nu(s,\cdot)
=x^\pm(\cdot)$ exists, uniformly in $t$,
and
\begin{equation}\label{eq:IFT-decay}
     \Abs{\p_su_\nu(s,t)}
     \le \frac{c_0}{1+s^2},\qquad
     \Norm{\p_tu_\nu}_\infty
     \le c_0,\qquad
     \Norm{\Nabla{t}\p_tu_\nu}_\infty
     \le c_0,
\end{equation}
for all $(s,t)\in\R\times S^1$ 
and
\begin{equation}\label{eq:IFT-approx-zero}
     \Norm{\p_su_\nu-\Nabla{t}\p_tu_\nu
     -\grad\, \Vv(u_\nu)}_p
     \le \frac{1}{\nu},
\end{equation}
but which does not satisfy the conclusion
of theorem~\ref{thm:modified-IFT} for $c=\nu$.
This means that for every
$u_*\in\Mm(x^-,x^+;\Vv)$
and every $\xi^\nu\in \im\,\Dd_{u_*}^*
\cap \Ww_{u_*}$ the following holds.
If $u_\nu=\exp_{u_*}(\xi^\nu)$
then
\begin{equation}\label{eq:IFT-contra}
     \Norm{\p_su_\nu-\Nabla{t}\p_tu_\nu
     -\grad\, \Vv(u_\nu)}_p
     <\frac{1}{\nu}
     \Norm{\xi^\nu}_\Ww.
\end{equation}
The {\bf time shift} 
of a smooth map $u:\R\times S^1$
by $\sigma\in\R$
is defined pointwise by 
$$
     u^\sigma(s,t):=u(s+\sigma,t).
$$
Set $a_0:=2c_0^2$ and observe that
$$
     \Ss_\Vv(x^-)
     =\lim_{s\to-\infty}\Ss_\Vv(u_\nu(s,\cdot))
     =\frac12 \Norm{\p_tu_\nu(s,\cdot)}_2^2
     -\Vv(u_\nu(s,\cdot))
     \le \frac12 c_0^2+C_0
     \le a_0.
$$
Here we used
the assumption on asymptotic 
$W^{1,2}$ convergence,
estimate~(\ref{eq:IFT-decay}), 
and our choice of the constant
$c_0>1$ larger than the constant $C_0$ in~(V0).
Now fix a regular value $c_*$ of $\Ss_\Vv$
between $\Ss_\Vv(x^+)$ and $\Ss_\Vv(x^-)$.
Here we use that
the set $\Pp^{a_0}(\Vv)$ is finite,
because $\Ss_\Vv$ is Morse--Smale
below level $a_0$.
Applying time shifts, if necessary, we
may assume without loss
of generality that
\begin{equation}\label{eq:shift}
     \Ss_\Vv\left(
     u_\nu(0,\cdot)\right)
     =c_*.
\end{equation}
Furthermore we
set $\tilde{c}_0=a$
and let $C_0=C_0(a,\Vv)>0$
be the constant in 
theorem~\ref{thm:apriori}
with that choice. Then
we have the apriori estimates
\begin{equation}\label{eq:aprior-bound1}
     \left\|\p_su\right\|_\infty
     +\left\|\p_tu\right\|_\infty
     +\left\|\Nabla{t}\p_tu\right\|_\infty
     \le C_0
\end{equation}
for all $u\in\Mm(x,y;\Vv)$
and $x,y\in\Pp^a(\Vv)$.

\vspace{.1cm}
\noindent
{\bf Claim.}
{\it There is a subsequence, 
still denoted by $u_\nu$, a constant $C>0$,
a trajectory $u\in\Mm(x^-,x^+;\Vv)$,
and a sequence of times $\sigma_\nu$
such that the sequence
$\eta_\nu$ determined by the identity
$$
     u_\nu=\exp_{u^{\sigma_\nu}}
     (\eta_\nu)
$$
satisfies
$\eta_\nu\in \im\,\Dd_{u^{\sigma_\nu}}^*
\cap \Ww_{u^{\sigma_\nu}}$ and
\begin{equation}\label{eq:IFT-limit}
     \lim_{\nu\to\infty}\left(
     \Norm{\eta_\nu}_\infty
     +\Norm{\eta_\nu}_p\right)
     =0
     ,\qquad
     \Norm{\eta_\nu}_\Ww
     \le C.
\end{equation}
}

\vspace{.1cm}
\noindent
Before we prove the claim
we show how it leads to a contradiction.
Consider the trajectories
$u^{\sigma_\nu} \in\Mm(x^-,x^+;\Vv)$ 
and vector fields $\eta_\nu$
provided by the claim.
They satisfy the assumptions of
the quadratic estimate,
proposition~\ref{prop:Quadest0},
by~(\ref{eq:aprior-bound1})
and by choosing a further
subsequence, if necessary, 
to achieve that
$\norm{\eta_\nu}_\infty<\iota$.
Set $c_0^\prime=C_0(a,\Vv)$
and let $C_2=C_2(p,c_0^\prime)$
be the constant
in proposition~\ref{prop:Quadest0}
with that choice.
Furthermore, since
$\Mm(x^-,x^+;\Vv)/\R$ is a finite set
by proposition~\ref{prop:finite-set}
(and $\Pp^a(\Vv)$ is a finite set as well)
the estimate for the right inverse,
proposition~\ref{prop:inverse}, 
applies with constant $C_1$
depending only on $p$, $a$, and $\Vv$.
Now by the definition~(\ref{eq:Ff_u})
of the map $\Ff_{\hat{u}}$
and the fact that parallel
transport is an isometry
we obtain the first step 
in the following estimate, namely
\begin{equation*}
  \begin{split}
    \Norm{
    \p_su_\nu
    -\Nabla{t}\p_tu_\nu
    -\grad\Vv(u_\nu)
    }_p
   &=\Norm{\Ff_{\hat{u}}(\eta_\nu)}_p\\
   &\ge\Norm{\Dd_{\hat{u}} \eta_\nu}_p
    -\Norm{\Ff_{\hat{u}}(\eta_\nu)
    -\Ff_{\hat{u}}(0)
    -d\Ff_{\hat{u}}(0)\eta_\nu}_p\\
   &\ge\Norm{\eta_\nu}_\Ww
    \left(\frac{1}{C_1}
    -C_2\Norm{\eta_\nu}_\infty
    \left(1+\Norm{\eta_\nu}_\Ww
    \right)\right)\\
   &\ge\frac{1}{2C_1}
    \Norm{\eta_\nu}_\Ww.
  \end{split}
\end{equation*}
Step two uses that
$\Ff_{\hat{u}}(0)=
\p_s\hat{u}-\Nabla{t}\p_t\hat{u}
-\grad\Vv(\hat{u})=0$
and $d\Ff_{\hat{u}}(0)=\Dd_{\hat{u}}$.
Step three is by
proposition~\ref{prop:inverse} and
proposition~\ref{prop:Quadest0}.
By~(\ref{eq:IFT-limit})
the last step holds
for sufficiently large
$\nu$. For $\nu>2C_1$
the estimate
contradicts~(\ref{eq:IFT-contra})
and this proves 
theorem~\ref{thm:modified-IFT}.
It only remains to prove
the claim.
This takes four steps.

\vspace{.1cm}
\noindent
{\bf Step 1.}
{\it There is a subsequence
of $u_\nu$, still denoted by $u_\nu$,
and a trajectory
$u\in\Mm(x^-,x^+;\Vv)$ such that
\begin{equation}\label{eq:IFT-step1}
     u_\nu=\exp_u(\xi_\nu),\qquad
     \lim_{\nu\to\infty}\left(
     \Norm{\xi_\nu}_\infty
     +\Norm{\xi_\nu}_p\right)
     =0.
\end{equation}
}

%\vspace{.1cm}
%\noindent
\begin{proof}
We embed the compact Riemannian manifold $M$
isometrically into some
Euclidean space $\R^N$
and view any continuous map
to $M$ as a map into $\R^N$ taking values
in the embedded manifold.
By translation we may assume
that the embedded $M$ contains the origin.
Now $L^p$ and $L^\infty$ norms of $u_\nu$
are provided by the ambient Euclidean space.
By compactness of $M$
and, in particular, the $L^\infty$ bounds
in~(\ref{eq:IFT-decay})
we obtain on every compact cylindrical domain
$Z_T:=[-T,T]\times S^1$
the estimates
$$
     \Norm{u_\nu}_{L^p(Z_T)}
     \le (2T)^\frac{1}{p}\, \diam M,
     \quad
     \Norm{\p_tu_\nu}_{L^p(Z_T)}
     +\Norm{\Nabla{t}\p_tu_\nu}_{L^p(Z_T)}
     \le 2c_0(2T)^\frac{1}{p},
$$
and
\begin{equation}\label{eq:bd-2}
     \Norm{\p_su_\nu}_r
     \le 4c_0\quad
     \forall r\in(1,\infty].
\end{equation}
The latter follows by the estimate
$$
     \int_{-\infty}^\infty
     \left(\frac{1}{1+s^2}\right)^r \,ds
     \le
     2+2\int_1^\infty
     \frac{1}{s^{2r}}\,ds
     =
     \frac{4}{2-1/r}
     <4
$$
whenever $r>1$.
Hence the sequence $u_\nu$ is
uniformly bounded in
$\Ww^{1,p}(Z_T)$.
Thus by the Arzela-Ascoli
and the Banach-Alaoglu
theorem a suitable subsequence,
still denoted by $u_\nu$,
converges strongly in $C^0$
and weakly in $\Ww^{1,p}$
on every compact cylindrical
domain $Z_T$ to some
continuous map
$u:\R\times S^1\to M$
which is locally of class $\Ww^{1,p}$.
Hence $\p_su_\nu-\Nabla{t}\p_tu_\nu
-\grad\Vv(u_\nu)$
converges weakly in $L^p$ to
$\p_su-\Nabla{t}\p_tu-\grad\Vv(u)$.
On the other hand, 
by~(\ref{eq:IFT-approx-zero})
it converges to zero in $L^p$.
By uniqueness of limits
$u$ satisfies the heat
equation~(\ref{eq:heat})
almost everywhere.
Thus $u$ is smooth
by theorem~\ref{thm:regularity}.

Fix $s\in\R$ and observe that
by~(\ref{eq:IFT-decay})
there are uniform $C^1(S^1)$ bounds
for the sequence $\p_tu_\nu(s,\cdot)$.
Hence by Arzela-Ascoli
a suitable subsequence,
still denoted by $\p_tu_\nu(s,\cdot)$,
converges in $C^0(S^1)$
to $\p_tu(s,\cdot)$.
Thus
$$
     \lim_{\nu\to\infty}
     \Ss_\Vv(u_\nu(s,\cdot))
     =\Ss_\Vv(u(s,\cdot))
$$
and therefore
$\Ss_\Vv(u(0,\cdot))=c_*$
by~(\ref{eq:shift}).
Recall that $\p_su=\Nabla{t}\p_tu+\grad\,\Vv(u)$.
When restricted to $s=0$ this means 
that the vector field
$\p_su(0,\cdot)$  is equal to the 
$L^2$ gradient of $\Ss_\Vv$
at the loop $u(0,\cdot)$.
But $\Ss_\Vv(u(0,\cdot))=c_*$ and $c_*$
is a regular value.
Hence $\p_su(0,\cdot)$
cannot vanish identically.
%and this proves that
%$E(u)=\norm{\p_su}_2^2>0$.

On the other hand,
by~(\ref{eq:IFT-decay}) and
axiom~(V0) with constant $C_0$
it follows exactly as above that
$$
     \sup_\nu\Ss_\Vv(u_\nu(s,\cdot))
     =\sup_\nu\frac12\Norm{\p_tu_\nu(s,\cdot)}_2^2
     -\Vv(u_\nu)
     \le a_0.
$$
This shows that all 
relevant trajectories
including relevant limits over $s$ or $\nu$
lie in the sublevel set
$\Ll^{a_0} M$ on which $\Ss_\Vv$ 
is Morse--Smale by assumption.
In particular, we have that
$\sup_{s\in\R}\Ss_\Vv(u(s,\cdot))\le a_0$
and therefore the energy of $u$ is finite
by lemma~\ref{le:energy-bound}.
Hence by
the exponential decay
theorem~\ref{thm:par-exp-decay}
there are critical points 
$y^\pm\in\Pp^{a_0}(\Vv)$
such that $u(s,\cdot)$ converges to
$y^\pm$ in $C^2(S^1)$, as $s\to\pm\infty$.
Moreover, the limits $y^-$ and $y^+$
are distinct,
because the action along a
nonconstant trajectory is strictly
decreasing and the trajectory
is nonconstant because
$\p_su$ is not identically zero
as observed above. 

More generally,
a standard argument shows the following,
see e.g.~\cite[lemma~10.3]{SaJoa-LOOP}.
There exist critical points
$x^-=x^0,x^1,\dots,x^\ell=x^+
\in \Pp^{a_0}(\Vv)$
and trajectories 
$u^k\in\Mm(x^{k-1},x^k;\Vv)$,
$\p_su^k\not\equiv0$, 
for $k\in\{1,\dots,\ell\}$,
a subsequence, still denoted by $u_\nu$,
and sequences $s_\nu^k\in\R$,
$k\in\{1,\dots,\ell\}$,
such that the shifted sequence
$u_\nu(s_\nu^k+s,t)$
converges to $u^k(s,t)$
in an appropriate topology.
The point here is that 
$\p_su^k\not\equiv0$
and therefore
the Morse index strictly
decreases along the sequence 
$x^-=x^0,x^1,\dots,x^\ell=x^+$.
Namely, by the Morse--Smale condition
each Fredholm operator $\Dd_{u^k}$
is onto, hence its Fredholm index
is equal to the dimension of its kernel.
But this is strictly positive
because the kernel contains the nonzero
element $\p_su^k$.
On the other hand, by
lemma~\ref{le:fredholm}
the Fredholm index is given by 
the difference of Morse indices 
$\IND_\Vv(x^{k-1})-\IND_\Vv(x^k)$.
Our assumption that the pair $x^\pm$
has Morse index difference one
then implies that $\ell=1$
and this proves that
$u\in\Mm(x^-,x^+;\Vv)$.
The first assertion of step~1.

It remains to prove the second assertion,
that is~(\ref{eq:IFT-step1}).
The key fact to prove~(\ref{eq:IFT-step1}) 
is that $u_\nu(s,\cdot)$ not only converges
in $W^{1,2}(S^1)$ to $x^\pm$,
as $s\to\pm\infty$,
but that the rate of convergence
is independent of $\nu$.
More precisely, we prove that
for every $\eps>0$ there is a time
$T=T(\eps)>1$ such that
\begin{equation}\label{eq:uniform-decay}
     s>T 
     \qquad\Longrightarrow\qquad
     d\left( u_\nu(s,t),x^+(t)\right)<\eps
\end{equation}
for all $t\in S^1$ and $\nu\in\N$.
Recall that $M$ is embedded
isometrically in $\R^N$.
By the fundamental theorem
of calculus and uniform
decay~(\ref{eq:IFT-decay}) we have that
\begin{equation}\label{eq:uniform-decay2}
     \Abs{x^+(t)-u_\nu(\sigma,t)}_{\R^N}
     =\Abs{\int_{\sigma}^\infty  
     \p_su_\nu(s,t)\,ds}_{\R^N}
     \le \int_{\sigma}^\infty 
     \frac{c_0}{s^2} ds
     =\frac{c_0}{\sigma}
\end{equation}
for all $t\in S^1$,
$\nu\in\N$, and
$\sigma>1$ sufficiently large.
The Riemannian distance
$d$ in $M$ and the restriction
of the Euclidean distance in $\R^N$
to the compact manifold $M$ 
are locally equivalent.
Hence~(\ref{eq:uniform-decay2})
implies~(\ref{eq:uniform-decay}).
Let $Z_T^+
:=[T,\infty)\times S^1$
denote the positive end
of the cylinder $\R\times S^1$
and $Z_T^-$ the negative end.
Let $\iota>0$ be the
injectivity radius of $M$.
Now fix $\eps\in(0,\iota/2)$ and
choose $T=T(\eps)>0$
such that the ends $u(Z_T^\pm)$
and $u_\nu(Z_T^\pm)$
for all $\nu$ are
contained in the
$(\eps/6)$-neighborhood
of $x^\pm(S^1)$.
Such $T$ exists
by~(\ref{eq:uniform-decay}).
Since $u_\nu$ converges to $u$
uniformly on $Z_T$, there exists
$\nu_0=\nu_0(T(\eps))\in\N$ such that
$\norm{\xi_\nu}_{L^\infty(Z_T)}<\eps/3$
for every $\nu\ge\nu_0$.
Hence
\begin{equation}
\begin{split}
     \Norm{\xi_\nu}_\infty
    &=\Norm{\xi_\nu}_{L^\infty(Z_T^-)}
     +\Norm{\xi_\nu}_{L^\infty(Z_T)}
     +\Norm{\xi_\nu}_{L^\infty(Z_T^+)}\\
    &\le
     \sup_{Z_T^-}\left( d(u_\nu,x^-)
     +d(x^-,u)\right)
     +\Norm{\xi_\nu}_{L^\infty(Z_T)}\\
    &\quad
     +\sup_{Z_T^+}\left( d(u_\nu,x^+)
     +d(x^+,u)\right)\\
    &\le \eps
\end{split}
\end{equation}
for every $\nu\ge\nu_0$.
This proves that
the $L^\infty$ limit
in~(\ref{eq:IFT-step1})
is zero.
To prove that the $L^p$ limit
is zero one uses again the decomposition
of $\R\times S^1$ into the
compact part $Z_T$ and the
two ends $Z_T^\pm$.
The left hand side
of~(\ref{eq:uniform-decay2}) is
$p$-integrable over the ends $Z_T^\pm$.
The key fact is that
the value of this integral
does not depend on $\nu$
and converges to zero as
$\abs{T}\to\infty$.
A similar integral is needed
in the case of $u$.
Here the exponential decay
theorem~\ref{thm:par-exp-decay}
shows that the
integral exists and
converges to zero as
$\abs{T}\to\infty$.
This concludes the proof of
step~1.
\end{proof}

%\vspace{.1cm}
\noindent
{\bf Step 2.}
{\it Set $\eps_\nu:=\norm{\xi_\nu}_\infty
+\norm{\xi_\nu}_p$
and let $C_0$ be the constant
in~(\ref{eq:aprior-bound1}).
Then there is a constant
$\sigma_0>0$ and integer $\nu_0\ge 1$
such that
$\eta=\eta(\sigma,\nu)$
is determined by the identity
$u_\nu=\exp_{u^\sigma}\eta$
and satisfies
$\norm{\eta}_\infty<\iota/2$
for all
$\sigma\in[-\sigma_0,\sigma_0]$
and $\nu\ge\nu_0$.
Furthermore, there is a constant
$c_2=c_2(a_0,\sigma_0)>0$
such that
$$
      \Norm{\eta}_\infty
      \le \eps_\nu+C_0\Abs{\sigma}
      ,\qquad
      \Norm{\eta}_p
      \le 2\eps_\nu+c_2\Abs{\sigma}
$$
and
$$
      \Norm{\Nabla{s}\eta}_p
      \le c_2,\qquad
      \Norm{\Nabla{t}\eta}_\infty
      \le c_2,\qquad
      \Norm{\Nabla{t}\Nabla{t}\eta}_p
      \le c_2
$$
for all $\sigma\in[-\sigma_0,\sigma_0]$
and $\nu\ge\nu_0$.
}

%\vspace{.1cm}
%\noindent
\begin{proof}
Existence of $\sigma_0$ and $\nu_0$
follows from the fact that $\eta(\nu,0)=\xi_\nu$,
continuity of time shift,
and the $L^\infty$ limit in~(\ref{eq:IFT-step1}).
Now denote by $L$
the length functional.
Then for every $\sigma\in\R$
and $\gamma(r):=u(s+r\sigma,t)$
for $r\in[0,1]$
we have that
\begin{equation}\label{eq:est-length}
     d\left( u(s,t),u(s+\sigma,t)
     \right)
     \le L(\gamma)
     =\Abs{\sigma}
     \int_0^1\Abs{\p_su(s+r\sigma,t)}dr
     \le \Abs{\sigma}
     \Norm{\p_su}_\infty.
\end{equation}
Since
$d\left( u_\nu(s,t),u(s,t)\right)
=\Abs{\xi_\nu(s,t)}\le\eps_\nu$,
the first estimate
of step~2 follows from
$\Abs{\eta(s,t)}=d\left( u_\nu(s,t),
u(s+\sigma,t)\right)$,
the triangle inequality,
and~(\ref{eq:aprior-bound1}).
To prove the second estimate
note that the triangle inequality
also implies that
$$
     \Norm{\eta}_p^p
     \le2^{p-1}\Norm{\xi_\nu}_p^p
     +2^{p-1}\int_{-\infty}^\infty\int_0^1
     d\left( u(s,t),u(s+\sigma,t)
     \right)^p\, dtds.
$$
By 
theorem~\ref{thm:par-exp-decay}
on exponential decay
there are constants $\rho,c_3>2$
such that for all 
$(\tilde{s},t)\in\R\times S^1$
we have that
\begin{equation}\label{eq:bd1}
     \Abs{\p_su(\tilde{s},t)}
     \le c_3e^{-\rho\abs{\tilde{s}}},\qquad
     \Norm{\p_su}_r\le c_3\quad
     \forall r>1.
\end{equation}
Note that the
constants $\rho$ and $c_3$
depend only on $a_0$, since
the set $\Pp^{a_0}(\Vv)$ is finite and
there are only finitely
many elements of $\Mm(x^-,x^+;\Vv)$
which satisfy~(\ref{eq:shift}).
By the first inequality
in~(\ref{eq:est-length})
and the first estimate in~(\ref{eq:bd1})
with $\tilde{s}=s+r\sigma$
$$
     d\left( u(s,t),u(s+\sigma,t)
     \right)
     \le\Abs{\sigma}
     \int_0^1
     \Abs{\p_su(s+r\sigma,t)} dr
     \le\Abs{\sigma} c_3
     e^{\rho\sigma_0}
     e^{-\rho\abs{s}}.
$$
Hence the left hand side
is $L^p$ integrable.
This concludes the proof
of the second estimate of
step~2.
To prove the next two estimates
we differentiate the identity
$
     \exp_{u^\sigma}\eta
     =u_\nu
$
with respect to $s$ and $t$
to obtain that
\begin{align}\label{eq:nabla-s-eta}
     E_1(u^\sigma,\eta)\p_su^\sigma
     +E_2(u^\sigma,\eta)\Nabla{s}\eta
    &=\p_su_\nu
     \\\label{eq:nabla-t-eta}
     E_1(u^\sigma,\eta)\p_tu^\sigma
     +E_2(u^\sigma,\eta)\Nabla{t}\eta
    &=\p_tu_\nu.
\end{align}
Here the maps $E_i$ are defined
by~(\ref{eq:exponential-identity}).
Since
$\norm{\p_su^\sigma}_p\le c_3$
by~(\ref{eq:bd1})
and $\norm{\p_su_\nu}_p\le 4c_0$
by~(\ref{eq:bd-2}),
the $L^p$ norm of $\Nabla{s}\eta$
is uniformly bounded as well.
Similarly, since
$\norm{\p_tu^\sigma}_\infty\le C_0$
by~(\ref{eq:aprior-bound1}) and
$\norm{\p_tu_\nu}_\infty\le c_0$
by~(\ref{eq:IFT-decay}),
the $L^\infty$
norm of $\Nabla{t}\eta$
is uniformly bounded.
To prove the last estimate
of step~2 differentiate
(\ref{eq:nabla-t-eta})
covariantly with respect to $t$
and abbreviate
$E_{ij}=E_{ij}(u^\sigma,\eta)$
to obtain
\begin{equation*}
\begin{split}
     &E_{11}(u^\sigma,\eta)
      \left(\p_tu^\sigma,\p_tu^\sigma\right)
      +E_{12}(u^\sigma,\eta)
      \left(\p_tu^\sigma,\Nabla{t}\eta\right)
      +E_1(u^\sigma,\eta)\,
      \Nabla{t}\p_tu^\sigma
      \\
     &+E_{21}(u^\sigma,\eta)
      \left(\Nabla{t}\eta,\p_tu^\sigma\right)
      +E_{22}(u^\sigma,\eta)
      \left(\Nabla{t}\eta,\Nabla{t}\eta\right)
      +E_2(u^\sigma,\eta)\,
      \Nabla{t}\Nabla{t}\eta
      \\
     &+\grad\,\Vv(u_\nu)-\p_su_\nu\\
     &=\Nabla{t}\p_tu_\nu
      +\grad\,\Vv(u_\nu)-\p_su_\nu.
\end{split}
\end{equation*}
This identity implies
a uniform $L^p$ bound for
$\Nabla{t}\Nabla{t}\eta$
as follows.
The right hand side
is bounded in $L^p$
by $1/\nu$
and the last term of the
left hand side by $4c_0$
according to~(\ref{eq:bd-2}).
Since $E_{ij}(u^\sigma,0)=0$
and we have uniform $L^\infty$ bounds
for each of the two linear terms
to which
$E_{ij}(u^\sigma,\eta)$ is applied,
we can estimate
the $L^p$ norm by
a constant times $\norm{\eta}_p$.
The only terms left
are term three and term seven 
of the left hand side.
By the heat equation~(\ref{eq:heat})
their sum equals
$$
     E_1(u^\sigma,\eta)\,
     \p_su^\sigma
     -E_1(u^\sigma,\eta)\,
     \grad\,\Vv(u^\sigma)
     +\grad\Vv(u_\nu).
$$
Since
$\norm{\p_su^\sigma}_p\le c_3$
by~(\ref{eq:bd1}),
the $L^p$ norm
of the first term is
uniformly bounded.
Consider the
remaining two terms
as a function $f$ of $\eta$.
Then $f(0)=0$, because
$E_1(u^\sigma,0)=\1$ and
$\eta=0$ means
$u_\nu=u^\sigma$.
Hence $\norm{f}_p$ is
uniformly bounded
by a constant times
$\norm{\eta}_p$.
Here we used axiom~(V0).
This proves step~2.
\end{proof}

%\vspace{.1cm}
\noindent
{\bf Step 3.}
{\it For
$\sigma\in[-\sigma_0,\sigma_0]$
consider the function
$
     \theta_\nu(\sigma)
     :=-\langle\p_su^\sigma,
     \eta\rangle
$
where $\eta=\eta(\sigma,\nu)$
has been defined in step~2 
by the identity
$u_\nu=\exp_{u^\sigma}\eta$
and where $\langle\cdot,\cdot\rangle$ 
denotes the $L^2(\R\times S^1)$ 
inner product.
This function has the property that
$$
     \theta_\nu(\sigma)=0
     \quad\Longleftrightarrow\quad
     \eta\in\im \Dd_{u^\sigma}^*.
$$
Moreover, there exist new constants
$\sigma_0>0$ and $\nu_0\in\N$
such that
$$
     \abs{\theta_\nu(0)}
     \le c_3\eps_\nu,\qquad
     \frac{d}{d\sigma} \theta_\nu(\sigma)
     \ge \frac{\mu}{2},
$$
for all $\sigma\in[-\sigma_0,\sigma_0]$
and $\nu\ge\nu_0$
where $\mu:=\Ss_\Vv(x^-)-\Ss_\Vv(x^+)>0$.
}

%\vspace{.1cm}
%\noindent
\begin{proof}
`$\Leftarrow$'
follows by definition
of the formal adjoint operator
using that $\p_su^\sigma\in\ker \Dd_{u^\sigma}$.
We prove `$\Rightarrow$'.
The kernel of the linear operator
$\Dd_{u^\sigma}$
is 1-dimensional:
It is Fredholm of index one
by theorem~\ref{thm:fredholm}
and it is onto by the Morse--Smale
condition.
This kernel is spanned
by the (nonzero) element
$\p_su^\sigma$.
Now consider $\Dd_{u^\sigma}^*$
on the domain $\Ww^{2,p}$
and apply proposition~\ref{prop:coker-ker}
to obtain that
$\Ww^{1,p}=\ker \Dd_{u^\sigma}
\oplus\im\Dd_{u^\sigma}^*$.
The implication '$\Rightarrow$'
now follows immediately
by contradiction.

Set $1/q+1/p=1$.
By~(\ref{eq:bd1}) and the
definition of the sequence
$\eps_\nu\to 0$ in step~2
it follows that
$$
     \abs{\theta_\nu(0)}
     =\Abs{\langle\p_su,
     \xi_\nu\rangle_{L^2}}
     \le \Norm{\p_su}_q
     \Norm{\xi_\nu}_p
     \le c_3\eps_\nu.
$$
Abbreviate $E_i=E_i(u^\sigma,\eta)$.
Then straightforward
calculation using the
identity~(\ref{eq:nabla-s-eta})
for $\Nabla{s}\eta$
shows that
\begin{equation*}
\begin{split}
     \frac{d}{d\sigma}
     \theta_\nu(\sigma)
    &=
     -\langle\Nabla{s}\p_su^\sigma,
     \eta\rangle_{L^2}
     -\langle\p_su^\sigma,
     -\p_su^\sigma+\p_su^\sigma
     -E_2^{-1}E_1\p_su^\sigma
     \rangle_{L^2}\\
    &\ge
     -\Norm{\Nabla{s}\p_su^\sigma}_q
     \Norm{\eta}_p
     +\Norm{\p_su^\sigma}_2^2
     -\Norm{\p_su^\sigma}_q
     \Norm{\p_su^\sigma}_\infty
     c_4 \Norm{\eta}_p\\
    &=\Norm{\p_su}_2^2
     -\Norm{\eta}_p\left(
     \Norm{\Nabla{s}\p_su}_q
     +c_4\Norm{\p_su}_q
     \Norm{\p_su}_\infty
     \right)\\
    &\ge \Norm{\p_su}_2^2
     -(2\eps_\nu+c_2\abs{\sigma})
     (c_5+c_3^2c_4)
\end{split}
\end{equation*}
for some constant $c_4=c_4(a_0,\sigma_0)>0$.
The last step is
by~(\ref{eq:bd1}) with constant $c_3$.
We also used that
$\norm{\Nabla{s}\p_su}_q\le c_5$
for some positive constant
$c_5=c_5(a_0)$,
which follows
from exponential decay
of $\Nabla{s}\p_su$ according to
theorem~\ref{thm:par-exp-decay}.
The energy
identity~(\ref{eq:par-energy})
shows that
$\norm{\p_su}_2^2=\mu>0$.
Now choose $\sigma_0>0$
sufficiently small
and $\nu_0$ sufficiently large
to conclude the proof of step~3.
\end{proof}

%\vspace{.1cm}
\noindent
{\bf Step 4.}
{\it We prove the claim.
}

%\vspace{.1cm}
%\noindent
\begin{proof}
By step~3 there exists,
for every sufficiently large $\nu$,
an element $\sigma_\nu\in[-\sigma_0,\sigma_0]$
such that
$
     \theta_\nu(\sigma_\nu)
     =0
$
and
$
     \Abs{\sigma_\nu}
     \le \eps_\nu (2c_3/\mu)
$.
Set
$\eta_\nu:=\eta(\sigma_\nu,\nu)$.
Then 
$\eta_\nu\in\im \Dd_{u^{\sigma_\nu}}^*$
again by step~3 and
$$
     \Norm{\eta_\nu}_\infty
     +\Norm{\eta_\nu}_p
     \le \eps_\nu
     \left(3+(c_2+C_0)2c_3/
     \mu\right)
     ,\qquad
    \Norm{\eta_\nu}_\Ww
    \le C,
$$
by step~2.
This proves~(\ref{eq:IFT-limit}),
hence the claim, and therefore
theorem~\ref{thm:modified-IFT}.
\end{proof}

%%%%%%%%%%%%%%%%%%%%%%%%%%%%%%%%%%%%%%%%%%%%%%
%%%%%%%%%%%%%%%%%%%%%%%%%%%%%%%%%%%%%%%%%%%%%%
%%%%%%%%%%%%%%% Section %%%%%%%%%%%%%%%%%%%%%%
%%%%%%%%%%%%%%%%%%%%%%%%%%%%%%%%%%%%%%%%%%%%%%
%%%%%%%%%%%%%%%%%%%%%%%%%%%%%%%%%%%%%%%%%%%%%%
\section{Unique Continuation}
\label{sec:UC}

To prove
unique continuation
for the nonlinear heat equation
we slightly extend
a result of Agmon
and Nirenberg~\cite{AgNi-67}
(to the case $C_1\not=0$).
This generalization
is needed to deal with the nonlinear 
heat equation~(\ref{eq:heat}), since
here nonzero order terms
appear on the right hand side
of~(\ref{eq:E.1}).
For the linear heat equation
the original result for $C_1=0$ is sufficient.

\begin{theorem}\label{thm:AgNi}
Let $H$ be a real Hilbert space and
let $A(s):\dom A(s)\to H$ be a family
of symmetric linear operators.
Assume that $\zeta:[0,T]\to H$
is continuously differentiable in the
weak topology such that 
$\zeta(s)\in\dom A(s)$ and
\begin{equation}\label{eq:E.1}
     \Norm{\zeta^\prime(s)-A(s)\zeta(s)}
     \le c_1 \Norm{\zeta(s)}
     +C_1
     \Abs{\langle A(s)\zeta(s),\zeta(s)\rangle}^{1/2}
\end{equation}
for every $s\in[0,T]$
and two constants $c_1,C_1\ge0$.
Here
$\zeta^\prime(s)\in H$ denotes the
derivative of $\zeta$ with respect to $s$.
Assume further that the function
$s\mapsto \langle \zeta(s),A(s)\zeta(s)\rangle$
is also continuously differentiable
and satisfies
\begin{equation}\label{eq:E.2}
     \frac{d}{ds}
     \langle\zeta,A\zeta\rangle
     -2\langle\zeta^\prime,A\zeta\rangle
     \ge -c_2 \Norm{A\zeta}\Norm{\zeta}
     -c_3 \Norm{\zeta}^2
\end{equation}
pointwise for every
$s\in[0,T]$
and constants $c_2,c_3>0$.
Then the following holds.

\noindent
{\rm (1)}
     If $\zeta(0)=0$
     then $\zeta(s)=0$ for all $s\in[0,T]$.

\noindent
{\rm(2)}  
     If $\zeta(0)\not=0$
     then $\zeta(s)\not=0$ for all $s\in[0,T]$
     and, moreover,
$$
     \log \Norm{\zeta(s)}^2
     \ge
     \log \Norm{\zeta(0)}^2
     -\left(2
     \frac{\langle\zeta(0),A(0)\zeta(0)\rangle}
     {\norm{\zeta(0)}^2} +\frac{b}{a}
     \right) \frac{e^{as}-1}{a}
     -2c_1 s
$$
where
$a=2{C_1}^2+c_2$
and $b=4{c_1}^2+{c_2}^2/2+2c_3$.
\end{theorem}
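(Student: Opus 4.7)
The plan is to follow the classical Agmon–Nirenberg convexity scheme for $\log\Norm{\zeta(s)}^2$, with extra bookkeeping to accommodate the non-homogeneous error term $C_1|\langle A\zeta,\zeta\rangle|^{1/2}$ on the right of~(\ref{eq:E.1}). Throughout I work on the (open) set where $\zeta(s)\ne 0$; the non-vanishing assertion in~(2) and the reduction of~(1) to~(2) are postponed to the end.

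Set $F(s):=\Norm{\zeta(s)}^2$ and $\rho(s):=\langle A(s)\zeta(s),\zeta(s)\rangle$, and write $\alpha(s):=\rho(s)/F(s)$ where $F>0$. Differentiating $F$ and using~(\ref{eq:E.1}) at once gives
\begin{equation*}
    \Abs{F'(s)-2\rho(s)}\le 2c_1 F(s)+2C_1|\rho(s)|^{1/2}F(s)^{1/2},
\end{equation*}
hence $|(\log F)'(s)-2\alpha(s)|\le 2c_1+2C_1|\alpha(s)|^{1/2}$. Next, replacing $\zeta'$ in~(\ref{eq:E.2}) by $A\zeta+(\zeta'-A\zeta)$ and estimating the cross term by~(\ref{eq:E.1}) yields
\begin{equation*}
    \rho'(s)\ge 2\Norm{A\zeta}^2-\bigl((2c_1+c_2)\Norm{\zeta}+2C_1|\rho|^{1/2}\bigr)\Norm{A\zeta}-c_3\Norm{\zeta}^2.
\end{equation*}
Applying weighted Cauchy–Schwarz $ab\le\eps a^2+b^2/(4\eps)$ to each of the three cross terms, with weights whose sum on the $\Norm{A\zeta}^2$ side is less than $2$, absorbs the cross terms and leaves an inequality of the shape $\rho'(s)\ge\Norm{A\zeta}^2 - a|\rho(s)|-\tfrac12 b\, F(s)$ for the constants $a,b$ of the statement; the particular form of the weights is where $a=2C_1^2+c_2$ and $b=4c_1^2+c_2^2/2+2c_3$ will appear.

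The convexity step combines this with the Cauchy–Schwarz bound $\Norm{A\zeta}^2\ge\rho^2/F$ and the relation between $(\log F)'$ and $2\alpha$ above, to derive a first-order differential inequality
\begin{equation*}
    \bigl((\log F)'(s)+2c_1\bigr)'\ge a\bigl((\log F)'(s)+2c_1\bigr)+b
\end{equation*}
on $\{F>0\}$. Multiplying by $e^{-as}$ shows that $e^{-as}\bigl((\log F)'+2c_1+b/a\bigr)$ is nondecreasing, so
\begin{equation*}
    (\log F)'(s)+2c_1\ge \bigl((\log F)'(0)+2c_1+b/a\bigr)e^{as}-b/a,
\end{equation*}
and the first inequality above lets me replace $(\log F)'(0)+2c_1$ by its lower bound in terms of $2\alpha(0)$. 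A second integration from $0$ to $s$ of the resulting estimate produces exactly the asserted lower bound for $\log\Norm{\zeta(s)}^2$.

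It remains to justify that $\zeta(s)\ne 0$ on the full interval $[0,T]$ in~(2): if $s_*\in(0,T]$ were the first time $\zeta(s_*)=0$, the estimate holds on $[0,s_*)$ and gives a \emph{finite} lower bound for $\log F(s)$ as $s\uparrow s_*$, contradicting $F(s_*)=0$. Finally, (1) follows from~(2) applied to the time-reversed pair $\tilde\zeta(s):=\zeta(T-s)$, $\tilde A(s):=-A(T-s)$, which satisfies~(\ref{eq:E.1})–(\ref{eq:E.2}) with the same constants; if $\zeta(0)=0$ but $\zeta(s_0)\ne 0$, applying~(2) on a suitable subinterval with $\tilde\zeta$-initial value $\zeta(s_0)\ne 0$ and endpoint giving $\log\Norm{\tilde\zeta}^2=-\infty$ contradicts the finite lower bound.

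The main obstacle is the second paragraph: the weighted Cauchy–Schwarz bookkeeping must be arranged so that $c_2$ enters $a$ \emph{linearly} rather than quadratically (as naive splits would give). This suggests that the $c_2\Norm{A\zeta}\Norm{\zeta}$ term should not be split by AM–GM but rather absorbed by exploiting $\Norm{A\zeta}\Norm{\zeta}\ge|\rho|$ and combining with the residual $\Norm{A\zeta}^2$ kept aside via $\Norm{A\zeta}^2\ge\rho^2/F$. Matching the stated constants $a,b$ will amount to choosing the AM–GM weights for the $c_1$ and $C_1$ terms to be $1/2$ each and treating the $c_2$ term separately by this non-split route.
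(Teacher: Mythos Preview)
Your overall plan—the Agmon–Nirenberg convexity scheme for $\log\Norm{\zeta}^2$—is exactly what the paper does, and your instinct about why $c_2$ must enter $a$ linearly is correct. However, the central ``convexity step'' contains a genuine gap.

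The claimed inequality $((\log F)'+2c_1)'\ge a((\log F)'+2c_1)+b$ cannot be derived from the hypotheses: computing $(\log F)''=F''/F-((\log F)')^2$ requires $F''=2\Norm{\zeta'}^2+2\langle\zeta,\zeta''\rangle$, and nothing in~(\ref{eq:E.1})--(\ref{eq:E.2}) controls $\zeta''$. The hypotheses give you $\rho'$ via~(\ref{eq:E.2}) and $F'$ via~(\ref{eq:E.1}), no more. The paper's remedy is to subtract off the error term and work with
\[
\varphi(s)=\log\Norm{\zeta(s)}^2-\int_0^s\frac{2\langle\zeta,\zeta'-A\zeta\rangle}{\Norm{\zeta}^2},
\]
so that $\varphi'=2\alpha$ \emph{exactly}; then $\varphi''=2\alpha'$ involves only $\rho'$ and $F'$, both available. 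The resulting inequality is the second–order one $\varphi''+a|\varphi'|+b\ge 0$ (note the absolute value), not the linear first–order form you wrote. Even granting your inequality formally, integrating it produces a bound with the wrong sign in front of $(e^{as}-1)/a$ and an extraneous $-(b/a)s$ term; it does not yield the stated estimate.

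Your proposed handling of the $c_2\Norm{A\zeta}\Norm{\zeta}$ term via $\Norm{A\zeta}\Norm{\zeta}\ge|\rho|$ also points the wrong way: you need an \emph{upper} bound on that product in order to keep the inequality for $\rho'$. The paper's device is the orthogonal decomposition $\Norm{\eta}^2=u^2+v^2$ with $\eta=A\zeta/\Norm{\zeta}$, $\xi=\zeta/\Norm{\zeta}$, $v=|\langle\eta,\xi\rangle|=|\alpha|$, $u=\Norm{\eta-\langle\eta,\xi\rangle\xi}$, followed by
\[
c_2\sqrt{u^2+v^2}\le c_2u+c_2v\le u^2+c_2v+\tfrac{c_2^2}{4}.
\]
The $u^2$ is absorbed by the good term $2u^2$ already present in the lower bound for $\varphi''$, and $c_2v$ contributes linearly to $a|\varphi'|$; this is exactly how $a=2C_1^2+c_2$ and $b=4c_1^2+c_2^2/2+2c_3$ arise. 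Your time–reversal reduction of~(1) to~(2) is fine.
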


\begin{proof}
A beautyful exposition in the case $C_1=0$
was given by Salamon
in~\cite[appendix~E]{Sa-SW}
in the case $C_1=0$.
It generalizes easily.
A key step is
to prove that
the function
$$
     \varphi(s)
     :=
     \log \norm{\zeta(s)}^2
     -\int_0^s
     \frac{2\langle\zeta(\sigma),
     \zeta^\prime(\sigma)-A(\sigma)\zeta(\sigma)\rangle}
     {\norm{\zeta(\sigma)}^2} d\sigma
$$
satisfies the differential
inequality
\begin{equation}\label{eq:diff-ineq}
     \varphi^{\prime\prime}
     +a\Abs{\varphi^\prime}+b
     \ge 0
\end{equation}
for two constants $a,b>0$.

In~\cite{Sa-SW}
it is shown that
assumption~(\ref{eq:E.2})
implies the inequality
$$
     \varphi^{\prime\prime}
     \ge
     2\Norm{\eta-\langle\eta,\xi\rangle\xi}^2
     -\frac{2\Norm{\zeta^\prime-A\zeta}^2}{\Norm{\zeta}^2}
     -2c_2\Norm{\eta}
     -2c_3
$$
where
$$
     \xi
     :=\frac{\zeta}{\Norm{\zeta}},\qquad
     \eta:=\frac{A\zeta}{\Norm{\zeta}}.
$$
Now it follows by
assumption~(\ref{eq:E.1})
that
$$
     \frac{2\Norm{\zeta^\prime-A\zeta}^2}{\Norm{\zeta}^2}
     \le 4{c_1}^2
     +4{C_1}^2
     \frac{\Abs{\langle A\zeta,\zeta\rangle}}
     {\Norm{\zeta}^2}
     =4{c_1}^2
     +4{C_1}^2\Abs{\langle \eta,\xi\rangle}
$$
and therefore
$$
     \varphi^{\prime\prime}
     \ge
     2\Norm{\eta-\langle\eta,\xi\rangle\xi}^2
     -4{c_1}^2
     -4{C_1}^2\Abs{\langle \eta,\xi\rangle}
     -2c_2\Norm{\eta}
     -2c_3.
$$
To obtain the
inequality~(\ref{eq:diff-ineq})
it remains to prove that
$$
     2\Norm{\eta-\langle\eta,\xi\rangle\xi}^2
     -4{c_1}^2
     -4{C_1}^2\Abs{\langle \eta,\xi\rangle}
     -2c_2\Norm{\eta}
     -2c_3
     \ge-a\Abs{\varphi^\prime}-b.
$$
Since
$\varphi^\prime=2\langle \xi,\eta\rangle$
this is equivalent to
$$
     c_2\Norm{\eta}
     \le
     \Norm{\eta-\langle\eta,\xi\rangle\xi}^2
     +(a-2{C_1}^2)\Abs{\langle \eta,\xi\rangle}
     +(b/2-2{c_1}^2-c_3).
$$
Abbreviate
$$
     u:=\Norm{\eta-\langle\eta,\xi\rangle\xi}^2,\qquad
     v:=\Abs{\langle \eta,\xi\rangle},
$$
then $\norm{\eta}^2=u^2+v^2$ and
the desired inequality
has the form
$$
     c_2\sqrt{u^2+v^2}
     \le u^2 +(a-2{C_1}^2)v
     +(b/2-2{c_1}^2-c_3).
$$
Since
$
  c_2\sqrt{u^2+v^2}
  \le
  c_2u+c_2v
  \le
  u^2+c_2v+{c_2}^2/4
$
this is
satisfies with
$$
     a=2{C_1}^2+c_2,\qquad
     b=4{c_1}^2+{c_2}^2/2+2c_3.
$$
This proves the
inequality~(\ref{eq:diff-ineq}).
The remaining
part of the proof
of theorem~\ref{thm:AgNi}
carries over from~\cite{Sa-SW}
unchanged.
\end{proof}

%%%%%%%%%%%%%%%%%%%%%%%%%%%%%%%%%%%%%%%%%%%%%%
%%%%%%%%%%%%%%% Subsection  %%%%%%%%%%%%%%%%%%
%%%%%%%%%%%%%%%%%%%%%%%%%%%%%%%%%%%%%%%%%%%%%%
\subsection{Linear equation}
\label{subsec:UC-L}

Unique continuation
for the linearized
heat equation is used to prove
proposition~\ref{prop:onto-universal}
on transversality of 
the universal section
and the unstable manifold
theorem~\ref{thm:unstable-mf}.

\begin{proposition}\label{prop:UC-L}
Fix a perturbation
$\Vv:\Ll M\to\R$ 
that satisfies~{\rm (V0)--(V2)}
and two constants $a<b$.
Let $u:[a,b]\times S^1\to M$ be
a smooth map and
let $\xi=\xi(s,t)$ be a smooth
vector field along $u$ such that
$\Dd_u\xi=0$ or $\Dd_u^*\xi=0$,
where the operators are defined
by~(\ref{eq:lin-op})
and~(\ref{eq:formal-adjoint}),
respectively.
Abbreviate $\xi(s,\cdot)$ by $\xi(s)$.
Then the following is true.

\vspace{.2cm}
{\rm (a)}
     If $\xi(s_*)=0$ for some $s_*$,
     then $\xi(s)=0$ for all $s\in[a,b]$.

\vspace{.2cm}
{\rm (b)}  
     If $\xi(s_*)\not=0$ for some $s_*$,
     then $\xi(s)\not=0$ for all $s\in[a,b]$.
\end{proposition}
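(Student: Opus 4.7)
The plan is to reduce the statement to the abstract Agmon--Nirenberg principle of Theorem~\ref{thm:AgNi}. First I trivialize the pullback bundle $u^*TM$ over the compact cylinder $[a,b]\times S^1$ by an orthonormal frame $\phi$, with the $E_\sigma$ boundary conditions of Section~\ref{sec:linearized} to cover the possibly non-orientable case, so that a smooth vector field $\xi$ along $u$ corresponds to a smooth map $\zeta:[a,b]\to H:=L^2(S^1,\R^n)$ with dense domain $W:=W^{2,2}(S^1,\R^n)$, via $\xi(s,\cdot)=\phi(s,\cdot)\zeta(s)$. The skew-symmetric matrix $C(s,t)$ defined by $\phi^{-1}\Nabla{s}\phi=\p_s+C$ is smooth and uniformly bounded on $[a,b]\times S^1$, and the trivialization identifies $\Dd_u\xi=0$ with the evolution equation $\zeta'(s)=-C(s)\zeta(s)-\tilde A(s)\zeta(s)$, where $\tilde A(s):=\phi^{-1}A_{u_s}\phi$ is a family of symmetric operators on $H$ with dense domain $W$. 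Setting $A(s):=-\tilde A(s)$, still symmetric, reduces hypothesis~(\ref{eq:E.1}) of Theorem~\ref{thm:AgNi} to the trivial bound
\begin{equation*}
    \|\zeta'(s)-A(s)\zeta(s)\|_{L^2}=\|C(s)\zeta(s)\|_{L^2}\le c_1\|\zeta(s)\|_{L^2},\qquad c_1:=\|C\|_\infty,
\end{equation*}
so that $C_1=0$. The adjoint equation $\Dd_u^*\xi=0$ becomes $\zeta'=-C\zeta+\tilde A\zeta$, in which case I set $A(s):=+\tilde A(s)$, with the same error bound.

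The main work is to verify hypothesis~(\ref{eq:E.2}). A direct computation using the symmetry of $\tilde A$ and the explicit form of $\zeta'$ yields, up to an overall sign,
\begin{equation*}
    \frac{d}{ds}\langle\zeta,A\zeta\rangle-2\langle\zeta',A\zeta\rangle=\pm\langle\zeta,(\p_s\tilde A)\zeta\rangle,
\end{equation*}
so it suffices to estimate $|\langle\zeta,(\p_s\tilde A)\zeta\rangle|\le c_2\|\tilde A(s)\zeta\|_{L^2}\|\zeta\|_{L^2}+c_3\|\zeta\|_{L^2}^2$. Writing $\tilde A$ in the frame as $-\p_t^2-B-Q$ with $B=(\p_tP)+2P\p_t+P^2$ and $Q=\phi^{-1}R(\phi,\p_tu)\p_tu+\phi^{-1}\Hh_\Vv(u)\phi$, one sees that $\p_s\tilde A$ is a first-order differential operator in $t$ whose coefficients are uniformly controlled on $[a,b]\times S^1$ by the smoothness of $u$ and by axiom~(V2) applied to the covariant Hessian. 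Combining this with the elliptic estimate $\|\zeta\|_{W^{2,2}(S^1)}\le c(\|\tilde A(s)\zeta\|_{L^2}+\|\zeta\|_{L^2})$, the interpolation $\|\p_t\zeta\|_{L^2}^2\le\|\zeta\|_{L^2}\|\zeta\|_{W^{2,2}}$, and Young's inequality, produces the required bound with constants $c_2,c_3$ depending only on the smoothness of $u$ on the compact cylinder and on the constants in~{\rm (V0)--(V2)}.

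With both hypotheses in place, Theorem~\ref{thm:AgNi} applied on $[s_*,b]$ (after a shift making the left endpoint zero) gives $\zeta\equiv 0$ on $[s_*,b]$ whenever $\zeta(s_*)=0$. For the backward direction on $[a,s_*]$ I apply the same machinery to the time-reversed field $\tilde\zeta(s):=\zeta(2s_*-s)$, which solves an equation of the same type with $\tilde A$ replaced by $-\tilde A(2s_*-\cdot)$ (still symmetric) and $C$ by $-C(2s_*-\cdot)$ (still skew-symmetric); the Agmon--Nirenberg estimates apply verbatim and yield $\zeta\equiv 0$ on $[a,s_*]$. This proves (a), and (b) is its contrapositive. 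The principal technical obstacle I anticipate is the careful handling of $\p_s\tilde A$: exposing its first-order term explicitly and then absorbing the resulting $\p_t$-factor into the graph norm of $\tilde A(s)$ via interpolation is where axiom~(V2) and the $s$-smoothness of $u$ on the compact rectangle enter essentially.
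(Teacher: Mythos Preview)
Your proposal is correct and follows essentially the same route as the paper: trivialize $u^*TM$ by an orthonormal frame, rewrite $\Dd_u\xi=0$ (resp.\ $\Dd_u^*\xi=0$) as $\zeta'=-C\zeta\mp\tilde A\zeta$ with $C$ skew and $\tilde A$ symmetric, verify~(\ref{eq:E.1}) with $C_1=0$ from the bounded zeroth-order error $C\zeta$, and verify~(\ref{eq:E.2}) by observing that its left-hand side equals $\pm\langle\zeta,(\p_s\tilde A)\zeta\rangle$ and that $\p_s\tilde A$ is first order in $t$ with coefficients bounded on the compact cylinder (using~(V2) for the Hessian piece), then absorbing the $\p_t$-term via the elliptic estimate for $\tilde A$. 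The only cosmetic differences are that the paper proves~(b) first by contradiction and deduces~(a), whereas you prove~(a) directly on both half-intervals via time reversal and read off~(b) as the contrapositive; your sign bookkeeping is also slightly more explicit than the paper's.
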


\begin{proof}
We represent $\Dd_u$ by the operator 
$
     D_{A+C}=\frac{d}{ds}+A(s)+C(s)
$
given by~(\ref{eq:D_A}).
Here the family $A(s)$
consists of self-adjoint
operators on the 
Hilbert space $H:=L^2(S^1,\R^n)$
with dense domain $W$;
see~(ii) and~(iv)
in section~\ref{sec:linearized}.
The space $W$ has been defined
prior to~(\ref{eq:D_A}).
Recall that if the vector bundle 
$u^*TM\to [a,b]\times S^1$
is trivial then $W=W^{2,2}(S^1,\R^n)$
and otherwise some boundary
condition enters.
In either case 
$W=:\dom A(s)$ is independent of $s$.

{\rm (b)}
Let $\xi\in\ker D_{A+C}$
satisfy $\xi(s_*)\not=0$.
Assume by contradiction
that $\xi(s_0)=0$ for some
$s_0\in[a,b]$.
Now if $s_0>s_*$,
then replace $\xi(s)$ by
$\xi(s+s_*)$ and set $T=b-s_*$
and $s_1=s_0-s_*$,
otherwise
replace $\xi(s)$ by
$\xi(-s+s_*)$ and set $T=-a+s_*$
and $s_1=-s_0+s_*$.
Hence we may assume without loss
of generality that $\xi\in\ker D_{A+C}$
maps $[0,T]$ to $H$ and satisfies
$\xi(0)\not=0$ and $\xi(s_1)=0$
for some $s_1\in(0,T]$.

Next we check that the conditions
in theorem~\ref{thm:AgNi}
are satisfied:
Firstly, the vector field $\xi$ 
is smooth by assumption.
Secondly, the family $A(s)$ consists 
of self-adjoint
operators by~(ii)
in section~\ref{sec:linearized}.
Thirdly, the function
$s\mapsto \langle \xi(s),A(s)\xi(s)\rangle$
is continuously differentiable. Here
we use the first condition in axiom~(V2),
which tells that the Hessian
$\Hh_\Vv$ is a zeroth order operator,
and the fact that by compactness 
of the domain the vector fields
$\p_tu$, $\p_su$, $\Nabla{t}\p_su$,
and $\Nabla{t}\Nabla{t}\p_su$
are bounded in
$L^\infty([0,T]\times S^1)$
by a constant $c_T>0$.
Next assumption~(\ref{eq:E.1})
is satisfied with $C_1=0$, because
$$
     \Norm{\xi^\prime(s)-A(s)\xi(s)}
     =\Norm{C(s)\xi(s)}
     \le c_T^\prime \Norm{\xi(s)}
$$
where the constant
$c_T^\prime=\sup_{[0,T]\times S^1} 
\norm{C(s,t)}_{\Ll(\R^n)}$
is finite by compactness
of the domain.
To verify the 
inequality~(\ref{eq:E.2})
note that its left
hand side is given by
$\langle\xi(s),A^\prime(s)\xi(s)\rangle$;
see~\cite[Rmk. in sec.~1]{AgNi-67}
and~\cite[Rmk.~F.3]{Sa-SW}.
Now
\begin{equation*}
\begin{split}
     \langle\xi(s),A^\prime(s)\xi(s)\rangle
     &\ge -\Norm{\xi(s)}
      \Norm{A^\prime(s)\xi(s)}\\
     &\ge -c_T^{\prime\prime}\Norm{\xi(s)}
      \left( \Norm{\xi(s)}
      +\Norm{\p_t\xi(s)} \right).
\end{split}
\end{equation*}
where the second step is by
straightforward calculation
of $A^\prime(s)$.
Replacing $\norm{\p_t\xi(s)}$ 
according to the elliptic
estimate for $A(s)$
yields~(\ref{eq:E.2}).

Now the Agmon-Nirenberg
theorem~\ref{thm:AgNi}
applies and 
part~{\rm (2)}
tells that
$\xi(s)\not=0$
for all $s\in[0,T]$.
This contradiction
proves~(b) for elements
in the kernel of $\Dd_u$.
The same argument
covers the case 
of the operator $\Dd_u^*$, 
since it is represented by
$-D_{-A-C}$ according to
remark~\ref{rmk:adjoint_fredholm_p=2}.

{\rm (a)}
This follows either by
a time reversing argument 
(see proof of the Agmon-Nirenberg 
Theorem in~\cite{Sa-SW})
and application of~(b)
or by a line of argument
analoguous to the proof of~(b)
given above, where in the 
final step
part~(2) of theorem~\ref{thm:AgNi}
is replaced by part~(1).
\end{proof}

%%%%%%%%%%%%%%%%%%%%%%%%%%%%%%%%%%%%%%%%%%%%%%
%%%%%%%%%%%%%%% Subsection  %%%%%%%%%%%%%%%%%%
%%%%%%%%%%%%%%%%%%%%%%%%%%%%%%%%%%%%%%%%%%%%%%
\subsection{Nonlinear equation}
\label{subsec:UC-NL}

Unique continuation
for the nonlinear
heat equation is used to prove
the unstable manifold
theorem~\ref{thm:unstable-mf}.

\begin{theorem}[Unique Continuation
for compact cylindrical domains]
\label{thm:UC-NL}
Fix two constants $a<b$
and a perturbation
$\Vv:\Ll M\to\R$
that satisfies~{\rm (V0)} and~{\rm (V1)}.
If two smooth solutions
$u,v:[a,b]\times S^1\to M$
of the heat equation~(\ref{eq:heat})
coincide along one loop,
then $u=v$.
\end{theorem}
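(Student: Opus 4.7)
The plan is to invoke the Agmon--Nirenberg type result theorem~\ref{thm:AgNi} via an open--closed argument, following the same template as the linear unique continuation proposition~\ref{prop:UC-L}. Define $S := \{s \in [a,b] : u(s,\cdot) = v(s,\cdot)\}$, which contains the given time by hypothesis and is closed in $[a,b]$ by continuity. The goal is to show $S$ is also open, hence $S = [a,b]$. To prove openness at a point $s_0 \in S$, I would work on a small two-sided sub-interval $I \subset [a,b]$ around $s_0$ on which the pointwise distance $d(u(s,t),v(s,t))$ is less than the injectivity radius of $M$, so that $v(s,\cdot) = \exp_{u(s,\cdot)}(\xi_s)$ for a small smooth family $\xi_s$ of vector fields along $u(s,\cdot)$.

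On such an interval, after isometrically embedding $M \hookrightarrow \R^N$ and viewing $u, v$ as $\R^N$-valued, the extrinsic form~(\ref{eq:heat-embedded}) shows that the difference $w := u - v$ solves
$$\partial_s w - \partial_t\partial_t w = \Gamma(u)(\partial_t w,\partial_t u+\partial_t v) + (\Gamma(u)-\Gamma(v))(\partial_t v,\partial_t v) + \grad\Vv(u) - \grad\Vv(v).$$
Set $H := L^2(S^1,\R^N)$ with dense domain $W := W^{2,2}(S^1,\R^N)$, let $A := \partial_t\partial_t$ be the time-independent symmetric operator on $H$ with domain $W$, and put $\zeta(s) := w(s,\cdot) \in W$. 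Smoothness of $u,v$ makes $\zeta$ smooth in $s$ and the pairing $s \mapsto \langle\zeta,A\zeta\rangle$ of class $C^1$. Integration by parts yields the key identity $|\langle A\zeta,\zeta\rangle|^{1/2} = \|\partial_t\zeta\|_{L^2(S^1)}$. Smoothness of $\Gamma$ on the compact $M$ together with uniform $L^\infty$ bounds on $\partial_t u,\partial_t v$ (available by smoothness of $u,v$ on the compact domain $I\times S^1$) bounds the two $\Gamma$-difference terms pointwise by $C(|w| + |\partial_t w|)$. For the nonlocal gradient difference, I would integrate $\Hh_\Vv$ along the pointwise geodesic path $\lambda\mapsto\exp_{u(s,\cdot)}(\lambda\xi_s)$ from $u(s,\cdot)$ to $v(s,\cdot)$; axiom~(V1), combined with H\"older's inequality to convert the $L^1$ tail into an $L^2$ bound, then gives $\|\grad\Vv(u_s) - \grad\Vv(v_s)\|_{L^2(S^1)} \leq C\|w(s,\cdot)\|_{L^2(S^1)}$. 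Combining these estimates produces
$$\|\zeta'(s) - A\zeta(s)\| \leq c_1\|\zeta(s)\| + C_1 |\langle A\zeta(s),\zeta(s)\rangle|^{1/2},$$
which is precisely condition~(\ref{eq:E.1}). Condition~(\ref{eq:E.2}) holds trivially with $c_2 = c_3 = 0$, since $A$ is $s$-independent and its self-adjointness forces $\frac{d}{ds}\langle\zeta,A\zeta\rangle = 2\langle\zeta',A\zeta\rangle$.

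With these hypotheses in place, part~(1) of theorem~\ref{thm:AgNi} applied to the forward time-shift $s \mapsto \zeta(s_0+s)$ on the right half of $I$, and to the time-reversal $s \mapsto \zeta(s_0-s)$ on the left half (where $A$ is replaced by the still-self-adjoint $-A$, the estimates being unaffected since $|\langle -A\zeta,\zeta\rangle| = |\langle A\zeta,\zeta\rangle|$), yields $\zeta \equiv 0$ on all of $I$. Hence $I \subset S$, proving openness and completing the proof. The main obstacle, and the reason theorem~\ref{thm:AgNi} rather than the classical Agmon--Nirenberg estimate is needed, is precisely the backward direction: the $\Gamma$-difference produces a genuine first-order term $\partial_t w$ that cannot be absorbed into $c_1\|\zeta\|$, which forces us into the $C_1 \neq 0$ regime of~(\ref{eq:E.1}); forward uniqueness alone would follow from an elementary Gr\"onwall-type energy estimate.
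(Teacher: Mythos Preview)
Your argument is correct and shares the paper's essential strategy: verify hypothesis~(\ref{eq:E.1}) of the extended Agmon--Nirenberg theorem~\ref{thm:AgNi} for the difference of the two solutions, with the crucial $C_1\neq 0$ term absorbing the first-order contribution $\p_t w$ coming from the quadratic nonlinearity, and note that the $s$-independence of the chosen operator $A$ makes~(\ref{eq:E.2}) vacuous. Your closing remark about forward Gr\"onwall versus backward Agmon--Nirenberg is exactly the point.

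Where you differ from the paper is in the packaging, and the differences are worth recording. First, the paper works \emph{intrinsically}: it fixes the base loop $u_\sigma$, writes $u=\exp_{u_\sigma}\xi$ and $v=\exp_{u_\sigma}\eta$ via the maps $E_i,E_{ij}$ of~(\ref{eq:exponential-identity}), sets $\zeta=\xi-\eta$, and takes $A=\Nabla{t}\Nabla{t}$ along $u_\sigma$. You instead use the isometric embedding $M\hookrightarrow\R^N$ and the extrinsic form~(\ref{eq:heat-embedded}), taking $\zeta=w=u-v$ and the flat operator $A=\p_t\p_t$ on $L^2(S^1,\R^N)$. This bypasses the $E_{ij}$ calculus entirely; the price is the extra second-fundamental-form term, which you correctly identify as the source of the $\p_t w$ contribution. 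Second, the paper globalizes by an explicit finite iteration over sub-cylinders of uniform width $\delta=\iota/(2+\Norm{\p_su}_\infty+\Norm{\p_sv}_\infty)$, while you use an open--closed argument on the set $S$ of coincidence times. Both work; your connectedness argument is slightly slicker, while the paper's uniform $\delta$ makes the constants in~(\ref{eq:E.1}) manifestly independent of the step, a point that becomes relevant in the half-infinite version theorem~\ref{thm:UC-NL-noncompact}.
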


\begin{proof}
Abbreviate
$u_s=u(s,\cdot)$
and assume $u_\sigma=v_\sigma:S^1\to M$
for some $\sigma\in[a,b]$.
Moreover, we may assume without loss
of generality that
$\p_su$ is nonzero
at some point $(s,t)$.
Otherwise $u$
coincides with a critical
point $x$ of the action
functional $\Ss_\Vv$ and,
since $v_\sigma=u_\sigma=x$,
so does $v$ and we are done.
It follows similarly that $\p_s v$
is nonzero somewhere.
Hence
\begin{equation}\label{eq:delta}
     \delta
     :=
     \frac{\iota}{2+\Norm{\p_su}_\infty+\Norm{\p_sv}_\infty}
     \in(0,\iota/2).
\end{equation}
Here $\iota>0$ denotes
the injectivity radius
of our compact
Riemannian manifold.

The first step is to prove that
the restrictions
of $u$ and $v$ to
$[\sigma-\delta,\sigma+\delta]\times S^1$
are equal.
(In fact we should take the intersection
with $[a,b]\times S^1$, but \emph{suppress
this throughout}
for simplicity of notation.)
The key idea is to express the
difference of $u$ and $v$
near $\sigma$ with respect
to geodesic normal
coordinates based at $u_\sigma$
and show that this difference
$\zeta$ and a suitable operator $A$
satisfy the requirements
of theorem~\ref{thm:AgNi}
(with nonzero constant $C_1$).
Then, since $\zeta(\sigma)=0$,
part~(1) of the theorem
shows that $\zeta=0$ and therefore
$u=v$ on 
$[\sigma-\delta,\sigma+\delta]\times S^1$.

Once the above has been achieved
we successively
restrict $u$ and $v$ 
to cylinders of the form
$[\sigma+(2k-1)\delta,
\sigma+(2k+1)\delta]\times S^1$,
where $k\in\Z$,
and use that $u$ and $v$ coincide
along one of the two boundary components
to conclude by the same argument as above
that $u=v$ on each of these cylinders.
Due to compactness of $Z$
the same constants $c_1$ and $C_1$
can be chosen in~(\ref{eq:E.1})
for \emph{all} cylinders.
After finitely many steps
the union of these cylinders covers
$[a,b]\times S^1$ and this
proves the theorem.

It remains to carry out the 
first step. Consider the
interval 
$I=[\sigma-\delta,\sigma+\delta]$ and
the cylinder
$$
     Z=I\times S^1
     =[\sigma-\delta,\sigma+\delta]\times S^1.
$$
From now on $u$ and $v$ are restricted
to the domain $Z$.
Note that the Riemannian distance
between $u(\sigma,t)$ and 
$u(s,t)$ is less than half
the injectivity radius $\iota$
for every $(s,t)\in Z$.
Hence the identities
$$
     u(s,t)
     =\exp_{u(\sigma,t)}\xi(s,t),
     \qquad
     v(s,t)
     =\exp_{u(\sigma,t)}\eta(s,t)
$$
for $(s,t)\in Z$
uniquely determine
smooth families
of vector fields $\xi$
and $\eta$
along the loop $u_\sigma$.
The domain of $\xi$ and $\eta$ is $Z$,
they satisfy the estimates
$$
     \Norm{\xi}_\infty<\frac{\iota}{2},\qquad
     \Norm{\eta}_\infty<\frac{\iota}{2},
$$
and $\xi(\sigma,t)=0=\eta(\sigma,t)$
for every $t\in S^1$.
Moreover, since $\xi(s,t)$ and $\eta(s,t)$
live in the same tangent
space $T_{u(\sigma,t)}M$
their difference
$\zeta=\xi-\eta$
is well defined.
\\
Now consider the
Hilbert space
$H=L^2(S^1,{u_\sigma}^*TM)$
and the symmetric
differential operator
$A=\Nabla{t}\Nabla{t}$
with domain
$W=W^{2,2}(S^1,{u_\sigma}^*TM)$.
Here $\Nabla{t}$ denotes the
covariant derivative
along the loop $u_\sigma$.
Hence the operator $A$
is independent of $s$ and 
condition~(\ref{eq:E.2})
in the Agmon-Nirenberg
theorem~\ref{thm:AgNi}
is vacuous.
If we can verify
condition~(\ref{eq:E.1})
as well, then
$\zeta(\sigma)=0$
implies that $\zeta(s)=0$
for every $s\in I$
by theorem~\ref{thm:AgNi}~(1).
Since $\zeta$ is smooth,
this means that on $Z$
we have $\xi=\eta$ pointwise
and therefore $u=v$.
\\
It remains
to verify~(\ref{eq:E.1}).
Use~(\ref{eq:exponential-identity})
to obtain the identities
\begin{equation}\label{eq:exp-coord}
\begin{split}
     \p_su
    &=E_2(u_\sigma,\xi)\p_s\xi\\
     \Nabla{t}\p_tu
    &=E_{11}(u_\sigma,\xi)\bigl(\p_tu_\sigma,\p_tu_\sigma\bigr)
     +2E_{12}(u_\sigma,\xi)
     \bigl(\p_tu_\sigma,\Nabla{t}\xi\bigr)\\
    &\quad
     +E_1(u_\sigma,\xi)\Nabla{t}\p_tu_\sigma
     +E_{22}(u_\sigma,\xi)
     \bigl(\Nabla{t}\xi,\Nabla{t}\xi\bigr)
     +E_2(u_\sigma,\xi)\Nabla{t}\Nabla{t}\xi
\end{split}
\end{equation}
pointwise for $(s,t)\in Z$
and similarly for $v$ and $\eta$.
To obtain the second identity
we used the symmetry
property~(\ref{eq:E12-symmetry}) of $E_{12}$.
Now consider the 
heat equation~(\ref{eq:heat})
and replace $\p_su$ and 
$\Nabla{t}\p_tu$ according
to~(\ref{eq:exp-coord}),
then solve for 
$\p_s\xi-\Nabla{t}\Nabla{t}\xi$.
Do the same for $v$ and $\eta$
to obtain a similar
expression for 
$-\p_s\eta+\Nabla{t}\Nabla{t}\eta$.
Add both expressions to get
the pointwise identity
\begin{equation*}
\begin{split}
    &\bigl(\p_s-\Nabla{t}\Nabla{t}\bigr)
     \bigl(\xi-\eta\bigr)\\
    &=\left(E_2(u_\sigma,\xi)^{-1}E_{11}(u_\sigma,\xi)
     -E_2(u_\sigma,\eta)^{-1}E_{11}(u_\sigma,\eta)\right)
     \bigl(\p_tu_\sigma,\p_tu_\sigma\bigr)\\
    &\quad
     +\left(E_2(u_\sigma,\xi)^{-1}E_1(u_\sigma,\xi)
     -E_2(u_\sigma,\eta)^{-1}E_1(u_\sigma,\eta)\right)
     \Nabla{t}\p_tu_\sigma\\
    &\quad
     +2\left(E_2(u_\sigma,\xi)^{-1}
     E_{21}(u_\sigma,\xi)\Nabla{t}\xi
     -E_2(u_\sigma,\eta)^{-1}
     E_{21}(u_\sigma,\eta)\Nabla{t}\eta\right)
     \p_tu_\sigma\\
    &\quad
     +E_2(u_\sigma,\xi)^{-1}\grad\Vv(\exp_{u_\sigma}\xi)
     -E_2(u_\sigma,\eta)^{-1}\grad\Vv(\exp_{u_\sigma}\eta)\\
    &\quad
     +E_2(u_\sigma,\xi)^{-1}E_{22}(u_\sigma,\xi)
     \bigl(\Nabla{t}\xi,\Nabla{t}\xi\bigr)
     -E_2(u_\sigma,\eta)^{-1}E_{22}(u_\sigma,\eta)
     \bigl(\Nabla{t}\eta,\Nabla{t}\eta\bigr).
\end{split}
\end{equation*}
Now by compactness of 
the domain $Z$ there is a constant 
$C>0$ such that
$$
     \norm{\p_tu_\sigma}_{L^\infty(S^1)}
     \le\norm{\p_tu}_{L^\infty(Z)}<C,\qquad
     \norm{\Nabla{t}\p_tu_\sigma}_{L^\infty(S^1)}<C.
$$
Moreover, since the maps $E_i$ and $E_{ij}$
are uniformly continuous
on the radius $\iota/2$
disk tangent bundle $\Oo\subset{TM}$
in which $\xi$ and $\eta$
take their values,
there exists a constant 
$c_1>0$ such that
\begin{equation*}
\begin{split}
    &\Abs{\p_s(\xi-\eta)
     -\Nabla{t}\Nabla{t}(\xi-\eta)}\\
    &\le(c_1C^2+c_1C)
     \Abs{\xi-\eta}\\
    &\quad
     +2C
     \Abs{E_2(u_\sigma,\xi)^{-1}E_{21}(u_\sigma,\xi)\Nabla{t}\xi
     -E_2(u_\sigma,\eta)^{-1}E_{21}(u_\sigma,\eta)\Nabla{t}\eta
     }\\
    &\quad
     +\Abs{E_2(u_\sigma,\xi)^{-1}\grad\Vv(\exp_{u_\sigma}\xi)
     -E_2(u_\sigma,\eta)^{-1}\grad\Vv(\exp_{u_\sigma}\eta)}\\
    &\quad
     +\Abs{E_2(u_\sigma,\xi)^{-1}E_{22}(u_\sigma,\xi)
     \bigl(\Nabla{t}\xi,\Nabla{t}\xi\bigr)
     -E_2(u_\sigma,\eta)^{-1}E_{22}(u_\sigma,\eta)
     \bigl(\Nabla{t}\eta,\Nabla{t}\eta\bigr)}
\end{split}
\end{equation*}
pointwise for $(s,t)\in Z$.
It remains to estimate the
last three terms in the sum.
First we estimate term three.
Use linearity and the symmetry property~(\ref{eq:E12-symmetry})
of $E_{22}$ to obtain the first identity in
the pointwise estimate
\begin{equation*}
\begin{split}
    &\Abs{E_2(u_\sigma,\xi)^{-1}E_{22}(u_\sigma,\xi)
     \bigl(\Nabla{t}\xi,\Nabla{t}\xi\bigr)
     -E_2(u_\sigma,\eta)^{-1}E_{22}(u_\sigma,\eta)
     \bigl(\Nabla{t}\eta,\Nabla{t}\eta\bigr)}\\
    &=\bigl| E_2(u_\sigma,\xi)^{-1}E_{22}(u_\sigma,\xi)
     \bigl(\Nabla{t}\xi-\Nabla{t}\eta,
     \Nabla{t}\xi\bigr)\\
    &\quad+E_2(u_\sigma,\eta)^{-1}E_{22}(u_\sigma,\eta)
     \bigl(\Nabla{t}\xi-\Nabla{t}\eta,
     \Nabla{t}\eta\bigr)\\
    &\quad+\left(E_2(u_\sigma,\xi)^{-1}E_{22}(u_\sigma,\xi)
     -E_2(u_\sigma,\eta)^{-1}E_{22}(u_\sigma,\eta)\right)
     \bigl(\Nabla{t}\xi,\Nabla{t}\eta\bigr)
     \bigr| \\
    &\le\Norm{{E_2}^{-1}E_{22}}_{L^\infty(\Oo)}
     \left(\Norm{\Nabla{t}\xi}_\infty
     +\Norm{\Nabla{t}\eta}_\infty\right)
     \Abs{\Nabla{t}(\xi-\eta)}\\
    &\quad +c_1 \Norm{\Nabla{t}\xi}_\infty
     \Norm{\Nabla{t}\eta}_\infty
     \Abs{\xi-\eta}\\
    &\le \mu_1\Abs{\Nabla{t}(\xi-\eta)}
     +\mu_2\Abs{\xi-\eta}
\end{split}
\end{equation*}
where $\mu_1=2{c_2}^2C(1+c_2)$,
$\mu_2=c_1{c_2}^2C^2(1+c_2)^2$,
and the constant
$c_2>0$ is chosen
sufficiently large such that
for $j=0,1$ we have
$$
     \Norm{E_j}_{L^\infty(\Oo)}
     +\Norm{{E_2}^{-1}}_{L^\infty(\Oo)}
     +\Norm{{E_2}^{-1}E_{22}}_{L^\infty(\Oo)}
     +\Norm{{E_2}^{-1}E_{21}}_{L^\infty(\Oo)}
     \le c_2.
$$
Moreover, we used that
by the first identity 
in~(\ref{eq:exponential-identity})
$$
     \Nabla{t}\xi
     =E_2(u_\sigma,\xi)^{-1}\left(
     \p_tu
     -E_1(u_\sigma,\xi)\p_tu_\sigma
     \right).
$$
Hence
$\norm{\Nabla{t}\xi}_\infty
\le c_2C(1+c_2)$ and similarly
for $\Nabla{t}\eta$.
Next we estimate term one.
Replace $\Nabla{t}\xi$ by
$\Nabla{t}\xi-\Nabla{t}\eta
+\Nabla{t}\eta$,
then similarly as above
we obtain that 
\begin{equation*}
\begin{split}
    &2C
     \Abs{E_2(u_\sigma,\xi)^{-1}E_{21}(u_\sigma,\xi)\Nabla{t}\xi
     -E_2(u_\sigma,\eta)^{-1}E_{21}(u_\sigma,\eta)\Nabla{t}\eta
     }\\
    &\le 2c_2C\Abs{\Nabla{t}(\xi-\eta)}
     +2c_1c_2C^2(1+c_2)\Abs{\xi-\eta}
\end{split}
\end{equation*}
pointwise for $(s,t)\in Z$.
Next rewrite term two
setting $X:=\eta-\xi$ and replacing $\eta$
accordingly to obtain pointwise
at $(s,t)\in Z$ the identity
\begin{equation*}
\begin{split}
    &E_2(u_\sigma,\xi)^{-1}
     \grad\Vv(\exp_{u_\sigma}\xi)
     -E_2(u_\sigma,\xi+X)^{-1}
     \grad\Vv(\exp_{u_\sigma}\xi+X)\\
    &=:f(X)\\
    &=f(0)+\frac{d}{d\tau} f(\tau X)\\
    &=\frac{d}{d\tau}\left(
     E_2(u_\sigma,\xi+\tau X)^{-1}
     \grad\Vv(\exp_{u_\sigma}\xi+\tau X)
     \right)
\end{split}
\end{equation*}
for some $\tau\in[0,1]$.
Since $f(0)=0$, this implies that
\begin{equation*}
\begin{split}
     \Abs{f(X)}
    &\le\Norm{{E_2}^{-1}E_{22}}_{L^\infty(\Oo)}\Abs{X}\cdot
     \Norm{{E_2}^{-1}}_{L^\infty(\Oo)}
     \Abs{\grad\,\Vv(\exp_{u_\sigma}(\xi+\tau X))}\\
    &\quad+\Norm{{E_2}^{-1}}_{L^\infty(\Oo)}
     \Abs{\Nabla{\tau}
     \grad\,\Vv(\exp_{u_\sigma}(\xi+\tau X))}\\
    &\le c_2^2C_0\Abs{X}
     +c_2^2C_1\left(\Abs{X}+\Norm{X_s}_{L^1(S^1)}
     \right)
\end{split}
\end{equation*}
pointwise at $(s,t)\in Z$.
Here $C_0$ and $C_1$ denote the constants
in axiom~(V0) and~(V1), respectively.
To obtain the final step we applied
the first estimate in axiom~(V1) to the curve
$\tau\mapsto \exp_{u_\sigma}(\xi_s+\tau X_s)$ 
in the loop space $\Ll M$.
Now replace $X$ by $\eta-\xi$.

Putting things together
we have proved that due
to compactness of the domain $Z$
there exists a positive constant
$\mu=\mu(Z,g)$ such that
for every $s\in I$
$$
     \Norm{\zeta^\prime(s)-A\zeta(s)}
     \le \mu\left(\Norm{\zeta(s)}
     +\Norm{\Nabla{t}\zeta(s)}\right).
$$
Here the norm is in $L^2(S^1,{u_\sigma}^*TM)$.
Now by integration by parts
$$
     \Norm{\Nabla{t}\zeta}^2
     =\langle\Nabla{t}\zeta,\Nabla{t}\zeta\rangle
     =-\langle A\zeta,\zeta\rangle
     \le\Abs{\langle A\zeta,\zeta\rangle}.
$$
Hence~(\ref{eq:E.1})
is satisfied and this concludes
the proof of theorem~\ref{thm:UC-NL}.
\end{proof}

In the proof of
the unstable manifold
theorem~\ref{thm:unstable-mf}
we use backward unique continuation
for the nonlinear
heat equation.

\begin{theorem}[Forward and backward unique continuation]
\label{thm:UC-NL-noncompact}
Fix a perturbation $\Vv:\Ll M\to\R$ that 
satisfies~{\rm (V0)--(V1)}.
\begin{enumerate}
\item[\rm\bfseries(F)]
  Let $u$ and $v$
  be smooth solutions
  of the heat equation~(\ref{eq:heat})
  defined on the
  forward halfcylinder
  $[0,\infty)\times S^1$.
  If $u$ and $v$ agree 
  along the loop at $s=0$,
  then $u=v$.
\item[\rm\bfseries(B)]
  Let $u$ and $v$
  be smooth solutions
  of the heat equation~(\ref{eq:heat})
  defined on the backward halfcylinder
  $(-\infty,0]\times S^1$.
  Assume further that
  $$
     \sup_{s\in(-\infty,0]}
     \Ss_\Vv\bigl(u(s,\cdot)\bigr)\le c_0,\qquad
     \sup_{s\in(-\infty,0]}
     \Ss_\Vv\bigl(v(s,\cdot)\bigr)\le c_0,
  $$
  for some constant $c_0>0$.
  Then the following is true.
  If $u$ and $v$ agree 
  along the loop at $s=0$,
  then $u=v$.
\end{enumerate}
\end{theorem}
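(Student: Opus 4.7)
My plan is to reduce both statements to theorem~\ref{thm:UC-NL}, the unique continuation result for compact cylindrical domains, which was established via the Agmon--Nirenberg theorem~\ref{thm:AgNi}. For each finite $T > 0$ I will restrict the given half-cylinder solutions to a bounded sub-cylinder, invoke the compact case there, and let $T \to \infty$.

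For part~(F), fix an arbitrary $T > 0$. The restrictions $u|_{[0,T]\times S^1}$ and $v|_{[0,T]\times S^1}$ are smooth solutions of~(\ref{eq:heat}) on the compact cylinder $[0,T]\times S^1$ that coincide along the loop at $s=0$. Theorem~\ref{thm:UC-NL} yields $u \equiv v$ on $[0,T]\times S^1$, and since $T$ was arbitrary this gives $u=v$ on all of $[0,\infty)\times S^1$.

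For part~(B), apply the same strategy to the cylinders $[-T,0]\times S^1$ for each $T>0$. Smoothness of $u$ and $v$ on the noncompact domain $(-\infty,0]\times S^1$ restricts to smoothness on each such sub-cylinder, and coincidence at $s=0$ is preserved. Theorem~\ref{thm:UC-NL} therefore gives $u=v$ on $[-T,0]\times S^1$ for every $T>0$, and taking the union over $T$ produces $u=v$ on all of $(-\infty,0]\times S^1$.

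The formal reduction is routine; the real work is packaged inside theorem~\ref{thm:UC-NL}. The conceptually interesting point, and the reason backward unique continuation is not in conflict with the ill-posedness of the backward heat equation, is the role of the action bound in part~(B). It is not strictly needed for the cylinder-by-cylinder reduction above, since the quantity $\delta=\iota/(2+\|\p_su\|_\infty+\|\p_sv\|_\infty)$ controlling the induction step in the proof of theorem~\ref{thm:UC-NL} is automatically finite on each compact $[-T,0]\times S^1$ by smoothness. What the action bound \emph{does} provide, via theorem~\ref{thm:apriori}, is \emph{uniform} $L^\infty$ control on $\p_su$, $\p_tu$, and $\Nabla{t}\p_tu$ over the entire backward halfcylinder, ensuring that the trajectories to which the result is applied are asymptotically well-behaved rather than wild objects for which backward uniqueness would be meaningless.
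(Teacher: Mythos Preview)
Your reduction is correct, and it is actually cleaner than the paper's own argument. The paper does not invoke theorem~\ref{thm:UC-NL} as a black box on compact sub-cylinders; instead it re-runs the $\delta$-step induction from the proof of theorem~\ref{thm:UC-NL} directly on the entire half-cylinder. To make that work with a \emph{fixed} step width $\delta$, the paper must first establish uniform $L^\infty$ bounds for $\p_su$, $\p_tu$, $\Nabla{t}\p_tu$ (and likewise for $v$) over the whole noncompact domain. For~(F) these bounds come from theorem~\ref{thm:gradient} and theorem~\ref{thm:apriori-t} together with the fact that the action decreases along forward trajectories; for~(B) they come from theorem~\ref{thm:apriori} via the action-bound hypothesis. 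Your approach sidesteps this entirely: on each finite cylinder $[0,T]\times S^1$ or $[-T,0]\times S^1$ the relevant sup-norms are automatically finite by smoothness and compactness, so theorem~\ref{thm:UC-NL} applies verbatim, and letting $T\to\infty$ finishes.

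Your closing observation is correct and worth stating plainly: with your argument the action bound in~(B) is not needed at all for the conclusion as stated. What the paper's route buys is uniform quantitative control---a single global $\delta$ and global constants in the Agmon--Nirenberg estimate~(\ref{eq:E.1})---which is more than the bare unique-continuation statement requires, but which matches the setting in which the theorem is later applied (backward trajectories in the unstable manifold, theorem~\ref{thm:unstable-mf}, where the action bound holds automatically).
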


\begin{proof}
The idea is the same
as in the proof
of theorem~\ref{thm:UC-NL},
namely to decompose
the halfcylinder into
small cylinders of width
$\delta$ and then
show $u=v$ on each piece
(by the method
developed in the first step
of the proof of theorem~\ref{thm:UC-NL}).
The only additional
problem is noncompactness
of the domain.
One way to deal with this 
is to choose the same 
width for each piece
(in order to arrive at any given
time $s$ in finitely many steps).
Here we need uniform
bounds for $\abs{\p_su}$
and $\abs{\p_sv}$.
Once we have these
we can define $\delta$
again by~(\ref{eq:delta}).
Check the proof of
theorem~\ref{thm:UC-NL}
to see that the only further ingredients
in proving $u=v$ on each small cylinder
are uniform bounds for
the first two $t$-derivatives
of $u$ and of $v$.
Hence to complete the proof
it remains to show that
$$
     \Norm{\p_su}_\infty
     +\Norm{\p_tu}_\infty
     +\Norm{\Nabla{t}\p_tu}_\infty
     +\Norm{\p_sv}_\infty
     +\Norm{\p_tv}_\infty
     +\Norm{\Nabla{t}\p_tv}_\infty
     \le C
$$
for some constant $C>0$.

{\bf ad (F)} 
Let $C_0$ be the constant
in axiom~(V0) and observe that
$\Ss_\Vv\ge -C_0$.
Now by theorem~\ref{thm:gradient}
with constant $C_1$,
more precisely, by checking its proof
\begin{equation*}
\begin{split}
     \Abs{\p_su(s,t)}^2
    &\le C_1 E_{[s-1,s]}(u)\\
    &=C_1\left(\Ss_\Vv(u_{s-1})
     -\Ss_\Vv(u_s)\right)\\
    &\le C_1\left(\Ss_\Vv(u_0)+C_0\right)
\end{split}
\end{equation*}
for $(s,t)\in [1,\infty)\times S^1$.
In the second and the last step
we used that $u$ is a negative gradient
flow line and the action
decreases along $u$.
Note that the proof of
theorem~\ref{thm:gradient}
shows that
the estimate at a point depends
on its past.
This is why we get
the above estimate only on
$[1,\infty)\times S^1$.
However, the missing part
$[0,1]\times S^1$ is compact
and $u$ is smooth.
Hence $\norm{\p_su}_\infty\le C$
and
$$
     \Norm{\Nabla{t}\p_tu}_\infty
     \le \Norm{\p_su}_\infty
     +\Norm{\grad \Vv(u)}_\infty
     \le C+C_0.
$$
Here we used the 
heat equation~(\ref{eq:heat})
and axiom~(V0)
with constant $C_0$.
It follows similarly
by (checking the proof of)
theorem~\ref{thm:apriori-t}
that $\abs{\p_tu(s,t)}$ is
uniformly bounded
on $[1,\infty)\times S^1$.
The corresponding estimates
for $v$ are analoguous.

{\bf ad (B)} The proof
of the $L^\infty$ estimates
follows the same steps
as in~(F).
We even get
all estimates
right away on the whole
backward halfcylinder,
because this halfcylinder
contains the past of
each of its points.
\end{proof}

%%%%%%%%%%%%%%%%%%%%%%%%%%%%%%%%%%%%%%%%%%%%%%
%%%%%%%%%%%%%%%%%%%%%%%%%%%%%%%%%%%%%%%%%%%%%%
%%%%%%%%%%%%%%%% Section %%%%%%%%%%%%%%%%%%%%%
%%%%%%%%%%%%%%%%%%%%%%%%%%%%%%%%%%%%%%%%%%%%%%
%%%%%%%%%%%%%%%%%%%%%%%%%%%%%%%%%%%%%%%%%%%%%%
\section{Transversality}
\label{sec:transversality}

In section~\ref{subsec:perturbations-banach}
we construct a separable Banach space $Y$ of
abstract perturbations satisfying
axioms~{\rm (V0)--(V3)}.
In section~\ref{subsec:transversality-proof}
we fix a perturbation
$\Vv$ such that~{\rm (V0)--(V3)} hold
and $\Ss_\Vv$ is Morse.
We choose a closed $L^2$ neighborhood $U$
of the critical points of the function $\Ss_\Vv$
and define the subspace $Y(\Vv,U)\subset Y$
of those perturbations supported away from $U$.
Then, given a regular value $a$ of $\Ss_\Vv$,
we define a separable Banach manifold
$\Oo^a=\Oo^a(\Vv,U)$ of admissible perturbations.
In fact $\Oo^a$ is the open ball 
about zero in the Banach space
$Y(\Vv,U)$ for some sufficiently small radius $r^a$.
For any admissible perturbation $v$ it holds that
$\Pp^a(\Vv)=\Pp^a(\Vv+v)$ -- in particular
$a$ is also a regular value of $\Ss_{\Vv+v}$ --
and the sublevel sets $\{\Ss_\Vv\le a\}$ and 
$\{\Ss_{\Vv+v}\le a\}$  are homologically equivalent.
For such a triple $(\Vv,U,a)$
we prove in section~\ref{subsec:onto-universal}
that there is a residual
subset $\Oo^a_{reg}\subset\Oo^a$
of regular perturbations $v$.
These, in addition, have
the property that
the perturbed functional
$\Ss_{\Vv+v}$ is Morse--Smale
below level $a$.
The crucial step is to prove
proposition~\ref{prop:onto-universal}
on surjectivity of the universal
section $\Ff$.
Here unique continuation
for the linear heat equation enters.
A further key ingredient
in the 'no return' part of the proof
is the (negative) gradient flow
property which implies that the functional is
strictly decreasing along nonconstant
heat flow solutions.

%%%%%%%%%%%%%%%%%%%%%%%%%%%%%%%%%%%%%%%%%%%%%%
%%%%%%%%%%%%%%%% Subsection %%%%%%%%%%%%%%%%%%
%%%%%%%%%%%%%%%%%%%%%%%%%%%%%%%%%%%%%%%%%%%%%%
\subsection{The universal Banach space of perturbations}
\label{subsec:perturbations-banach}

We fix, once and for all, the 
following data.
\begin{enumerate}
\item[\rm\bfseries a)]
     A dense sequence 
     $\bigl( x_i\bigr)_{i\in\N}$
     in $\Ll M=C^\infty(S^1,M)$.
\item[\rm\bfseries b)]
     For every $x_i$ a dense sequence
     $\bigl( \eta^{ij}\bigr)_{j\in\N}$
     in $C^\infty(S^1,x_i^*TM)$.
\item[\rm\bfseries c)]
     A smooth cutoff function
     $\rho:\R\to[0,1]$ such that $\rho=1$
     on $[-1,1]$ and $\rho=0$
     outside $[-4,4]$
     and such that 
     $\norm{\rho^\prime}_\infty<1$.
     Then set
     $\rho_{1/k}(r)=\rho(rk^2)$
     for $k\in\N$
     (figure~\ref{fig:fig-rhok}).
\end{enumerate}
Moreover, let $\iota>0$
denote the injectivity radius
of the closed Riemannian 
manifold $M$ and fix
a smooth cutoff function $\beta$
such that $\beta=1$
on $[-(\iota/2)^2,(\iota/2)^2]$
and $\beta=1$ outside
$[-\iota^2,\iota^2]$ 
(figure~\ref{fig:fig-cutb}).
\begin{figure}[h]
\begin{minipage}[b]{.47\linewidth}
  \centering
  \epsfig{figure=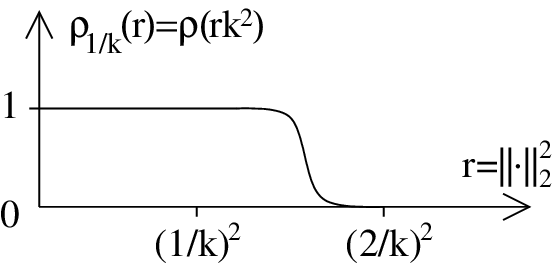}
  \caption{The cutoff function $\rho_{1/k}$} 
  \label{fig:fig-rhok}
\end{minipage}
\hfill
\begin{minipage}[b]{.47\linewidth}
  \centering
  \epsfig{figure=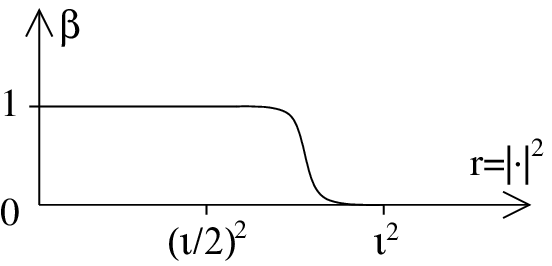}
  \caption{The cutoff function $\beta$} 
  \label{fig:fig-cutb}
\end{minipage}
\hfill
\end{figure}
\\
Then for any choice
of $i,j,k\in\N$
there is a smooth
function on the loop space 
given by
\begin{equation}\label{eq:perturbation}
     \Vv_\ell(x)
     =\Vv_{ijk}(x)
     =\rho_{1/k}\left(\Norm{x-x_i}_{L^2}^2\right)
     \int_0^1 V^{ij}(t,x(t))\, dt,
\end{equation}
where $V^{ij}$ is
the smooth function
on $S^1\times M$
defined by
\begin{equation*}
     V^{ij}(t,q):=
     \begin{cases}
       \beta\bigl(\abs{\xi_q^i(t)}^2\bigr)
       \;\big\langle\xi_q^i(t),
       \eta^{ij}(t)\big\rangle
       &\text{, $\abs{\xi_q^i(t)}<\iota$,}
       \\
       0
       &\text{, else.}
     \end{cases}
\end{equation*}
Here the vector $\xi_q^i(t)$
is determined by the identity 
$$
     q=\exp_{x_i(t)} \xi_q^i(t)
$$
whenever the Riemannian distance
between $q$ and $x_i(t)$
is less than $\iota$.
To simplify notation
we fixed a bijection
$\ell:\N^3\to\N_0$.
Observe that 
the support of $\Vv_{ijk}$
is contained in the $L^2$ ball
of radius $2/k$ about $x_i$.
Each function $\Vv_\ell:\Ll M\to\R$
is uniformly continuous with respect
to the $C^0$ topology
and satisfies~{\rm (V0)--(V3)}.
This follows by compactness of $M$,
smoothness of the potential $V$,
and by the identity
\begin{equation*}
\begin{split}
     \left\langle\grad\Vv(u),\p_su\right\rangle_{L^2}
    &=\frac{d}{ds} \Vv(u)\\
    &=2\rho^\prime\left(\Norm{u-x_0}_2^2\right)
     \left(\int_0^1 V_t(u(s,t))\, dt\right)
     \left\langle u-x_0,\p_su\right\rangle_{L^2}\\
    &\quad
     +\rho\left(\Norm{u-x_0}_2^2\right)
     \left\langle \nabla V(u),\p_su\right\rangle_{L^2}
\end{split}
\end{equation*}
which determines $\grad\Vv$.
Here $\R\to\Ll M:s\mapsto u(s,\cdot)$
is any smooth map.

Given $\Vv_\ell$,
we fix a constant $C_\ell^0\ge 1$
which is greater than its
constant of uniform continuity
and for which~(V0) holds true.
Then we fix a constant
$C_\ell^1\ge C_\ell^0$
for which both estimates in~(V1)
hold true and a constant
$C_\ell^2\ge C_\ell^1$
to cover the three estimates
of~(V2).
Furthermore, for every
integer $i\ge3$, we choose a constant
$C_\ell^i\ge C_\ell^{i-1}$
that covers all estimates
in~(V3) with $k^\prime+\ell^\prime=i$
(here $k^\prime$ and $\ell^\prime$ denote
the integers $k$ and $\ell$
that appear in~(V3)).
To summarize, for each
integer $\ell\ge 0$ we have 
fixed a sequence of constants
\begin{equation}\label{eq:constants}
     1
     \le C_\ell^0
     \le C_\ell^1\le...
     \le C_\ell^\ell\le...\qquad
     \forall \ell\in\N_0.
\end{equation}
The 
\emph{universal space of perturbations}
is the normed linear space
\begin{equation}\label{eq:Y}
     Y
     =\left\{
     v_\lambda
     :=\sum_{\ell=0}^\infty
     \lambda_\ell\Vv_\ell
     \left.\frac{}{}\right|\,
     \text{
     $\lambda=\left(\lambda_\ell
     \right)
     \subset\R$
     and
     $\Norm{v_\lambda}:=
     \sum_{\ell=0}^\infty\abs{\lambda_\ell}
     C_\ell^\ell <\infty$}
    \right\}.
\end{equation}

\begin{prop}\label{prop:Y}
The universal space $Y$ of perturbations
is a separable Banach space
and every $v_\lambda\in Y$
satisfies the axioms~{\rm (V0)--(V3)}.
\end{prop}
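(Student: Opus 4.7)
The plan is to identify $Y$ with a weighted $\ell^1$ sequence space and then transport the axioms from the individual $\Vv_\ell$ to arbitrary $v_\lambda$ by term-by-term estimation. For the Banach structure, the key formal observation is that the norm $\norm{v_\lambda}=\sum_\ell|\lambda_\ell|C_\ell^\ell$ makes $Y$ isometric (once injectivity of $\lambda\mapsto v_\lambda$ is verified) to the weighted $\ell^1$ space of sequences $(\lambda_\ell)$ with $\sum_\ell|\lambda_\ell|C_\ell^\ell<\infty$, which is classically complete and separable, the latter via the dense subset of rational finitely supported sequences. Injectivity of $\lambda\mapsto v_\lambda$ I would deduce from the combinatorial structure of the building blocks: the cutoffs $\rho_{1/k}(\norm{x-x_i}_{L^2}^2)$ localize $\Vv_{ijk}$ to an $L^2$-ball of radius $2/k$ around $x_i$, and within this ball one can use density of $\{\eta^{ij}\}$ and nondegeneracy of $\beta$ to separate different triples $(i,j,k)$ by evaluating on suitably chosen test loops. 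The series $v_\lambda=\sum_\ell\lambda_\ell\Vv_\ell$ converges uniformly on $\Ll M$ since $\norm{\Vv_\ell}_\infty\le C_\ell^0\le C_\ell^\ell$.

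To verify (V0)--(V3) for $v_\lambda$, the plan is to exploit the crucial fact that each quantity appearing in these axioms -- the gradient $\grad\Vv$, the Hessian $\Hh_\Vv$, and the mixed covariant derivatives $\nabla_t^\ell\nabla_s^k\grad\Vv$ -- depends \emph{linearly} on $\Vv$. First, termwise differentiation $\grad v_\lambda=\sum_\ell\lambda_\ell\grad\Vv_\ell$ (and analogous identities for the higher derivatives that enter (V3)) must be justified. This follows from uniform convergence of the associated derivative series on bounded subsets of the spaces of paths and loops: the bound in axiom (Vi) applied to $\Vv_\ell$ majorizes the $\ell$-th term of each derivative series by $|\lambda_\ell|C_\ell^i$ times a quantity depending only on $u$ and its derivatives. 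Once termwise differentiation is in place, each pointwise estimate in (V0)--(V3) for $v_\lambda$ follows by the triangle inequality by summing $|\lambda_\ell|$ times the corresponding estimate for $\Vv_\ell$, producing a right-hand side of the form (the same $u$-dependent expression as in the corresponding axiom for a single $\Vv_\ell$) multiplied by $\sum_\ell|\lambda_\ell|C_\ell^i$.

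It remains to bound $\sum_\ell|\lambda_\ell|C_\ell^i$ for fixed $i$ in terms of $\norm{v_\lambda}$. Splitting the sum at $\ell=i$, the monotonicity~(\ref{eq:constants}) gives $C_\ell^i\le C_\ell^\ell$ for $\ell\ge i$, hence $\sum_{\ell\ge i}|\lambda_\ell|C_\ell^i\le\sum_{\ell\ge i}|\lambda_\ell|C_\ell^\ell\le\norm{v_\lambda}$; while $\sum_{\ell<i}|\lambda_\ell|C_\ell^i$ is a finite sum bounded by a constant $K(i)$ times $\norm{v_\lambda}$ (using $|\lambda_\ell|\le\norm{v_\lambda}/C_\ell^\ell$ componentwise). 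This yields the required constant $C=C(i,v_\lambda)$ in each axiom. Continuity of $v_\lambda$ with respect to the $C^0$ topology on $\Ll M$ follows because a uniform limit of uniformly continuous functions is uniformly continuous. The main obstacle I expect is the rigorous justification of termwise differentiation at all orders required by (V3), where the operators $\nabla_t^\ell\nabla_s^k$ act on the series in a nonelementary way along paths in $\Ll M$; this is handled by reducing each covariant derivative to ordinary derivatives in a local trivialization and applying the uniform bounds from axiom (Vi) for each $\Vv_\ell$ to invoke dominated convergence on bounded sets, after which all manipulations of the infinite series are legitimate.
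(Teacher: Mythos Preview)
Your proposal is correct and follows essentially the same route as the paper: the paper identifies $Y$ with $\ell^1$ via $v_\lambda\mapsto(\lambda_\ell C_\ell^\ell)_\ell$ and verifies the axioms by exactly the triangle-inequality-plus-split-sum argument you describe (illustrated on the second estimate of~(V2), splitting at $\ell=2$ and using $C_\ell^2\le C_\ell^\ell$ for $\ell\ge2$). The paper does not explicitly address the injectivity of $\lambda\mapsto v_\lambda$ or the termwise-differentiation issue you flag, effectively treating $Y$ as a sequence space from the outset; your extra care on these points is reasonable but not a departure in method.
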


\begin{proof}
The map
$v_\lambda\mapsto
(\lambda_\ell C_\ell^\ell)_{\ell\in\N_0}$
provides an isomorphism
from $Y$ to
the separable
Banach space $\ell^1$
of absolutely summable
real sequences.
This proves that $Y$ is
a separable Banach space.
That every element
$v_\lambda=\sum\lambda_\ell\Vv_\ell$
of $Y$ satisfies~{\rm (V0)--(V3)}
follows readily from
the corresponding property of
the generators $\Vv_\ell$.
To explain the idea
we give the proof 
of the second estimate in~(V2), namely
\begin{equation*}
\begin{split}
     \Abs{\Nabla{t}\Nabla{s}\grad v_\lambda(u)}
    &\le\sum_{\ell=0}^\infty
     \Abs{\lambda_\ell}\cdot
     \Abs{\Nabla{t}\Nabla{s}\grad\Vv_\ell(u)}\\
    &\le\left(
     \Abs{\lambda_0} C_0^2
     +\Abs{\lambda_1} C_1^2
     +\sum_{\ell=2}^\infty
     \Abs{\lambda_\ell} C_\ell^2
     \right) f(u)\\
    &\le\left(
     \Abs{\lambda_0} C_0^2
     +\Abs{\lambda_1} C_1^2
     +\Norm{v_\lambda}\right) f(u)
\end{split}
\end{equation*}
for every smooth map $\R\to\Ll M:s\mapsto u(s,\cdot)$ 
and every $(s,t)\in\R\times S^1$.
We abbreviated
$f(u)=(\abs{\Nabla{t}\p_su} 
+(1+\abs{\p_tu})(\abs{\p_su}
+\norm{\p_su}_{L^1}))$.
Step two uses the second
estimate in~(V2) for each $\Vv_\ell$
with constant $C_\ell^2$.
Step three follows from
$C_\ell^k\le C_\ell^\ell$ whenever
$\ell\ge k$, see~(\ref{eq:constants}).
The remaining estimates in~{\rm (V0)--(V3)}
follow by the same argument.
Continuity of $v_\lambda$ with respect 
to the $C^0$ topology follows similarly
using uniform continuity of the functions $\Vv_\ell$.
\end{proof}

%%%%%%%%%%%%%%%%%%%%%%%%%%%%%%%%%%%%%%%%%%%%%%
%%%%%%%%%%%%%%%% Subsection %%%%%%%%%%%%%%%%%%
%%%%%%%%%%%%%%%%%%%%%%%%%%%%%%%%%%%%%%%%%%%%%%
\subsection{Admissible perturbations}
\label{subsec:transversality-proof}

Throughout we fix a perturbation
$\Vv$ that satisfies~{\rm (V0)--(V3)} 
and such that
$\Ss_{\Vv}:\Ll M\to\R$ is Morse.
Denote the critical
values $c_i$ of $\Ss_\Vv$ by
$$
     c_0<c_1<c_2<\ldots<c_k<a<c_{k+1}<\ldots
$$
and recall that there is no accumulation point,
because $\Ss_\Vv$ admits only finitely many
critical points on each sublevel set.
Now fix a regular value $a>c_0$
(otherwise $\{\Ss_\Vv\le a\}=\emptyset$ 
and we are done) and let $c_k$
be the largest critical value smaller than $a$.
If there are critical values
larger than $a$ let $c_{k+1}$ be the smallest such,
otherwise set $c_{k+1}$ at the same distance
above $a$ as $c_k$ sits below $a$, that
is $c_{k+1}:=a+(a-c_k)$.
The idea to prove the transversality
theorem~\ref{thm:transversality}
is to perturb
$\Ss_{\Vv}$ outside
some $L^2$ neigborhood $U$
of its critical points
in such a way that
no new critical points arise
on the sublevel set $\{\Ss_\Vv<c_{k+1}\}$.
To achieve this we fix
for every critical
point $x$
a closed $L^2$ neighborhood
$U_x$ such that
$U_x\cap U_y=\emptyset$
whenever $x\not= y$.
This is possible, because
on any sublevel set there are
only finitely many critical points
($\Ss_{\Vv}$ is Morse and
satisfies the Palais-Smale condition;
see e.g.~\cite[app.~A]{Joa-INDEX}).
Set
\begin{equation}\label{eq:U}
     U=U(\Vv)
     :=\bigcup_{x\in\Pp(\Vv)}U_x
\end{equation}
and consider
the Banach space 
of perturbations
$Y$ given by~(\ref{eq:Y}).
We are interested
in the subset of those perturbations
supported away from $U$, namely
\begin{equation*}%\label{eq:Y-U}
     Y(\Vv,U)
     :=\left\{
     v_\lambda
     =\sum_{\ell=0}^\infty
     \lambda_\ell\Vv_\ell
     \in Y
     \left.\frac{}{}\right|\,
      \supp \Vv_\ell \cap U\not= \emptyset
      \;\;\Rightarrow\;\;
      \lambda_\ell=0
      \right\}.
\end{equation*}

\begin{lemma}\label{le:Y-U}
$Y(\Vv,U)$ is a closed
subspace of the separable Banach space $Y$.
\end{lemma}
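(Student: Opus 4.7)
The plan is to exploit the isometric isomorphism $\Phi : Y \to \ell^1$ given by $v_\lambda \mapsto (\lambda_\ell C_\ell^\ell)_{\ell\in\N_0}$ established in the proof of proposition~\ref{prop:Y}, and to observe that $Y(\Vv,U)$ corresponds under $\Phi$ to the set of sequences vanishing on the fixed index set
$$
    I := \{\ell\in\N_0 \mid \supp\Vv_\ell\cap U\ne\emptyset\}.
$$
This set is manifestly a linear subspace of $\ell^1$, since the vanishing condition at each index is preserved under addition and scalar multiplication, so $Y(\Vv,U)$ is a linear subspace of $Y$.

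For closedness, suppose $v_{\lambda^n}\in Y(\Vv,U)$ converges in $Y$ to some $v_\mu\in Y$. Then
$$
    \Norm{v_{\lambda^n}-v_\mu}
    =\sum_{\ell=0}^\infty\Abs{\lambda^n_\ell-\mu_\ell}\,C_\ell^\ell
    \xrightarrow{\;n\to\infty\;}0.
$$
Since $C_\ell^\ell\ge 1$ for every $\ell$ by~(\ref{eq:constants}), this forces $|\lambda^n_\ell-\mu_\ell|\to 0$ for each fixed $\ell$. For $\ell\in I$ all coefficients $\lambda^n_\ell$ vanish, hence so does $\mu_\ell$, which shows $v_\mu\in Y(\Vv,U)$. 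There is no real obstacle; the whole argument reduces to the fact that a subspace of $\ell^1$ cut out by requiring certain coordinates to vanish is automatically closed, because coordinate functionals are continuous.
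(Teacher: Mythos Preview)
Your proof is correct and essentially identical to the paper's: both arguments show that convergence in $Y$ forces coordinate-wise convergence of the $\lambda_\ell$ (you make the use of $C_\ell^\ell\ge 1$ explicit, the paper leaves it implicit), and then conclude that the vanishing conditions on the index set $I$ pass to the limit. The explicit framing via the $\ell^1$-isomorphism from proposition~\ref{prop:Y} is a cosmetic difference only.
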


\begin{proof}
Let $\alpha,\beta\in\R$
and let $v_\lambda$ and
$v_\mu$
be elements of $Y(\Vv,U)$.
By definition of $Y(\Vv,U)$
the following is true
for every $\ell\in\N_0$.
If $\supp \Vv_\ell \cap U\not= \emptyset$,
then $\lambda_\ell=0$
and $\mu_\ell=0$.
Hence $\alpha\lambda_\ell
+\beta\mu_\ell=0$
and therefore
$\alpha v_\lambda+
\beta v_\mu\in Y(\Vv,U)$.
To see that the
subspace $Y(\Vv,U)$ is closed
let $v_\lambda^i=\sum\lambda_\ell^i\Vv_\ell$
be a sequence in $Y(\Vv,U)$
which converges to some element
$v_\lambda=\sum\lambda_\ell\Vv_\ell$
of $Y$.
This means that
$\lambda_\ell^i\to\lambda_\ell$
as $i\to\infty$, for every $\ell$.
Now assume
$\supp \Vv_\ell \cap U\not= \emptyset$.
It follows that $\lambda_\ell^i=0$,
because $v_\lambda^i\in Y(\Vv,U)$,
and this is true for all $i$.
Hence the limit $\lambda_\ell$
is zero and therefore
$v_\lambda\in Y(\Vv,U)$.
\end{proof}

For $c_k<a<c_{k+1}$ as above set
\begin{equation}\label{eq:delta^a}
     \delta^a=\delta^a(\Vv)
     :=\frac{1}{2}\min\{a-c_k,c_{k+1}-a\}>0,
     \qquad
     a_\pm:=a\pm\delta^a.
\end{equation}
Hence the distance between
any two of the five reals
$$
     c_k<a_-<a<a_+<c_{k+1}
$$
is at least $\delta^a$.

\begin{lemma}\label{le:sublevel}
Fix a perturbation $\Vv$
that satisfies~{\rm (V0--V3)}
and assume $\Ss_\Vv$ is Morse.
Let $U$ be given by~(\ref{eq:U}).
Fix a regular value $a$ of $\Ss_\Vv$
and consider the reals $c_k$, $c_{k+1}$, 
$a_\pm$, and $\delta^a$,
defined above. Then the following is true.
If $v_\lambda\in Y(\Vv,U)$
and $\norm{v_\lambda}<\delta^a$, then
there are inclusions
\begin{equation*}
\begin{gathered}
     \left\{\Ss_\Vv\le c_k\right\}
     \subset \left\{\Ss_{\Vv+v_\lambda}\le a_-\right\}
     \subset \left\{\Ss_\Vv\le a\right\}
     \subset \left\{\Ss_{\Vv+v_\lambda}\le a_+\right\}
     \subset \left\{\Ss_\Vv< c_{k+1}\right\}
     \\
     \left\{\Ss_\Vv\le a_-\right\}
     \subset \left\{\Ss_{\Vv+v_\lambda}\le a\right\}
     \subset \left\{\Ss_\Vv\le a_+\right\}.
\end{gathered}
\end{equation*}
\end{lemma}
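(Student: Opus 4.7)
The plan is to reduce every inclusion to the single pointwise bound
\[
     \sup_{x\in\Ll M}\Abs{v_\lambda(x)}
     \;\le\; \Norm{v_\lambda}
     \;<\; \delta^a,
\]
from which $\Abs{\Ss_{\Vv+v_\lambda}(x)-\Ss_\Vv(x)}=\Abs{v_\lambda(x)}<\delta^a$ for every loop $x$, and then all six inclusions are immediate arithmetic using the spacings between $c_k<a_-<a<a_+<c_{k+1}$ that are built into the definition~(\ref{eq:delta^a}) of $\delta^a$.

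First I would verify the pointwise bound. Writing $v_\lambda=\sum_\ell \lambda_\ell\Vv_\ell$, axiom~(V0) gives $\norm{\Vv_\ell}_\infty\le C_\ell^0$, and by the monotonicity $C_\ell^0\le C_\ell^\ell$ from~(\ref{eq:constants}) we obtain
\[
     \Abs{v_\lambda(x)}
     \le\sum_\ell\Abs{\lambda_\ell}\,\Abs{\Vv_\ell(x)}
     \le\sum_\ell\Abs{\lambda_\ell}\,C_\ell^\ell
     =\Norm{v_\lambda}<\delta^a.
\]
The absolute convergence is guaranteed by the definition of the norm on $Y$, and the support condition on $Y(\Vv,U)$ plays no role here, but it is used implicitly through the guarantee that the sum is an element of $Y$.

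Next I would dispatch the six inclusions in parallel. For instance, if $\Ss_\Vv(x)\le c_k$, then $\Ss_{\Vv+v_\lambda}(x)<c_k+\delta^a$; and since $2\delta^a\le a-c_k$, this is $\le a-\delta^a=a_-$, giving the first inclusion. If $\Ss_{\Vv+v_\lambda}(x)\le a_-$, then $\Ss_\Vv(x)<a_-+\delta^a=a$, the second. The third and fifth are direct: $\Ss_\Vv(x)\le a$ (resp.\ $a_-$) implies $\Ss_{\Vv+v_\lambda}(x)<a+\delta^a=a_+$ (resp.\ $a_-+\delta^a=a$). The fourth uses $2\delta^a\le c_{k+1}-a$: if $\Ss_{\Vv+v_\lambda}(x)\le a_+$, then $\Ss_\Vv(x)<a_++\delta^a=a+2\delta^a\le c_{k+1}$. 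The sixth is analogous to the second.

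There is no real obstacle here; the entire lemma is a bookkeeping consequence of the pointwise bound together with the construction of $\delta^a$ to force exactly the required separations. The only point that deserves care is making sure the universal norm on $Y$ controls the pointwise values of the perturbation — this is why the constants $C_\ell^\ell$ are chosen to dominate $C_\ell^0$ in~(\ref{eq:constants}), and it is this single uniformity that makes all inclusions fall out simultaneously.
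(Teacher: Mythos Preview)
Your proof is correct and follows essentially the same approach as the paper: establish the pointwise bound $\Abs{v_\lambda(x)}\le\Norm{v_\lambda}<\delta^a$ via the chain $C_\ell^0\le C_\ell^\ell$, then read off each inclusion from the arithmetic of~(\ref{eq:delta^a}). The paper proves the pointwise bound identically and then only spells out two of the inclusions as representative; you have written out all six, but the content is the same.
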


\begin{proof}
Fix $v_\lambda\in Y(\Vv,U)$
with $\norm{v_\lambda}<\delta^a$.
Observe that for each $\gamma\in\Ll M$ 
$$
     \Abs{v_\lambda(\gamma)}
     \le\sum_{\ell=0}^\infty
     \Abs{\lambda_\ell \Vv_\ell(\gamma)}
     \le\sum_{\ell=0}^\infty\Abs{\lambda_\ell} C_\ell^0
     \le\sum_{\ell=0}^\infty\Abs{\lambda_\ell} C_\ell^\ell
     =\Norm{v_\lambda}
     <\delta^a.
$$
Here we used that $v_\lambda$ is of
the form $\sum\lambda_\ell\Vv_\ell$,
axiom~(V0) with constant $C_\ell^0$
for $\Vv_\ell$, the fact that
$C_\ell^0\le C_\ell^\ell$
by~(\ref{eq:constants}),
and definition~(\ref{eq:Y})
of the norm on $Y$.
Observe further that
$$
     \Ss_{\Vv+v_\lambda}
     =\Ss_\Vv-v_\lambda.
$$
The proofs of the asserted inclusions
all follow the same pattern.
We only provide details for the last two
inclusions in the first line of the assertion
of the lemma.
Assume $\Ss_\Vv(\gamma)\le a$, then
$\Ss_{\Vv+v_\lambda}(\gamma)
=\Ss_\Vv(\gamma)-v_\lambda(\gamma)
<a+\delta^a=a_+$
where the last step is by
definition of $a_+$.
Now assume $\Ss_{\Vv+v_\lambda}(\gamma)\le a_+$,
then $\Ss_\Vv(\gamma)
\le a_++v_\lambda(\gamma)<a+2\delta^a
\le c_{k+1}$ again by definition of $a_+$.
The last step is by definition of $\delta^a$.
\end{proof}

Consider the positive constants given by
$$
     \kappa^a=\kappa^a(\Vv,U)
     :=\inf_{\gamma\in\{\Ss_\Vv<c_{k+1}\}\setminus U}
     \Norm{\grad \Ss_{\Vv}(\gamma)}_2>0
$$
and
\begin{equation}\label{eq:r^a}
     r^a=r^a(\Vv,U):=\frac12 \min\{\delta^a,\kappa^a\}>0.
\end{equation}
To prove the strict inequality $\kappa^a>0$
assume by contradiction that $\kappa^a=0$.
Then by Palais-Smale 
there exists a sequence
$(\gamma_k)\subset \{\Ss_\Vv<c_{k+1}\}\setminus U$ 
converging in the $W^{1,2}$ topology
to a critical point $x$.
It follows that $x\in U$, because
$U$ contains all critical points.
Since $W^{1,2}$ convergence implies $L^2$ convergence
and $U$ is a $L^2$ neighborhood
of the critical points,
we arrive at a contradiction
to $\gamma_k\notin U$
for every $k\in\N$.

\begin{proposition}\label{prop:sublevel}
Fix a perturbation $\Vv$
that satisfies~{\rm (V0--V3)}
and assume $\Ss_\Vv$ is Morse
and $a$ is a regular value.
Then the following is true.
If $v_\lambda\in Y(\Vv,U)$
and $\norm{v_\lambda}\le r^a$,
then
$$
     \Pp^a(\Vv)=\Pp^a(\Vv+v_\lambda),\qquad
     {\rm H}_*\left(\left\{\Ss_\Vv\le a\right\}\right)
     \cong
     {\rm H}_*\left(\left\{
     \Ss_{\Vv+v_\lambda}\le a\right\}\right).
$$
\end{proposition}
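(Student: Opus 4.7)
The proof splits into the equality of critical sets and the homology isomorphism. For the inclusion $\Pp^a(\Vv)\subset\Pp^a(\Vv+v_\lambda)$, every $x\in\Pp^a(\Vv)$ lies in the interior of $U$, and by construction of $Y(\Vv,U)$ each generator of $v_\lambda$ has support disjoint from $U$, so $v_\lambda$ vanishes identically on $U$ together with all its derivatives at interior points; hence $\grad v_\lambda(x)=0$ and $\Ss_{\Vv+v_\lambda}(x)=\Ss_\Vv(x)\le a$. For the reverse inclusion, suppose $\gamma\in\Pp^a(\Vv+v_\lambda)$. By lemma~\ref{le:sublevel}, $\{\Ss_{\Vv+v_\lambda}\le a\}\subset\{\Ss_\Vv<c_{k+1}\}$; if $\gamma\notin U$ then $\gamma\in\{\Ss_\Vv<c_{k+1}\}\setminus U$, forcing $\Norm{\grad\Ss_\Vv(\gamma)}_2\ge\kappa^a$. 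But criticality gives $\grad\Ss_\Vv(\gamma)=-\grad v_\lambda(\gamma)$, and axiom~{\rm (V0)} applied term by term yields
$$\Norm{\grad v_\lambda(\gamma)}_2\le\Norm{\grad v_\lambda(\gamma)}_\infty\le\sum_{\ell}\Abs{\lambda_\ell}C_\ell^0\le\Norm{v_\lambda}\le r^a\le\tfrac{1}{2}\kappa^a,$$
a contradiction; hence $\gamma\in U$ and $\gamma\in\Pp^a(\Vv)$.

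For the homology isomorphism, my plan is to construct an explicit isotopy between the two sublevel sets by flowing along the affine family $f_s:=\Ss_\Vv+sv_\lambda$, $s\in[0,1]$. Because $\Norm{sv_\lambda}\le r^a$ for every $s$, the preceding argument applies uniformly: the critical points of each $f_s$ in $\{\Ss_\Vv<c_{k+1}\}$ are exactly those of $\Ss_\Vv$, their critical values are $c_0,\dots,c_k<a_-$, and
$$\Norm{\grad f_s(\gamma)}_2\ge\tfrac{1}{2}\kappa^a\qquad\text{for every }\gamma\in\{\Ss_\Vv<c_{k+1}\}\setminus U\text{ and every }s\in[0,1].$$
In particular $a$ is a regular value of every $f_s$ and the level set $\{f_s=a\}$ is bounded away from $U$.

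I would then generate the isotopy by the smooth time-dependent vector field
$$X_s(\gamma):=-\frac{v_\lambda(\gamma)}{\Norm{\grad f_s(\gamma)}_2^2}\,\grad f_s(\gamma),$$
which is well-defined and smooth on $\Ll M$: the denominator is bounded below by $(\kappa^a/2)^2$ wherever $v_\lambda\neq 0$, while $X_s$ extends smoothly by zero across $U$ because $v_\lambda\equiv 0$ on the closed set $U$ forces all derivatives of $v_\lambda$ to vanish at points of $\partial U$ as well. The identity $\langle\grad f_s,X_s\rangle_{L^2}=-v_\lambda$ combined with $\p_s f_s=v_\lambda$ yields $\tfrac{d}{ds}f_s(\phi_s(\gamma))=0$ along the flow $\phi_s$ of $X_s$ from time $0$ to time $s$. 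Hence $\phi_s$ carries $\{\Ss_\Vv\le a\}=\{f_0\le a\}$ onto $\{f_s\le a\}$, and at $s=1$ it produces a homeomorphism $\{\Ss_\Vv\le a\}\cong\{\Ss_{\Vv+v_\lambda}\le a\}$, which in particular realizes the desired homology isomorphism.

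The main obstacle will be global existence and completeness of the flow $\phi_s$ on the infinite-dimensional loop space $\Ll M$ for every starting point in $\{\Ss_\Vv\le a\}$. Forward invariance of $\{f_s\le a\}$ is automatic from the construction, so trajectories cannot escape in action. Trajectories cannot accumulate at $\Pp$ because $X_s\equiv 0$ on a neighborhood of the critical set, so they stabilise upon entering such a neighborhood. The remaining ingredient is a Lipschitz estimate on $X_s$ uniform in $s\in[0,1]$, which follows from the uniform lower bound on $\Norm{\grad f_s}_2$ on $\supp v_\lambda\cap\{f_s\le a_+\}$ together with the smoothness of $v_\lambda$ and its derivatives provided by axioms~{\rm (V0)--(V3)}. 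These verifications are routine for perturbations of the specific form constructed in section~\ref{subsec:perturbations-banach}, but deserve careful attention in the Hilbert-manifold setting.
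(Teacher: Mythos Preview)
Your argument for the equality of critical sets is essentially the paper's: same contradiction via $\kappa^a$, same estimate $\Norm{\grad v_\lambda}_2\le\Norm{v_\lambda}$. One small correction: vanishing of $v_\lambda$ on a closed set does \emph{not} force its derivatives to vanish on the boundary of that set; what actually saves you is that each critical point lies in the \emph{interior} of $U$, so $v_\lambda\equiv 0$ on an open $L^2$-neighborhood of it, whence $X_s\equiv 0$ there.

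For the homology isomorphism the paper takes a much lighter route than your Moser isotopy. Rather than building an explicit flow between the two functionals, it uses the chain of inclusions from lemma~\ref{le:sublevel} together with the classical fact (Morse theory for the $W^{1,2}$ gradient flow on the Hilbert manifold $\Lambda M$, via Palais--Smale) that two sublevel sets bounded by regular values with no critical value in between are deformation retracts of one another. Since all of $[a_-,a_+]$ consists of regular values for both $\Ss_\Vv$ and $\Ss_{\Vv+v_\lambda}$, the interleaved inclusions
\[
\{\Ss_{\Vv+v_\lambda}\le a_-\}\subset\{\Ss_\Vv\le a\}\subset\{\Ss_{\Vv+v_\lambda}\le a_+\},\qquad
\{\Ss_\Vv\le a_-\}\subset\{\Ss_{\Vv+v_\lambda}\le a\}\subset\{\Ss_\Vv\le a_+\}
\]
force the relevant map on homology to be both injective and surjective. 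No new vector field and no new flow are needed.

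Your approach has a genuine gap that goes beyond routine verification. The $L^2$ gradient $\grad f_s(\gamma)=-\Nabla{t}\dot\gamma-\grad(\Vv+sv_\lambda)(\gamma)$ involves two $t$-derivatives of $\gamma$; it is not a tangent vector to the Hilbert manifold $\Lambda M=W^{1,2}(S^1,M)$, and $\Ll M$ is only Fr\'echet, so ODE existence theory is unavailable there. Thus $X_s$ as written is not a vector field on any Banach manifold on which you can integrate it. Replacing $\grad f_s$ by the $W^{1,2}$ gradient $G_s$ restores regularity and still gives $df_s(X_s)=-v_\lambda$ with $X_s=-\tfrac{v_\lambda}{\Norm{G_s}_{W^{1,2}}^2}G_s$; but then you need a uniform lower bound on $\Norm{G_s}_{W^{1,2}}$ away from $U$, and this does \emph{not} follow from your $L^2$ bound $\Norm{\grad f_s}_{L^2}\ge\kappa^a/2$ (the comparison inequality between the two gradients runs the wrong way). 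Obtaining that $W^{1,2}$ lower bound uniformly in $s$ and verifying global existence of the flow on a noncompact infinite-dimensional level set amounts to redoing a fair portion of Palais' theory for the family $f_s$. The paper's sandwich argument sidesteps all of this by invoking the classical theory once per functional.
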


\begin{proof}
Fix $v_\lambda\in Y(\Vv,U)$
with $\norm{v_\lambda}\le\frac12\min\{\delta^a,\kappa^a\}$.
Define $a_+$ by~(\ref{eq:delta^a}).

1) We prove that $\Pp^{a_+}(\Vv)=\Pp^{a_+}(\Vv+v_\lambda)$
and this immediately implies the 
first assertion of the proposition.
On $U$ both functionals
$\Ss_\Vv$ and $\Ss_{\Vv+v_\lambda}$
coincide, because $\Ss_{\Vv+v_\lambda}=\Ss_\Vv-v_\lambda$
and $v_\lambda$ is not supported on $U$.
Now $\Ss_\Vv$ does not admit any critical point
on $\{\Ss_{\Vv+v_\lambda}<c_{k+1}\}\setminus U$
by definition of $U$.
Assume the same holds true for $\Ss_{\Vv+v_\lambda}$.
Then, since $\{\Ss_{\Vv+v_\lambda}\le a_+\}
\subset\{\Ss_\Vv< c_{k+1}\}$
by lemma~\ref{le:sublevel},
it follows that all critical point of
$\Ss_{\Vv+v_\lambda}$ below level $a_+$
are contained in $U$. But there it coincides with $\Ss_\Vv$.
Hence $\Pp^{a_+}(\Vv+v_\lambda)=\Pp^{a_+}(\Vv)$.
\\
It remains to prove the assumption. Suppose by
contradiction that there is 
a critical point $x$ of $\Ss_{\Vv+v_\lambda}$ on
$\{\Ss_{\Vv+v_\lambda}<c_{k+1}\}\setminus U$. 
Hence
$$
     0=\grad\,\Ss_{\Vv+v_\lambda}(x)
      =\grad\,\Ss_\Vv(x)
      -\grad\, v_\lambda(x)
$$
and therefore $\norm{\grad\, v_\lambda(x)}_2=
\norm{\grad\,\Ss_\Vv(x)}_2\ge\kappa^a$
by definition of $\kappa^a$.
On the other hand, 
since $v_\lambda$ is of the form
$\sum\lambda_\ell\Vv_\ell$
it follows that
\begin{equation*}
\begin{split}
     \Norm{\grad\, v_\lambda(x)}_2
    &\le\sum_{\ell=0}^\infty\Abs{\lambda_\ell}\cdot
     \Norm{\grad\,\Vv_\ell(x)}_\infty\\
    &\le\sum_{\ell=0}^\infty\Abs{\lambda_\ell}
     C_\ell^0\\
    &\le\Norm{v_\lambda}\\
    &\le \frac12 \kappa^a.
\end{split}
\end{equation*}
Here we used the inequality
$\norm{\cdot}_2\le\norm{\cdot}_\infty$,
axiom~(V0) with constant $C_\ell^0$
for $\Vv_\ell$ and the fact that
$C_\ell^0\le C_\ell^\ell$
by~(\ref{eq:constants}).
The last two lines are by definition~(\ref{eq:Y})
of the norm on $Y$
and the assumption on $\norm{v_\lambda}$.

2) We prove that 
${\rm H}_*\left(\left\{\Ss_{\Vv+v_\lambda}\le a\right\}\right)
\cong {\rm H}_*\left(\left\{
\Ss_{\Vv+v_\lambda}\le a\right\}\right)$.
Observe that all elements of the intervall $[a_-,a_+]$
are regular values of $\Ss_{\Vv+v_\lambda}$
by step~1).
Hence classical Morse theory 
for the negative $W^{1,2}$ gradient flow on the loop space
shows that
$$
     {\rm H}_*\left(\left\{
     \Ss_{\Vv+v_\lambda}\le a_-\right\}\right)
     \cong
     {\rm H}_*\left(\left\{
     \Ss_{\Vv+v_\lambda}\le a_+\right\}\right).
$$
On the other hand, using the inclusions 
provided by lemma~\ref{le:sublevel}
this isomorphism factors through
the inclusion induced homomorphisms
$$
     {\rm H}_*\left(\left\{
     \Ss_{\Vv+v_\lambda}\le a_-\right\}\right)
     \to
     {\rm H}_*\left(\left\{\Ss_\Vv\le a\right\}\right)
     \to
     {\rm H}_*\left(\left\{
     \Ss_{\Vv+v_\lambda}\le a_+\right\}\right).
$$
Therefore the first homomorphism is injective and
the second one surjective. 
Since $a$ lies in the interval of 
regular values of $\Ss_{\Vv+v_\lambda}$,
the first one leads to
an injective homomorphism
$
     {\rm H}_*\left(\left\{
     \Ss_{\Vv+v_\lambda}\le a\right\}\right)
     \to
     {\rm H}_*\left(\left\{\Ss_\Vv\le a\right\}\right)
$.
By construction the
intervall $[a_-,a_+]$
consists of regular values of $\Ss_\Vv$.
Hence the same argument using again lemma~\ref{le:sublevel}
to obtain the inclusion induced homomorphisms
$$
     {\rm H}_*\left(\left\{
     \Ss_\Vv\le a_-\right\}\right)
     \to
     {\rm H}_*\left(\left\{\Ss_{\Vv+v_\lambda}\le a\right\}\right)
     \to
     {\rm H}_*\left(\left\{
     \Ss_\Vv\le a_+\right\}\right)
$$
provides a surjection
$
     {\rm H}_*\left(\left\{
     \Ss_{\Vv+v_\lambda}\le a\right\}\right)
     \to
     {\rm H}_*\left(\left\{\Ss_\Vv\le a\right\}\right)
$.
\end{proof}

By definition the set of
\emph{admissible perturbations}
is given by the open ball 
in the Banach space $Y(\Vv,U)$
of radius $r^a$ defined in~(\ref{eq:r^a}).
We denote this set by
\begin{equation}\label{eq:OOa}
     \Oo^a=\Oo^a(\Vv,U)
     :=\left\{v_\lambda\in Y(\Vv,U):
     \Norm{v_\lambda}\le r^a
     \right\}.
\end{equation}
Since $Y(\Vv,U)$ is a separable Banach space
by lemma~\ref{le:Y-U},
the closed subset $\Oo^a$ inherits the structure
of a complete metric space.
Proposition~\ref{prop:sublevel}
then concludes the proof of the first part
of theorem~\ref{thm:transversality}.
Namely, if $v_\lambda\in\Oo^a$, then
$\Ss_\Vv$ and $\Ss_{\Vv+v_\lambda}$
have homologically equivalent sublevel sets
with respect to $a$
and the same critical points 
when restricted to these sublevel sets.

\begin{remark}\label{rmk:Oo}
If $a<b$ are regular values of $\Ss_\Vv$
and $v\in\Oo^b$ satisfies $\norm{v}\le \delta^a/2$,
then $v\in\Oo^a$. To see this note that
$\kappa^b\le \kappa^a$ and therefore
$\norm{v}\le r^b\le\kappa^b/2\le\kappa^a/2$.
Hence $\norm{v}\le\frac12\min\{\delta^a,\kappa^a\}=:r^a$.
\end{remark}

\begin{remark}
Since we chose to cut off
our abstract perturbations 
in section~\ref{sec:perturbations}
with respect to the $L^2$ norm, 
we cannot naturally control the support
of $v\in\Oo^a$ in terms
of sublevel sets of $\Ss_\Vv$.
This would be possible if we had cut off
with respect to the $W^{1,2}$ norm, because
the action functional $\Ss_\Vv$ 
is continuous with respect 
to the $W^{1,2}$ topology.
\end{remark}

%%%%%%%%%%%%%%%%%%%%%%%%%%%%%%%%%%%%%%%%%%%%%%
%%%%%%%%%%%%%%%% Subsection %%%%%%%%%%%%%%%%%%
%%%%%%%%%%%%%%%%%%%%%%%%%%%%%%%%%%%%%%%%%%%%%%
\subsection{Surjectivity}
\label{subsec:onto-universal}

\begin{proof}[Proof of theorem~\ref{thm:transversality}]
Assume that the perturbation
$\Vv$ satisfies~{\rm (V0)--(V3)} 
and the function $\Ss_{\Vv}:\Ll M\to\R$ is Morse.
Consider the neighborhood $U$ of the critical points
of $\Ss_{\Vv}$ defined by~(\ref{eq:U})
in the previous section
and fix a regular value $a$ of $\Ss_{\Vv}$.
For $\Oo^a=\Oo^a(\Vv,U)$
given by~(\ref{eq:OOa}) the first 
part of theorem~\ref{thm:transversality}
is true by proposition~\ref{prop:sublevel}.
To prove the second part
fix in addition a constant $p>2$
and two critical points $x,y\in\Pp^a(\Vv)$.
We denote by $\Bb^{1,p}_{x,y}$
the smooth Banach manifold
of cylinders between $x$ and $y$
defined by~(\ref{eq:B}) in section~\ref{sec:IFT}.
This manifold is separable and admits
a countable atlas.
Now consider the smooth Banach space bundle
$$
     \Ee^p\to\Bb^{1,p}_{x,y}\times\Oo^a
$$
whose fibre over $(u,v_\lambda)$
are the $L^p$ vector fields along $u$.
The formula
\begin{equation}\label{eq:section-F}
     \Ff(u,v_\lambda)
     =\p_su-\Nabla{t}\p_tu
     -\grad \bigl(\Vv+v_\lambda\bigr)(u)
\end{equation}
defines a smooth section of
this bundle. Its zero set
$$
     \Zz=\Zz(x,y;\Vv,U,a)=\Ff^{-1}(0)
$$
is called the
{\bf universal moduli space}.
It does not
depend on $p>2$, since
all solutions of the
heat equation~(\ref{eq:heat})
are smooth by
theorem~\ref{thm:regularity}.
Now zero is a
{\bf regular value} of $\Ff$.
By definition this means that \emph{either}
there is no zero of $\Ff$ at all
\emph{or} $d\Ff(u,v_\lambda)$ is onto
and $\ker d\Ff(u,v_\lambda)$ 
admits a topological
complement,
whenever $\Ff(u,v_\lambda)=0$.
In the first case it is natural to
set $\Oo^a_{reg}(x,y)=\Oo^a$.
The second case naturally decomposes 
into two classes. 

The first class consists of constant
solutions $u$ and transversality
holds true automatically, since $\Ss_\Vv$ is Morse.
More precisely, if $x=y$, then $u(s,\cdot):=x(\cdot)$
is a zero of $\Ff$. 
In fact it solves~(\ref{eq:heat}) since
each $v_\lambda\in\Oo^a$ is supported away from
the elements of $\Pp(\Vv)$.
Now the linearization $\Dd_u$ of~(\ref{eq:heat})
reduces to the covariant Hessian $A_x$ of $\Ss_\Vv$
given by~(\ref{eq:Hessian}).
This Hessian is injective by 
the Morse assumption on $\Ss_\Vv$.
It is also surjective, because the cokernel of $A_x$
coincides with the kernel of its formal adjoint
operator with respect to the $L^2$ inner product.
But by symmetry of $A_x$ this kernel 
is equal to $\ker A_x=\{0\}$.
Hence $\Dd_u$, and therefore $d\Ff(u,v_\lambda)$,
is automatically surjective
at constant solutions. Hence
$\Oo^a_{reg}(x,x)=\Oo^a$.

The second class consists of zeroes
$(u,v_\lambda)$ of~(\ref{eq:section-F}) 
where $u$ depends on $s$.
In this case surjectivity of $d\Ff(u,v_\lambda)$
is the content
of proposition~\ref{prop:onto-universal}
below
and existence of a
topological complement
follows (see e.g.~\cite[prop.~3.3]{Joa-INDEX})
from surjectivity
and the fact that
by theorem~\ref{thm:fredholm}
and theorem~\ref{thm:par-exp-decay}
the operator 
\begin{equation}\label{eq:D_u-v}
     \Dd_u\xi
     =\Nabla{s}\xi -\Nabla{t}\Nabla{t}\xi
     -R(\xi,\p_tu)\p_tu-\Hh_{\Vv+v_\lambda}(u)\xi
\end{equation}
is Fredholm. (Note that $\Ss_{\Vv+v_\lambda}$
is Morse below level $a$
by proposition~\ref{prop:sublevel}
and the fact that $v_\lambda$ is 
not supported near
the critical points.)
Hence $\Zz$ is a 
smooth Banach manifold
by the implicit function theorem.
Consider the projection onto
the second factor
$$
     \pi:\Zz\to\Oo^a.
$$
By standard Thom-Smale
transversality theory (see
e.g.~\cite[lemma~A.3.6]{MS})
$\pi$ is a smooth Fredholm map
whose index is given by
the Fredholm index of $\Dd_u$.
This index is equal to the difference of the
Morse indices of $x$ and $y$,
again by theorem~\ref{thm:fredholm}.
Since $\Zz$ is separable and
admits a countable atlas,
we can apply the Sard-Smale
theorem~\cite{Sm73}
to countably many coordinate
representatives of $\pi$.
It follows that
the set of regular
values of $\pi$ is residual in $\Oo^a$.
We denote this set by
$\Oo^a_{reg}(x,y)$
and observe that
$$
     \Oo^a_{reg}(x,y)
     =\{v_\lambda\in\Oo^a\mid
     \text{$\Dd_u$ onto $\forall u\in
     \Mm(x,y;\Vv+v_\lambda)$}
     \}
$$
again by standard
transversality theory; see
e.g.~\cite[prop.~3.4]{Joa-INDEX}.

We define the set of {\bf regular perturbations} by
$$
     \Oo^a_{reg}
     :=\bigcap_{x,y\in\Pp^a(\Vv)}
     \Oo^a_{reg}(x,y).
$$
It is a residual subset
of $\Oo^a$, since
it consists of a finite intersection
of residual subsets.
This proves
theorem~\ref{thm:transversality}
up to proposition~\ref{prop:onto-universal}.
\end{proof}

\begin{proposition}[Surjectivity]
\label{prop:onto-universal}
Fix a perturbation $\Vv$ that satisfies~{\rm (V0)--(V3)} 
and assume $\Ss_{\Vv}$ is Morse.
Fix a regular value $a$,
critical points $x,y\in\Pp^a(\Vv)$, 
and a constant $p>2$.
Let $U$ be defined by~(\ref{eq:U})
and consider the section $\Ff$
given by~(\ref{eq:section-F}).
Then the following is true.
The linearization
$$
     d\Ff(u,v_\lambda):\Ww^{1,p}_u
     \times Y(\Vv,U)
     \to\Ll^p_u
$$
is onto at every zero 
$(u,v_\lambda)\in \Bb^{1,p}_{x,y}\times \Oo^a(\Vv,U)$ 
of the section $\Ff$.
\end{proposition}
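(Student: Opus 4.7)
The plan is a standard duality argument. Write
$d\Ff(u,v_\lambda)(\xi,w) = \Dd_u \xi - \grad w(u)$,
where $\Dd_u$ is the operator from~(\ref{eq:D_u-v}) for the perturbation $\Vv+v_\lambda$. By proposition~\ref{prop:sublevel} we have $\Pp^a(\Vv) = \Pp^a(\Vv+v_\lambda)$, so $x,y$ remain nondegenerate critical points of $\Ss_{\Vv+v_\lambda}$; then theorem~\ref{thm:par-exp-decay} and theorem~\ref{thm:fredholm} show $\Dd_u:\Ww^{1,p}_u\to\Ll^p_u$ is Fredholm. For any $w\in Y(\Vv,U)$ the gradient $\grad w(u)$ is supported where $u(s,\cdot)\notin U$, hence in a compact $s$-range (since $u(s,\cdot)\to x^\pm\in U$), so indeed $\grad w(u)\in\Ll^p_u$. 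Therefore $\im d\Ff(u,v_\lambda)$ contains $\im\Dd_u$ and is closed of finite codimension in $\Ll^p_u$. Surjectivity reduces to showing that every annihilator $\eta\in\Ll^q_u$ (with $1/p+1/q = 1$) satisfying $\langle\eta,\Dd_u\xi\rangle=0$ for all $\xi$ and $\langle\eta,\grad w(u)\rangle=0$ for all $w\in Y(\Vv,U)$ must vanish.

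The first family of identities says $\eta$ is a weak solution of $\Dd_u^*\eta = 0$; theorem~\ref{thm:REG-L} (in the form of the remark after it) then upgrades $\eta$ to a smooth vector field along $u$. If $u$ is constant, then $x=y$ and surjectivity is automatic by bijectivity of the Hessian $A_x$ under the Morse assumption, so we may assume $\p_su\not\equiv 0$. Proposition~\ref{prop:UC-L} applied to the adjoint equation yields the dichotomy that either $\eta\equiv 0$ or $\eta(s,\cdot)\not\equiv 0$ for every $s\in\R$; we assume the latter for contradiction.

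The remaining step is to construct a single perturbation $\Vv_\ell\in Y(\Vv,U)$ violating the annihilator condition. Since $\Ss_\Vv$ is strictly decreasing along the nonconstant trajectory $u$ (first variation formula), the map $s\mapsto u(s,\cdot)\in L^2(S^1,M)$ is injective; combined with the asymptotic limits $u(s,\cdot)\to x^\pm$ and disjointness of $U_{x^\pm}$, we find $s_0\in\R$ with $u(s_0,\cdot)\notin U$, and then pick $t_0\in S^1$ with $\eta(s_0,t_0)\neq 0$. The injective continuous map $s\mapsto u(s,\cdot)\in L^2$, together with $u(s,\cdot)\to x^\pm$ in $L^\infty$ as $s\to\pm\infty$, furnishes the \emph{no-return property}: for every sufficiently small $\varepsilon>0$ the preimage $\{s\in\R:\|u(s,\cdot)-u(s_0,\cdot)\|_{L^2}<\varepsilon\}$ is a short interval about $s_0$. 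Now use density of $(x_i)\subset\Ll M$ and of $(\eta^{ij})_j\subset C^\infty(S^1,x_i^*TM)$ to choose indices $i,j,k$ so that $x_i$ is close to $u(s_0,\cdot)$ in $C^0$, the vector field $\eta^{ij}$ is close in $L^2$ to $\eta(s_0,\cdot)$ transported to $x_i$, and $1/k$ is so small that the support of $\Vv_{ijk}$ (an $L^2$-ball of radius $2/k$ about $x_i$) is disjoint from $U$, ensuring $\Vv_\ell:=\Vv_{ijk}\in Y(\Vv,U)$.

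The pairing $\langle\eta,\grad\Vv_\ell(u)\rangle_{L^2(\R\times S^1)}$ then becomes an integral over the short $s$-interval provided by the no-return property of $\langle\eta(s,\cdot),\grad\Vv_\ell(u(s,\cdot))\rangle_{L^2(S^1)}$; on this interval $u(s,\cdot)$ and $\eta(s,\cdot)$ are close to their values at $s_0$, so by continuity and the explicit formula~(\ref{eq:perturbation}) the pairing equals a strictly positive multiple of $\int_0^1\langle\eta(s_0,t),\eta^{ij}(t)\rangle\,dt$, which is nonzero by the choice of $\eta^{ij}$. This contradicts the annihilator condition and forces $\eta\equiv 0$. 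The main obstacle I anticipate is precisely the no-return step: since the cutoff in the generators is measured in the $L^2$-norm rather than in action or $W^{1,2}$-distance, one must carefully exploit the $L^2$-injectivity of $s\mapsto u(s,\cdot)$ together with the asymptotic behavior to rule out remote contributions to the pairing.
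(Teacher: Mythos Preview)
Your overall strategy matches the paper's: closed range via Fredholmness of $\Dd_u$, the annihilator argument, regularity and unique continuation for $\eta$, the no-return property, and finally a single $\Vv_{ijk}\in Y(\Vv,U)$ with $\langle\eta,\grad\Vv_{ijk}(u)\rangle\neq 0$. The gap is in this last step, and it is not the no-return issue you flag.

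The gradient of $\Vv_{ijk}$ has two pieces, one carrying $\rho_{1/k}$ and one carrying $\rho'_{1/k}$; the latter has $L^\infty$ norm of order $k^2$. The pairing over the short $s$-interval (of length $\sim 1/k$) where the support of $\Vv_{ijk}$ is met splits accordingly. The ``good'' piece contributes $\sim(1/k)\|\eta_{s_0}\|_2^2$, as you suggest. But the ``bad'' piece is
\[
\int 2\rho'_{1/k}\bigl(\|u_s-x_i\|_2^2\bigr)\,
\langle u_s-x_i,\eta_s\rangle_{L^2}
\int_0^1 V^{ij}_t(u_s)\,dt\;ds,
\]
and near $s_0$ each of the two scalar factors $\langle u_s-x_i,\eta_s\rangle_{L^2}$ and $\int_0^1 V^{ij}_t(u_s)\,dt$ vanishes at $s_0$ with first derivative (approximately) $\langle\p_su_{s_0},\eta_{s_0}\rangle_{L^2(S^1)}$. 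If this number is nonzero, each factor is $\sim(s-s_0)$, the integrand is $\sim k^2(s-s_0)^2$, and integrating over an interval of length $\sim 1/k$ yields a bad piece of the \emph{same} order $1/k$ --- with a definite sign, since $\rho'\le 0$. Your continuity statement is therefore false as written: the integrand $\langle\eta_s,\grad\Vv_{ijk}(u_s)\rangle$ is not close to its value at $s_0$, because $\rho'_{1/k}$ vanishes at $s_0$ but is of size $k^2$ on part of the interval.

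The paper closes this by an identity you do not mention: the \emph{slicewise orthogonality} $\langle\eta_s,\p_su_s\rangle_{L^2(S^1)}=0$ for every $s$. One computes $\frac{d}{ds}\langle\eta_s,\p_su_s\rangle=0$ using $\Dd_u\p_su=0$ and $\Dd_u^*\eta=0$ together with symmetry of $\Hh_{\Vv+v_\lambda}$ and the first Bianchi identity; the function is integrable on $\R$ (H\"older with $\eta\in L^q$, $\p_su\in L^p$), hence vanishes identically. With this the two scalar factors above are $O((s-s_0)^2)$, the bad piece drops to order $(1/k)^3$, and for $k$ large the good piece dominates. You need to insert this orthogonality step and the resulting quantitative comparison of the two pieces (the paper's Step~2 and Step~4) to make the final argument go through.
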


\begin{proof}
Assume that $(u,v_\lambda)$ is a zero of $\Ff$.
The case of constant $u$ has been treated
in the proof of theorem~\ref{thm:transversality}.
Hence we assume that $u$ depends on $s\in\R$.
Since the action $\Ss_{\Vv+v_\lambda}$
decreases strictly along nonconstant zeroes
of~(\ref{eq:section-F}), it follows that
\begin{equation}\label{eq:u_s}
     c_k\ge\Ss_\Vv(x)=\Ss_{\Vv+v_\lambda}(x)
     >\Ss_{\Vv+v_\lambda}(u_s)
     >\Ss_{\Vv+v_\lambda}(y)=\Ss_\Vv(y).
\end{equation}
Here the two identities
are due to the fact that
$v_\lambda$ is not supported
near $x$ and $y$. In particular,
this shows that $x\not= y$.
Now define $1<q<2$ by $1/p+1/q=1$.
By the regularity theorem~\ref{thm:regularity}
the map $u$ is smooth
and by theorem~\ref{thm:par-exp-decay}
on exponential decay
all derivatives of $\p_su$
are bounded.
The linearization of $\Ff$ at the zero
$(u,v_\lambda)$ is given by
\begin{equation*}
\begin{split}
     d\Ff(u,v_\lambda)\;(\xi,\hat\Vv)
    &=d\Ff_{v_\lambda}(u)\;\xi
     +d\Ff_u(v_\lambda)\;\hat\Vv\\
    &=\Dd_u\xi-\grad \hat\Vv(u)
\end{split}
\end{equation*}
where 
$\Ff_{v_\lambda}(u):=\Ff(u,v_\lambda)=:\Ff_u(v_\lambda)$
and $\Dd_u$ is given by~(\ref{eq:D_u-v}).
Recall that $\Ss_{\Vv+v_\lambda}$
is Morse below level $a$
by proposition~\ref{prop:sublevel}
and the fact that $v_\lambda$ is 
not supported near
the critical points.
Hence by theorem~\ref{thm:par-exp-decay}
the Fredholm theorem~\ref{thm:fredholm}
shows that the operator $\Dd_u$
is Fredholm. 
Moreover, the second operator
$$
     Y(\Vv,U)\to\Ll^p_u\;:\; 
     \hat\Vv\mapsto -\grad \hat\Vv(u)
$$
is bounded.
To see this observe that,
since the support of $\hat\Vv$ is disjoint
to the neighborhood $U$ of $x$ and $y$,
there is a constant $T=T(u)>0$
such that $\grad \hat\Vv(u_s)=0$
whenever $\abs{s}>T$.
Now $\hat\Vv$ is of the form
$\sum_{\ell=0}^\infty \mu_\ell\Vv_\ell$.
Hence
\begin{equation*}
\begin{split}
     \bigl\|\grad \hat\Vv(u)\bigr\|_{L^p(\R\times S^1)}
    &=\left(\int_{-T}^T\Norm{\grad\hat\Vv(u_s)}_p^p ds
     \right)^{1/p}\\
    &\le \left(2T\right)^{1/p} 
     \sum_{\ell=0}^\infty \abs{\mu_\ell}\cdot
     \Norm{\grad\Vv_\ell(u_s)}_\infty\\
    &\le \left(2T\right)^{1/p} 
     \sum_{\ell=0}^\infty \abs{\mu_\ell} C_\ell^0\\
    &\le \left(2T\right)^{1/p} \bigl\|\hat\Vv\bigr\|
\end{split}
\end{equation*}
where for each $\Vv_\ell$
we used the last condition
in~(V0) with constant
$C_\ell^0\le C_\ell^\ell$.
The last step uses the 
definition~(\ref{eq:Y})
of the norm in $Y$.

Hence the range of
$d\Ff(u,v_\lambda)$ is closed
by standard arguments;
see e.g.~\cite[proposition~3.3]{Joa-INDEX}.
Therefore it suffices to prove that
it is dense. We use that density of the range
is equivalent to \emph{triviality
of its annihilator}: By definition
this means that, given $\eta\in\Ll_u^q$, then
\begin{equation}\label{eq:dense-1}
     \langle\eta,\Dd_u\xi
     \rangle=0,
     \qquad \forall\xi\in\Ww^{1,p}_u,
\end{equation}
and
\begin{equation}\label{eq:dense-2}
     \langle\eta,\grad\hat\Vv(u)
     \rangle=0,
     \qquad \forall\hat\Vv\in Y(\Vv,U),
\end{equation}
imply that $\eta=0$.

Assume by contradiction
that $\eta\in\Ll_u^q$
satisfies~(\ref{eq:dense-1})
and $\eta\not=0$.
In five steps we
derive a contradiction
to~(\ref{eq:dense-2}).
Steps~1--3 are preparatory,
in step~4 we construct
a model perturbation
$\Vv_\eps$
violating~(\ref{eq:dense-2})
and in step~5
we approximate $\Vv_\eps$
by the fundamental perturbations $\Vv_{ijk}$
of the form~(\ref{eq:perturbation}).
To start with observe
that $\eta$ is smooth
by~(\ref{eq:dense-1})
and theorem~\ref{thm:REG-L}.
Furthermore, integrating~(\ref{eq:dense-1})
by parts for 
$\xi\in C^\infty_0(\R\times S^1,u^*TM)$
shows that $\Dd_u^*\eta=0$ pointwise,
where the operator $\Dd_u^*$ arises
by replacing $\Nabla{s}$ by 
$-\Nabla{s}$ in~(\ref{eq:D_u-v}).
Throughout we use the notation
$\eta_s(t)=\eta(s,t)$. Hence
$\eta_s$ is a smooth vector field
along the loop $u_s$.

%---------STEP 1----------------
\vspace{.1cm}
\noindent
{\bf Step 1.} (Unique Continuation)
{\it $\eta_s\not=0$ and
$\p_su_s\not=0$ for every $s\in\R$.
}

\vspace{.1cm}
\noindent
Because $\eta$ is smooth, nonzero,
and $\Dd_u^*\eta=0$,
proposition~\ref{prop:UC-L} 
on unique continuation
shows that $\eta_s\not=0$
for every $s\in\R$.
Next observe that
$\p_su$ is smooth, because
$u$ is smooth,
and that
$0=\frac{d}{ds}\Ff_{v_\lambda}(u)=\Dd_u\p_su$.
Since $u$ connects different
critical points,
the derivative $\p_su$
cannot vanish identically on $\R\times S^1$.
Hence $\p_su_s\not=0$
for every $s\in\R$ by
proposition~\ref{prop:UC-L}
for $\xi(s):=\p_su_s$.

%---------STEP 2----------------
\vspace{.1cm}
\noindent
{\bf Step 2.} (Slicewise Orthogonal)
{\it $\langle\eta_s,\p_su_s\rangle=0$
for every $s\in\R$.
}

\vspace{.1cm}
\noindent
Note that here $\langle\eta_s,\p_su_s\rangle$
denotes the $L^2(S^1)$ inner product.
Now
\begin{equation*}
\begin{split}
     \frac{d}{ds}
     \langle\eta_s,\p_su_s\rangle
    &=\langle\Nabla{s}\eta_s,\p_su_s\rangle
     +\langle\eta_s,\Nabla{s}\p_su_s\rangle\\
    &=\langle-\Nabla{t}\Nabla{t}\eta_s
     -R(\eta_s,\p_tu_s)\p_tu_s
     -\Hh_{\Vv+v_\lambda}(u_s)\eta_s,\p_su_s\rangle\\
    &\quad +\langle\eta_s,
     \Nabla{t}\Nabla{t}\p_su_s
     -R(\p_su_s,\p_tu_s)\p_tu_s
     -\Hh_{\Vv+v_\lambda}(u_s)\p_su_s\rangle\\
    &=0
\end{split}
\end{equation*}
by straightforward calculation.
In the second equality
we replaced $\Nabla{s}\eta_s$
according to
the identity $\Dd_u^*\eta=0$
and $\Nabla{s}\p_su_s$
according to $\Dd_u\p_su=0$;
see~(\ref{eq:D_u-v}).
The last step is by
integration by parts,
symmetry of the Hessian $\Hh$,
and the first Bianchi identity
for the curvature operator $R$.
It follows that
$\langle\eta_s,\p_su_s\rangle$
is constant in $s$.
Now this constant, say $c$,
must be zero, because
$$
     \int_{-\infty}^\infty c \; ds
    =\int_{-\infty}^\infty
    \langle\eta_s,\p_su_s\rangle \; ds
    =\langle\eta,\p_su\rangle
$$
and the right hand side is finite,
because $\eta\in\Ll_u^q$
and $\p_su\in\Ll_u^p$
with $\frac{1}{p}+\frac{1}{q}=1$.
This proves step~2.
Note that $\eta_s$ and $\p_su_s$
are linearly independent for
every $s\in\R$ 
as a consequence of step~1 and step~2.

%---------STEP 3----------------
\vspace{.1cm}
\noindent
{\bf Step 3.} (No Return)
{\it Assume the loop
$u_{s_0}$ is different
from the asymptotic
limits $x$ and $y$
and let $\delta>0$.
Then there exists $\eps>0$
such that for every $s\in\R$
$$
     \Norm{u_s-u_{s_0}}_2<3\eps
     \quad \Longrightarrow \quad
     s\in(s_0-\delta,s_0+\delta).
$$
In words,
once $s$ leaves a given $\delta$-interval
about $s_0$ the loops
$u_s$ cannot return 
to some $L^2$ $\eps$-neighborhood
of $u_{s_0}$.
}

\vspace{.1cm}
\noindent
Key ingredients
in the proof are
smoothness of $u$,
existence of asymptotic limits,
and the gradient flow property.
Recall the footnote in remark~\ref{rmk:pert}
concerning the difference of loops $u_s-u_{s_0}$.
Now assume by contradiction
that there is
a sequence of
positive reals $\eps_i\to 0$
and a sequence of
reals $s_i$ which satisfy
$\norm{u_{s_i}-u_{s_0}}_2<3\eps_i$
and
$s_i\notin(s_0-\delta,s_0+\delta)$.
In particular, it follows that
\begin{equation}\label{eq:L2-limit}
      u_{s_i}
      \stackrel{L^2}{\longrightarrow}
      u_{s_0}\quad
      \text{as $i\to\infty$.}
\end{equation}
Assume first that the
sequence $s_i$ is unbounded.
Hence we can choose
a subsequence, without changing
notation, such that
$s_i$ converges to
$+\infty$ or $-\infty$.
In either case $u_{s_i}$ converges
to one of the critical points
$x$ or $y$ and the convergence
is in $C^0(S^1)$ by
theorem~\ref{thm:par-exp-decay}.
By~(\ref{eq:L2-limit})
and uniqueness of limits
it follows that
$u_{s_0}$ is equal to
one of the critical points $x$ or $y$
contradicting our assumption.
\\
Assume now that the
sequence $s_i$ is bounded.
Then we can choose
a subsequence, without
changing notation,
such that $s_i$
converges to some element
$s_1\notin (s_0-\delta,s_0+\delta)$.
Since $u$ is smooth,
it follows that
$u_{s_i}$ converges
to $u_{s_1}$ in $C^0(S^1)$.
Again by uniqueness of limits
$u_{s_1}=u_{s_0}$.
On the other hand,
the action functional
is strictly decreasing
along nonconstant
negative gradient flow lines.
Therefore $s_1=s_0$
and this contradiction
concludes the proof of step~3.

%---------STEP 4----------------
\vspace{.1cm}
\noindent
{\bf Step 4.}
{\it There is a time $s_0\in\R$
such that the loop $u_{s_0}$
is not element of $U$.
Moreover, there is a constant $\eps>0$
and a smooth function $\Vv_0 :\Ll M\to\R$ 
supported in the $L^2$ ball of radius 
$2\eps$ about $u_{s_0}$ such that
$$
     \Vv_0 (u_{s_0})=0,\qquad
     d\Vv_0 (u_{s_0})\eta_{s_0}=
     \Norm{\eta_{s_0}}_2^2,\qquad
     \langle\grad\Vv_0 (u),
     \eta\rangle
     \not=0.
$$
}

%\vspace{.1cm}
\noindent
The first assertion follows from
$x\not= y$ and the fact that the
closed sets
$U_z$, where $z\in\Pp(\Vv)$, 
are pairwise disjoint.
Now observe that the graph
$t\mapsto(t,u_{s_0}(t))$
of the loop $u_{s_0}$
is embedded in $S^1\times M$.
We define a smooth function $V$
on $S^1\times M$
supported near this graph 
as follows.
Denote by $\iota>0$ the injectivity
radius of the closed
Riemannian manifold $M$.
Pick a smooth cutoff function
$\beta:\R\to[0,1]$
such that $\beta=1$ on
$[-(\iota/2)^2,(\iota/2)^2]$ and
$\beta=0$ outside
$[-{\iota}^2,{\iota}^2]$;
see figure~\ref{fig:fig-cutb}.
Then define
\begin{equation}\label{eq:V}
     V_t(q):=V(t,q):=
     \begin{cases}
       \beta\bigl(\abs{\xi_q(t)}^2\bigr)\;
       \bigl\langle\xi_q(t),
       \eta_{s_0}(t)\bigr\rangle
       &\text{, $\abs{\xi_q(t)}<\iota$,}
       \\
       0
       &\text{, else,}
     \end{cases}
\end{equation}
where the vector $\xi_q(t)$ is
determined by the identity
$
     q=\exp_{u_{s_0}(t)}\xi_q(t)
$
whenever the Riemannian distance
between $q$ and $u_{s_0}(t)$
is less than $\iota$.
Note that the function $V$
vanishes on the graph of
the loop $u_{s_0}$.

Since all maps involved are
smooth, we can choose
a constant $\delta>0$ 
sufficiently small
such that for every
$s\in(s_0-\delta,s_0+\delta)$
the following is true
\begin{enumerate}
\item[i)]
     $d_{C^0}(u_s,u_{s_0})
     =\Norm{\xi_s}_\infty
     <\frac12\iota$, where
     the vector field $\xi_s$ 
     along the loop $u_{s_0}$
     is uniquely determined by
     the pointwise identity
     $
        u_s=\exp_{u_{s_0}}\xi_s
     $,
\item[ii)]
     $\langle
     E_2(u_{s_0},\xi_s)^{-1}
     \eta_s,\eta_{s_0}
     \rangle
     \ge\frac12\mu_0$,
     where $\mu_0:=\Norm{\eta_{s_0}}_2^2>0$,
\item[iii)]
     $\frac{1}{2} \mu_1
     \le
     \frac{\Norm{u_s-u_{s_0}}_2}{\abs{s-s_0}}
     \le
     \frac{3}{2} \mu_1$,
     where $\mu_1:=\Norm{\p_su_{s_0}}_2>0$.
\end{enumerate}
Recall the definition~(\ref{eq:exponential-identity}) 
of $E_2$ and the 
identities~(\ref{eq:E_ij(0)}).
For $s\in(s_0-\delta,s_0+\delta)$,
we obtain that
\begin{equation}\label{eq:dV}
\begin{split}
     dV_t(u_s)\,\eta_s
    &={\textstyle \left.\frac{d}{dr}\right|_{r=0}}
     V_t(\exp_{u_s}r\eta_s)\\
    &=2\beta^\prime(\abs{\xi_s}^2)\;
     \langle\xi_s,
     E_2(u_{s_0},\xi_s)^{-1}
     \eta_s\rangle\cdot
     \langle\xi_s,
     \eta_{s_0}\rangle\\
    &\quad+\beta(\abs{\xi_s}^2)\;
     \langle
     E_2(u_{s_0},\xi_s)^{-1}
     \eta_s,\eta_{s_0}
     \rangle\\
    &=\langle
     E_2(u_{s_0},\xi_s)^{-1}
     \eta_s,\eta_{s_0}
     \rangle
\end{split}
\end{equation}
pointwise for every $t\in S^1$.
The final step
uses~{\rm i)} and the
definition of $\beta$.
Note that
$dV_t(u_{s_0})\,\eta_{s_0}
=\abs{\eta_{s_0}}^2$ pointwise.

Integrating $V$ along a loop
defines a smooth function
on the loop space which vanishes
on $u_{s_0}$. 
To cut this function off
with respect to the
$L^2$ distance
fix a smooth
cutoff function
$\rho:\R\to[0,1]$
such that $\rho=1$
on $[-1,1]$, $\rho=0$
outside $[-4,4]$, and
$\norm{\rho^\prime}_\infty<1$.
Then, for the constant
$\delta$ fixed above,
choose $\eps>0$
according to step~3 (No Return) and
set $\rho_\eps(r)=\rho(r/\eps^2)$;
see figure~\ref{fig:fig-rhok}
for $\eps=\frac{1}{k}$.
Note that
$\norm{\rho_\eps^\prime}_\infty<\eps^{-2}$.
Observe that we can choose
$\eps>0$ smaller and the assertion
of step~3 remains true.
Now define
a smooth function
on $\Ll M$ by
\begin{equation*}%\label{eq:Veps}
     \Vv_0 (x)
     :=\rho_\eps\left(
     \Norm{x-u_{s_0}}_2^2\right)
     \int_0^1 V(t,x(t))\, dt
\end{equation*}
where $V$ is given
by~(\ref{eq:V}).
The function $\Vv_0 $
vanishes on the loop $u_{s_0}$
and satisfies
\begin{equation*}%\label{eq:dVv}
\begin{split}
     d\Vv_0 (u_s)\,\eta_s
    &={\textstyle \left.\frac{d}{dr}\right|_{r=0}}
     \Vv_0 (\exp_{u_s}r\eta_s)\\
    &=2\rho_\eps^\prime\bigl(
     \Norm{u_s-u_{s_0}}_2^2\bigr)\;
     \langle u_s-u_{s_0},
     \eta_s\rangle
     \int_0^1 V_t(u_s(t))\, dt
     \\
    &\quad
     +\rho_\eps\bigl(
     \Norm{u_s-u_{s_0}}_2^2\bigr)\;
     \int_0^1 dV_t(u_s(t))\,\eta_s(t)\, dt.
\end{split}
\end{equation*}
Hence
$d\Vv_0 (u_{s_0})\eta_{s_0}
=\norm{\eta_{s_0}}_2^2$
and this proves another 
assertion  of step~4.

To prove the final assertion 
of step~4 observe that
$s\notin(s_0-\delta,s_0+\delta)$
implies that
$\norm{u_s-u_{s_0}}_2\ge3\eps$,
by step~3,
and therefore
$u_s\notin\supp\,\Vv_0 $.
It follows that
\begin{equation}\label{eq:grad-V-est}
\begin{split}
     \langle\grad\,\Vv_0 (u),\eta\rangle
    &=\int_{s_0-\delta}^{s_0+\delta}
     d\Vv_0 (u_s)\eta_s\, ds\\
    &=\int_{s_0-\delta}^{s_0+\delta}
     2\rho_\eps^\prime\bigl(
     \Norm{u_s-u_{s_0}}_2^2\bigr)
     \langle u_s-u_{s_0},
     \eta_s\rangle
     \langle\xi_s,
     \eta_{s_0}\rangle\, ds\\
    &\quad
     +\int_{s_0-\delta}^{s_0+\delta}
     \rho_\eps\bigl(
     \Norm{u_s-u_{s_0}}_2^2\bigr)
     \langle
     E_2(u_{s_0},\xi_s)^{-1}
     \eta_s,\eta_{s_0}
     \rangle\, ds.
\end{split}
\end{equation}
We shall estimate
the two terms in the
sum separately.
Let $s_2>s_0$ be such that
$\norm{u_{s_2}-u_{s_0}}_2=\eps$
and
$\norm{u_s-u_{s_0}}_2<\eps$
whenever $s\in(s_0,s_2)$.
This means that
$s_2$ is the \emph{forward}
exit time of $u_s$
with respect to the
$L^2$ ball of radius $\eps$
about $u_{s_0}$.
Let $s_1<s_0$ be the corresponding
\emph{backward} exit time;
see figure~\ref{fig:fig-exit}.
Then, by~{\rm ii)}
and $\rho_\eps\ge0$, it holds that
\begin{equation*}
\begin{split}
    &\int_{s_0-\delta}^{s_0+\delta}
     \rho_\eps\bigl(
     \Norm{u_s-u_{s_0}}_2^2\bigr)
     \langle
     E_2(u_{s_0},\xi_s)^{-1}
     \eta_s,\eta_{s_0}
     \rangle\, ds\\
    &\ge\int_{s_1}^{s_2}
     1\cdot\frac{\mu_0}{2}\, ds
     =\frac{\mu_0}{2}
     \left( s_2-s_0+s_0-s_1\right)\\
    &\ge\frac{\mu_0}{3\mu_1}\left(
     \Norm{u_{s_2}-u_{s_0}}_2
     +\Norm{u_{s_0}-u_{s_1}}_2\right)
     =\frac{2\mu_0}{3\mu_1}\, \eps.
\end{split}
\end{equation*}
Here the second 
inequality uses~{\rm iii)}.
To estimate the
other term in~(\ref{eq:grad-V-est})
let $\sigma_1$
be the time of first entry
into the $L^2$ ball of radius
$2\eps$ starting from $s_0-\delta$
and let $\sigma_2$ be
the corresponding time
when time runs backwards
and we start from $s_0+\delta$;
see figure~\ref{fig:fig-exit}.
\begin{figure}
  \centering
  \epsfig{figure=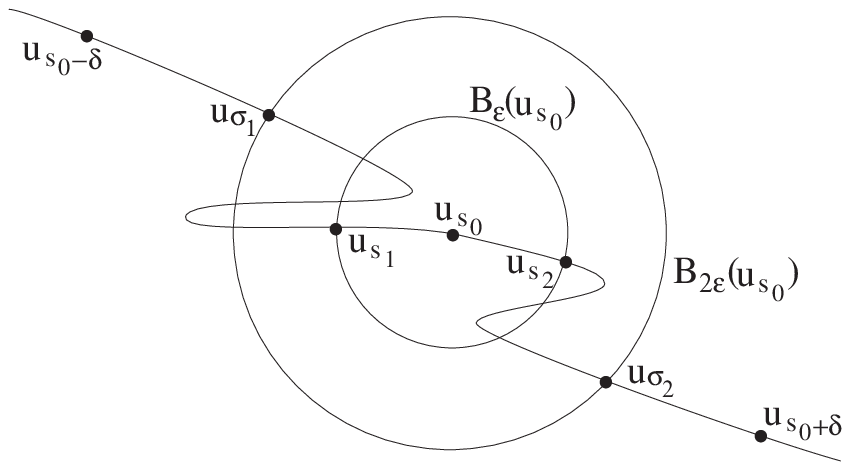}
  \caption{Exit times $s_1,s_2$ and 
  entry times $\sigma_1,\sigma_2$} 
  \label{fig:fig-exit}
\end{figure}
Then it follows that
\begin{equation*}
\begin{split}
    &\int_{s_0-\delta}^{s_0+\delta}
     2\rho_\eps^\prime\bigl(
     \Norm{u_s-u_{s_0}}_2^2\bigr)
     \langle u_s-u_{s_0},
     \eta_s\rangle
     \langle\xi_s,
     \eta_{s_0}\rangle\, ds\\
    &\ge-2\int_{\sigma_1}^{\sigma_2}
     \Norm{\rho_\eps^\prime}_\infty
     \Abs{\langle u_s-u_{s_0},
     \eta_s\rangle}\cdot
     \abs{\langle\xi_s,
     \eta_{s_0}\rangle}\, ds\\
    &\ge-2c_1c_2\eps^{-2}\int_{\sigma_1}^{\sigma_2}
     (s-s_0)^4\, ds\\
    &=-\frac{2c_1c_2}{5\eps^2}\left(
     \sigma_2-s_0+s_0-\sigma_1\right)^5
     \ge-\frac{2c_1c_28^5}{5\mu_1^5}\eps^3.
\end{split}
\end{equation*}
It remains to explain the
second and the final inequality.
In the final one we use that
by~{\rm iii)} there is the
estimate
$\sigma_2-s_0\le
2\norm{u_{\sigma_2}-u_{s_0}}_2/\mu_1
=4\eps/\mu_1$
and similarly
for $s_0-\sigma_1$.
The second inequality
is based on the geometric fact
that $\p_su$ and $\eta$
are slicewise orthogonal
by step~2. Namely,
let $f(s)=\langle u_s-u_{s_0},
\eta_s\rangle$ and
$h(s)=\langle\xi_s,
\eta_{s_0}\rangle$, then
$f(s_0)=h(s_0)=0$ and
\begin{equation*}
\begin{split}
     f^\prime(s)
    &=\langle\p_su_s,\eta_s\rangle
     +\langle u_s-u_{s_0},
     \Nabla{s}\eta_s\rangle
     =\langle u_s-u_{s_0},
     \Nabla{s}\eta_s\rangle\\
     h^\prime(s)
    &=\langle E_2(u_{s_0},\xi_s)^{-1}\p_su_s,
     \eta_{s_0}\rangle.
\end{split}
\end{equation*}
Hence $f^\prime(s_0)=h^\prime(s_0)=0$
and so there exist constants
$c_1=c_1(f)>0$ and $c_2=c_2(h)>0$
depending continuously
on $\delta$
such that for every 
$s\in(s_0-\delta,s_0+\delta)$
$$
     \Abs{f(s)}
     \le c_1(s-s_0)^2,\qquad
     \Abs{h(s)}
     \le c_2(s-s_0)^2.
$$
This proves the second inequality.
Now choose $\eps>0$
sufficiently small
such that
$\eps^2<\mu_0\mu_1^4/c_1c_2$.
This implies that
$\langle\grad\,\Vv_0 (u),
\eta\rangle>0$ and proves step~4.

Recall that $u_{s_0}\notin U$.
Now we choose $\eps>0$
again smaller such that
the $L^2$ ball of radius $3\eps$ 
about $u_{s_0}$ is disjoint from 
the $L^2$ closed set $U$,
%and contained in $\{\Ss_\Vv<c_{k+1}\}$,
that $3\eps$ is smaller
than the injectivity radius
$\iota$ of $M$,
and that $\eps=1/k$ for
some integer $k$.

%---------STEP 5----------------
\vspace{.1cm}
\noindent
{\bf Step 5.}
{\it Given $k=1/\eps$ as in the paragraph above,
there exist integers $i,j>0$ such that
the function $\hat\Vv:=\Vv_{ijk}$
given by~(\ref{eq:perturbation})
lies in $Y(\Vv,U)$ and satisfies
$$
     \langle\grad\,\Vv_{ijk}(u),
     \eta\rangle>0.
$$
This contradicts~(\ref{eq:dense-2})
and thereby proves
proposition~\ref{prop:onto-universal}.
}

\vspace{.1cm}
\noindent
Let $u_{s_0}$ be as in step~4.
In section~\ref{subsec:perturbations-banach}
we fixed a dense
sequence $(x_i)$ in $C^\infty(S^1,M)$
and for each $i$
a dense sequence $(\eta^{ij})$
in $C^\infty(S^1,x_i^*TM)$.
Choose a subsequence, still
denoted by $(x_i)$,
such that
\begin{equation*}%\label{eq:loops}
     x_i
     \to u_{s_0}\qquad
     \text{as $i\to\infty$}.
\end{equation*}
Now we may assume
without loss of generality
that every $x_i$
lies in $B_\eps(u_{s_0})$
the $L^2$ ball of radius
$\eps$ about $u_{s_0}$.
Hence
$B_{2\eps}(x_i)\subset
B_{3\eps}(u_{s_0})$.
Let $\xi_{s_0}^i$ be defined
by the identity
$u_{s_0}=\exp_{x_i} \xi_{s_0}^i$
pointwise for every $t\in S^1$.
Choose a diagonal
subsequence, denoted for simplicity
by $(\eta^{ii})$, such that
$$
     \Phi_{x_i}(\xi_{s_0}^i)\eta^{ii}
     \to \eta_{s_0}\qquad
     \text{as $i\to\infty$}.
$$
Here $\Phi_x(\xi)$ is
parallel transport from $x$
to $\exp_x \xi$ along
$\tau\mapsto\exp_x\tau \xi$
pointwise for every $t\in S^1$.
Let $(\Vv_{iik})_{i\in\N}$
be the corresponding sequence of
functions where each $\Vv_{iik}$
is given by~(\ref{eq:perturbation}).
Now observe that
$$
     \supp\Vv_{iik}
     \subset B_{2/k}(x_i)
     =B_{2\eps}(x_i)
     \subset B_{3\eps}(u_{s_0}).
$$
But $B_{3\eps}(u_{s_0})\cap U=\emptyset$
by the choice of $\eps$
in the paragraph prior to step~4,
and therefore $\Vv_{iik}\in Y(\Vv,U)$.
Next recall that the constant $\delta>0$
has been chosen in the proof of step~4
in order to exclude any
return of the trajectory
$s\mapsto u_s$ to
the ball $B_{3\eps}(u_{s_0})$
once $s$ has left the
interval $(s_0-\delta,s_0+\delta)$.
Since $\supp\Vv_{iik}\subset B_{3\eps}(u_{s_0})$,
this shows that $\Vv_{iik}(u_s)=0$
whenever $s\notin(s_0-\delta,s_0+\delta)$.
Hence
\begin{equation*}
\begin{split}
     \langle\grad\,\Vv_{iik}(u),\eta\rangle
    &=\int_{s_0-\delta}^{s_0+\delta}
     2\rho_{1/k}^\prime\bigl(
     \Norm{u_s-x_i}_2^2\bigr)
     \langle u_s-x_i,
     \eta_s\rangle
     \langle\xi_s^i,
     \eta^{ii}\rangle\, ds\\
    &\quad
     +\int_{s_0-\delta}^{s_0+\delta}
     \rho_{1/k}\bigl(
     \Norm{u_s-x_i}_2^2\bigr)
     \langle
     E_2(x_i,\xi_s^i)^{-1}
     \eta_s,\eta^{ii}
     \rangle\, ds
\end{split}
\end{equation*}
where $\xi_s^i$ is
determined
by $u_s=\exp_{x_i}\xi_s^i$.
Now the right hand
side converges as $i\to\infty$
to the right hand side
of~(\ref{eq:grad-V-est}),
which equals
$\langle\grad\,\Vv_0 (u),
\eta\rangle>0$.
This proves step~5 and 
proposition~\ref{prop:onto-universal}.
\end{proof}

%%%%%%%%%%%%%%%%%%%%%%%%%%%%%%%%%%%%%%%%%%%%%%
%%%%%%%%%%%%%%%%%%%%%%%%%%%%%%%%%%%%%%%%%%%%%%
%%%%%%%%%%%%%%%% Section %%%%%%%%%%%%%%%%%%%%%
%%%%%%%%%%%%%%%%%%%%%%%%%%%%%%%%%%%%%%%%%%%%%%
%%%%%%%%%%%%%%%%%%%%%%%%%%%%%%%%%%%%%%%%%%%%%%
\section{Heat flow homology}
\label{sec:Morse-homology}

In section~\ref{subsec:unstable}
we define the unstable manifold
of a critical point $x$
of the action functional
$\Ss_\Vv:\Ll M\to\R$
as the set of endpoints at time zero
of all backward halfcylinders
solving the heat equation~(\ref{eq:heat})
and emanating from $x$ at $-\infty$.
The main result is 
theorem~\ref{thm:unstable-mf} saying that
if $x$ is nondegenerate,
then this is a submanifold
of the loop space and its dimension
equals the Morse index of $x$.

Section~\ref{subsec:Morse-complex}
puts together the results proved so far
to construct the Morse complex
for the negative $L^2$ gradient
of the action functional on the loop space.

%%%%%%%%%%%%%%%%%%%%%%%%%%%%%%%%%%%%%%%%%%%%%%
%%%%%%%%%%%%%%%% Subsection %%%%%%%%%%%%%%%%%%
%%%%%%%%%%%%%%%%%%%%%%%%%%%%%%%%%%%%%%%%%%%%%%
\subsection{The unstable manifold theorem}
\label{subsec:unstable}

Fix a perturbation
$\Vv:\Ll M\to\R$
that satisfies~{\rm (V0)--(V3)}
and let $Z^-$ be
the {\bf backward halfcylinder}
$(-\infty,0]\times S^1$.
Given a critical point $x$
of the action
functional $\Ss_\Vv$
the moduli space
\begin{equation}\label{eq:M-}
     \Mm^-(x;\Vv)
\end{equation}
is, by definition,
the set of all smooth solutions
$u^-:Z^-\to M$
of the heat 
equation~(\ref{eq:heat})
such that
$u^-(s,t)\to x(t)$
as $s\to-\infty$,
uniformly in $t\in S^1$.
Note that the moduli space
is not empty, since
it contains
the stationary solution
$u^-(s,t)=x(t)$.
The {\bf unstable
manifold of \boldmath$x$}
is defined by
$$
     W^u(x;\Vv)
     =\{{u^-}(0,\cdot)\mid
     {u^-}\in\Mm^-(x;\Vv)\}.
$$
\begin{theorem}
\label{thm:unstable-mf}
Let $\Vv:\Ll M\to\R$
be a perturbation
that satisfies~{\rm (V0)--(V3)}.
If $x$ is a nondegenerate critical point
of the action functional $\Ss_\Vv$,
then the unstable manifold $W^u(x;\Vv)$ 
is a smooth contractible
embedded submanifold
of the loop space and its dimension
is equal to the Morse index of $x$.
\end{theorem}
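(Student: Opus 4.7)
The plan is to represent $W^u(x;\Vv)$ as the image under the evaluation at $s=0$ of the moduli space $\Mm^-(x;\Vv)$ of backward half-trajectories, and to treat the three conclusions separately using the tools developed earlier. First I set up a Banach manifold $\Bb^-(x)$ of continuous maps $u^-:Z^-\to M$ of class $\Ww^{1,p}_{loc}$ (for some $p>2$) that are asymptotic to $x$ at $-\infty$ in the sense that $u^-(s,t)=\exp_{x(t)}\xi^-(s,t)$ with $\xi^-\in\Ww^{1,p}((-\infty,-T]\times S^1,x^*TM)$ for some $T>0$; the tangent spaces are the half-cylinder analogues of $\Ww^{1,p}_u$ from~(\ref{eq:norms}). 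Exactly as for $\Bb^{1,p}(x^-,x^+)$ in chapter~\ref{sec:IFT}, the pointwise assignment $u^-\mapsto \p_su^--\Nabla{t}\p_tu^--\grad\Vv(u^-)$, pulled back by parallel transport, defines a smooth section $\Ff^-$ whose zero set is exactly $\Mm^-(x;\Vv)$; theorem~\ref{thm:regularity} then upgrades each zero to a smooth map.

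Second I show that $\Mm^-(x;\Vv)$ is a smooth manifold of dimension $\IND_\Vv(x)$. The linearization $\Dd_{u^-}:\Ww^{1,p}_{u^-}\to\Ll^p_{u^-}$ on the half-cylinder is Fredholm: the reduction in section~\ref{sec:linearized} to the Robbin--Salamon spectral-flow theorem adapts verbatim when only the $-\infty$ asymptotic limit is present and the boundary $\{0\}\times S^1$ is left free, and the index becomes $\IND_\Vv(x)$ since only the negative spectral subspace of $A_x$ produces exponentially decaying solutions at $-\infty$ while no condition is imposed at $s=0$. For surjectivity, integration by parts on $Z^-$ identifies the $L^q$-annihilator of the range of $\Dd_{u^-}$ with the smooth vector fields $\eta$ along $u^-$ satisfying $\Dd_{u^-}^*\eta=0$ together with the Dirichlet boundary condition $\eta(0,\cdot)\equiv0$; proposition~\ref{prop:UC-L}(a) applied on every compact subcylinder $[-N,0]\times S^1$ then forces $\eta\equiv0$. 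The implicit function theorem packages this into the desired manifold structure.

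Third I establish that $\ev_0:\Mm^-(x;\Vv)\to\Ll M$, $u^-\mapsto u^-(0,\cdot)$, is a smooth embedding onto $W^u(x;\Vv)$. Injectivity is the nonlinear backward unique continuation theorem~\ref{thm:UC-NL-noncompact}(B): because $\Ss_\Vv$ strictly decreases along nonconstant flow lines and $\Ss_\Vv(u^-(s,\cdot))\to\Ss_\Vv(x)$ as $s\to-\infty$, the bound $\sup_{s\le 0}\Ss_\Vv(u^-(s,\cdot))\le\Ss_\Vv(x)$ required by (B) holds automatically, and $u^-_1(0,\cdot)=u^-_2(0,\cdot)$ forces $u^-_1=u^-_2$. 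The differential of $\ev_0$ is the restriction $\xi\mapsto\xi(0,\cdot)$ from $\ker\Dd_{u^-}$, whose injectivity is exactly proposition~\ref{prop:UC-L}(a) applied to the linearized equation. To upgrade the injective immersion to an embedding, the exponential decay theorem~\ref{thm:par-exp-decay}(B) combined with the implicit function theorem near $-\infty$ produces an asymptotic chart $\Mm^-(x;\Vv)\to E^u_x$ sending $u^-$ to the leading coefficient of $\xi^-$ along the negative spectral subspace of $A_x$, giving a local diffeomorphism near each trajectory whose composition with $\ev_0$ is a homeomorphism onto its image.

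Finally, contractibility comes from the time-shift homotopy $T_\sigma u^-(s,t):=u^-(s-\sigma,t)$ defined for $\sigma\ge 0$; each $T_\sigma$ maps $\Mm^-(x;\Vv)$ to itself since $T_\sigma u^-$ is just the restriction of $u^-$ to the smaller half-cylinder $(-\infty,-\sigma]$ and still converges to $x$ at $-\infty$, and exponential decay (theorem~\ref{thm:par-exp-decay}(B)) shows $T_\sigma u^-\to x$ in the Banach manifold topology as $\sigma\to\infty$. Reparametrizing $\sigma=\tau/(1-\tau)$ for $\tau\in[0,1)$ and setting the endpoint at $\tau=1$ to the constant map $x$ yields a continuous deformation retraction of $\Mm^-(x;\Vv)$ onto $\{x\}$, which pushes forward under $\ev_0$ to a contraction of $W^u(x;\Vv)$ to $x$. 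The main technical obstacle I expect is the embedding step: the half-cylinder Fredholm theory and the unique continuation arguments deliver an injective immersion essentially for free, but matching the Banach manifold topology on $\Mm^-(x;\Vv)$ with the topology induced from $\Ll M$ requires both the asymptotic chart by $E^u_x$ and a ``no-return'' argument in the spirit of step~3 of the proof of proposition~\ref{prop:onto-universal}, exploiting strict monotonicity of $\Ss_\Vv$ along nonconstant trajectories.
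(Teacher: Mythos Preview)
Your overall architecture---realize $W^u(x;\Vv)$ as $\ev_0(\Mm^-(x;\Vv))$, show $\Mm^-$ is a smooth manifold via the implicit function theorem, prove $\ev_0$ is an injective immersion by nonlinear and linear unique continuation, contract via backward time shift---matches the paper exactly. The genuine difference is your route to surjectivity of $\Dd_{u^-}$. You argue directly: integration by parts on $Z^-$ forces any $\eta$ in the $L^q$-annihilator to satisfy $\Dd_{u^-}^*\eta=0$ together with $\eta(0,\cdot)=0$, and then proposition~\ref{prop:UC-L}(a) kills $\eta$. This is clean and works uniformly for every $u^-\in\Mm^-(x;\Vv)$. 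The paper instead establishes surjectivity first for the stationary solution $u^-\equiv x$ by an explicit semigroup construction (theorem~\ref{thm:onto}), then passes to nearby $u^-$ by openness of surjectivity, and finally to arbitrary $u^-$ by the backward time shift $u^\sigma(s,t)=u(s+\sigma,t)$. Your argument is shorter; the paper's buys an explicit right inverse and the isomorphism $E^-\cong\ker\Dd_x$, which it then uses to read off $\dim\ker\Dd_{u^-}=\IND_\Vv(x)$.

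Two places deserve more care. First, your claim that the Robbin--Salamon spectral-flow theorem ``adapts verbatim'' to the half-cylinder with free boundary is not right as stated: \cite{RoSa-SPEC} treats operators on $\R$, and a free boundary at $s=0$ is a genuine modification. You do not actually need the full Fredholm package in advance---closed range follows from the parabolic estimate (proposition~\ref{prop:par-linear} on $Z^-$) plus the cutoff argument as in proposition~\ref{prop:closed-range}, your annihilator computation then gives surjectivity, and the kernel dimension can be computed at the stationary solution and transported by Fredholm stability under the shift $u^\sigma\to x$---but you should say this rather than invoke a theorem that does not apply directly. Second, your embedding step is only a sketch. The paper does not use an ``asymptotic chart by $E^u_x$''; instead, for the homeomorphism-onto-image property it argues by cases: at $u\equiv x$ it takes $U=\{c-\eps<\Ss_\Vv<c+\eps\}$ and uses that $\Ss_\Vv|_{W^u}$ attains its maximum only at $x$; for nonconstant $u$ it runs a contradiction via the compactness theorem~\ref{thm:compactness-gradbound} on backward half-cylinders together with backward unique continuation. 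Your anticipated ``no-return'' ingredient is morally the second case, but the first case (near the fixed point) is handled purely by the action, not by any chart.
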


The idea to prove theorem~\ref{thm:unstable-mf} 
is to first show
in proposition~\ref{prop:unstable-modspace}
that nondegeneracy of $x$
implies that
the moduli space $\Mm^-(x;\Vv)$
is a smooth manifold
of the desired dimension.
A crucial ingredient
is proposition~\ref{prop:onto}
on surjectivity of the operator
$\Dd_{u^-}:\Ww^{1,p}\to\Ll^p$
whenever $u^-\in\Mm^-(x;\Vv)$
and $p\ge2$. Here the operator
$\Dd_{u^-}$ given by~(\ref{eq:lin-op})
arises by linearizing the
heat equation at the backward
trajectory $u^-$. 
A further key result
to prove theorem~\ref{thm:unstable-mf} 
is unique continuation
for the linear and
the nonlinear heat equation,
proposition~\ref{prop:UC-L}
and theorem~\ref{thm:UC-NL-noncompact}.
Namely, unique continuation
implies that the evaluation map
$$
     ev_0:\Mm^-(x;\Vv)\to\Ll M,\qquad
     {u^-}\mapsto {u^-}(0,\cdot)
$$
is an injective immersion.
It is even an embedding
by the gradient flow property.

\begin{proposition}[Moduli space]
\label{prop:unstable-modspace}
Let $\Vv:\Ll M\to\R$
be a perturbation
satisfying~{\rm (V0)--(V3)}
and suppose that
$x$ is a nondegenerate
critical point of $\Ss_\Vv$.
Then the moduli space
$\Mm^-(x;\Vv)$ is a smooth contractible
manifold of dimension
$\IND_\Vv(x)$.
Its tangent space
at ${u^-}$ is equal to
the vector space $X^-$
given by~(\ref{eq:kernel}).
\end{proposition}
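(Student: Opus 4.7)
The plan is to realize $\Mm^-(x;\Vv)$ as the zero set of a Fredholm section on a Banach manifold of backward half cylinders, apply the implicit function theorem, and then exhibit an explicit contraction using the $s$-shift symmetry of the heat equation. Fix $p>2$ and let $\Bb^{-}=\Bb^{-}_x$ be the Banach manifold of continuous maps $u^-\colon Z^-\to M$ which are locally of class $\Ww^{1,p}$ and satisfy $u^-(s,\cdot)=\exp_{x}(\eta_s)$ with $\eta\in \Ww^{1,p}((-\infty,-T]\times S^1,x^*TM)$ for some $T>0$. The tangent space at a smooth $u^-\in\Bb^{-}$ is the Banach space $\Ww^{1,p}_{u^-}$ of vector fields along $u^-$ defined as in~(\ref{eq:norms}) with $\R$ replaced by $(-\infty,0]$. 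The assignment $u^-\mapsto \p_su^--\Nabla{t}\p_tu^--\grad\Vv(u^-)$ defines, as in section~\ref{sec:IFT}, a smooth section $\Ff\colon\Bb^{-}\to\Ee$ of a Banach space bundle whose fibre at $u^-$ is $\Ll^p_{u^-}$, and zeros of $\Ff$ are precisely elements of $\Mm^-(x;\Vv)$ by theorem~\ref{thm:regularity-local} applied on $Z^-$.

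First I would prove that the linearization $d\Ff(u^-)=\Dd_{u^-}\colon \Ww^{1,p}_{u^-}\to\Ll^p_{u^-}$ is Fredholm of index $\IND_\Vv(x)$ for every $u^-\in\Mm^-(x;\Vv)$. The argument is an obvious adaptation of the one in section~\ref{sec:linearized}: represent $\Dd_{u^-}$ with respect to an orthonormal frame along $u^-$ as an operator of the form $d/ds+A(s)+C(s)$ on the half line, where $A(s)$ converges in norm to the self-adjoint operator $A_x$ associated to $x$ as $s\to-\infty$, while no condition is imposed at $s=0$. The analogue of~\cite[thm.~A]{RoSa-SPEC} for half lines with a single nondegenerate asymptotic operator then produces Fredholm index equal to the number of negative eigenvalues of $A_x$, which is $\IND_\Vv(x)$; exponential decay of $\p_su^-$ and $\eta$ toward $-\infty$, provided by theorem~\ref{thm:par-exp-decay}~(B), together with nondegeneracy of $x$, verifies all the required hypotheses. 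Surjectivity of $\Dd_{u^-}$ on the half cylinder is the content of the referenced proposition~\ref{prop:onto}; here the point is that the cokernel, represented via $L^2$-pairing by solutions of $\Dd_{u^-}^*\zeta=0$ on $Z^-$ satisfying a free boundary condition at $s=0$, must vanish (the free end adds $\IND_\Vv(x)$ degrees of freedom and kills the cokernel).

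Granted the Fredholm and surjectivity statements, the implicit function theorem for Banach spaces applied to $\Ff$, together with the regularity theorem~\ref{thm:regularity-local}, shows that $\Mm^-(x;\Vv)$ is a smooth manifold whose tangent space at $u^-$ is $\ker \Dd_{u^-}$, that is, precisely the space $X^-$ defined by~(\ref{eq:kernel}); the dimension is $\IND_\Vv(x)$ by the Fredholm index computation. The hard point in the whole argument is this Fredholm/surjectivity package on the half cylinder, but both steps reduce to already established tools.

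Finally, to prove contractibility, I would exploit the $s$-shift invariance of the heat equation. Define
\[
    H\colon [0,1]\times\Mm^-(x;\Vv)\to\Mm^-(x;\Vv),\qquad
    H(\tau,u^-)(s,t):=u^-\bigl(s-\tfrac{\tau}{1-\tau},t\bigr)
\]
for $\tau<1$ and $H(1,u^-)(s,t):=x(t)$. For each $\tau<1$ the shifted map $H(\tau,u^-)$ lies in $\Mm^-(x;\Vv)$ by $s$-shift invariance, while as $\tau\nearrow1$ theorem~\ref{thm:par-exp-decay}~(B) implies $H(\tau,u^-)\to x$ uniformly on compact subsets of $Z^-$, together with all derivatives, and in the topology of $\Bb^{-}$ by exponential decay of $\eta_s$; continuity of $H$ at $\tau=1$ follows from the same exponential decay applied uniformly in $u^-$ on compact subsets of $\Mm^-(x;\Vv)$. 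This provides a strong deformation retraction onto the constant solution, proving contractibility. The embedding $\Mm^-(x;\Vv)\hookrightarrow\Ll M$ via $ev_0$ will then be handled separately in the proof of theorem~\ref{thm:unstable-mf}, using forward uniqueness (theorem~\ref{thm:UC-NL-noncompact}) to ensure injectivity of $ev_0$ and the linear uniqueness (proposition~\ref{prop:UC-L}) to show that $d\,ev_0$ is injective on $X^-$.
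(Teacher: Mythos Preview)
Your proposal is correct and follows essentially the same route as the paper: set up a Banach manifold of backward half cylinders asymptotic to $x$, realize $\Mm^-(x;\Vv)$ as the zero set of the section $\Ff$, invoke proposition~\ref{prop:onto} for surjectivity and kernel dimension of $\Dd_{u^-}$, apply the implicit function theorem, and contract via backward time shift (the paper uses $u(\cdot-\sqrt{r/(1-r)},\cdot)$, equivalent to your $\tau/(1-\tau)$). Your separate Fredholm index computation via a half-line analogue of~\cite{RoSa-SPEC} is superfluous, since proposition~\ref{prop:onto} already delivers both surjectivity and $\dim X^-=\IND_\Vv(x)$ directly; the paper does not establish Fredholmness independently.
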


\begin{proposition}[Surjectivity]
\label{prop:onto}
Fix a constant $p>2$,
a perturbation $\Vv$
that satisfies~{\rm (V0)--(V3)},
and a nondegenerate
critical point $x$ of $\Ss_\Vv$.
If ${u^-}\in\Mm^-(x;\Vv)$,
then the operator
$
     \Dd_{u^-}:\Ww^{1,p}\to\Ll^p
$
is onto and its kernel is given by
\begin{equation}\label{eq:kernel}
\begin{split}
     X^-
    &:=\Bigl\{
     \xi\in C^\infty(Z^-,{u^-}^*TM)
     \mid\text{
     $\Dd_{u^-}\xi=0$,
     $\exists c,\delta>0\;
     \forall s\le 0:$}
     \\
    &\qquad
     \text{
     $
       \Norm{\xi_s}_\infty
       +\Norm{\Nabla{t}\xi_s}_\infty
       +\Norm{\Nabla{t}\Nabla{t}\xi_s}_\infty
       +\Norm{\Nabla{s}\xi_s}_\infty
       \le ce^{\delta s}
     $}
     \Bigr\}.
\end{split}
\end{equation}
Moreover, the dimension of
%the vector space 
$X^-$ is equal to
the Morse index of $x$.
\end{proposition}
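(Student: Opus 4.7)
The argument splits naturally into three parts: identification of the kernel with $X^-$, surjectivity, and the dimension count. Throughout I interpret $\Ww^{1,p}=\Ww^{1,p}(Z^-)$ and $\Ll^p=\Ll^p(Z^-)$ as the norms~(\ref{eq:norms}) restricted to $Z^-=(-\infty,0]\times S^1$, with no boundary condition imposed at $s=0$.

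\textbf{Kernel.} The inclusion $X^-\subset\ker\Dd_{u^-}$ is immediate from the pointwise bounds in the definition. Conversely, let $\xi\in\Ww^{1,p}\cap\ker\Dd_{u^-}$. The proof of theorem~\ref{thm:REG-L} is purely local and carries over to show smoothness on $\INT Z^-$, while standard one-sided parabolic regularity (no boundary condition imposed) gives smoothness up to $\{s=0\}$. Because $u^-_s\to x$ in every $C^k(S^1)$ norm as $s\to-\infty$ by theorem~\ref{thm:par-exp-decay}(B) (its action hypothesis is automatic, since the action only increases as $s$ decreases along a backward heat trajectory), nondegeneracy of $x$ allows application of theorem~\ref{thm:kerD-exp-decay} to obtain the backward exponential $L^2$ decay $\norm{\xi_s}_2\le ce^{\rho s}$. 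Combined with the pointwise estimates of theorems~\ref{thm:kerD-apriori} and~\ref{thm:kerD-apriori-II}, this upgrades to the exponential pointwise bounds on $\xi,\Nabla{t}\xi,\Nabla{t}\Nabla{t}\xi,\Nabla{s}\xi$ demanded by~(\ref{eq:kernel}).

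\textbf{Surjectivity.} The range is closed by a half-cylinder analogue of proposition~\ref{prop:closed-range}: the semi-global estimate $\norm{\xi}_{\Ww^{1,p}}\le c(\norm{\Dd_{u^-}\xi}_p+\norm{\xi}_p)$ follows from proposition~\ref{prop:par-linear} combined with bijectivity of the model operator $\p_s+A_x$ on the constant half-cylinder over $x$, which holds since nondegeneracy of $A_x$ makes the backward Cauchy problem for the formal adjoint $-\p_s+A_x$ with trivial data at $s=0$ uniquely solvable. Hahn--Banach then reduces surjectivity to showing that any $\eta\in L^q_{u^-}$ with $1/p+1/q=1$ satisfying $\langle\eta,\Dd_{u^-}\xi\rangle=0$ for every $\xi\in\Ww^{1,p}$ must vanish. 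Testing against $\xi\in C_c^\infty(\INT Z^-)$ and invoking theorem~\ref{thm:REG-L} gives $\eta$ smooth on $\INT Z^-$ with $\Dd_{u^-}^*\eta=0$, and one-sided parabolic regularity extends smoothness up to $\{s=0\}$. Now choose $\xi\in\Ww^{1,p}$ smooth, compactly supported in $s$, with arbitrary prescribed trace at $s=0$; integrating by parts in $s$ produces no contribution at $s=-\infty$ and yields
\begin{equation*}
     0=\langle\eta,\Dd_{u^-}\xi\rangle
     =\langle\Dd_{u^-}^*\eta,\xi\rangle
     +\int_0^1\inner{\eta(0,t)}{\xi(0,t)}\,dt
     =\int_0^1\inner{\eta(0,t)}{\xi(0,t)}\,dt.
\end{equation*}
Since the trace $\xi(0,\cdot)$ can be chosen to be any smooth section of $(u^-_0)^*TM$, this forces $\eta(0,\cdot)\equiv 0$. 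Applying proposition~\ref{prop:UC-L}(a) to $\eta$ on each compact interval $[-T,0]$ and letting $T\to\infty$ gives $\eta\equiv0$ on $Z^-$.

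\textbf{Dimension.} In the parallel frame along $u^-$ used in section~\ref{sec:linearized}, the equation $\Dd_{u^-}\xi=0$ becomes $\dot\xi+(A(s)+C(s))\xi=0$ in $H=L^2(S^1,\R^n)$, with $A(s)\to A_x$ in norm and $C(s)\to0$. Standard exponential dichotomy theory for such asymptotically autonomous families with self-adjoint invertible limit identifies the space of solutions decaying exponentially as $s\to-\infty$ with the negative spectral subspace $E_x^u$ of $A_x$, which has dimension $\IND_\Vv(x)$ by definition. Alternatively, having established surjectivity one obtains the same conclusion directly from a half-cylinder spectral flow computation in the spirit of section~\ref{sec:linearized}, giving $\INDEX\,\Dd_{u^-}=\IND_\Vv(x)$ and hence $\dim X^-=\dim\ker\Dd_{u^-}=\IND_\Vv(x)$. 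The main technical obstacle lies in adapting the closed-range step of proposition~\ref{prop:closed-range} to the half-cylinder, where the free trace at $s=0$ must be carefully accommodated in the cutoff argument; every other ingredient is a careful repackaging of regularity, exponential decay, and unique continuation machinery developed earlier in the paper.
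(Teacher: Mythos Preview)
Your kernel identification follows the same route as the paper (local regularity via theorem~\ref{thm:REG-L}, backward exponential $L^2$ decay via theorem~\ref{thm:kerD-exp-decay}, upgrade to pointwise decay via theorems~\ref{thm:kerD-apriori}--\ref{thm:kerD-apriori-II}) and is correct.

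Your surjectivity argument, however, is genuinely different from the paper's and contains a real gap. The paper does not argue via an annihilator and unique continuation. It proceeds in three stages: (i)~for the stationary solution $u^-\equiv x$, surjectivity is established \emph{constructively} in theorem~\ref{thm:onto} by writing down an explicit right inverse $Q\eta=K*\eta$ built from the semigroups generated by $\pm A_x$ on the positive and negative spectral subspaces; (ii)~surjectivity is an open condition in the operator norm and $u^-\mapsto\Dd_{u^-}$ is continuous in the $\Ww^{1,p}$ topology, so surjectivity persists for $u^-$ close to $x$; (iii)~for general $u^-$ one observes that the backward shift $u^\sigma(s,\cdot)=u^-(s+\sigma,\cdot)$ converges to $x$ as $\sigma\to-\infty$ by exponential decay, and shift invariance makes surjectivity of $\Dd_{u^-}$ equivalent to that of $\Dd_{u^\sigma}$. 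The dimension count then comes for free from step~(i), where theorem~\ref{thm:onto} exhibits the explicit isomorphism $E^-\to\ker D$, $v\mapsto e^{-sA_x}v$, for $p=2$; since $X^-$ is $p$-independent this settles all $p$.

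The gap in your argument is the assertion that ``one-sided parabolic regularity extends smoothness \mbox{[of $\eta$]} up to $\{s=0\}$''. The equation $\Dd_{u^-}^*\eta=0$ is a \emph{backward} heat equation; theorem~\ref{thm:REG-L} and the remark following it give smoothness only on $(-\infty,0)$, and the forward-time endpoint $s=0$ is exactly the direction in which backward-parabolic regularity fails. Without a well-defined trace $\eta(0,\cdot)$ your integration-by-parts identity, and hence the invocation of proposition~\ref{prop:UC-L}(a), is unjustified. (A related slip: the model operator $\p_s+A_x$ on the constant backward half-cylinder is \emph{not} bijective---its kernel has dimension $\IND_\Vv(x)$; what your adjoint observation actually yields, once the trace issue is handled, is surjectivity, not bijectivity.) The paper's constructive route sidesteps this boundary difficulty entirely and simultaneously delivers the kernel dimension without appeal to external exponential-dichotomy or half-cylinder spectral-flow results.
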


Proposition~\ref{prop:onto}
is in fact a corollary
of theorem~\ref{thm:onto}
below which asserts
surjectivity
in the special case
of a stationary solution
${u^-}(s,t)=x(t)$,
where $x$ is a nondegenerate
critical point of $\Ss_\Vv$.
The idea is that
if a solution
${u^-}$ is nearby the
stationary solution $x$
in the $\Ww^{1,p}$ topology,
then the corresponding 
linearizations $\Dd_{u^-}$ and $\Dd_x$
are close in the operator norm topology.
But surjectivity is an open condition
with respect to the norm topology.
The case of a general solution 
reduces to the nearby case
by shifting
the $s$-variable.

\begin{remark}\label{rmk:A}
Abbreviate 
$H=L^2(S^1,\R^n)$
and $W=W^{2,2}(S^1,\R^n)$ and
consider the operator
$$
     A_S=-\frac{d^2}{dt^2}-S:H\to H
$$
with dense domain $W$.
Here we assume that $S:W\to H$
is a symmetric and compact
linear operator.
Under these assumptions
it is well known (see~(ii) in 
section~\ref{sec:linearized})
that $A_S$ is self-adjoint
and that its
Morse index $\IND(A_S)$,
that is the dimension
of the negative eigenspace 
$E^-$ of $A_S$,
is finite.
\end{remark}

\begin{theorem}
\label{thm:onto}
Let $S$ and $A_S$ be as
in remark~\ref{rmk:A}.
Fix $p\ge 2$ and assume
that the
linear operator
$S:W^{1,p}(S^1,\R^n)\to L^p(S^1,\R^n)$
is bounded with bound $c_S$.
Then the following is true.
If $A_S$ is injective, then
the operator
$$
     D=\p_s-\p_t\p_t-S:
     \Ww^{1,p}(Z^-,\R^n)\to 
     L^p(Z^-,\R^n)
$$
is onto.
In the case $p=2$ the map
$E^-\to\ker D$, $v\mapsto e^{-sA_S}v$
is an isomorpism.
\end{theorem}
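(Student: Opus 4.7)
The plan is to solve $D\xi = f$ by spectral decomposition. Since $A_S = -d^2/dt^2 - S$ is a self-adjoint operator on $H$ with compact resolvent (because $-d^2/dt^2$ on $W^{2,2}(S^1,\R^n)$ has compact resolvent and $S : W \to H$ is a lower-order, hence compact, perturbation), its spectrum consists of discrete real eigenvalues $\{\lambda_k\}$ with finite-dimensional eigenspaces and smooth eigenfunctions. Injectivity of $A_S$ means $0 \notin \sigma(A_S)$, so $H = E^- \oplus E^+$ where $E^-$ is the finite-dimensional sum of eigenspaces with $\lambda_k < 0$ and $E^+$ the $L^2$-closure of the sum over $\lambda_k > 0$. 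The orthogonal projections $P^\pm$ are bounded on $L^p(S^1,\R^n)$ for every $p \in [1,\infty]$ because $P^-$ is a finite-rank projection onto a space of smooth functions.

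First I would establish the case $p=2$. Given $f \in L^p(Z^-,\R^n)$ with $p=2$, split $f = f^+ + f^-$ by applying $P^\pm$ slicewise. On $E^-$, working in a basis of eigenvectors $\{e_j\}$ with eigenvalues $\mu_j < 0$, the equation $\p_s\xi^- + A_S\xi^- = f^-$ becomes a system of scalar ODEs $\p_s\xi_j + \mu_j\xi_j = \langle f^-, e_j\rangle$; I solve each with vanishing initial condition at $s=0$ by Duhamel,
$$
\xi_j(s) = -\int_s^0 e^{-\mu_j(s-\tau)}\langle f^-(\tau),e_j\rangle\,d\tau,
$$
and note that $e^{-\mu_j(s-\tau)} \le 1$ for $s \le \tau \le 0$ since $\mu_j < 0$, giving directly an $L^2(Z^-)$ bound. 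On $E^+$, define
$$
\xi^+(s) = \int_{-\infty}^s e^{-(s-\tau)A_S^+}\,f^+(\tau)\,d\tau,
$$
where $A_S^+ := A_S|_{E^+}$ has spectral gap $\lambda_1 > 0$ so $\|e^{-\sigma A_S^+}\|_{H \to H} \le e^{-\lambda_1\sigma}$ for $\sigma \ge 0$. Young's convolution inequality applied to the scalar kernel $e^{-\lambda_1\sigma}\mathbf{1}_{\sigma \ge 0}$ yields $\|\xi^+\|_{L^2(Z^-,H)} \le \lambda_1^{-1}\|f^+\|_{L^2(Z^-,H)}$. Standard regularity in the $t$-variable (elliptic theory for $A_S$) upgrades $\xi = \xi^+ + \xi^-$ to $\mathcal{W}^{1,2}$.

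For $p > 2$ I would run the same construction but use the semigroup $e^{-\sigma A_S^+}$ on $L^p(S^1,\R^n)$. Since $-d^2/dt^2$ generates a holomorphic contraction semigroup on $L^p(S^1)$ for all $p \in (1,\infty)$ (the classical heat semigroup on the circle) and $S$ is a relatively bounded perturbation with bound $c_S$, the operator $A_S$ generates an analytic semigroup on $L^p$; restricting to the $P^+$-invariant subspace $E^+ \cap L^p$ preserves the spectral gap, so $\|e^{-\sigma A_S^+}\|_{L^p \to L^p} \le C e^{-\lambda_1\sigma/2}$ for $\sigma \ge 0$, after possibly enlarging the exponent slightly. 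Young's inequality then gives $\|\xi^+\|_{L^p(Z^-)} \le C'\|f^+\|_{L^p(Z^-)}$, and the $E^-$-component is trivial since $E^-$ is finite-dimensional and consists of smooth functions. Finally the linear estimate for the parabolic operator (proposition~\ref{prop:par-linear} in the appendix) upgrades the $L^p$ solution to $\mathcal{W}^{1,p}$, proving surjectivity.

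For the second claim (kernel identification when $p=2$), suppose $\xi \in \mathcal{W}^{1,2} \cap \ker D$. Applying $P^\pm$ commutes with $\p_s$ and $A_S$, so each component satisfies $\p_s\xi^\pm + A_S^\pm \xi^\pm = 0$, forcing $\xi^\pm(s) = e^{-sA_S^\pm}\xi^\pm(0)$. On $E^+$, the expansion $\xi^+(s) = \sum_{\lambda_k>0} e^{-s\lambda_k}\langle \xi(0),e_k\rangle e_k$ grows exponentially as $s \to -\infty$ unless every coefficient vanishes, contradicting $\xi \in L^2(Z^-)$; hence $\xi^+\equiv 0$ and $\xi(0) \in E^-$. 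Conversely, for $v \in E^-$ the map $s \mapsto e^{-sA_S}v$ decays as $s \to -\infty$ since the $E^-$-eigenvalues are negative, so it lies in $\mathcal{W}^{1,2}(Z^-)$. This gives the isomorphism $E^- \to \ker D$, $v \mapsto e^{-sA_S}v$, whose dimension equals $\IND(A_S)$ by definition. The main obstacle I anticipate is a careful justification of the $L^p$-semigroup theory for $A_S^+$ — specifically, verifying that the spectral gap is preserved on $L^p$ and that the eigenfunction expansion converges in $L^p$; this may require a preliminary bootstrap showing that positive-spectrum eigenfunctions, being smooth on the compact $S^1$, yield a Riesz-like decomposition on $L^p(S^1)$ compatible with the semigroup action.
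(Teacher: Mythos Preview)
Your $p=2$ argument and the kernel identification are essentially the paper's Step~1: both build the right inverse $Q$ as convolution with the piecewise semigroup kernel and read off $\ker D$ from the identity $QD\xi=\xi-e^{-sA^-}P^-\xi(0)$.

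For $p>2$ the routes genuinely diverge. You run the Duhamel construction directly in $L^p(S^1)$, which forces you to establish that $A_S$ generates an analytic semigroup on $L^p$ and that the spectral gap survives on $P^+L^p$ --- exactly the obstacle you flag, and it can indeed be carried out (relative bound zero of $S$ with respect to $-\p_t^2$, $p$-independence of the spectrum via smoothness of eigenfunctions, growth bound $=$ spectral bound for analytic semigroups). The paper avoids all of this via a trick of Donaldson: it introduces the mixed norm $\|\xi\|_{2;p}:=\bigl(\int_{-\infty}^0\|\xi(s,\cdot)\|_{L^2(S^1)}^p\,ds\bigr)^{1/p}$, so Young's inequality needs only the $L^2$ operator bound $\|K(s)\|_{\Ll(H)}\le e^{-\delta|s|}$. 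A localized parabolic estimate (Step~2) gives $\|\xi\|_{\Ww^{1,p}}\le c(\|D\xi\|_p+\|\xi\|_{2;p})$; combined with $\|QD\xi\|_{2;p}\le c\|D\xi\|_p$ this makes $\pi\xi:=\xi-QD\xi$ a finite-rank projection onto $\ker D$, hence compact, giving closed range, and density follows by solving in $L^2$ and bootstrapping. Your route is shorter once the $L^p$ semigroup machinery is granted; the paper's is longer but entirely self-contained, using nothing beyond self-adjointness on $L^2$ and the local parabolic $L^p$ estimate. One small gap in your write-up: proposition~\ref{prop:par-linear} is stated for compactly supported $\xi$, so your final upgrade from $\xi\in L^p$ with $D\xi\in L^p$ to $\xi\in\Ww^{1,p}(Z^-)$ still requires a slab-by-slab summation argument of the kind the paper carries out in Step~3.
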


\begin{proof}[Proof of theorem~\ref{thm:onto}]
The proof takes four steps.
Step~1 proves the theorem for $p=2$.
The proof by Salamon~\cite[lemma~2.4 step~1]{Sa-FLOER}
of the corresponding
result in Floer theory carries
over with minor but important
modifications. These are
due to the fact that our domain $Z^-$
does have a boundary.
The proof uses the theory of
semigroups. We recall the details
for convenience of the reader.
The generalization of surjectivity
in step~4 to $p>2$
follows an argument
due to Donaldson~\cite{Don-FLOER}.
It uses the case $p=2$ and the
estimates provided by step~2
and step~3.
Again we follow the presentation
in~\cite[lemma~2.4 steps~2--4]{Sa-FLOER}
up to minor but subtle modifications.
One subtlety is related to the parabolic
estimate of step~2. Here 
in contrast to the elliptic case 
the domain
needs to be increased only
towards the \emph{past}.
Hence the estimates
of step~3 work precisely
for the
\emph{backward} halfcylinder.
Throughout the proof,
unless indicated differently,
the domain of
all spaces 
is the backward halfcylinder
$Z^-$ and the target is $\R^n$.
%We denote the bound of the operator $S$ by $c_S$.

%-----------STEP 1---------
\vspace{.1cm}
\noindent
{\bf Step~1.}
{\it The theorem is true for $p=2$.
}

\vspace{.1cm}
\noindent
The operator $A_S$
is unbounded and self-adjoint
on the Hilbert space
$H$ with dense domain $W$.
Denote the negative and positive
eigenspaces of $A_S$
by $E^-$ and $E^+$, respectively.
Note that $\dim E^-<\infty$ 
by remark~\ref{rmk:A}.
By assumption
$A_S$ is injective,
hence zero is not an eigenvalue
and there is a splitting
$H=E^-\oplus E^+$.
Denote by
$P^\pm:H\to E^\pm$
the orthogonal projections
and set $A^\pm=A_S|_{E^\pm}$.
The self-adjoint 
negative semidefinite
operators $A^-$ and $-A^+$ 
generate contraction
semigroups on $E^-$ and $E^+$,
respectively, by the
Hille-Yosida theorem; see
e.g.~\cite[sec.~X.8 ex.~1]{ReSi-II}.
We denote them by
$s\mapsto e^{sA^-}$ and
$s\mapsto e^{-sA^+}$, respectively.
Both are defined for $s\ge0$.
Define the map $K:\R\to\Ll(H)$ by
$$
     K(s)
     =
     \begin{cases}
       -e^{-sA^-}P^-,
       &\text{for $s\le0$,}
       \\
       e^{-sA^+}P^+,
       &\text{for $s>0$.}
     \end{cases}
$$
This function is strongly
continuous for $s\not=0$
and satisfies
\begin{equation}\label{eq:K}
     \Norm{K(s)}_{\Ll(H)}
     \le e^{-\delta\abs{s}}
\end{equation}
where
$\delta=\min\{-\lambda^-,\lambda^+\}>0$.
Here $\lambda^-$ denotes
the largest eigenvalue of $A^-$
and $\lambda^+$
the smallest eigenvalue of $A^+$.
Abbreviate $\R^-=(-\infty,0]$.
For $\eta\in L^2(\R^-,H)$
consider the operator 
$$
     \left(Q\eta\right)(s)
     :=\int_{-\infty}^0
     K(s-\sigma)\eta(\sigma)\; d\sigma.
$$
Now the operator
$Q$ maps $L^2(\R^-,H)$ to
the intersection of Banach spaces
$W^{1,2}(\R^-,H)\cap L^2(\R^-,W)$
and it is a right inverse of $D$.
To prove the latter set $\xi:=Q\eta$.
Then $\xi=\xi^-+\xi^+$,
where
$$
     \xi^+(s)
     =\int_{-\infty}^s
     e^{-(s-\sigma)A^+}
     P^+\eta(\sigma)\; 
     d\sigma,
     \qquad
     \xi^-(s)
     =-\int_s^0
     e^{-(s-\sigma)A^-}
     P^-\eta(\sigma)\; 
     d\sigma.
$$
Calculation shows that
$D\xi^\pm=P^\pm\eta$
pointwise for every $s\in\R^-$.
It follows that
$$
     DQ\eta=D\xi=D\xi^-+D\xi^+
     =P^-\eta+P^+\eta=\eta.
$$
Since the space
$W^{1,2}(\R^-,H)\cap L^2(\R^-,W)$
agrees with $\Ww^{1,2}$,
this proves that $Q$ is a right inverse
of $D$. Hence $Q$ is injective
and $D$ is onto.
To calculate the
kernel of $D$
fix $\xi\in\Ww^{1,2}$
and set $\eta:=D\xi$.
Then by straightforward calculation
\begin{equation*}
\begin{split}
     \left(QD\xi\right)(s)
    &=\left(Q\eta\right)(s)
     =\xi^+(s) +\xi^-(s)\\
    &=\int_{-\infty}^s\frac{d}{d\sigma}\left(
     e^{-(s-\sigma)A^+}
     P^+\xi(\sigma)\right) d\sigma
     -\int_s^0\frac{d}{d\sigma}\left(
     e^{-(s-\sigma)A^-}
     P^-\xi(\sigma)\right) d\sigma\\
    &=P^+\xi(s)
     -e^{-sA^-}P^-\xi(0)+P^-\xi(s)\\
    &=\xi(s)-e^{-sA^-}P^-\xi(0).
\end{split}
\end{equation*}
To obtain the third identity
replace
$\eta(\sigma)$ in 
$\xi^\pm(s)$ by
$\xi^\prime(\sigma)+A_S\xi(\sigma)$
and use the fact that 
$A^\pm P^\pm=P^\pm A_S$.
Now observe that
$\xi\in\ker D$ is equivalent to
$D\xi\in\ker Q$,
because $Q$ is injective.
But $QD\xi=0$ means that
$
     \xi(s)=e^{-sA^-}P^-\xi(0)
$
for every $s\in\R^-$.
This shows that the map
\begin{equation}\label{eq:iso-kernel}
     E^-\to\ker \big[D:\Ww^{1,2}\to L^2\big]:
     v_k\mapsto e^{-s\lambda_k}v_k
\end{equation}
induces an isomorphism.
Here $v_1,\ldots,v_N$
is an orthonormal
basis of $E^-$
consisting of eigenvectors
of $A_S$ 
with eigenvalues 
$\lambda_1,\dots,\lambda_N$
and where $N=\IND(A_S)$.

%-----------STEP 2---------
\vspace{.1cm}
\noindent
{\bf Step~2.}
{\it Fix a constant $p\ge 2$.
Then there is a constant
$c_1=c_1(p,c_S)$ such that
$$
     \Norm{\xi}_{\Ww^{1,p}([-1,0]\times S^1)}
     \le c_1 \left(
     \Norm{D\xi}_{L^p([-3,0]\times S^1)}
     +\Norm{\xi}_{L^2([-3,0]\times S^1)}
     \right)
$$
for
$\xi\in C^\infty([-3,0]\times S^1)$.
Moreover, if 
$\xi\in \Ww^{1,2}$ and
$D\xi\in L^p_{loc}$,
then $\xi\in \Ww^{1,p}_{loc}$.
}

\vspace{.1cm}
\noindent
Choose a smooth compactly supported
cutoff function $\rho:(-2,0]\to [0,1]$
such that $\rho=1$ on $[-1,0]$
and $\norm{\p_s\rho}_\infty\le 2$.
Now apply proposition~\ref{prop:par-linear}
for the backward halfcylinder $Z^-$,
Euclidean space $\R^n$,
covariant derivatives
replaced by partial derivatives,
and with constant $c$
to the function $\rho\xi$
to obtain that
\begin{equation*}%\label{eq:parabolic-basic}
     \Norm{\xi}_{\Ww^{1,p}([-1,0]\times S^1)}
     \le c \left(
     2\Norm{(\p_s-\p_t\p_t)\xi}
     _{L^p([-2,0]\times S^1)}
     +\Norm{\xi}_{L^p([-2,0]\times S^1)}
     \right)
\end{equation*}
for every
$\xi\in C^\infty([-2,0]\times S^1)$.
To obtain the first estimate
in step~3 for the backward 
\emph{half}cylinder
it will be crucial that
the domain on the right hand
side does not extend to the future.
Now write $\p_s-\p_t\p_t=D+S$
and use that the operator
$S:W^{1,p}(S^1)\to L^p(S^1)$
is bounded to obtain that
\begin{equation*}
\begin{split}
     \Norm{\xi}_{\Ww^{1,p}([-1,0]\times S^1)}
    &\le c \Bigl(
     \Norm{D\xi}_{L^p([-2,0]\times S^1)}
     +(1+c_S)\Norm{\xi}_{L^p([-2,0]\times S^1)}\\
    &\quad
     +c_S\Norm{\p_t\xi}_{L^p([-2,0]\times S^1)}
     \Bigr)\\
\end{split}
\end{equation*}
for every
$\xi\in C^\infty([-2,0]\times S^1)$
and some constant $\tilde{c}=\tilde{c}(p,c,c_S)$.
Now integrate the estimate in
lemma~\ref{le:plus-minus}
over $s\in[-2,0]$
and chose $\delta>0$
sufficiently small
in order to throw
the arising term $\p_t\p_t\xi$ 
to the left hand side.
It follows that
\begin{equation}\label{eq:parabolic-basic-2}
     \Norm{\xi}_{\Ww^{1,p}([-1,0]\times S^1)}
     \le \tilde{c}\Bigl(
     \Norm{D\xi}_{L^p([-2,0]\times S^1)}
     +\Norm{\xi}_{L^p([-2,0]\times S^1)}\Bigr)
\end{equation}
for every
$\xi\in C^\infty([-2,0]\times S^1)$
and some constant $\tilde{c}=\tilde{c}(p,c,c_S)$.
It remains to replace
the $L^p$ norm of $\xi$
by the $L^2$ norm.
Since $p\ge2$, there
is the Sobolev inequality
$
     \norm{\xi}_{L^p}\le 
     c_p\norm{\xi}_{W^{1,2}}
$
for $\xi\in W^{1,2}$; see
e.g.~\cite[theorem~8.5~(ii)]{LIEB-LOSS}
for the domain $\R^2$.
The first step is
to replace the last term
in~(\ref{eq:parabolic-basic-2})
according to the Sobolev
inequality.
Then use~(\ref{eq:parabolic-basic-2})
with $p=2$ and on \emph{increased} domains
to complete the proof
of the estimate in step~2
(use H\"older's inequality
to estimate the $L^2$ norm 
of $D\xi$ by the $L^p$ norm).

To conclude
the proof of step~2
assume $\xi\in\Ww^{1,2}$,
then of course $\xi\in L^2$
and $D\xi\in L^2$.
If in addition
$D\xi$ is locally $L^p$ integrable,
then the estimate of step~2
which we just proved shows
that $\xi\in\Ww^{1,p}_{loc}$.

%-----------STEP 3---------
\vspace{.1cm}
\noindent
{\bf Step~3.}
{\it Fix a constant $p\ge2$
and consider the norm
$$
     \Norm{\xi}_{2;p}
     =\left(\int_{-\infty}^0
     \Norm{\xi(s,\cdot)}^p_{L^2(S^1)} ds
     \right)^{1/p}.
$$
Then there exist constants
$c_2$ and $c_3$ both depending
on $p$ and $c_S$ such that
the following is true. If
$\xi\in\Ww^{1,2}$ and $D\xi\in L^p$,
then $\xi\in\Ww^{1,p}$ and
$$
     \Norm{\xi}_{\Ww^{1,p}}
     \le c_2 \left(
     \Norm{D\xi}_{L^p}+\Norm{\xi}_{2;p}
     \right)
     ,\qquad
     \Norm{QD\xi}_{2;p}
     \le c_3
     \Norm{D\xi}_{L^p}.
$$
}

\vspace{.1cm}
\noindent
Fix $\xi\in\Ww^{1,2}$
such that $D\xi\in L^p$.
Then $\xi\in\Ww^{1,p}_{loc}$
by step~2.
Moreover,
the estimate of step~2
implies that
$$
     \Norm{\xi}_{\Ww^{1,p}([k,k+1]\times S^1)}^p
     \le 3^{p/2-1}2^p{c_1}^p
     \int_{k-2}^{k+1}\left(
     \Norm{D\xi}_{L^p(S^1)}^p
     +\Norm{\xi}_{L^2(S^1)}^p\right) ds
$$
for every integer $k<0$;
see~\cite[lemma~2.4 step~3]{Sa-FLOER}
for details.
Now take the
sum over all such $k$
to obtain the 
first estimate of step~3.

Next observe that
$\eta:=D\xi$
lies in $L^2(\R^-,H)$
and in $L^p(\R^-,H)$.
Here $H=L^2(S^1)$ and
we used that
by H\"older's inequality
\begin{equation}\label{eq:2-p-est}
     \Norm{\cdot}_{L^2(S^1)}
     \le
     \Norm{\cdot}_{L^p(S^1)}.
\end{equation}
Since $\eta$ is in the domain
$L^2(\R^-,H)$
of the operator $Q$
from step~1,
we obtain
$$
     QD\xi=Q\eta=K*\eta.
$$
Now Young's inequality
applies to $K*\eta$,
because
$\eta\in L^p(\R^-,H)$.
Hence
\begin{equation}\label{eq:y_2}
     \Norm{K*\eta}_{2;p}
     \le\Norm{K}_{L^1(\R^-,\Ll(H))}
     \Norm{\eta}_{L^p(\R^-,H)}
     \le C
     \Norm{D\xi}_{L^p}
\end{equation}
where $C$ depends
on the constant
$\delta$ in
estimate~(\ref{eq:K})
for the norm of $K$;
see~\cite{Sa-FLOER}.
The last step
uses~(\ref{eq:2-p-est}) again.
This proves the second
estimate of step~3.

It remains to prove 
that $\xi\in\Ww^{1,p}$.
The two estimates
of step~3
imply that
$$
     \Norm{\xi}_{\Ww^{1,p}}
     \le c_2 \left((1+c_3)
     \Norm{D\xi}_{L^p}
     +\Norm{\xi-QD\xi}_{2;p}
     \right).
$$
To see that
the right hand side
is finite recall
that $D\xi\in L^p$
by assumption
and $\xi-QD\xi$
lies in the kernel of
$D:\Ww^{1,2}\to L^2$
by (the proof of) step~1.
Moreover,
by~(\ref{eq:iso-kernel})
every element
of this kernel is
a finite sum of functions
of the form 
$\xi_k=e^{-s\lambda_k}v_k$
and  $\norm{\xi_k}_{2;p}<\infty$
by calculation.

%-----------STEP 4---------
\vspace{.1cm}
\noindent
{\bf Step~4.}
{\it The theorem is true for $p>2$.
}

\vspace{.1cm}
\noindent
Fix $p>2$ and set
$X^-:=\ker[D:\Ww^{1,2}\to L^2]$.
Then the linear operator
$$
     \pi:
     \Ww^{1,p}\to
     \left(X^-,\norm{\cdot}_{2;p}\right),\qquad
     \xi\mapsto\xi-QD\xi,
$$
is well defined, bounded and of
finite rank, hence compact.
To prove this
observe that $\pi$ is
well defined on the
dense subset $C_0^\infty(Z^-)$ 
of $\Ww^{1,p}$. Since
$C_0^\infty(Z^-)$ is
also dense in $\Ww^{1,2}$,
step~1 shows that
$\xi-QD\xi\in X^-$.
To see that $\pi$
is bounded on $C_0^\infty(Z^-)$
let $\xi\in C_0^\infty(Z^-)$.
Then
$$
     \Norm{\pi\xi}_{2;p}
     =\Norm{\xi-QD\xi}_{2;p}
     \le\Norm{\xi}_p
     +c_3\Norm{D\xi}_p
     \le(1+c_3c_4)\Norm{\xi}_{\Ww^{1,p}}
$$
by definition of $\pi$,
the triangle inequality,
the estimate~(\ref{eq:2-p-est}),
and the second estimate of
step~3.
The last inequality
follows from the estimate
$$
     \norm{D\xi}_{L^p}
     \le \norm{\p_s\xi}_{L^p}
     +\norm{\p_t\p_t\xi}_{L^p}
     +\norm{S\xi}_{L^p}
     \le c_4\Norm{\xi}_{\Ww^{1,p}}
$$
with suitable constant $c_4=c_4(p,c_S)$.
Here we used that
$\norm{S}_p\le c_S(\norm{\xi}_p+\norm{\p_t\xi}_p)$
by boundedness of $S$.
Now being bounded on a dense
subset the operator $\pi$ extends
to a bounded linear operator
on $\Ww^{1,p}$.
The rank of
$\pi$ is finite,
because the dimension
of its target $X^-$
is equal to the Morse index
of $A_S$ by step~1.

To prove that
$D:\Ww^{1,p}\to L^p$ is onto
we show first that the range is closed
and then that it is dense.
By the two estimates
of step~3 we have that
$$
     \Norm{\xi}_{\Ww^{1,p}}
     \le c_2 \left((1+c_3)
     \Norm{D\xi}_{L^p}+\Norm{\pi\xi}_{2;p}
     \right)   
$$
for every $\xi\in C_0^\infty$,
hence for every 
$\xi\in\Ww^{1,p}$
by density.
Since $\pi$ is compact,
the range of $D$ is closed
by the abstract closed range lemma.
To prove density of the range
fix
$
     \eta\in
     L^p\cap L^2
$
and note that the subset
$L^p\cap L^2$ is dense in $L^p$,
because it contains
the dense subset
$C_0^\infty$ of $L^p$.
Now by surjectivity
of $D$ in the case $p=2$ (step~1)
and since $\eta\in L^2$,
there exists an element
$\xi\in\Ww^{1,2}$
such that $D\xi=\eta$.
But then
$\xi\in\Ww^{1,p}$
by step~3,
because $D\xi=\eta\in L^p$
by the choice of $\eta$.
Hence
$\eta$ is in the range of
$D:\Ww^{1,p}\to L^p$.
This proves
theorem~\ref{thm:onto}.
\end{proof}

%%%%%%%%%%%%%%%%%%%%%%%%%%%%%%%%%%%%%%%%%%%%%%%%%%%%%
\begin{proof}[Proof of proposition~\ref{prop:onto}]
The arguments in the proof of 
proposition~\ref{prop:kernel-smooth}
show that the kernel of
$\Dd_{u^-}:\Ww^{1,p}\to\Ll^p$ 
is equal to $X^-$
and $X^-$ does not depend on $p$.
On the other hand, for $p=2$ 
the dimension of the kernel
is equal to the Morse index of $x$
by theorem~\ref{thm:onto}.
Surjectivity of $\Dd_{u^-}$
follows in three stages.

\vspace{.1cm}
\noindent
{\sc The stationary case.}
Consider the stationary
solution $u^-(s,t)=x(t)$,
then $\Dd_x$ is onto
by theorem~\ref{thm:onto}.
To see this represent
$\Dd_x$ with respect
to an orthonormal frame
along $x$; see
section~\ref{sec:linearized}.

\vspace{.1cm}
\noindent
{\sc The nearby case.}
Surjectivity
is preserved under small
perturbations with respect
to the operator norm.
Moreover, the operator
family $\Dd_{u^-}$ depends
continuously
on $u^-$ with respect to the
$\Ww^{1,p}$ topology
(here we use $p>2$).
Hence, if $u^-\in\Mm^-(x;\Vv)$
satisfies $u^-=\exp_x(\eta)$
and $\norm{\eta}_{\Ww^{1,p}}$
is sufficiently small,
it follows that
$\Dd_{u^-}$ is onto.

\vspace{.1cm}
\noindent
{\sc The general case.}
Given $u\in\Mm^-(x;\Vv)$
and $\sigma<0$,
consider the shifted solution
$u^\sigma(s,t):=u(s+\sigma,t)$.
Then
$\left(\Dd_{u}\xi\right)^\sigma
=\Dd_{u^\sigma}\xi^\sigma$
by shift invariance
of the linear heat equation.
This means that
surjectivity of $\Dd_{u}$
is equivalent
to surjectivity 
of $\Dd_{u^\sigma}$.
But the latter is true by the
nearby case above, because
$u^\sigma$ converges to
$x$ in the $\Ww^{1,p}$ topology
as $\sigma\to-\infty$.
To see this apply
theorem~\ref{thm:exp-decay}~(B)
on exponential decay to $u$
and note that 
$u^\sigma(0,t)=u(\sigma,t)$.
\end{proof}

%%%%%%%%%%%%%%%%%%%%%%%%%%%%%%%%%%%%%%%%%%%%%%%%%%%%%
\begin{proof}
[Proof of proposition~\ref{prop:unstable-modspace}]
The proof follows the same (standard)
pattern as the proof
of theorem~\ref{thm:IFT};
see also the introduction to
section~\ref{sec:IFT}.
The first key step is the definition
of a Banach manifold
$\Bb=\Bb^{1,p}_x$
of backward halfcylinders
emanating from $x$
such that $\Bb$
contains the moduli space
$\Mm^-(x;\Vv)$ whenever $p>2$.
The second key step
is to define
a smooth map
$\Ff_{u^-}$ between Banach spaces
as in~(\ref{eq:Ff_u}).
Its significance lies in the
fact that
its zeroes correspond precisely
to the elements of the moduli space
near $u^-$
and that $d\Ff_{u^-}(0)=\Dd_{u^-}$.
By proposition~\ref{prop:onto}
this operator
is surjective and the
dimension of its
kernel is equal to the 
Morse index of $x$.
Hence $\Mm^-(x;\Vv)$
is locally near $u^-$
modeled on $\ker\Dd_{u^-}$
by the implicit function
theorem for Banach spaces.
To see that the moduli space
is a contractible manifold
observe that backward time shift
provides a contraction
\begin{equation*}
\begin{split}%{gathered}%
     h:\Mm^-(x;\Vv)\times[0,1]
    &\to\Mm^-(x;\Vv)
     \\
     (u,r)
    &\mapsto 
     u(\cdot -\sqrt{r/(1-r)},\cdot)
\end{split}%{gathered}%
\end{equation*}
onto the stationary solution $x$, that is
$h$ is continuous and
satisfies $h(u,0)=u$ and $h(u,1)=x$
for every $u\in\Mm^-(x;\Vv)$.
\end{proof}

%%%%%%%%%%%%%%%%%%%%%%%%%%%%%%%%%%%%%%%%%%%%%%%%%%%%%
\begin{proof}[Proof of
theorem~\ref{thm:unstable-mf}]
We abbreviate
$\Mm^-=\Mm^-(x;\Vv)$
and $W^u=W^u(x;\Vv)$.
Recall that the moduli space $\Mm^-$
is a smooth manifold of
dimension equal to
$\IND_\Vv(x)$ by
proposition~\ref{prop:unstable-modspace}
and, furthermore, by definition 
the unstable manifold $W^u$
is equal to the image of the
evaluation map $ev_0:\Mm^-\to\Ll M$.
We use the notation $ev_0(u)=:u_0$,
hence $u_0(t)=u(0,t)$.
It remains
to prove that $ev_0$
and its linearization
are injective and that $ev_0$
is a homeomorphism onto $W^u$.

To prove that
$ev_0$ is injective
let $u,v\in\Mm^-$
and assume that $ev_0(u)=ev_0(v)$,
that is $u_0=v_0$.
Hence $u=v$ by 
theorem~\ref{thm:UC-NL-noncompact}
on backward unique
continuation.

We prove that the linearization
$d(ev_0)_u$ of $ev_0$ at $u\in\Mm^-$
is injective.
Let $\xi,\eta\in T_u\Mm^-$.
Hence $\Dd_u\xi=0=\Dd_u\eta$ by
proposition~\ref{prop:unstable-modspace}.
Now assume that
$d(ev_0)_u\xi=d(ev_0)_u\eta$.
This means that $\xi_0=\eta_0$.
Therefore $\xi=\eta$ by application of
proposition~\ref{prop:UC-L}~{\rm (a)}
on linear unique continuation
to the vector field $\xi-\eta$.

We prove that $ev_0:\Mm^-\to\Ll M$ 
is a homeomorphism onto its image.
Fix $u\in\Mm^-$ and 
recall that every immersion is locally an
embedding.
Hence there is an open disk 
$D$ in $\Mm^-$ containing $u$
such that $ev_0|_D:D\to \Ll M$
is an embedding.
It remains to prove that there is
an open neighborhood $U$ of $u_0=ev_0(u)$
in $\Ll M$ such that
\begin{equation}\label{eq:UU}
     U\cap W^u=U\cap ev_0(D).
\end{equation}
Now there are two cases.
In case one $u$ is
constant in $s$ and therefore $u\equiv x$.
Here we exploit the (negative) gradient flow property
that the restricted function $\Ss_\Vv|_{W^u}$
takes on its maximum precisely at the critical point $x$.
Case two is the complementary case
in which $u$ depends on $s$.
Here we use a convergence argument
based on the compactness 
theorem~\ref{thm:compactness-gradbound}.

\vspace{.1cm}
\noindent
{\bf Case 1.} ($u\equiv x$)
Set $c=\Ss_\Vv(x)$, then
a set $U$ having the desired 
property~(\ref{eq:UU})
is given by
$$
     U:=\{c- \eps 
     < \Ss_\Vv < c+ \eps\},
$$
where
$$
     2\eps:= \min_{u\in\cl D\setminus D}
      \left(\Ss_\Vv(x)-\Ss_\Vv(u_0)\right).
$$
Here the compact set $\cl D\setminus D$ is the
topological boundary of the open disc $D$.
Note that the elements of
$W^u\setminus ev_0(D)$ 
have action at most $c-2\eps$.

\vspace{.1cm}
\noindent
{\bf Case 2.} ($u\not\equiv x$)
Assume by contradiction that
there is no $U$ which satisfies~(\ref{eq:UU}).
Then there is a sequence
$\gamma^\nu\in W^u\setminus ev_0(D)$
that converges to $u_0$ in $\Ll M$
as $\nu\to\infty$.
Note that $\gamma^\nu=ev_0(u^\nu)$
where $u^\nu\in\Mm^-\setminus D$.
In particular, each heat trajectory $u^\nu$
converges in backward time asymptotically to $x$.
Thus we obtain that
$$
     \sup_{s\in(-\infty,0]}\Ss_\Vv(u^\nu_s)
     \le \Ss_\Vv(x)=:c
$$
for every $\nu$.
Together with the energy identity
this implies that
\begin{equation*}
\begin{split}
     E(u^\nu)
    &=\Ss_\Vv(x)-\Ss_\Vv(u^\nu_0)\\
    &=c-\tfrac12 \Norm{\p_tu^\nu_0}_{L^2(S^1)}^2
     +\Vv(u^\nu_0)\\
    &\le c+C_0
\end{split}
\end{equation*}
where $C_0>1$ is the constant in axiom~(V0).
Adapting the proofs
of the apriori theorem~\ref{thm:apriori-t}
and the gradient bound theorem~\ref{thm:gradient}
to cover the case of 
backward \emph{half}cylinders
it follows that there is 
a constant $C=C(c,\Vv)>0$ such that
$$
     \Norm{\p_tu^\nu}_\infty
     \le C,
$$
and
$$
     \Norm{\p_su^\nu}_\infty
     \le C \sqrt{E(u^\nu)}
     \le C(c+C_0)
$$
for every $\nu$.
Here the norms are
taken on the domain $(-\infty,0]\times S^1$.
Adapting also the proof
of the compactness 
theorem~\ref{thm:compactness-gradbound}
we obtain -- in view of
the uniform apriori $L^\infty$ bounds 
for $\p_tu^\nu$ and $\p_su^\nu$ just derived --the existence of a smooth heat flow
solution $v:(-\infty,0]\times S^1\to M$
and a subsequence, still denoted by $u^\nu$,
such that $u^\nu$ converges to $v$
in $C^\infty_{loc}$.
In particular, this implies that
$u_0=v_0$ and that
$\p_tu^\nu_s$ converges to $\p_tv_s$, 
as $\nu\to\infty$,
uniformly with all derivatives on $S^1$
and for each $s$.
This and our earlier uniform action
bound for $u^\nu_s$ show that
$$
     \Ss_\Vv(v_s)
     =\lim_{\nu\to\infty}\Ss_\Vv(u^\nu_s)
     \le c
$$
for every $s$.
To summarize,we have two backward flow lines $u$ and $v$
defined on $(-\infty,0]\times S^1$
along which the action is bounded from above by $c$
and which coincide along the loop $u_0=v_0$.
Hence theorem~\ref{thm:UC-NL-noncompact}~(B)
on backward unique continuation
asserts that $u=v$.
Because $u^\nu$ converges to $v$
in $C^\infty_{loc}$,
this means that $u^\nu\in D$
whenever $\nu$ is sufficiently large.
For such $\nu$ we arrive at the
contradiction 
$\gamma^\nu=ev_0(u^\nu)\in ev_0(D)$
and this proves theorem~\ref{thm:unstable-mf}.
\end{proof}

%%%%%%%%%%%%%%%%%%%%%%%%%%%%%%%%%%%%%%%%%%%%%%
%%%%%%%%%%%%%%%% Subsection %%%%%%%%%%%%%%%%%%
%%%%%%%%%%%%%%%%%%%%%%%%%%%%%%%%%%%%%%%%%%%%%%
\subsection{The Morse complex}
\label{subsec:Morse-complex}

Assume that the action
$\Ss_V$ is a Morse function on the loop space.
This is true for a generic
potential $V\in C^\infty(S^1\times M)$
by~\cite{Joa-INDEX}.
Fix a regular value $a$ of $\Ss_V$
and, furthermore, for each critical point
$x\in\Pp^a(V)$ fix an {\bf orientation}
$\langle x\rangle$
of the tangent space at $x$
to the (finite dimensional)
unstable manifold $W^u(x;V)$.
By $\nu=\nu(V,a)$ we
denote a {\bf choice of orientations}
for all $x\in\Pp^a(V)$.
The {\bf Morse chain groups}
are the $\Z$-modules
$$
     \CM_k^a=\CM^a_k(V,\nu)
     :=\bigoplus_{\scriptstyle x\in \Pp^a(V)\atop 
                    \scriptstyle \IND_V(x)=k} \Z\,\langle x\rangle,
     \qquad k\in \Z.
$$
These modules are finitely generated
and graded by the Morse index.
We set $C_k^a=\{0$\} whenever the direct sum is taken 
over the empty set. We define 
$$
     \CM^a_*
     :=\bigoplus_{k=0}^N \CM^a_k
$$
where $N$ is the largest Morse index
of an element of the finite set $\Pp^a(V)$.

Set $\Vv(x)=\int_0^1 V_t(x(t))\, dt$
and note that $\Vv$
satisfies~{\rm (V0)--(V3)}.
Now consider the associated
set of admissible perturbations 
$\Oo^a$ of $\Vv$
defined by~(\ref{eq:OOa})
and the dense subset $\Oo^a_{reg}$
of regular perturbations
provided by theorem~\ref{thm:transversality}.
(The ambient Banach space $Y$ given by~(\ref{eq:Y})
provides the metric on $\Oo^a$.)
Now for any $v\in\Oo^a_{reg}$
we have the following key facts:
The functionals $\Ss_V$ and 
$\Ss_{V+v}$ coincide near their critical points
and have the same sublevel set
with respect to $a$.
Moreover, the perturbed
functional $\Ss_{V+v}$
is Morse-Smale below level $a$.
Here and throughout
we sometimes denote $\Vv+v$ 
in abuse of notation by $V+v$
to emphasize that we are actually
perturbing a \emph{geometric} potential.

To define the Morse boundary operator 
$\p$ on $\CM^a_*$
it suffices to define
it on the set of generators $\Pp^a(V)$
and then extend linearly.
Fix a regular perturbation $v\in\Oo^a_{reg}$.
Note that each chosen orientation 
$\langle x\rangle$
orients the \emph{perturbed}
unstable manifold $W^u(x;V+v)$.
This is because the tangent spaces
at $x$ to $W^u(x;V)$ and $W^u(x;V+v)$
coincide ($v$ is not supported near $x$)
and unstable manifolds are finite dimensional and
contractible, hence orientable, 
by theorem~\ref{thm:unstable-mf}.
Now given two critical points $x^\pm$
of action less than $a$,
consider the heat moduli space $\Mm(x^-,x^+;V+v)$
of solutions $u$ of the heat equation~(\ref{eq:heat})
with $\Vv$ replaced by $\Vv+v$ and subject to the boundary
condition~(\ref{eq:limit}).
Jointly with D.~Salamon 
we proved in~\cite[ch.~11]{SaJoa-LOOP}
that a choice of orientations
for all unstable manifolds
determines a system of
{\bf coherent orientations}
on the heat moduli spaces
in the sense of 
Floer--Hofer~\cite{FLOER-HOFER-ORIENTATION}.

From now on we assume that $x^\pm$
are of \emph{Morse index difference one}.
In this case $\Mm(x^-,x^+;V+v)$
is a smooth 1-dimensional manifold
by theorem~\ref{thm:IFT}
and its quotient
$\Mm(x^-,x^+;V+v)/\R$ 
by the (free) time shift action
consists of finitely many points
by proposition~\ref{prop:finite-set}.
For $[u]\in\Mm(x^-,x^+;V+v)/\R$
time shift naturally induces an orientation
of the corresponding component of $\Mm(x^-,x^+;V+v)$;
compare~\cite{SaJoa-LOOP} and note 
that $\p_su\in\ker\Dd_u=\det(\Dd_u)$.
We set $n_u=+1$, if the time shift
orientation coincides with the
coherent orientation, 
and we set $n_u=-1$ otherwise. One calls 
$n_u$ the {\bf characteristic sign} of 
the heat trajectory $u$.
It depends on the orientations
$\langle x^-\rangle$ and $\langle x^+\rangle$.
Consider the (finite) sum
of characteristic signs
corresponding to all heat trajectories
from $x^-$ to $x^+$, namely
$$
     n(x^-,x^+)
     :=\sum_{[u]\in \Mm(x^-,x^+;V+v)/\R} n_u.
$$
If the sum runs over the
empty set, we set $n(x^-,x^+)=0$.
For $x\in\Pp^a(V)$
%of Morse index $k$ 
define the {\bf Morse boundary operator} 
{\boldmath $\p=\p(V,a,\nu,v)$}
by the (finite) sum
$$
     \p x
     :=\sum_{\scriptstyle y\in\Pp^a(V)
               \atop 
               \scriptstyle \IND_V(x)-\IND_V(y)=1}
     n(x,y)\, y.
$$
Set $\p x=0$, 
if the sum runs over the empty set.

\begin{proof}[Proof of theorem~\ref{thm:d^2=0}.]
The main result of~\cite{SaJoa-LOOP}
is that for each heat flow line
$u$ between critical points of
Morse index difference one
there is precisely one
Floer trajectory in the loop
space of the cotangent bundle
between corresponding critical points
of the symplectic action functional;
see~\cite[cor.~10.4~(ii)]{SaJoa-LOOP}.
Moreover, we proved that
the characteristic sign of $u$ 
coincides with the characteristic sign 
of the corresponding Floer trajectory.
In other words, both chain complexes
are \emph{equal} (up to natural identification).
Hence $\p\circ\p=0$
follows immediately from the
well known analogue
for the Floer boundary operator;
see e.g.~\cite{FLOER5,Sa-FLOER}.
(The required, but in case of 
our nongeometric potentials 
$\Vv$ slightly nonstandard
apriori $C^0$ estimate 
is provided by~\cite[thm.~5.1]{SaJoa-LOOP}
with $\eps=1$.)

The fact that heat flow
homology is independent
of the choice of orientations
$\nu$ and the regular perturbation $v$
follows from the homotopy
argument which is standard in Floer theory;
see again e.g.~\cite{FLOER5,Sa-FLOER}.
Here it is crucial to observe
that our admissible perturbations $v\in\Oo^a$
are supported away from the 
level set $\{\Ss_V=a\}$
on which the $L^2$ gradient of $\Ss_V$
(hence of $\Ss_{V+v}$) is nonvanishing and
inward pointing with respect to $\Ll^a M$.
Likewise independence
follows by theorem~\ref{thm:isomorphism}.
\end{proof}

\begin{appendix}
%%%%%%%%%%%%%%%%%%%%%%%%%%%%%%%%%%%%%%%%%%%%%%
%%%%%%%%%%%%%%%%%%%%%%%%%%%%%%%%%%%%%%%%%%%%%%
%%%%%%%%%%%%%%% Subsection  %%%%%%%%%%%%%%%%%%
%%%%%%%%%%%%%%%%%%%%%%%%%%%%%%%%%%%%%%%%%%%%%%
%%%%%%%%%%%%%%%%%%%%%%%%%%%%%%%%%%%%%%%%%%%%%%
\section{Parabolic regularity}
\label{sec:REG}

Proofs of all results collected in this appendix
are given in~\cite{Joa-PARABOLI}, unless
specified differently.
By $\HH^-$ we denote
the closed lower half plane,
that is the set of pairs of reals $(s,t)$
with $s\le 0$.
In this section,
unless specified differently,
all maps are real-valued and the domains of 
the various Banach spaces which appear
are understood to be
either open subsets
$\Omega$ of $\R^2$ or $\HH^-$
or (cylindrical subsets of) the cylinder $Z=\R\times S^1$.
To deal with the heat equation
it is useful to consider the
anisotropic Sobolev spaces $W_p^{k,2k}$.
We call them \emph{parabolic Sobolev spaces}
and denote them by $\Ww^{k,p}$.
For constants $p\ge 1$ and integers $k\ge 0$
these spaces are defined as follows.
Set $\Ww^{0,p}=L^p$
and denote by $\Ww^{1,p}$
the set of all $u\in L^p$
which admit weak derivatives
$\p_su$, $\p_tu$, and $\p_t\p_tu$ in $L^p$.
For $k\ge2$ define
$$
     \Ww^{k,p}=\{ u\in \Ww^{1,p}\mid
     \p_su,\p_tu,\p_t\p_tu\in\Ww^{k-1,p}\}
$$
where the derivatives are again meant in
the weak sense.
The norm
\begin{equation}\label{eq:parabolic-Wk}
     \Norm{u}_{\Ww^{k,p}}
     :=\left(
     \int\int \sum_{2\nu+\mu\le 2k}
     \Abs{\p_s^\nu\p_t^\mu u(s,t)}^p\, dt ds
     \right)^{1/p}
\end{equation}
gives $\Ww^{k,p}$ the structure
of a Banach space.
Here $\nu$ and $\mu$
are nonnegative integers.
For $k=1$ we obtain that
$$
     \Norm{u}_{\Ww^{1,p}}^p
     =\Norm{u}_p^p
     +\Norm{\p_su}_p^p
     +\Norm{\p_tu}_p^p
     +\Norm{\p_t\p_tu}_p^p
$$
and occasionally we abbreviate $\Ww=\Ww^{1,p}$.
Note the difference to
(standard) Sobolev space $W^{k,p}$
where the norm is given by
$$
     \Norm{u}_{k,p}^p
     =\sum_{\nu+\mu\le k}
     \Norm{\p_s^\nu\p_t^\mu u}_p^p.
$$
A \emph{rectangular domain} 
is a set of the form $I\times J$
where $I$ and $J$ are bounded intervals.
For rectangular
(or more generally Lipschitz) domains $\Omega$
the parabolic Sobolev spaces $\Ww^{k,p}$
can be identified with the closure
of $C^\infty(\overline{\Omega})$
with respect to the
$\Ww^{k,p}$ norm;
see e.g.~\cite[appendix~B.1]{MS}.
Similarly, we define the $\Cc^k$ 
(or $\Ww^{k,\infty}$) norm by
\begin{equation}\label{eq:parabolic-Ck}
     \Norm{u}_{\Cc^k}
     :=\sum_{2\nu+\mu\le 2k}
     \Norm{\p_s^\nu \p_t^\mu u}_{\infty}.
\end{equation}

The following parabolic analogue of the Calderon-Zygmund
inequality is used to prove theorem~\ref{thm:local-regularity}
on local regularity.

\begin{theorem}[Fundamental $L^p$ estimate, 
{\cite{SaJoa-LOOP}}]
\label{thm:kerD-CalZyg}
For every $p>1$, there is a constant 
$c=c(p)$ such that
$$
     \norm{\p_sv}_p
     +\norm{\p_t\p_tv}_p
     \le c
     \norm{\p_sv-\p_t\p_tv}_p
$$
for every $v\in C_0^\infty(\R^2)$.
The same statement is true for
the domain $\HH^-$.
\end{theorem}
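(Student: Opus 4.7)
My plan is to establish the estimate on $\R^2$ first, by Fourier analysis, and then reduce the half-plane case to the whole-plane case by extension. For $v\in C_0^\infty(\R^2)$ set $f:=\p_sv-\p_t\p_tv$. Taking the Fourier transform with dual variables $\xi$ to $s$ and $\tau$ to $t$ gives $(i\xi+\tau^2)\hat v=\hat f$, hence
\[
\widehat{\p_sv}=m_1(\xi,\tau)\hat f, \qquad \widehat{\p_t\p_tv}=m_2(\xi,\tau)\hat f,
\]
where $m_1(\xi,\tau)=i\xi/(i\xi+\tau^2)$ and $m_2(\xi,\tau)=-\tau^2/(i\xi+\tau^2)$. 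Both symbols are bounded by one (since $|i\xi|^2,\tau^4\le\xi^2+\tau^4=|i\xi+\tau^2|^2$), smooth off the origin, and invariant under the parabolic rescaling $(\xi,\tau)\mapsto(\la^{-2}\xi,\la^{-1}\tau)$ dictated by the heat operator.

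The desired $L^p$ bound for $1<p<\infty$ then follows by identifying the two Fourier multipliers with parabolic Calder\'on-Zygmund operators. Concretely, one inverts $\p_s-\p_t\p_t$ by convolution with the forward heat kernel $G(s,t)=(4\pi s)^{-1/2}\exp(-t^2/4s)$ for $s>0$ and $G\equiv0$ for $s\le 0$, so that $v=G*f$ and $\p_sv=(\p_sG)*f$, $\p_t\p_tv=(\p_t\p_tG)*f$. The kernels $\p_sG$ and $\p_t\p_tG$ are homogeneous of degree $-3$ with respect to the parabolic distance $\rho(s,t):=\max(|s|^{1/2},|t|)$, satisfy the cancellation property on parabolic balls about the origin, and obey the corresponding H\"ormander smoothness estimate. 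The parabolic Calder\'on-Zygmund theorem of Fabes-Rivi\`ere then yields the $L^p$ bound with a constant $c=c(p)$ for every $1<p<\infty$.

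For the half-plane version, if $v\in C_0^\infty(\HH^-)$ has support in the open half-plane $\{s<0\}$, then its zero extension $\tilde v\in C_0^\infty(\R^2)$ satisfies $\Norm{\p_s\tilde v}_p=\Norm{\p_sv}_p$ and analogously for $\p_t\p_tv$ and $\p_s\tilde v-\p_t\p_t\tilde v$, so the $\R^2$ estimate applied to $\tilde v$ immediately implies the estimate on $\HH^-$. A standard density argument (cutting off in $s$) covers those test functions whose support is allowed to touch the boundary $\{s=0\}$.

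The hard part is the anisotropy. Classical isotropic Calder\'on-Zygmund theory does not apply directly, because the heat operator couples one $s$-derivative to two $t$-derivatives and the natural dilations are $(s,t)\sim(\la^2,\la)$ rather than $(s,t)\sim(\la,\la)$. The main technical work thus lies in verifying, with the parabolic metric $\rho$ replacing the Euclidean one, the size, cancellation and smoothness conditions on $\p_sG$ and $\p_t\p_tG$ that are needed to invoke the parabolic singular-integral machinery; equivalently, in checking the anisotropic Mikhlin-H\"ormander conditions on the symbols $m_1,m_2$. Once this anisotropic framework is in place the argument is short, and on a rectangular or cylindrical subdomain the estimate transfers by the usual localization and cutoff procedure.
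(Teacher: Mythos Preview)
The paper does not give a proof of this statement; it is quoted from \cite{SaJoa-LOOP}, so there is no in-paper argument to compare against. Your approach for $\R^2$ via parabolic Calder\'on--Zygmund theory (equivalently, the anisotropic Mikhlin--H\"ormander theorem applied to the degree-zero parabolic multipliers $m_1,m_2$) is the standard one and is correct.

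There is, however, a real gap in your reduction to $\HH^-$. The density step fails: if $v\in C_0^\infty(\HH^-)$ does not vanish on $\{s=0\}$, it cannot be approximated in $\Ww^{1,p}(\HH^-)$ by functions supported in the open half-plane, because such functions have zero trace at $s=0$ while $v$ does not. Concretely, multiplying by a cutoff $\chi_\eps(s)$ that vanishes near $s=0$ introduces $\chi_\eps'v$ into $\p_sv$, and $\norm{\chi_\eps'v}_p\sim\eps^{1/p-1}\to\infty$. Translation does not help either, since the translated function is not defined on all of $\HH^-$ without a further discontinuous extension.

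The fix is already implicit in your setup: extend $f:=\p_sv-\p_t\p_tv$ by zero to $\tilde f\in L^p(\R^2)$, not $v$. Since the heat kernel $G$ is supported in $\{s\ge0\}$, causality gives $(G*\tilde f)(s,\cdot)=v(s,\cdot)$ for every $s\le0$ (both sides solve the heat equation on $\HH^-$ with the same data and vanish for $s\ll0$). The $\R^2$ multiplier bound, extended by density from $C_0^\infty$ to $L^p$, then yields
\[
\norm{\p_sv}_{L^p(\HH^-)}\le\norm{\p_s(G*\tilde f)}_{L^p(\R^2)}\le c\norm{\tilde f}_{L^p(\R^2)}=c\norm{f}_{L^p(\HH^-)},
\]
and likewise for $\p_t\p_tv$. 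The point is that the final-time slice $\{s=0\}$ is the ``free'' boundary for the forward heat operator: no boundary condition is needed there, and this is precisely what the causality of $G$ encodes.
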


\begin{theorem}[Local regularity]
\label{thm:local-regularity}
Fix a constant $1<q<\infty$,
an integer $k\ge0$, and an open
subset $\Omega\subset\HH^-$.
Then the following is true.
\begin{itemize}

\item[\rm a)]
If $u\in L^1_{loc}(\Omega)$ and
$f\in\Ww^{k,q}_{loc}(\Omega)$ satisfy
\begin{equation}\label{eq:weak}
     \int_{\Omega}
     u\left(-\p_s\phi-\p_t\p_t\phi\right)
     =\int_{\Omega} f\phi
\end{equation}
for every
$\phi\in C_0^\infty(\INT\,\Omega)$,
then $u\in\Ww^{k+1,q}_{loc}(\Omega)$.

\item[\rm b)]
If $u\in L^1_{loc}(\Omega)$ and
$f,h\in\Ww^{k,q}_{loc}(\Omega)$ satisfy
\begin{equation}\label{eq:weak-divergence}
     \int_{\Omega}
     u\left(-\p_s\phi-\p_t\p_t\phi\right)
     =\int_{\Omega} f\phi
     -\int_{\Omega} h\,\p_t\phi
\end{equation}
for every $\phi\in C_0^\infty(\INT\,\Omega)$,
then $u$ and $\p_tu$ are in
$\Ww^{k,q}_{loc}(\Omega)$.
\end{itemize}
\end{theorem}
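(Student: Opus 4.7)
The plan is to prove part (a) by a mollification-plus-bootstrap argument built on the fundamental $L^p$ estimate theorem~\ref{thm:kerD-CalZyg}, and then adapt the same scheme to the divergence-form setting of part (b). The overall strategy treats the base case $k=0$ as the analytic heart of the matter; higher $k$ is a routine induction that uses the fact that the weak equation is preserved under distributional differentiation in $s$ and $t$.

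For the base case of (a), I would fix $z_0\in\Omega$, choose nested rectangular subdomains $\Omega''\Subset\Omega'\Subset\Omega$ around $z_0$, and pick a cutoff $\chi\in C_0^\infty(\INT\,\Omega')$ with $\chi\equiv 1$ on $\Omega''$. Mollify $u$ by a standard $\R^2$--mollifier $\rho_\eps$ to obtain smooth approximants $u_\eps=\rho_\eps\ast u$ and $f_\eps=\rho_\eps\ast f$; for $\eps$ smaller than the distance between $\supp\chi$ and $\partial\Omega'$, the weak identity~(\ref{eq:weak}) gives the pointwise equation $\partial_s u_\eps-\partial_t^2 u_\eps=f_\eps$ on $\supp\chi$. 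Apply theorem~\ref{thm:kerD-CalZyg} to $\chi u_\eps$ (extended by zero), expand the commutators $[\partial_s,\chi]$ and $[\partial_t^2,\chi]$, and absorb the lower-order $\Ww^{0,q}$ terms using the $L^q_{loc}$ assumption on $u$. This yields a $\Ww^{1,q}(\Omega'')$ bound on $u_\eps$ uniform in $\eps$; weak compactness then gives $u\in\Ww^{1,q}_{loc}(\Omega)$.

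For the inductive step $k\Rightarrow k+1$ in (a), I would first note that, for any smooth compactly supported $\psi$, the test function $\phi:=-\partial_t\psi$ (respectively $\phi:=-\partial_s\psi$) plugged into~(\ref{eq:weak}) shows that $\partial_t u$ (respectively $\partial_s u$) weakly solves the same type of equation with right hand side $\partial_t f$ (respectively $\partial_s f$), both of which lie in $\Ww^{k,q}_{loc}$ by hypothesis on $f\in\Ww^{k+1,q}_{loc}$. By the inductive hypothesis, $\partial_t u,\partial_s u\in\Ww^{k+1,q}_{loc}$. The identity $\partial_t^2 u=\partial_s u-f$ (valid in the distributional sense from~(\ref{eq:weak})) then trades any second $t$--derivative for a single $s$--derivative plus an $f$--term, so collecting all parabolic derivatives $\partial_s^\nu\partial_t^\mu u$ with $2\nu+\mu\le 2(k+1)$ produces $\Ww^{k+2,q}_{loc}$ membership.

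For (b), the same mollification produces $\partial_s u_\eps-\partial_t^2 u_\eps=f_\eps-\partial_t h_\eps$. Now one cannot apply theorem~\ref{thm:kerD-CalZyg} directly in the top order, since differentiating $h$ loses regularity. The fix is to derive a divergence-form variant: bound $\norm{\partial_s u_\eps}_q+\norm{\partial_t^2 u_\eps}_q$ in terms of $\norm{f_\eps}_q+\norm{\partial_t h_\eps}_q$ only in the fundamental estimate, but in the resulting $\Ww$--estimate keep $\partial_t h_\eps$ grouped with $\partial_t u_\eps$. Equivalently, represent $u_\eps$ by the heat potential $u_\eps=K\ast f_\eps+\partial_t K\ast h_\eps$ on the mollified level and apply a Calderon--Zygmund estimate to the singular kernels $\partial_t K$ and $\partial_t^2 K$; the latter controls $\partial_t u_\eps$ in $L^q$ but not $\partial_t^2 u_\eps$, which is exactly why (b) only concludes regularity of $u$ and $\partial_t u$. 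After the base case, induct exactly as in (a), differentiating the weak identity~(\ref{eq:weak-divergence}) in $s$ and $t$; the key bookkeeping point is that every $t$--derivative that lands on $h$ can be integrated back by parts into the test function, so one never needs more than $\Ww^{k,q}_{loc}$ regularity of $h$. The only real obstacle is this bookkeeping in the divergence-form commutator/cutoff computation, and it is resolved by the observation just made.
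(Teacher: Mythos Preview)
The paper does not prove this theorem; it is quoted from the companion reference~\cite{Joa-PARABOLI}, whose very title advertises ``the parabolic Weyl lemma''. That is a hint about what your argument is missing.

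Your base case for (a) has a genuine gap. You write that the cutoff commutator terms can be ``absorb[ed] \ldots\ using the $L^q_{loc}$ assumption on $u$'', but the hypothesis is only $u\in L^1_{loc}$. With merely $L^1_{loc}$ control, the $L^q$ norms of $u_\eps$ on $\supp\chi$ need not stay bounded as $\eps\to 0$, so the uniform $\Ww^{1,q}$ bound you want does not follow. There is a second, related problem: the commutator $[\p_t^2,\chi]u_\eps$ contains the term $2(\p_t\chi)(\p_t u_\eps)$, a first-order quantity you have no a~priori $L^q$ control over either. Interpolation in the spirit of lemma~\ref{le:plus-minus} would let you absorb $\p_t u_\eps$ into $\p_t^2 u_\eps$ and $u_\eps$, but that again requires $\Norm{u_\eps}_q$ to be under control, which brings you back to the first gap.

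The standard way to close this---and presumably the route taken in~\cite{Joa-PARABOLI}---is to first establish a Weyl-type lemma for the heat operator: if $u\in L^1_{loc}$ solves $(\p_s-\p_t^2)u=0$ weakly, then $u$ is smooth. One proves this by convolving with the heat kernel $K$ and using that $K$ is a fundamental solution. Once that is in hand, on a compact subdomain you write $u$ as $K\ast(\chi f)$ plus a weak solution of the homogeneous equation; the latter is smooth by the Weyl lemma, and the former lies in $\Ww^{1,q}$ by parabolic Calder\'on--Zygmund theory (theorem~\ref{thm:kerD-CalZyg}). This simultaneously upgrades $u$ from $L^1_{loc}$ to $\Ww^{1,q}_{loc}$ without ever needing $L^q$ control of $u$ itself. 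You in fact invoke exactly this potential-theoretic representation in your treatment of (b), but you do not use it where it is actually needed, namely the base case of (a). Once the base case is repaired in this way, your inductive step is essentially sound.
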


\begin{lemma}[{\cite[lemma~D.4]{SaJoa-LOOP}}]
\label{le:plus-minus}
Let $x\in C^\infty(S^1,M)$
and $p>1$. Then
$$
     \Norm{\Nabla{t}\xi}_p
     \le \kappa_p\left(
     \delta^{-1}\Norm{\xi}_p
     +\delta\Norm{\Nabla{t}\Nabla{t}\xi}_p\right)
$$ 
for $\delta>0$ and smooth
vector fields
$\xi$ along $x$.
Here $\kappa_p$ equals 
$p/(p-1)$ for $p\le 2$
and it equals $p$ for $p\ge 2$.
\end{lemma}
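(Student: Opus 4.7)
The plan is to establish this interpolation inequality by covariant integration by parts on the loop $S^1$, exploiting the absence of boundary and the compatibility of $\nabla$ with the metric. The case $\Nabla{t}\xi\equiv 0$ is trivial, so we may assume otherwise.

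First I would write, initially assuming that $\Nabla{t}\xi$ vanishes nowhere, the identity
\begin{equation*}
     \Norm{\Nabla{t}\xi}_p^p
     = \int_0^1 \Abs{\Nabla{t}\xi}^{p-2}\inner{\Nabla{t}\xi}{\Nabla{t}\xi}\, dt
     = -\int_0^1 \inner{\xi}{\Nabla{t}\bigl(\Abs{\Nabla{t}\xi}^{p-2}\Nabla{t}\xi\bigr)}\, dt
\end{equation*}
obtained by integration by parts, then expand the covariant derivative as
\begin{equation*}
     \Nabla{t}\bigl(\Abs{\Nabla{t}\xi}^{p-2}\Nabla{t}\xi\bigr)
     =\Abs{\Nabla{t}\xi}^{p-2}\Nabla{t}\Nabla{t}\xi
     +(p-2)\Abs{\Nabla{t}\xi}^{p-4}\inner{\Nabla{t}\xi}{\Nabla{t}\Nabla{t}\xi}\Nabla{t}\xi
\end{equation*}
and estimate each summand pointwise by Cauchy--Schwarz to obtain
\begin{equation*}
     \Norm{\Nabla{t}\xi}_p^p
     \le (|p-2|+1)\int_0^1\Abs{\Nabla{t}\xi}^{p-2}\Abs{\xi}\Abs{\Nabla{t}\Nabla{t}\xi}\, dt.
\end{equation*}
H\"older's inequality with exponents $(p/(p-2),p,p)$ then yields $\Norm{\Nabla{t}\xi}_p^2\le(|p-2|+1)\Norm{\xi}_p\Norm{\Nabla{t}\Nabla{t}\xi}_p$, and Young's inequality $2ab\le\delta^{-2}a^2+\delta^2b^2$ together with $\sqrt{a^2+b^2}\le a+b$ gives the desired bound with constant $\sqrt{(|p-2|+1)/2}$. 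This constant is easily checked to be bounded above by $p/(p-1)$ on $1<p\le 2$ and by $p$ on $p\ge 2$, yielding $\kappa_p$ in both cases.

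The only technical point, and the main obstacle in the proof, is that the weight $\Abs{\Nabla{t}\xi}^{p-2}$ is singular at zeros of $\Nabla{t}\xi$ when $p<2$, so the integration by parts above is only formal in that range. I would handle this by replacing the weight throughout by $(\Abs{\Nabla{t}\xi}^2+\varepsilon)^{(p-2)/2}$, a smooth positive function on $S^1$, carrying out the analogous identity and estimates rigorously, and passing to the limit $\varepsilon\to 0$ by dominated convergence. This step uses only that $\xi$ is smooth and $S^1$ is compact, so all relevant pointwise quantities are uniformly bounded. For $p\ge 2$ no regularization is needed and the argument above goes through directly.
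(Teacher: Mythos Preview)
The paper does not give its own proof of this lemma; it is simply quoted from \cite[lemma~D.4]{SaJoa-LOOP}, so there is no argument in the present text to compare against.

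Your approach is correct for $p\ge 2$. There the weight $\Abs{\Nabla{t}\xi}^{p-2}$ is continuous, the integration by parts needs no regularization, the H\"older exponents $(p/(p-2),p,p)$ are admissible, and your constant $\sqrt{(p-1)/2}$ is indeed bounded by $\kappa_p=p$.

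For $1<p<2$, however, there is a genuine gap, and it is not where you locate it. The $\varepsilon$-regularization does make the integration by parts rigorous, yielding
\[
\int_0^1 w_\varepsilon\,\Abs{\Nabla{t}\xi}^2\,dt
\;\le\;(3-p)\int_0^1 w_\varepsilon\,\Abs{\xi}\,\Abs{\Nabla{t}\Nabla{t}\xi}\,dt,
\qquad w_\varepsilon=\bigl(\Abs{\Nabla{t}\xi}^2+\varepsilon\bigr)^{(p-2)/2},
\]
and the left side does converge to $\Norm{\Nabla{t}\xi}_p^p$. The problem is the H\"older step: the exponent $p/(p-2)$ is \emph{negative} for $p<2$, so that triple is not a H\"older triple at all. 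If you try to bound the right side by H\"older with any legitimate exponents, you are forced to control a positive $L^r$ norm of $w_\varepsilon$, and the only uniform bound available is $w_\varepsilon\le \varepsilon^{(p-2)/2}$, which blows up as $\varepsilon\to 0$. (Reverse H\"older with the negative exponent would give a \emph{lower} bound on the integral, which goes the wrong way.) So ``carrying out the analogous estimates and passing to the limit by dominated convergence'' does not work: the obstruction is the structure of the H\"older estimate, not the convergence of the integrand.

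The range $1<p<2$ therefore needs a genuinely different idea. The form of the constant $\kappa_p=p/(p-1)$, which is exactly the conjugate exponent $q$, is suggestive of a duality argument; alternatively one can pass to an orthonormal frame along $x$, reducing to $\R^n$-valued periodic functions, and invoke standard $W^{2,p}$ interpolation on $S^1$.
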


\begin{proposition}
\label{prop:par-linear}
Assume $u:\R\times S^1\to M$
is a smooth map such that
$\norm{\p_s u}_\infty$, $\norm{\p_t u}_\infty$,
and $\norm{\Nabla{t}\p_t u}_\infty$
are finite and
$
     \lim_{s\to\pm\infty}u(s,t)
$
exists, uniformly in $t$.
Then, for every $p>1$,
there is a constant 
$c=c(p,u,M)$ such that
\begin{equation}\label{eq:parabolic-linear}
     \Norm{\Nabla{s}\xi}_p 
     +\Norm{\Nabla{t}\xi}_p
     +\Norm{\Nabla{t}\Nabla{t}\xi}_p
    \le c \left(\Norm{\Nabla{s}\xi-\Nabla{t}\Nabla{t}\xi}_p 
     + \Norm{\xi}_p
     \right)
\end{equation}
for every smooth compactly 
supported vector field
$\xi$ along $u$.
Estimate~(\ref{eq:parabolic-linear}) 
remains valid
for $-\Nabla{s}$ replacing $\Nabla{s}$.
Estimate~(\ref{eq:parabolic-linear}) 
also remains valid if $u$ is
defined on the backward
halfcylinder $(-\infty,0]\times S^1$.
\end{proposition}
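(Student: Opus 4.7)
The plan is to reduce the estimate to the flat linear heat operator $\p_s-\p_t\p_t$ on $\R\times S^1$ by trivializing the pullback bundle $u^*TM\to\R\times S^1$ via an orthonormal frame, in the same spirit as the reduction to the Atiyah--Patodi--Singer operator in section~\ref{sec:linearized}. More precisely, I would choose an orthonormal frame $\phi:\R\times[0,1]\to u^*TM$ satisfying the twisted boundary condition $\phi(s,1)=\phi(s,0)E_{\sigma(u)}$, under which the smooth compactly supported section $\xi$ is represented by a map $X:\R\times[0,1]\to\R^n$ with the corresponding boundary condition, and the covariant derivatives transform as
\begin{equation*}
\phi^{-1}\Nabla{s}\xi=\p_sX+CX,\qquad
\phi^{-1}\Nabla{t}\xi=\p_tX+PX,
\end{equation*}
\begin{equation*}
\phi^{-1}\Nabla{t}\Nabla{t}\xi
=\p_t\p_tX+2P\p_tX+(\p_tP+P^2)X,
\end{equation*}
with matrix-valued $P(s,t)$ and $C(s,t)$ determined by $\phi^{-1}\Nabla{t}\phi=\p_t+P$ and $\phi^{-1}\Nabla{s}\phi=\p_s+C$. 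The hypotheses that $\norm{\p_tu}_\infty,\norm{\p_su}_\infty$, and $\norm{\Nabla{t}\p_tu}_\infty$ are finite -- together with the existence of uniform asymptotic limits of $u$, which permits a global choice of frame with uniform bounds on $P,\p_tP,C$ -- guarantee that all coefficient matrices appearing above are uniformly bounded on $\R\times S^1$ by a constant $c_*=c_*(u,M)$.

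Next I would rewrite
\begin{equation*}
(\p_s-\p_t\p_t)X
=\phi^{-1}\left(\Nabla{s}\xi-\Nabla{t}\Nabla{t}\xi\right)
-CX+2P\,\p_tX+(\p_tP+P^2)X
\end{equation*}
and apply the fundamental $L^p$ estimate, theorem~\ref{thm:kerD-CalZyg}, to the compactly supported map $X$. To transfer that estimate from $\R^2$ to the cylinder $\R\times S^1$ I would use a partition of unity on $S^1$ by two bumps whose supports lift to open intervals in $\R$; applying theorem~\ref{thm:kerD-CalZyg} to the pieces and summing yields
\begin{equation*}
\Norm{\p_sX}_p+\Norm{\p_t\p_tX}_p
\le c(p)\Norm{(\p_s-\p_t\p_t)X}_p+c(p,u)\Norm{X}_p.
\end{equation*}
Substituting the displayed identity and using uniform boundedness of $P,C,\p_tP$ produces an estimate whose right-hand side involves $\Norm{\Nabla{s}\xi-\Nabla{t}\Nabla{t}\xi}_p$, $\Norm{X}_p$, and crucially one copy of the first-order term $\Norm{\p_tX}_p$.

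The key technical step, which I expect to be the main obstacle, is the absorption of the term $\Norm{\p_tX}_p$ into the left-hand side. For this I would invoke lemma~\ref{le:plus-minus} (or its flat, frame-level analogue on $S^1$), which gives
\begin{equation*}
\Norm{\p_tX}_p\le\kappa_p\bigl(\delta^{-1}\Norm{X}_p+\delta\Norm{\p_t\p_tX}_p\bigr)
\end{equation*}
for any $\delta>0$, applied slicewise in $s$ and then integrated. Choosing $\delta$ small enough to absorb the $\Norm{\p_t\p_tX}_p$ into the left-hand side, and then translating back from $X$ to $\xi$ using pointwise isometry of the frame together with the bounded connection matrices, delivers the desired inequality~(\ref{eq:parabolic-linear}). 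The case of $-\Nabla{s}$ in place of $\Nabla{s}$ is immediate from the second assertion of theorem~\ref{thm:kerD-CalZyg}, and the case of a backward half-cylinder follows verbatim using the half-plane version of theorem~\ref{thm:kerD-CalZyg}, since the cutoff argument in $t$ is unaffected by restricting the $s$-variable. The principal subtlety is verifying uniformity of the constant across the cylinder: this rests precisely on the uniform existence of limits of $u$, which prevents the frame coefficients $P,C$ from degenerating at infinity even though $\xi$ itself has compact support.
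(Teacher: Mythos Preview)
Your approach is essentially the paper's: reduce to the flat estimate of theorem~\ref{thm:kerD-CalZyg} via a trivialization/covering argument on the cylinder, and then absorb the first-order term using lemma~\ref{le:plus-minus}. One minor slip: the $-\Nabla{s}$ case does not follow from ``the second assertion of theorem~\ref{thm:kerD-CalZyg}'' (that assertion concerns the domain $\HH^-$, not a sign change in $\p_s$); the paper obtains it instead by the reflection $s\mapsto -s$.
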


\begin{proof}
The proof of~(\ref{eq:parabolic-linear})
for $\R\times S^1$ and 
$(-\infty,0]\times S^1$ is based on
theorem~\ref{thm:kerD-CalZyg} for $\R^2$
and $\HH^-$, respectively,
using a covering argument.
Full details in the case $\R\times S^1$
are provided by~\cite[prop.~D.2]{SaJoa-LOOP}.
Lemma~\ref{le:plus-minus}
allows to add the term $\Nabla{t}\xi$
to the left hand side of~(\ref{eq:parabolic-linear}).
The underlying reason is periodicity
in the $t$ variable.
The statement for $-\Nabla{s}$ follows
by reflection $s\mapsto -s$.
\end{proof}

Applications of proposition~\ref{prop:par-linear} include
closedness of the range of the linearized
operator, proposition~\ref{prop:closed-range},
estimate~(\ref{eq:half-cylinder})
in the proof of the
exponential decay theorem~\ref{thm:par-exp-decay},
and step~2 in the proof of
theorem~\ref{thm:onto}.

\begin{lemma}[Product estimate]
\label{le:product-derivatives}
Let $N$ be a Riemannian manifold
with Levi-Civita connection $\nabla$
and Riemannian curvature tensor $R$.
Fix constants $2\le p <\infty$
and $c_0>0$.
Then there is a constant
$C=C(p,c_0,\norm{R}_\infty)$
such that
the following holds.
If $u:(a,b]\times S^1\to N$
is a smooth map such that
$$
     \Norm{\p_su}_\infty+\Norm{\p_tu}_\infty
     \le c_0,
$$
then
$$
     \left(\int_a^b\int_0^1
     \left(\Abs{\Nabla{t}\xi}
     \Abs{\Nabla{t}X}\right)^p\, dtds 
     \right)^{1/p}
     \le C\Norm{\xi}_{\Ww^{1,p}}
     \left(\Norm{\Nabla{t} X}_p
     +\Norm{\Nabla{t}\Nabla{t} X}_p\right)
$$
for all smooth
compactly supported
vector fields
$\xi$ and $X$ along $u$.
\end{lemma}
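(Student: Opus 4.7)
My plan is to prove this bilinear $L^p$ estimate by combining a slicewise Sobolev embedding on $S^1$ in the $t$-variable with a trace estimate in $s$ for the $\xi$-factor, together with lemma~\ref{le:plus-minus}. The bound is intrinsically asymmetric: the $\Ww^{1,p}$-norm of $\xi$ includes $\Nabla{s}\xi$, whereas $X$ is controlled only by $\Nabla{t}X$ and $\Nabla{t}\Nabla{t}X$. This asymmetry forces the two factors to play different roles -- the $X$-side will be handled purely slicewise in the $t$-direction, while the $\xi$-side will be traced in $s$.

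First, working in a local orthonormal frame along each loop $u(s,\cdot)$, I would apply the one-dimensional Sobolev embedding $W^{1,p}(S^1)\hookrightarrow L^\infty(S^1)$ to obtain
$$\Norm{\Nabla{t}X(s,\cdot)}_{L^\infty(S^1)}^p
\le c_1\Bigl(\Norm{\Nabla{t}X(s,\cdot)}_{L^p(S^1)}^p
+\Norm{\Nabla{t}\Nabla{t}X(s,\cdot)}_{L^p(S^1)}^p\Bigr)$$
with $c_1=c_1(p,c_0,\Norm{R}_\infty)$ (the latter two constants arise when commuting $\Nabla{t}$ past the frame). Combining this with H\"older slicewise and integrating over $s$ gives
$$I:=\Norm{\Nabla{t}\xi\cdot\Nabla{t}X}_{L^p}^p
\le c_1\int_a^b\Norm{\Nabla{t}\xi(s,\cdot)}_p^p\Bigl(\Norm{\Nabla{t}X}_p^p+\Norm{\Nabla{t}\Nabla{t}X}_p^p\Bigr)(s)\,ds,$$
and a further H\"older in $s$ with weights $(\infty,1)$ separates the factors:
$$I\le c_1\,\Bigl(\sup_{s}\Norm{\Nabla{t}\xi(s,\cdot)}_{L^p(S^1)}^p\Bigr)
\Bigl(\Norm{\Nabla{t}X}_{L^p}^p+\Norm{\Nabla{t}\Nabla{t}X}_{L^p}^p\Bigr).$$

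Second, I would bound $\sup_s\Norm{\Nabla{t}\xi(s,\cdot)}_{L^p(S^1)}$ by $C\Norm{\xi}_{\Ww^{1,p}}$. Since $\xi$ is compactly supported in $(a,b)\times S^1$, the fundamental theorem of calculus in $s$ combined with H\"older yields
$$\sup_s\Norm{\Nabla{t}\xi(s,\cdot)}_p^p
\le p\,\Norm{\Nabla{t}\xi}_{L^p(Z)}^{p-1}\,\Norm{\Nabla{s}\Nabla{t}\xi}_{L^p(Z)}.$$
The factor $\Norm{\Nabla{t}\xi}_{L^p}$ is controlled by $\Norm{\xi}_{\Ww^{1,p}}$ via lemma~\ref{le:plus-minus}. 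For the mixed derivative, I would commute $\Nabla{s}\Nabla{t}\xi=\Nabla{t}\Nabla{s}\xi+R(\p_su,\p_tu)\xi$, estimate the curvature term directly using $c_0$ and $\Norm{R}_\infty$, and bound $\Norm{\Nabla{t}\Nabla{s}\xi}_{L^p}$ by applying lemma~\ref{le:plus-minus} to the vector field $\Nabla{s}\xi$ along each loop.

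The hardest step will be closing this last estimate: a naive application of lemma~\ref{le:plus-minus} to $\Nabla{s}\xi$ produces $\Norm{\Nabla{t}\Nabla{t}\Nabla{s}\xi}_{L^p}$, a third-order quantity not controlled by the parabolic norm. I would resolve this by integrating by parts in $t$ on each slice (using periodicity on $S^1$), transferring one $\Nabla{t}$ back onto the conjugate factor and invoking Young's inequality to absorb the resulting term -- here the hypothesis $p\ge 2$ is essential. After this absorption, all remaining quantities are controlled by $\Norm{\xi}_{\Ww^{1,p}}$ together with the $X$-side norm, and the final constant depends only on $p$, $c_0$, and $\Norm{R}_\infty$, as claimed.
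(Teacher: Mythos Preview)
The paper does not actually prove this lemma here; at the start of the appendix it states that all proofs are deferred to~\cite{Joa-PARABOLI}. So there is no in-paper argument to compare against, and the question is simply whether your outline can be made into a proof.

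Your overall strategy is sound: put the $X$-factor in $L^\infty(S^1)$ slicewise via the one-dimensional Sobolev embedding, and then reduce to the trace bound
\[
   \sup_{s}\Norm{\Nabla{t}\xi(s,\cdot)}_{L^p(S^1)}^p\le C\,\Norm{\xi}_{\Ww^{1,p}}^p.
\]
However, the order of operations in your final step is garbled. Once you apply H\"older in step~4 to produce the factor $\Norm{\Nabla{s}\Nabla{t}\xi}_{L^p}$, you are stuck: the mixed derivative $\Nabla{t}\Nabla{s}\xi$ is genuinely \emph{not} controlled by the $\Ww^{1,p}$ norm (which sees only $\xi,\Nabla{s}\xi,\Nabla{t}\Nabla{t}\xi$), and you cannot ``integrate by parts'' inside an $L^p$ norm that has already been extracted. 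Invoking lemma~\ref{le:plus-minus} on $\Nabla{s}\xi$ only makes things worse, as you noticed.

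The fix is to integrate by parts \emph{before} applying H\"older. From the fundamental theorem of calculus keep the identity
\[
   \Norm{\Nabla{t}\xi(s_0,\cdot)}_p^p
   = p\int_a^{s_0}\!\!\int_0^1
   \abs{\Nabla{t}\xi}^{p-2}\,
   \langle\Nabla{t}\xi,\Nabla{s}\Nabla{t}\xi\rangle\,dt\,ds,
\]
commute to $\Nabla{t}\Nabla{s}\xi$ plus the curvature term $R(\p_su,\p_tu)\xi$, and \emph{then} integrate by parts in $t$ over $S^1$ to move $\Nabla{t}$ onto $\abs{\Nabla{t}\xi}^{p-2}\Nabla{t}\xi$. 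This produces an integrand bounded by $(p-1)\abs{\Nabla{t}\xi}^{p-2}\abs{\Nabla{t}\Nabla{t}\xi}\,\abs{\Nabla{s}\xi}$, and three-way H\"older with exponents $\tfrac{p}{p-2},p,p$ (here $p\ge2$ is used) gives
\[
   \sup_{s}\Norm{\Nabla{t}\xi(s,\cdot)}_p^p
   \le C\Bigl(\Norm{\Nabla{t}\xi}_p^{p-2}\Norm{\Nabla{t}\Nabla{t}\xi}_p\Norm{\Nabla{s}\xi}_p
   +\Norm{R}_\infty c_0^2\,\Norm{\Nabla{t}\xi}_p^{p-1}\Norm{\xi}_p\Bigr)
   \le C'\Norm{\xi}_{\Ww^{1,p}}^p.
\]
No absorption via Young is needed; every factor is already controlled by the $\Ww^{1,p}$ norm (using lemma~\ref{le:plus-minus} once for $\Norm{\Nabla{t}\xi}_p$). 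With this correction your argument goes through, and the constant depends only on $p$, $c_0$, and $\Norm{R}_\infty$ as required.
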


\begin{remark}\label{rmk:product-estimate}
Lemma~\ref{le:product-derivatives}
continues to hold for smooth
maps $u$ that are defined on
the whole cylinder $\R\times S^1$.
In this case
the (compact) supports of
$\xi$ and $X$
are contained in an
interval of the
form $(a,b]$.
\end{remark}

Now we fix
a closed smooth submanifold
$M\hookrightarrow\R^N$
and a smooth family of
vector-valued symmetric bilinear forms
$\Gamma:M\to\R^{N\times N\times N}$.
Abbreviate
$\Ww^{k,p}(Z)=\Ww^{k,p}(Z,\R^N)$.
Moreover, for $T>T^\prime>0$
we abbreviate
$$
     Z=Z_T=(-T,0]\times S^1,\qquad
     Z^\prime=Z_{T^\prime}=(-T^\prime,0]\times S^1.
$$

\begin{proposition}[Parabolic regularity]
\label{prop:bootstrap}
Fix constants
$p>2$, $\mu_0>1$, and $T>0$.
Fix a map $F:Z\to\R^N$ such that
$F$ and $\p_tF$ are of class $L^p$.
Assume that $u:Z\to\R^N$ is a $\Ww^{1,p}$
map taking values in $M$
with $\norm{u}_{\Ww^{1,p}}\le\mu_0$ and
such that the perturbed heat equation
\begin{equation}\label{eq:heat-local-F}
     \p_su-\p_t\p_tu
     =\Gamma(u)\left(\p_tu,\p_tu\right)
     +F
\end{equation}
is satisfied almost everywhere.
Then the following is true
for every integer $k\ge1$ such that
$F,\p_tF\in\Ww^{k-1,p}(Z)$
and every $T^\prime\in(0,T)$.
\begin{enumerate}

\item[\rm(i)]
There is a constant
$a_k$ depending on $p$, $\mu_0$,
$T$, $T^\prime$, $\norm{\Gamma}_{C^{2k+2}}$, and
the $\Ww^{k-1,p}(Z)$ norms of
$F$ and $\p_tF$ such that
$$
     \Norm{\p_tu}_{\Ww^{k,p}(Z^\prime)}
     \le a_k.
$$

\item[\rm(ii)]
If $\p_sF\in\Ww^{k-1,p}(Z)$
then there is a constant
$b_k$ depending on $p$, $\mu_0$,
$T$, $T^\prime$,
$\norm{\Gamma}_{C^{2k+2}}$,
and the $\Ww^{k-1,p}(Z)$ norms
of $F$, $\p_tF$, and
$\p_sF$ such that
\begin{equation*}%\label{eq:p_su}
     \Norm{\p_su}_{\Ww^{k,p}(Z^\prime)}
     \le b_k.
\end{equation*}

\item[\rm(iii)]
If $\p_t\p_tF\in\Ww^{k-1,p}(Z)$
then there is a constant
$c_k$ depending on $p$, $\mu_0$,
$T$, $T^\prime$,
$\norm{\Gamma}_{C^{2k+2}}$, and
the $\Ww^{k-1,p}(Z)$ norms
of $F$, $\p_tF$, and
$\p_t\p_tF$ such that
\begin{equation*}
     \Norm{\p_t\p_tu}_{\Ww^{k,p}(Z^\prime)}
     \le c_k.
\end{equation*}

\end{enumerate}
\end{proposition}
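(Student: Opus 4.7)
The plan is to prove proposition~\ref{prop:bootstrap} by induction on $k$, treating parts (i), (ii), (iii) in order within each induction step, working with a nested sequence of cylinders $Z = Z^{(0)}\supset Z^{(1)}\supset\cdots\supset Z^{(N)} = Z'$ interpolating linearly between $Z$ and $Z'$ and with smooth cutoffs $\chi_j\in C_0^\infty(\INT Z^{(j)})$ equal to $1$ on $Z^{(j+1)}$. The central analytic input is theorem~\ref{thm:local-regularity}: form~(a) will be applied to $u$, viewed as a weak solution of the heat equation $\p_su-\p_t\p_tu=G$ with $G:=\Gamma(u)(\p_tu,\p_tu)+F$, or to formal $t$-derivatives of $u$ produced by testing against $\p_t^m\phi$ and integrating by parts; the improved regularity of $G$ extracted at each step feeds into the next application.

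First I would set up the basic Sobolev ingredients. Because $p>2$, the parabolic Sobolev embedding gives $u\in C^{0,\alpha}(\overline Z)$ with a bound depending only on $\mu_0$, so $\Gamma(u)$ and all its derivatives along $u$ are uniformly bounded by a constant depending on $\mu_0$ and $\|\Gamma\|_{C^{2k+2}}$; the slicewise 1D embedding $W^{1,p}(S^1)\hookrightarrow L^\infty(S^1)$ places $\p_tu\in L^p_sL^\infty_t$ with norm controlled by $\mu_0$. These facts, together with the product estimate in lemma~\ref{le:product-derivatives} (routinely adapted to $\R^N$-valued maps after embedding $M\hookrightarrow\R^N$), will allow the nonlinear term $\Gamma(u)(\p_tu,\p_tu)$ and the various $\p_s$ and $\p_t$ derivatives of it that arise during the bootstrap to be controlled in the appropriate $L^p$ norm on a slightly smaller cylinder, purely in terms of the currently available regularity of $u$.

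For the base case $k=1$, I would handle (i) by noting that $v:=\p_tu$ distributionally satisfies
$$\int v\bigl(-\p_s\phi-\p_t\p_t\phi\bigr)=\int (\p_tG)\,\phi,\qquad \phi\in C_0^\infty(\INT Z^{(1)}),$$
obtained from the weak heat equation for $u$ by one integration by parts in $t$; by the preparatory bounds and the hypothesis $\p_tF\in L^p$, the right-hand side lies in $L^p(Z^{(1)})$, so theorem~\ref{thm:local-regularity}(a) with $k=0$ places $v\in\Ww^{1,p}_{loc}(Z^{(1)})$. Part (iii) is parallel with $w:=\p_t\p_tu$ satisfying the analogous equation with two integrations by parts in $t$, now requiring $\p_t\p_tF\in L^p$. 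Part (ii) then follows by reading the heat equation pointwise as $\p_su=\p_t\p_tu+G$ and invoking the $\Ww^{1,p}_{loc}$ regularity of $\p_t\p_tu$ from (iii) together with the extra hypothesis $\p_sF\in L^p$, which upgrades $G$ itself to $\Ww^{1,p}_{loc}$. The induction step $k\mapsto k+1$ repeats this scheme: the previous-step bounds combined with the embedding $\Ww^{k+1,p}\hookrightarrow\Cc^k$ feed into the chain rule and the product estimate to raise the regularity of $G$ from $\Ww^{k-1,p}$ to $\Ww^{k,p}$ on a slightly smaller cylinder, and theorem~\ref{thm:local-regularity}(a) is reapplied to the appropriate derivative of $u$.

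The main technical obstacle is the bookkeeping in controlling the many terms produced by differentiating $\Gamma(u)(\p_tu,\p_tu)$: each is a product of factors of the form $\nabla^\gamma\Gamma\circ u$ or $\nabla^\beta u$ with total parabolic weight at most $2(k+1)$, and the key observation is that in any such product one factor can be chosen to carry the highest available derivative of $u$ (with $L^p$ norm controlled by the induction hypothesis on the slightly larger cylinder $Z^{(j-1)}$) while the remaining factors are controlled in $\Cc^{k-1}$ via the parabolic Sobolev embedding. Lemma~\ref{le:product-derivatives} and Hölder's inequality then close the estimate in $L^p$, with constants accumulating exactly in the form listed in (i), (ii), (iii). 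The delicate point in separating the three parts is that the hypotheses on $F$ differ, so one cannot simply establish $u\in\Ww^{k+1,p}_{loc}$ at once; each of $\p_tu$, $\p_su$, $\p_t\p_tu$ must be treated by its own weak equation to avoid introducing $s$- or $\p_t\p_t$-derivatives of $F$ that are not assumed to be integrable.
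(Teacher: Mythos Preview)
The paper does not actually prove proposition~\ref{prop:bootstrap}; the appendix states at the outset that all proofs are deferred to the companion paper~\cite{Joa-PARABOLI}. So there is no in-paper argument to compare against, and your inductive bootstrap via theorem~\ref{thm:local-regularity} together with product estimates is the natural scheme and presumably close in spirit to what~\cite{Joa-PARABOLI} does.

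That said, there is a genuine logical error in your treatment of~(ii). You write that~(ii) follows by reading $\p_su=\p_t\p_tu+G$ and invoking the $\Ww^{1,p}$ regularity of $\p_t\p_tu$ from~(iii). But~(iii) carries the hypothesis $\p_t\p_tF\in\Ww^{k-1,p}$, which is \emph{not} assumed in~(ii); likewise ``$G\in\Ww^{1,p}_{loc}$'' would require $\p_t\p_tG\in L^p$, hence again $\p_t\p_tF\in L^p$. The three parts are deliberately decoupled precisely because the hypotheses on $F$ differ. The correct route for~(ii) is parallel to~(i): test the weak heat equation for $u$ against $-\p_s\phi$ to obtain the weak equation for $w=\p_su$ with right-hand side $\p_sG$, and then apply theorem~\ref{thm:local-regularity}(a). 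After~(i) has been established on a slightly larger cylinder one has $\p_tu\in\Ww^{1,p}\hookrightarrow C^0$ and $\p_s\p_tu\in L^p$, so $\p_sG=\p_sF+d\Gamma(u)\p_su(\p_tu,\p_tu)+2\Gamma(u)(\p_s\p_tu,\p_tu)$ is indeed in $L^p$ using only the hypothesis $\p_sF\in L^p$.

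There is also a gap you should not gloss over in the base case of~(i): you assert that $\p_tG\in L^p$ follows from ``the preparatory bounds'' and lemma~\ref{le:product-derivatives}, but the terms $\Gamma(u)(\p_t\p_tu,\p_tu)$ and $d\Gamma(u)\p_tu(\p_tu,\p_tu)$ are not covered by that lemma starting only from $u\in\Ww^{1,p}$. Applying lemma~\ref{le:product-derivatives} with $\xi=\p_tu$ or $X=\p_tu$ either requires $\|\p_tu\|_{\Ww^{1,p}}$ (circular) or $\|\p_t\p_t\p_tu\|_p$ (unavailable), and the bound $\p_tu\in L^p_sL^\infty_t$ alone does not give $\p_tu\in L^{3p}$. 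The actual argument in~\cite{Joa-PARABOLI} has to do more work here---typically by exploiting form~(b) of theorem~\ref{thm:local-regularity} to keep one $t$-derivative on the test function rather than on $G$, or by an intermediate step---and your outline should acknowledge that this is where the real analytic content lies.
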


Note that by the Sobolev embedding
theorem the assumption $p>2$ guarantees
that every $\Ww^{1,p}$ map $u$
is continuous. Hence it makes sense
to specify that $u$ takes values in
the submanifold $M$ of $\R^N$.

\begin{corollary}
\label{cor:bootstrap}
Under the assumptions of proposition~\ref{prop:bootstrap}
the following is true.
For every integer $k\ge 1$
such that $F\in\Ww^{k,p}(Z_T)$
and every $T^\prime\in(0,T)$
there is a constant 
$c_k=c_k(k,p,\mu_0,T-T^\prime,
\norm{\Gamma}_{C^{2k+2}(M)},
\norm{F}_{\Ww^{k,p}(Z_T)})$
such that
$$
     \Norm{u}_{\Ww^{k+1,p}(Z_{T^\prime})}
     \le c_k.
$$
\end{corollary}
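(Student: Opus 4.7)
\bigskip

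\noindent
\emph{Plan.}
The idea is that the corollary is essentially an immediate consequence of Proposition~\ref{prop:bootstrap}; it only requires observing that $F\in\Ww^{k,p}(Z_T)$ provides \emph{simultaneously} the $\Ww^{k-1,p}$ control of $F$, $\p_tF$, $\p_sF$ and $\p_t\p_tF$ needed to invoke parts (i), (ii) and (iii). By the very definition of the parabolic Sobolev norm~(\ref{eq:parabolic-Wk}), every combination of derivatives $\p_s^\nu\p_t^\mu F$ with $2\nu+\mu\le 2k$ lies in $L^p(Z_T)$, and in particular
$$
\Norm{\p_tF}_{\Ww^{k-1,p}(Z_T)}
+\Norm{\p_sF}_{\Ww^{k-1,p}(Z_T)}
+\Norm{\p_t\p_tF}_{\Ww^{k-1,p}(Z_T)}
\le C\Norm{F}_{\Ww^{k,p}(Z_T)},
$$
so that the hypotheses of all three parts of Proposition~\ref{prop:bootstrap} hold and their constants may be absorbed into a single constant with the dependence claimed in the corollary.

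First I would insert an intermediate cylinder $Z_{T''}$ with $T>T''>T'$ and apply Proposition~\ref{prop:bootstrap}~(i), (ii), (iii) to the pair $(Z_T,Z_{T''})$. This yields a constant $A_k$, depending on the quantities listed in the corollary (with $T-T'$ replaced by, say, $\min\{T-T'',T''-T'\}$), such that
$$
\Norm{\p_tu}_{\Ww^{k,p}(Z_{T''})}
+\Norm{\p_su}_{\Ww^{k,p}(Z_{T''})}
+\Norm{\p_t\p_tu}_{\Ww^{k,p}(Z_{T''})}
\le A_k.
$$
Combining this with the trivial bound $\Norm{u}_{L^p(Z_{T''})}\le\Norm{u}_{\Ww^{1,p}(Z_T)}\le\mu_0$ and with the recursive definition of the parabolic Sobolev spaces, namely $\Ww^{k+1,p}=\{u\in\Ww^{1,p}\mid\p_su,\p_tu,\p_t\p_tu\in\Ww^{k,p}\}$, one obtains
$$
\Norm{u}_{\Ww^{k+1,p}(Z_{T'})}
\le \Norm{u}_{\Ww^{k+1,p}(Z_{T''})}
\le \mu_0+A_k=:c_k,
$$
which is the asserted estimate.

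The only point that requires any care is to check that the combined constant really depends only on $T-T'$ and not separately on $T$ and $T'$. This follows by translating the time variable so that $Z_T=(-T,0]\times S^1$ and $Z_{T'}=(-T',0]\times S^1$ share the right endpoint: Proposition~\ref{prop:bootstrap} is invariant under such translations, so only the gap $T-T'$ (and, through the intermediate cylinder, a choice thereof) enters. No nontrivial analytic obstacle is expected here; the corollary is really a bookkeeping statement that packages the three parts of the preceding proposition into the single $\Ww^{k+1,p}$ estimate actually used in the proof of Proposition~\ref{prop:reg-estimate}.
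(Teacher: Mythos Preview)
Your proof is correct and follows essentially the same approach as the paper: observe that $F\in\Ww^{k,p}$ supplies the $\Ww^{k-1,p}$ control of $F,\p_tF,\p_sF,\p_t\p_tF$ required by parts~(i)--(iii) of Proposition~\ref{prop:bootstrap}, then assemble the resulting bounds on $\p_tu,\p_su,\p_t\p_tu$ into a $\Ww^{k+1,p}$ bound on $u$ via the recursive definition of the norm. The intermediate cylinder $Z_{T''}$ is harmless but unnecessary---Proposition~\ref{prop:bootstrap} already delivers all three estimates directly on $Z_{T'}$ from data on $Z_T$.
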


\begin{proof}
The $\Ww^{k+1,p}$ norm of $u$
is equivalent to the 
sum of the $\Ww^{k,p}$ norms
of $u$, $\p_tu$, $\p_su$, and $\p_t\p_tu$.
Apply
proposition~\ref{prop:bootstrap}~(i--iii).
\end{proof}

\begin{lemma}[{\cite{Joa-PARABOLI}}]
\label{le:product-Sobolev}
Fix a constant $p>2$ and
a bounded open subset
$\Omega\subset\R^2$ with area $\abs{\Omega}$.
Then for every 
integer $k\ge 1$
there is a constant $c=c(k,\abs{\Omega})$
such that
$$
     \Norm{\p_tu\cdot v}_{\Ww^{k,p}}
     \le c \left(\Norm{\p_tu}_{\Ww^{k,p}}
     \Norm{v}_\infty
     +\Norm{u}_{\Cc^k}
     \Norm{v}_{\Ww^{k,p}}\right)
$$
for all functions $u,v\in
C^\infty(\overline{\Omega})$.
\end{lemma}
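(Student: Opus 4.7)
The plan is to prove this by induction on $k$, closely mirroring the proof of lemma~\ref{le:product-s} just above (the two results are really parallel, with $\p_tu$ playing the role of $\p_su$). The key algebraic observation is that when $\p_s^\nu\p_t^\mu$ with $2\nu+\mu\le 2k$ is applied to $\p_tu\cdot v$, Leibniz produces a sum of terms of the form $\bigl(\p_s^\alpha\p_t^{\beta+1}u\bigr)\bigl(\p_s^\gamma\p_t^\delta v\bigr)$ with $\alpha+\gamma=\nu$ and $\beta+\delta=\mu$, and the parabolic weights satisfy $(2\alpha+\beta+1)+(2\gamma+\delta)=2\nu+\mu+1\le 2k+1$. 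Thus at least one of the two factors carries parabolic weight $\le 2k$ and can be estimated in $L^\infty$ while the other is estimated in $L^p$. If the weight on the $u$-factor does not exceed $2k$, I bound $\Abs{\p_s^\alpha\p_t^{\beta+1}u}\le\Norm{u}_{\Cc^k}$ and the other factor in $L^p$ by $\Norm{v}_{\Ww^{k,p}}$; otherwise the weight on the $u$-factor equals $2k+1$, which forces $\gamma=\delta=0$, and then $\p_s^\alpha\p_t^{\beta+1}u$ is a derivative of $\p_tu$ of parabolic weight $2k$, whose $L^p$ norm is controlled by $\Norm{\p_tu}_{\Ww^{k,p}}$, paired with $\Norm{v}_\infty$.

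For the base case $k=1$ I would expand
\begin{equation*}
\Norm{\p_tu\cdot v}_{\Ww^{1,p}}
\le \Norm{\p_tu\cdot v}_p
 +\Norm{\p_t(\p_tu\cdot v)}_p
 +\Norm{\p_t\p_t(\p_tu\cdot v)}_p
 +\Norm{\p_s(\p_tu\cdot v)}_p,
\end{equation*}
apply Leibniz term by term, and split each resulting product according to the weight dichotomy above. The weight-$3$ factors $\p_t^3u$ and $\p_s\p_tu$ are absorbed into $\Norm{\p_tu}_{\Ww^{1,p}}$ and paired with $\Norm{v}_\infty$; all other factors of $u$ have weight $\le 2$ and so are bounded by $\Norm{u}_{\Cc^1}$, paired with the corresponding $L^p$ norm of a weight-$\le 2$ derivative of $v$, which is part of $\Norm{v}_{\Ww^{1,p}}$.

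For the induction step $k\Rightarrow k+1$ I would write, with $w:=\p_tu\cdot v$,
\begin{equation*}
\Norm{w}_{\Ww^{k+1,p}}
\le \Norm{w}_{\Ww^{k,p}}
+\Norm{\p_tw}_{\Ww^{k,p}}
+\Norm{\p_t\p_tw}_{\Ww^{k,p}}
+\Norm{\p_sw}_{\Ww^{k,p}},
\end{equation*}
expand each of the four terms by Leibniz, and apply the induction hypothesis to each piece, using either $\p_tu$ or one of its derivatives (rewritten as $\p_t$ of something) as the new ``$u$''-factor and an appropriate derivative of $v$ as the new ``$v$''-factor. Cross terms of the type $\p_s\p_tu\cdot\p_sv$ or $\p_t^2u\cdot\p_tv$, in which both factors carry sizeable weight, are handled exactly as in lemma~\ref{le:product-s}: use the Sobolev embedding $\Ww^{1,p}\subset W^{1,p}\hookrightarrow C^0(\overline\Omega)$, which is valid because $p>2$ and $\abs{\Omega}<\infty$, to trade a first-order $L^p$ bound on $v$ for an $L^\infty$ bound at the cost of raising the Sobolev index by one.

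The main obstacle is, as usual for product estimates of this type, not conceptual but combinatorial: organizing the Leibniz expansion at level $k+1$ so that every one of the (many) terms is successfully absorbed into either $\Norm{\p_tu}_{\Ww^{k+1,p}}\Norm{v}_\infty$ or $\Norm{u}_{\Cc^{k+1}}\Norm{v}_{\Ww^{k+1,p}}$. The weight-counting in the first paragraph guarantees that such a splitting always exists, and the dependence of the constant $c$ on $k$ and $\abs{\Omega}$ enters through the number of Leibniz terms and the Sobolev constant of $\Ww^{1,p}\hookrightarrow C^0$ on $\Omega$.
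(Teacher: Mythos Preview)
Your approach is correct. The paper does not actually prove this lemma here---it is quoted from \cite{Joa-PARABOLI}---but it does prove the companion lemma~\ref{le:product-s} by exactly the inductive scheme you outline, and your adaptation (replacing $\p_su$ by $\p_tu$, which lowers the parabolic weight of the distinguished factor from $2$ to $1$ and so removes the need for the extra $\Norm{\p_tu}_{\Cc^k}$ term on the right) goes through term by term. In fact the weight dichotomy in your first paragraph already constitutes a complete direct proof: each Leibniz term $(\p_s^\alpha\p_t^{\beta+1}u)(\p_s^\gamma\p_t^\delta v)$ with $2\nu+\mu\le 2k$ falls into exactly one of your two cases, so the induction is convenient bookkeeping rather than a logical necessity.
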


\end{appendix}

%%%%%%%%%%%%%%%%%%%%%%%%%%%%%%%%%%%%%%%%%%%%%%
%%%%%%%%%%%%%%%%%%%%%%%%%%%%%%%%%%%%%%%%%%%%%%
%%%%%%%%%%%%%%%% References %%%%%%%%%%%%%%%%%%
%%%%%%%%%%%%%%%%%%%%%%%%%%%%%%%%%%%%%%%%%%%%%%
%%%%%%%%%%%%%%%%%%%%%%%%%%%%%%%%%%%%%%%%%%%%%%


\begin{thebibliography}{00000}
\small

%\bibitem[AM05]{AM-MCInfDimMfs} A.~Abbondandolo and P.~Majer,
%      A Morse complex for infinite dimensional manifolds, 
%      Part I, 
%      {\it Adv. Math.}
%      {\bf 197} (2005), 321--410. 

\bibitem[AM06]{AM-LecsMCInfDimMfs} 
      A.~Abbondandolo and P.~Majer,
      Lectures on the Morse complex for 
      infinite dimensional manifolds, in 
      {\it Morse theoretic methods in nonlinear analysis 
      and in symplectic topology}, pp.~1-74, 
      NATO Science Series~II: Mathematics, 
      Physics and Chemistry, P.~Biran, O.~Cornea, 
      and F.~Lalonde Eds, Springer, 2006.

\bibitem[AN67]{AgNi-67} S.~Agmon and L.~Nirenberg,
      Lower bounds and uniqueness theorems for
      solutions of differential equations 
      in Hilbert space,
      {\it Comm. Pure Appl. Math.}
      {\bf  20} (1967), 207--29.

%\bibitem[C78]{CONLEY}
%      C.C.~Conley,
%      {\it Isolated invariant sets and the Morse index},
%      CBMS Regional Conf. Ser. Math. {\bf 38},
%      American Mathematical Society, 
%      Providence, R.~I., 1978.

%\bibitem[DK92]{Daners-Koch-Medina-92}
%      D.~Daners and P.~Koch Medina,
%      {\it Abstract evolution equations, periodic
%      problems and applications},
%      Pitman Research Notes in Mathematics Series {\bf 279},
%      Longman Scientific \& Technical, 
%      Harlow, 1992.

%\bibitem[D80]{DOLD}  A.~Dold,
%      {\it Lectures on Algebraic Topology},
%      Grundlehren Math. Wiss. {\bf 200},
%      Springer-Verlag, Berlin,
%      second edition 1980.

\bibitem[Do02]{Don-FLOER}  S.K.~Donaldson,
      {\it Floer Homology Groups in Yang--Mills Theory},
      Cambridge University Press, 2002.

\bibitem[DS94]{DOSA}  S.~Dostoglou and D.A.~Salamon,
      Self-dual instantons and holomorphic curves,
      {\it Ann. of Math.} 
      {\bf  139} (1994), 581--640. 

%\bibitem[ES64]{Eells-Sampson-64-Harmonic}  
%      J.~Eells, Jr. and J.H.~Sampson,
%      Harmonic mappings of Riemannian manifolds,
%      {\it American Journal of Mathematics} 
%      {\bf 86} (1964), 109--160. 

%\bibitem[Ev98]{EVANS-PDE}  L.C.~Evans,
%      {\it Partial Differential Equations},
%      Graduate Studies in Mathematics {\bf 19},
%      American Mathematical Society,
%      Providence, Rhode Island, 1999.

\bibitem[F89a]{FLOER4}  A.~Floer,
      Wittens complex and infinite
      dimensional Morse theory,
      {\it J. Differential Geom.} 
      {\bf 30} (1989), 207--21.

\bibitem[F89b]{FLOER5}  A.~Floer,
      Symplectic fixed points and holomorphic spheres,
      {\it Comm. Math. Phys.} {\bf 120} (1989), 575--611.

\bibitem[FH93]{FLOER-HOFER-ORIENTATION}  A.~Floer and H.~Hofer,
      Coherent orientations for periodic 
      orbit problems in symplectic geometry, 
      {\it Math. Zeit.} {\bf 212} (1993), 13--38.

%\bibitem[Fu66]{Fu66}  W.~Fulks,
%      A mean value theorem for
%      the heat equation,
%      {\it Proc. A.M.S.} 
%      {\bf 17} (1966), 6--11.
 
%\bibitem[GT77]{GT77}  D.~Gilbarg and N.S.~Trudinger,
%      {\it Elliptic partial differential equations 
%      of second order},
%      Grundlehren der mathematischen 
%      Wissenschaften {\bf 224}, 
%      Springer-Verlag 1977, third printing 1998.

%\bibitem[He81]{Henry-81-GeomTheory}
%      D.~Henry,
%      {\it Geometric theory of semilinear
%      parabolic equations},
%      Lecture Notes in Mathematics {\bf 840}, 
%      Springer-Verlag, Berlin, 1981, third printing 1993.

%\bibitem[Jo98]{Jost-PDE}  J.~Jost,
%      {\it Partielle
%      Differentialgleichungen: elliptische 
%      (und parabolische) Gleichungen},
%      Springer-Verlag, Berlin, 1998.

%\bibitem[Li96]{LIEBERMAN}  G.M.~Lieberman,
%      {\it Second order parabolic
%      differential equations},
%      World Scientific, Singapore, 1996.

\bibitem[LL97]{LIEB-LOSS}  E.H.~Lieb and M.~Loss,
      {\it Analysis},
      Graduate Studies in Mathematics, Vol.{\bf 14},
      American Mathematical Society,
      Providence, Rhode Island, 1997,
      second edition 2001.

\bibitem[MS04]{MS}  D.~McDuff and D.A.~Salamon,
      {\it $J$-holomorphic curves and 
      Symplectic Topology},
      Colloquium Publications, Vol.{\bf 52},
      American Mathematical Society, Providence,
      Rhode Island, 2004.

\bibitem[M65]{MILNOR-h}  J.~Milnor,
      {\it Lectures on the h-Cobordism Theorem},
      Princeton University Press, 1965.

\bibitem[M69]{MILNOR1}  J.~Milnor,
      {\it Morse Theory},
      Princeton University Press, 1969.

\bibitem[ReS75]{ReSi-II}  M.~Reed and B.~Simon,
      {\it Methods of modern mathematical physics II
      Fourier analysis, self-adjointness},
      Academic Press, 1975.

\bibitem[RS95]{RoSa-SPEC}  J.W.~Robbin and D.A.~Salamon,
      The spectral flow and the Maslov index,
      {\it Bull. London Math. Soc.} 
      {\bf 27} (1995), 1--33.

%\bibitem[S90]{Sa-MORSE}  D.A.~Salamon,
%      Morse theory, the Conley index and Floer homology,
%      {\it Bull. London Math. Soc.} 
%      {\bf 22} (1990), 113--40. 

\bibitem[S97]{Sa-SW}  D.A.~Salamon,
      {\it Spin geometry and
      Seiberg-Witten invariants},
      unpublished book, revised 1997.

\bibitem[S99]{Sa-FLOER}  D.A.~Salamon,
      Lectures on Floer Homology,
      In {\it Symplectic Geometry and Topology},
      edited by Y.~Eliashberg and L.~Traynor,
      IAS/Park City Mathematics Series, 
      Vol {\bf 7}, 1999,
      pp 143--230.

\bibitem[SW03]{SaJoa-LOOP}  D.A.~Salamon and J.~Weber,
      Floer homology and the heat flow,
%      Preprint ETHZ April 2003,
      {\it GAFA} {\bf 16}
      (2006), 1050--138.

\bibitem[Sm60]{SMALE-60}  S.~Smale,
      Morse inequalities for a dynamical system,
      {\it Bull. Amer. Math. Soc.} {\bf 66}
      (1960), 43--49.

\bibitem[Sm61]{SMALE-61}  S.~Smale,
      On gradient dynamical systems,
      {\it Ann. of Math.} {\bf 74}
      (1961), 199--206.

\bibitem[Sm73]{Sm73}  S.~Smale,
      An infinite dimensional version
      of Sard's theorem,
      {\it Am. J. Math.} {\bf 87}
      (1973), 213--21.

%\bibitem[Ta96]{Taylor-PDE-III}
%      M.~E.~Taylor,
%      {\it Partial Differential Equations III,
%      Nonlinear Equations},
%      Applied Mathematical Sciences {\bf 117},
%      Springer-Verlag, New York, 1996.

\bibitem[T49]{THOM-49}  R.~Thom,
      Sur une partition en cellules 
      associ\'ee \`a une
      fonction sur une vari\'et\'e,
      {\it C. R. Acad. Sci. Paris} {\bf 228}
      (1949), 973--975.

%\bibitem[Wa73]{Wa73}  N.A.~Watson,
%      A theory of subtemperatures
%      in several variables,
%      {\it Proc. London Math. Soc.} {\bf 26}
%      (1973), 385--417. 

\bibitem[W99]{Joa-PHD}  J.~Weber,
      {\it $J$-holomorphic curves in cotangent bundles
      and the heat flow},
      PhD thesis, TU Berlin, 1999. 

\bibitem[W02]{Joa-INDEX}  J.~Weber,
      Perturbed closed geodesics are periodic
      orbits: Index and transversality, 
      {\it Math. Z.} {\bf 241}
      (2002), 45--81. 

\bibitem[W09]{Joa-PARABOLI}  J.~Weber,
      A product estimate, the parabolic Weyl lemma,
      and applications,
      Preprint, HU Berlin, Sep. 2009.
      arXiv:0910.2739

\bibitem[W10]{Joa-LOOPSPACE}  J.~Weber,
      The heat flow and the homology of the loop space.
      In preparation.

\bibitem[Wi82]{Wi82}  E.~Witten,
      Supersymmetry and Morse theory,
      {\it J. Differential Geom.}
      {\bf 17} (1982), 661--92.
      
\end{thebibliography}
\end{document}